\begin{document}
\newcommand{\dd}{\mathrm{d}}
\newcommand{\ii}{\mathrm{i}}
\newcommand{\ee}{\mathrm{e}}
\newcommand{\LZeroBlue}{{L^0_\squarellblack}}
\newcommand{\LZeroBlueOne}{{L^{0,1}_\squarellblack}}
\newcommand{\LZeroBlueTwo}{{L^{0,2}_\squarellblack}}
\newcommand{\LZeroBlueThree}{{L^{0,3}_\squarellblack}}
\newcommand{\LInftyBlue}{{L^\infty_\squarellblack}}
\newcommand{\LInftyBlueOne}{{L^{\infty,1}_\squarellblack}}
\newcommand{\LInftyBlueTwo}{{L^{\infty,2}_\squarellblack}}
\newcommand{\LZeroRed}{{L^0_\squareurblack}}
\newcommand{\LZeroRedOne}{{L^{0,1}_\squareurblack}}
\newcommand{\LZeroRedTwo}{{L^{0,2}_\squareurblack}}
\newcommand{\LInftyRed}{{L^\infty_\squareurblack}}
\newcommand{\LInftyRedOne}{{L^{\infty,1}_\squareurblack}}
\newcommand{\LInftyRedTwo}{{L^{\infty,2}_\squareurblack}}
\newcommand{\LInftyRedThree}{{L^{\infty,3}_\squareurblack}}
\newcommand{\VZeroBlue}{{\mathbf{V}^0_\squarellblack}}
\newcommand{\VInftyBlue}{{\mathbf{V}^\infty_\squarellblack}}
\newcommand{\VZeroRed}{{\mathbf{V}^0_\squareurblack}}
\newcommand{\VInftyRed}{{\mathbf{V}^\infty_\squareurblack}}
\newcommand{\OurPower}[2]{{#1^{#2}_\squarellblack}}
\newcommand{\OurArg}[1]{{\arg_\squarellblack(#1)}}
\newcommand{\OurLog}[1]{{\log_\squarellblack(#1)}}
\newcommand{\OurLogAvg}[1]{{\langle\log_\squarellblack(#1)\rangle}}
\newcommand{\pEInftyRed}{{\partial E^\infty_\squareurblack}}
\newcommand{\pEZeroBlue}{{\partial E^0_\squarellblack}}
\newcommand{\lNaught}{{p}}
\newcommand{\lNaughtZeroBlue}{{p^0_\squarellblack}}
\newcommand{\lNaughtInftyRed}{{p^\infty_\squareurblack}}
\newcommand{\LensRedPM}{{\Lambda^\pm_\squareurblack}}
\newcommand{\LensBluePM}{{\Lambda^\pm_\squarellblack}}
\newtheorem{lem}{Lemma}
\newtheorem{prop}{Proposition}
\newtheorem{conjecture}{Conjecture}
\newtheorem{theorem}{Theorem}
\newtheorem{corollary}{Corollary}
\newtheorem{rhp}{Riemann-Hilbert Problem}
\newtheorem{rem}{Remark}

\title{Rational Solutions of the Painlev\'e-III Equation:  Large Parameter Asymptotics}
\author{Thomas Bothner}
\address{Department of Mathematics, University of Michigan, 2074 East Hall, 530 Church Street, Ann Arbor, MI 48109-1043, United States}
\email{bothner@umich.edu}
\author{Peter D. Miller}
\address{Department of Mathematics, University of Michigan, 2074 East Hall, 530 Church Street, Ann Arbor, MI 48109-1043, United States}
\email{millerpd@umich.edu}

\keywords{Painlev\'e-III equation, rational solutions, large parameter asymptotics, Riemann-Hilbert problem, nonlinear steepest descent method.}
\subjclass[2010]{Primary 34M55; Secondary 34M50, 33E17, 34E05}
\thanks{T. B. acknowledges support of the AMS and the Simons Foundation through a travel grant and P. D. M. is supported by the National Science Foundation under grants DMS-1513054 and DMS-1812625. The authors are grateful to Y. Sheng for useful conversations.}

\date{\today}

\begin{abstract}
The Painlev\'e-III equation with parameters $\Theta_0=n+m$ and $\Theta_\infty=m-n+1$ has a unique rational solution $u(x)=u_n(x;m)$ with $u_n(\infty;m)=1$ whenever $n\in\mathbb{Z}$.  Using a Riemann-Hilbert representation proposed in \cite{BothnerMS18}, we study the asymptotic behavior of $u_n(x;m)$ in the limit $n\to+\infty$ with $m\in\mathbb{C}$ held fixed.  We isolate an eye-shaped domain $E$ in the $y=n^{-1}x$ plane that asymptotically confines the poles and zeros of $u_n(x;m)$ for all values of the second parameter $m$.  We then show that unless $m$ is a half-integer, the interior of $E$ is filled with a locally uniform lattice of poles and zeros, and the density of the poles and zeros is small near the boundary of $E$ but blows up near the origin, which is the only fixed singularity of the Painlev\'e-III equation.  In both the interior and exterior domains we provide accurate asymptotic formul\ae\ for $u_n(x;m)$ that we compare with $u_n(x;m)$ itself for finite values of $n$ to illustrate their accuracy.  We also consider the exceptional cases where $m$ is a half-integer, showing 
that the poles and zeros of $u_n(x;m)$ now accumulate along only one or the other of two ``eyebrows'', i.e., exterior boundary arcs of $E$.  
\end{abstract}
\maketitle

%

\tableofcontents

\section{Introduction}
Generic solutions of the six Painlev\'e equations cannot be expressed in terms of elementary functions, hence the common terminology of \emph{Painlev\'e transcendents} for the general solutions of these famous equations.  However, it is also known that all of the Painlev\'e equations except for the Painlev\'e-I equation admit solutions expressible in terms of classical special functions (e.g., Airy solutions for Painlev\'e-II, or Bessel solutions for Painlev\'e-III) as well as rational solutions, both of which occur for certain isolated values of the auxiliary parameters (each Painlev\'e equation except Painlev\'e-I is actually a family of differential equations indexed by one or more complex parameters).  Rational solutions of Painlev\'e equations have attracted interest partly because they are known to occur in several diverse applications such as the description of equilibrium configurations of fluid vortices \cite{Clarkson09} and of particular solutions of soliton equations \cite{Clarkson06}, electrochemistry \cite{BassNRS10}, parametrization of string theories \cite{Johnson06}, spectral theory of quasi-exactly solvable potentials \cite{ShapiroT14}, and the description of universal wave patterns \cite{BuckinghamM12}.  In several of these applications it is interesting to consider the behavior of the rational Painlev\'e solutions when the parameters in the equation become large (possibly along with the independent variable); as the degree of the rational function is tied to the parameters via B\"acklund transformations, in this limit algebraic representations of rational solutions become unwieldy and hence less attractive than analytical ones as a means for extracting asymptotic behaviors.  Recent progress on the analytical study of large-degree rational Painlev\'e solutions includes \cite{BertolaB15,BuckinghamM14,BuckinghamM15,MillerS17} for Painlev\'e-II and \cite{Buckingham17,MasoeroR18} for Painlev\'e-IV.  Both of these equations have the property that there is no fixed singular point except the point at infinity.  On the other hand, the Painlev\'e-III equation is the simplest of the Painlev\'e equations having a finite fixed singular point (at the origin).
This paper is the second in a series beginning with \cite{BothnerMS18} concerning the large-degree asymptotic behavior of rational solutions to the Painlev\'e-III equation, which we take in the generic form
\begin{equation}
\frac{\dd^2u}{\dd x^2}=\frac{1}{u}\left(\frac{\dd u}{\dd x}\right)^2-\frac{1}{x}\frac{\dd u}{\dd x} + 
\frac{4\Theta_0 u^2 + 4(1-\Theta_\infty)}{x} + 4u^3-\frac{4}{u},\ \ \ \ x\in\mathbb{C}.
\label{eq:PIII}
\end{equation}
It is convenient to represent the constant parameters $\Theta_0$ and $\Theta_\infty$ in the form
\begin{equation}
\Theta_0=n+m\quad\text{and}\quad\Theta_\infty=m-n+1.
\label{eq:Thetas-m-n}
\end{equation}
It is known that if $n\in\mathbb{Z}$, there exists a unique rational solution $u(x)=u_n(x;m)$ of \eqref{eq:PIII} that tends to $1$ as $x\to\infty$.  The odd reflection $u(x)=-u_n(-x;m)$ provides a second distinct rational solution.  Similarly, if $m\in\mathbb{Z}$, there are two rational solutions tending to $\pm\ii$ as $x\to\infty$, namely $u(x)=\pm\ii u_m(\pm\ii x;n)$, while if neither $m$ nor $n$ is an integer, \eqref{eq:PIII} has no rational solutions at all.  If only one of $m$ and $n$ is an integer, then there are exactly two rational solutions; however if both $m\in\mathbb{Z}$ and $n\in\mathbb{Z}$ there are exactly four distinct rational solutions: $u_n(x;m)$, $-u_n(-x;m)$, $\ii u_m(\ii x;n)$, and $-\ii u_m(-\ii x;n)$.  

\subsection{Representations of $u_n(x;m)$}
\subsubsection{Algebraic representation}
\label{sec:algebraic-representation}
It has been shown \cite{Clarkson03,ClarksonLL16,Umemura99} that $u_n(x;m)$ admits the representation
\begin{equation}
u_n(x;m)=\frac{s_n(x;m-1)s_{n-1}(x;m)}{s_n(x;m)s_{n-1}(x;m-1)};\quad u_{-n}(x;m)=\frac{1}{u_n(x;m)},\quad n\in\mathbb{Z}_{\ge 0},
\label{eq:un-exact-fraction}
\end{equation}
where $\{s_n(x;m)\}_{n=-1}^{\infty}$ are polynomials in $x$ with coefficients polynomial in $m$ that are defined by the recurrence formula
\begin{equation}
s_{n+1}(x;m):=\frac{(4x+2m+1)s_n(x;m)^2-s_n(x;m)s_n'(x;m)-x\left(s_n(x;m)s_n''(x;m)-s_n'(x;m)^2\right)}{2s_{n-1}(x;m)}
\label{eq:sn-recurrence}
\end{equation}
and the initial conditions $s_{-1}(x;m)=s_0(x;m)=1$.  The polynomials $\{s_n(x;m)\}_{n=-1}^\infty$ are frequently called the \emph{Umemura polynomials}, although in \cite{Umemura99} Umemura originally considered instead related functions that are polynomials in $1/x$.  For $n$ not too large, the recurrence relation \eqref{eq:sn-recurrence} provides an effective computational strategy to obtain the poles and zeros of $u_n(x;m)$.  The rational function $u_n(x;m)$ has the following symmetry:
\begin{equation}
u_n(-x;m)=\frac{1}{u_n(x;-m)}.
\label{eq:u-n-exact-symmetry}
\end{equation}
This follows from the fact that $u(x)\mapsto u(-x)^{-1}$ is a symmetry of \eqref{eq:PIII}--\eqref{eq:Thetas-m-n} corresponding to the parameter mapping $(m,n)\mapsto (-m,n)$.  Since this symmetry preserves rationality and asymptotics $u\to 1$ as $x\to\infty$, it descends from general solutions to the particular solution $u_n(x;m)$ as written in \eqref{eq:u-n-exact-symmetry}.

\subsubsection{Analytic representation}
\label{sec:analytic-representation}
The goal of this paper is to study $u_n(x;m)$ when $n$ is a large positive integer and $m$ is a fixed complex number.  The representation \eqref{eq:un-exact-fraction} is useful to determine numerous properties of the rational Painlev\'e-III solutions, however when $n$ is large another representation becomes more preferable.  To explain this alternate representation, we first define some $x$-dependent arcs in an auxiliary complex $\lambda$-plane as follows.
Given $x\in\mathbb{C}$ with $x\neq 0$ and $|\mathrm{Arg}(x)|<\pi$, there is an intersection point $\lNaught$ and four oriented arcs $\LInftyRed$, $\LZeroRed$, $\LInftyBlue$, and $\LZeroBlue$ such that:
\begin{itemize}
\item The arc $\LInftyRed$ originates from $\lambda=\infty$ in such a direction that $\ii x\lambda$ is negative real and terminates at $\lambda=\lNaught$, 
the arc $\LZeroRed$ begins at $\lambda=\lNaught$ and terminates at $\lambda=0$ in a direction such that $-\ii x\lambda^{-1}$ is negative real, and the net increment of the argument of $\lambda$ along $\LInftyRed\cup\LZeroRed$ is
\begin{equation}
\Delta\arg(\squareurblack)=2\mathrm{Arg}(x)
\pm 2\pi.
\label{eq:increment-argument-red}
\end{equation}
The ambiguity of the sign in \eqref{eq:increment-argument-red} will be explained below (see Remark~\ref{rmk:surgery}).
\item The arc $\LInftyBlue$ originates from $\lambda=\infty$ in such a direction that $-\ii x\lambda$ is negative real and terminates at $\lambda=\lNaught$,
the arc $\LZeroBlue$ begins at $\lambda=\lNaught$ and terminates at $\lambda=0$ in a direction such that $\ii x\lambda^{-1}$ is negative real, and the net increment of the argument of $\lambda$ along $\LInftyBlue\cup\LZeroBlue$ is
\begin{equation}
\Delta\arg(\squarellblack)=2\mathrm{Arg}(x).
\label{eq:increment-argument-blue}
\end{equation}
\item The arcs $\LInftyRed$, $\LZeroRed$, $\LInftyBlue$, and $\LZeroBlue$ do not otherwise intersect.
\end{itemize}
%
See Figure~\ref{fig:y-5-Exp-i-pi-over-4} below for an illustration in the case of $\mathrm{Arg}(x)=\tfrac{1}{4}\pi$.
We define a single-valued branch of the argument function $\lambda\mapsto\arg(\lambda)$, henceforth denoted $\OurArg{\lambda}$, by first selecting $\LInftyBlue\cup\LZeroBlue$
as the branch cut, and then defining $\OurArg{\lambda}=0$ for sufficiently large positive $\lambda$ when $\mathrm{Im}(x)>0$ and $\OurArg{\lambda}=\pi$ for sufficiently large negative $\lambda$ when $\mathrm{Im}(x)<0$.  It is easy to see that this definition is consistent for $x>0$ but there is a jump across the negative real $x$-axis.  We define an associated branch of the complex logarithm $\log(\lambda)$ by setting $\OurLog{\lambda}:=\ln|\lambda|+\ii\,\OurArg{\lambda}$.  Then,
given $q\in\mathbb{C}$, the corresponding branch of the power function $\lambda^q$ will be denoted by 
$\OurPower{\lambda}{q}:=\ee^{q\,\OurLog{\lambda}}$.
Finally, we denote by $L$ the union of the four oriented arcs $\LInftyRed$, $\LZeroRed$, $\LInftyBlue$, and $\LZeroBlue$, and define the function 
\begin{equation}
\varphi(\lambda):=\lambda-\lambda^{-1},\ \ \ \lambda\in\mathbb{C}\setminus\{0\}.
\label{eq:varphi}
\end{equation}
The following Riemann-Hilbert problem  was formulated in \cite[Sec.\@ 1.2]{BothnerMS18}.  Here and below we follow the convention that subscripts $+$/$-$ refer to boundary values taken on a jump contour from the left/right,
and $\sigma_3:=\mathrm{diag}[1,-1]$ denotes a standard Pauli spin matrix.
\begin{rhp}  Given parameters $m\in\mathbb{C}$ and $n=0,1,2,3,\dots$, as well as $x\in\mathbb{C}\setminus\{0\}$ with $|\mathrm{Arg}(x)|<\pi$, seek a $2\times 2$ matrix function $\lambda\mapsto\mathbf{Y}(\lambda)=\mathbf{Y}_n(\lambda;x,m)$ with the following properties.
\begin{itemize}
\item[1.]\textbf{Analyticity:}  $\lambda\mapsto\mathbf{Y}(\lambda)$ is analytic in the domain $\lambda\in\mathbb{C}\setminus L$.  It takes continuous boundary values on $L\setminus\{0\}$ from each maximal domain of analyticity.  
\item[2.]\textbf{Jump conditions:}  The boundary values $\mathbf{Y}_\pm(\lambda)$ are related on each arc of $L$ by the following formul\ae:
\begin{equation}
\mathbf{Y}_+(\lambda)
=\mathbf{Y}_-(\lambda)\begin{bmatrix}
1 & \displaystyle -\frac{\sqrt{2\pi}\OurPower{\lambda}{-(m+1)}}{\Gamma(\tfrac{1}{2}-m)}\lambda^n\ee^{\ii x\varphi(\lambda)}\\
0 & 1
\end{bmatrix},\quad \lambda\in \LZeroRed,
\label{eq:Yjump-1}
\end{equation}
\begin{equation}
\mathbf{Y}_+(\lambda)
=\mathbf{Y}_-(\lambda)\begin{bmatrix}
1 & \displaystyle \frac{\sqrt{2\pi}\OurPower{\lambda}{-(m+1)}}{\Gamma(\tfrac{1}{2}-m)}\lambda^n\ee^{\ii x\varphi(\lambda)}\\
0 & 1
\end{bmatrix},\quad \lambda\in \LInftyRed,
\label{eq:Yjump-2}
\end{equation}
\begin{equation}
\mathbf{Y}_+(\lambda)
=\mathbf{Y}_-(\lambda)\begin{bmatrix}
1 & 0\\
\displaystyle \frac{\sqrt{2\pi}(\OurPower{\lambda}{(m+1)/2})_+(\OurPower{\lambda}{(m+1)/2})_-}{\Gamma(\tfrac{1}{2}+m)}\lambda^{-n}\ee^{-\ii x\varphi(\lambda)} & 1
\end{bmatrix},\quad \lambda\in \LInftyBlue,
\label{eq:Yjump-3}
\end{equation}
\begin{equation}
\mathbf{Y}_+(\lambda)
=\mathbf{Y}_-(\lambda)\begin{bmatrix}
-\ee^{2\pi\ii m} & 0\\
\displaystyle \frac{\sqrt{2\pi}(\OurPower{\lambda}{(m+1)/2})_+(\OurPower{\lambda}{(m+1)/2})_-}{\Gamma(\tfrac{1}{2}+m)}\lambda^{-n}\ee^{-\ii x\varphi(\lambda)} & -\ee^{-2\pi \ii m}\end{bmatrix}, \quad \lambda\in \LZeroBlue.
\label{eq:Yjump-4}
\end{equation}
\item[3.]\textbf{Asymptotics:}  $\mathbf{Y}(\lambda)\to\mathbb{I}$ as $\lambda\to\infty$.  Also,
the matrix function $\mathbf{Y}(\lambda)\OurPower{\lambda}{-(\Theta_0+\Theta_\infty)\sigma_3/2}=\mathbf{Y}(\lambda)\OurPower{\lambda}{-(m+\tfrac{1}{2})\sigma_3}$ has a well-defined limit as $\lambda\to 0$ (the same limit from each side of $L$).
\end{itemize}
\label{rhp:renormalized}
\end{rhp}

\begin{rem}
Given any choice of sign in \eqref{eq:increment-argument-red}, the sign may be reversed by a surgery performed on $\LInftyRed\cup\LZeroRed$ for any given value of $x\neq 0$, $|\mathrm{Arg}(x)|<\pi$ which leaves the conditions of Riemann-Hilbert Problem~\ref{rhp:renormalized} invariant.
The surgery consists of bringing $\LInftyRed$ together (with the same orientation) with $\LZeroRed$ in some small arc. The jump for $\mathbf{Y}$ cancels on this small arc because the jump matrices in \eqref{eq:Yjump-1}--\eqref{eq:Yjump-2} are inverses of each other; thus, up to some relabeling, one has effectively changed the sign in \eqref{eq:increment-argument-red}.  In \cite{BothnerMS18} the choice of sign in \eqref{eq:increment-argument-red} was tied to the sign of $\mathrm{Im}(x)$ due to the derivation of Riemann-Hilbert Problem~\ref{rhp:renormalized} from direct/inverse monodromy theory, however the above surgery argument shows that the sign is in fact arbitrary.  The freedom to choose this sign will be important later when the solution of Riemann-Hilbert Problem~\ref{rhp:renormalized} is constructed for large $n$.
\label{rmk:surgery}
\end{rem}

It turns out that if Riemann-Hilbert Problem~\ref{rhp:renormalized} is solvable for some $x \in\mathbb{C}\setminus\{0\}$, then we may define corresponding matrices $\mathbf{Y}_1^\infty(x)$ and $\mathbf{Y}_0^0(x)$ by expanding $\mathbf{Y}(\lambda)=\mathbf{Y}_n(\lambda;x,m)$ for large and small $\lambda$, respectively:
\begin{equation}
\mathbf{Y}(\lambda)=\mathbb{I}+\mathbf{Y}_1^\infty(x)\lambda^{-1}+\mathcal{O}(\lambda^{-2}),\quad\lambda\to\infty;\quad\mathbf{Y}_1^\infty(x)=[Y_{1,jk}^\infty(x)]_{j,k=1}^2
\end{equation}
and
\begin{equation}
\mathbf{Y}(\lambda)\OurPower{\lambda}{-(m+\tfrac{1}{2})\sigma_3}=\mathbf{Y}_0^0(x)+\mathcal{O}(\lambda),\quad\lambda\to 0;\quad\mathbf{Y}_0^0(x)=[Y_{0,jk}^0(x)]_{j,k=1}^2.
\end{equation}
Then, according to \cite[Theorem 1]{BothnerMS18}, an alternate formula for the rational solution $u_n(x;m)$ of the Painlev\'e-III equation \eqref{eq:PIII} is
\begin{equation}
u_n(x;m)=\frac{-\ii Y^\infty_{1,12}(x)}{Y^0_{0,11}(x)Y^0_{0,12}(x)},
\label{eq:u-n-from-Y-formula}
\end{equation}
where we have suppressed the parametric dependence on $n\in\mathbb{Z}$ and $m\in\mathbb{C}$ on the right-hand side.

\subsection{Results and outline of paper}
\label{sec:results}
A good way to introduce our results is to first explain a simple formal asymptotic calculation.  Since we are interested in solutions $u=u_n(x;m)$ of \eqref{eq:PIII} with parameters written in the form \eqref{eq:Thetas-m-n} when $n$ is large, and since numerical experiments such as those in \cite[Sec.\@ 2]{BothnerMS18} suggest that the largest poles and zeros of $u_n(x;m)$ lie at a distance $|x|$ from the origin proportional to $n$ with a local spacing that neither grows nor shrinks with $n$, it is natural to introduce a complex parameter $y\neq 0$ and a new independent variable $w\in\mathbb{C}$ by setting $x=ny+w$.  It follows that if $u(x)$ solves \eqref{eq:PIII}--\eqref{eq:Thetas-m-n}, then $\lNaught(w):=-\ii u(x=ny+w)$ satisfies
\begin{equation}
\frac{\dd^2\lNaught}{\dd w^2}=\frac{1}{\lNaught}\left(\frac{\dd \lNaught}{\dd w}\right)^2 + \frac{4\ii}{y}(\lNaught^2-1)-4\lNaught^3+\frac{4}{\lNaught}+\mathcal{O}(n^{-1})
\end{equation}
in which the $\mathcal{O}(n^{-1})$ symbol absorbs several terms each of which is explicitly proportional to $n^{-1}\ll 1$.  Dropping these formally small terms leads to an autonomous  second-order equation which is amenable to classical analysis:
\begin{equation}
\frac{\dd^2\dot{\lNaught}}{\dd w^2}=\frac{1}{\dot{\lNaught}}\left(\frac{\dd\dot{\lNaught}}{\dd w}\right)^2+\frac{4\ii}{y}(\dot{\lNaught}^2-1)-4\dot{\lNaught}^3+\frac{4}{\dot{\lNaught}},
\label{eq:autonomous}
\end{equation}
where $\dot{\lNaught}$ denotes a formal approximation to $\lNaught$.  Solutions of the equation\footnote{More properly, it is a family of equations parametrized by $y\in\mathbb{C}\setminus\{0\}$.} \eqref{eq:autonomous} can be classified as follows:
\begin{itemize}
\item 
Equilibrium solutions $\dot{\lNaught}\equiv\text{constant}$.  Generically with respect to $y$ there are four such equilibria:  $\dot{\lNaught}\equiv \pm 1$ and 
\begin{equation}
\dot{\lNaught}\equiv \lNaught_0^\pm(y):=\frac{\ii}{2y}\mp\ii\sqrt{\frac{1}{4y^2}+1},
\end{equation}
where to be precise we take the square roots to be equal to $1$ at $y=\infty$ and to be analytic in $y$ except on a line segment branch cut connecting the branch points $y=\pm\tfrac{1}{2}\ii$ in the $y$ parameter plane.  Note that of these four, the unique equilibrium that tends to $-\ii$ as $y\to\infty$ (as would be consistent with $u=u_n(x;m)\to 1$ as $x\to\infty$) is $\dot{\lNaught}\equiv \lNaught_0^+(y)$.
\item
Non-equilibrium solutions.  These can be obtained by integrating \eqref{eq:autonomous} to find a first integral.  Thus, provided $\dot{\lNaught}(w)$ is non-constant, we may write \eqref{eq:autonomous} in the equivalent form
\begin{equation}
\left(\frac{\dd\dot{\lNaught}}{\dd w}\right)^2 = \frac{16}{y^2}P(\dot{\lNaught};y,C),\quad P(\dot{\lNaught};y,C):=-\frac{y^2}{4}\dot{\lNaught}^4+\frac{\ii y}{2}\dot{\lNaught}^3 + C\dot{\lNaught}^2 + \frac{\ii y}{2}\dot{\lNaught}-\frac{y^2}{4},
\label{eq:dotV-ODE}
\end{equation}
in which $C$ is a constant of integration.  There are two types of non-equilibrium solutions:
\begin{itemize}
\item If $C$ is generic given $y$ such that $P$ has $4$ distinct roots, then all non-constant solutions of \eqref{eq:dotV-ODE} are (doubly-periodic) elliptic functions of $w$ with elliptic modulus depending on $C$ and $y$.
\item If $C=C(y)$ is such that the quartic $P$ has fewer than $4$ distinct roots, then the higher-order roots are necessarily equilibrium solutions of \eqref{eq:autonomous} and all non-constant solutions of \eqref{eq:dotV-ODE} are (singly-periodic) trigonometric functions of $w$.
\end{itemize}
\end{itemize}
Our rigorous analysis of $u_n(x;m)$ in the large-$n$ limit shows that all of the above types of solutions of the approximating equation \eqref{eq:autonomous} play a role. In order to begin to explain our results, first observe that if $x$ is replaced with $ny+w$, then for large $n$, the dominant factors in the off-diagonal elements of the jump matrices in 
Riemann-Hilbert Problem~\ref{rhp:renormalized} are the exponentials $\ee^{\pm nV(\lambda;y)}$, where
\begin{equation}
V(\lambda;y):= -\log(\lambda)-\ii y \varphi(\lambda).
\label{eq:V-define}
\end{equation}
The fact that $V$ is multi-valued is not important because $\ee^{\pm nV(\lambda;y)}$ is single-valued whenever $n\in\mathbb{Z}$.  However,  $\mathrm{Re}(V(\lambda;y))$ is certainly single-valued for $\lambda\in\mathbb{C}\setminus\{0\}$ and $y\in\mathbb{C}$.
For simplicity, in the rest of the paper we write $\lNaught(y):=\lNaught_0^+(y)$.  Since $\lNaught(y)$ is analytic and non-vanishing in its domain of definition, the left-hand side of the equation
\begin{equation}
\mathrm{Re}(V(\lNaught(y);y))=0
\label{eq:critical}
\end{equation}
defines a harmonic function in the complex $y$-plane omitting the vertical branch cut of $\lNaught(y)$ connecting the branch points $\pm\tfrac{1}{2}\ii$.  Therefore, \eqref{eq:critical} determines a curve in the latter domain that turns out to be the union of four analytic arcs:  two rays on the imaginary axis connecting the branch points $y=\pm\tfrac{1}{2}\ii$ to $y=\pm\ii\infty$ respectively, an arc in the right half-plane joining the two branch points, and its image under reflection through the imaginary axis.  The union of the latter two arcs is the boundary of a compact and simply-connected eye-shaped set denoted $E$ containing the origin $y=0$.  The eye $E$ is symmetric with respect to reflection through the origin as well as both the real and imaginary axes.  See Figure~\ref{fig:partialE} below.  Our first result is then the following.
\begin{theorem}[Equilibrium asymptotics of $u_n(x;m)$]
Fix $m\in\mathbb{C}$ and let $K\subset\mathbb{C}\setminus E$ be bounded away from $E$, i.e., $\mathrm{dist}(y,E)>0$.  Then 
\begin{equation}
u_n(ny;m)=\ii\lNaught(y) +\mathcal{O}(n^{-1}),\quad n\to+\infty,\quad y\in K,
\label{eq:u-outside}
\end{equation}
where the error estimate is uniform for $y\in K$.  
\label{theorem:outside}
\end{theorem}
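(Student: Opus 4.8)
The plan is to implement the Deift–Zhou nonlinear steepest-descent method on Riemann–Hilbert Problem~\ref{rhp:renormalized} with $x=ny$ and $y\in K$ bounded away from $E$, and then read off \eqref{eq:u-outside} from the formula \eqref{eq:u-n-from-Y-formula}. First I would substitute $x=ny$ into the jump matrices and observe that the off-diagonal entries are controlled by the exponentials $\ee^{\pm nV(\lambda;y)}$ with $V$ as in \eqref{eq:V-define}. The key to the analysis is the sign structure of $\mathrm{Re}(V(\lambda;y))$ on the $\lambda$-plane: since $y\notin E$, the equilibrium $\lNaught(y)=\lNaught_0^+(y)$ is the ``dominant'' saddle in the sense that $\mathrm{Re}(V(\lNaught(y);y))$ has a definite sign (positive, say) away from the critical curve \eqref{eq:critical}, and one can choose the contour arcs $\LInftyRed\cup\LZeroRed$ and $\LInftyBlue\cup\LZeroBlue$ (using the surgery freedom of Remark~\ref{rmk:surgery} for the sign in \eqref{eq:increment-argument-red}) to pass through or near $\lNaught(y)$ in the steepest-descent directions of $\mathrm{Re}(V)$, so that on the deformed red arcs $\mathrm{Re}(V)<0$ and on the deformed blue arcs $\mathrm{Re}(V)>0$, with uniform bounds for $y\in K$.

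Next I would perform the standard sequence of transformations. There is no need for a $g$-function here because the relevant object is a single point $\lNaught(y)$ rather than a band: the ``genus-zero'' outer parametrix is simply built from the scalar function $\OurPower{\lambda}{n}\ee^{\ii x\varphi(\lambda)}$ evaluated appropriately, or more precisely one conjugates $\mathbf{Y}$ by a diagonal matrix that moves the $\lambda^{\pm n}\ee^{\pm\ii x\varphi(\lambda)}$ factors out of the jumps and controls the behavior at $\lambda=0,\infty$. After deforming the contours onto the steepest-descent paths, all jump matrices are exponentially close to the identity uniformly on compact subsets of the $\lambda$-plane away from $\lNaught(y)$, $0$, and $\infty$; near the isolated point $\lNaught(y)$ one needs a local parametrix, but because $\mathrm{Re}(V)$ does not vanish there (only $\lNaught(y)$ is a saddle, not a zero of $\mathrm{Re}(V)$ when $y\notin E$) in fact no Bessel- or Airy-type local model is required — a small-norm argument handles a full punctured neighborhood. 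The outer parametrix $\dot{\mathbf Y}$ is then an explicit rational (in fact, essentially diagonal up to a constant conjugation) matrix solving a jump-free problem with the prescribed normalizations at $\lambda=\infty$ and the prescribed singularity structure $\OurPower{\lambda}{-(m+\tfrac12)\sigma_3}$ at $\lambda=0$; its construction uses the parameters $\lNaughtZeroBlue$, $\pEZeroBlue$, etc., and encodes the equilibrium value $\lNaught(y)$ through the formula \eqref{eq:u-n-from-Y-formula}.

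I would then set up the error matrix $\mathbf{E}(\lambda):=\mathbf{Y}(\lambda)\dot{\mathbf Y}(\lambda)^{-1}$, show it solves a small-norm RHP with jump $\mathbb{I}+\mathcal{O}(\ee^{-cn})$ on the deformed contours (uniformly for $y\in K$, using that $\mathrm{dist}(y,E)>0$ forces a uniform spectral gap $c=c(K)>0$ in $\mathrm{Re}(V)$ along the contours), and conclude by the standard small-norm theory that $\mathbf{E}(\lambda)=\mathbb{I}+\mathcal{O}(n^{-1})$ uniformly, with corresponding $n^{-1}$ corrections to $\mathbf{E}_1^\infty$ and the relevant expansion coefficients at $\lambda=0$. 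Tracking these corrections through \eqref{eq:u-n-from-Y-formula} — that is, expressing $Y^\infty_{1,12}$, $Y^0_{0,11}$, $Y^0_{0,12}$ in terms of the explicit outer parametrix plus $\mathcal{O}(n^{-1})$ — yields $u_n(ny;m)=\ii\lNaught(y)+\mathcal{O}(n^{-1})$. The main obstacle I anticipate is verifying the global sign structure of $\mathrm{Re}(V(\lambda;y))$ on the entire $\lambda$-plane uniformly for $y\in K$: one must show that the steepest-descent contours through $\lNaught(y)$ can be chosen so that the red and blue arcs carry the correct signs all the way from $\lambda=0$ to $\lambda=\infty$ without the level set $\{\mathrm{Re}(V(\lambda;y))=0\}$ obstructing the deformation, and that the relevant inequalities degrade exactly as $y$ approaches $E$ — which is precisely the reason $E$ appears as the boundary of validity. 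This is a careful but essentially topological study of the critical graph of the harmonic function $\mathrm{Re}(V(\cdot;y))$, and handling it uniformly in $y$ (including the transition as $y\to E$) is where the real work lies; the remaining transformations and the small-norm estimate are routine once the sign chart is established.
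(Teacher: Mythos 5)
Your overall skeleton (deform $L$ according to the sign chart of the phase, use a diagonal outer parametrix respecting the normalizations at $0$ and $\infty$, then run a small-norm argument) matches the paper's strategy, and your identification of the sign structure of $\mathrm{Re}(V(\cdot;y))$ as the mechanism producing $\partial E$ is correct. However, there is a genuine gap at the heart of the argument: your claim that no local model is required near $\lNaught(y)$ and that ``a small-norm argument handles a full punctured neighborhood'' is false, and repairing it is not a technicality --- it is where the answer $\ii\lNaught(y)$ actually comes from. The four arcs of $L$ meet at $\lNaught(y)$ with alternating triangularity, so the exponent controlling the decay of the jumps must change sign across that junction and therefore vanishes there; concretely, after the (constant) $g$-function conjugation $g\equiv g_\infty(y)=\tfrac12 V(\lNaught(y);y)$ the relevant exponent is $2g_\infty(y)-V(\lambda;y)=V(\lNaught(y);y)-V(\lambda;y)$, which has a \emph{double zero} at $\lambda=\lNaught(y)$. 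You are conflating the condition $\mathrm{Re}(V(\lNaught(y);y))\neq 0$, which is what distinguishes $y\notin E$ from $y\in\partial E$, with nonvanishing of the jump exponent at the junction point, which fails for every $y$. Hence the jump matrices do not tend to the identity uniformly near $\lNaught(y)$, and a local parametrix built from parabolic cylinder functions (Riemann--Hilbert Problem~\ref{rhp:ParabolicCylinder}, with quadratic exponent $\ee^{\pm\zeta^2}$, $\zeta=n^{1/2}W(\lambda;y)$) is unavoidable.

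The second, and more consequential, point is that the outer parametrix \eqref{eq:M-dot-out} is \emph{diagonal}, so by itself it contributes $0$ to $Y^\infty_{1,12}$ and to $Y^0_{0,12}$, and the exact formula \eqref{eq:u-n-from-Y-formula} becomes an indeterminate $0/0$ at leading order. The leading term $\ii\lNaught(y)$ is produced entirely by the off-diagonal mismatch between the inner and outer parametrices on $\partial D$, which is $\mathcal{O}(1)$ after conjugation by $n^{m\sigma_3/2}$ (see \eqref{eq:Mismatch-0}); because this mismatch is not small, the error matrix does not satisfy a small-norm problem directly, and one must first remove the $\mathcal{O}(1)$ upper-triangular term with an explicit second-level parametrix $\dot{\mathbf{F}}$ as in \eqref{eq:dot-F-def}--\eqref{eq:F-dot-outside} before small-norm theory applies. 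Your proposal, which reads the leading term off the outer parametrix and treats the neighborhood of $\lNaught(y)$ as routine, would therefore not produce \eqref{eq:u-outside}; you would need to add the parabolic-cylinder local analysis and the explicit correction that extracts the residue of the mismatch at $\lNaught(y)$, which is precisely where the factor $\lNaught(y)$ in the final answer originates.
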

Thus, $u_n$ is approximated by the unique equilibrium solution of \eqref{eq:autonomous} that tends to $-\ii$ as $y\to\infty$, provided that $y$ lies outside the eye $E$.
Since $\lNaught(y)$ is analytic and non-vanishing as a function of $y$ bounded away from $E$, the uniform convergence immediately implies the following.
\begin{corollary}
Fix $m\in\mathbb{C}$ and let $K$ be as in the statement of Theorem~\ref{theorem:outside}.  Then
$u_n(\cdot;m)$ has no zeros or poles
in the set $nK$ for $n$ sufficiently large.
\label{corollary:outside-no-poles-or-zeros}
\end{corollary}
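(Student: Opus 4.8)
The plan is to deduce the corollary directly from Theorem~\ref{theorem:outside} together with the fact, recorded in the text just before the corollary, that $y\mapsto\lNaught(y)$ is analytic and non-vanishing on the complement of the eye $E$. The first step is to obtain two-sided bounds on $|\lNaught(y)|$ that are uniform over $K$. Since $K$ is bounded away from $E$, i.e. $\mathrm{dist}(K,E)>0$, and $\lNaught$ extends continuously to the point $y=\infty$ with $\lNaught(\infty)=1$, the set $\{y\in\mathbb{C}\cup\{\infty\}:\mathrm{dist}(y,E)\ge\mathrm{dist}(K,E)\}$ is compact; on it the continuous non-vanishing function $|\lNaught|$ attains a strictly positive minimum $c>0$ and a finite maximum $C<\infty$. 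Restricting to $K$ gives $0<c\le |\ii\lNaught(y)|=|\lNaught(y)|\le C<\infty$ for every $y\in K$.

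The second step invokes Theorem~\ref{theorem:outside} in the form: there exist $M>0$ and $N\in\mathbb{N}$ such that $|u_n(ny;m)-\ii\lNaught(y)|\le M/n$ for all $y\in K$ and all integers $n\ge N$. Combining this with the bounds from the first step, for every $n\ge\max\{N,\,2M/c\}$ and every $y\in K$ we get $|u_n(ny;m)|\ge c-M/n\ge c/2>0$ and $|u_n(ny;m)|\le C+M/N<\infty$. Hence $u_n(ny;m)$ is finite and nonzero for all $y\in K$, which says precisely that the rational function $u_n(\cdot;m)$ has neither a zero nor a pole at any point of $nK=\{ny:y\in K\}$; this is the claim.

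The argument is essentially immediate once Theorem~\ref{theorem:outside} is available, so there is no genuine obstacle here; the corollary is a soft consequence of uniform convergence to a function bounded and bounded away from $0$. The only point meriting a line of care is the uniform lower bound on $|\lNaught|$ in the case that $K$ is unbounded, which is handled by the one-point compactification as above, using that $\lNaught(\infty)=1\neq 0$. It is perhaps worth remarking that a pole of $u_n$ in $nK$ is already excluded by the mere boundedness of the error term in Theorem~\ref{theorem:outside} (the right-hand side of \eqref{eq:u-outside} is finite), independently of any lower bound; the positivity of $c$ is what additionally rules out zeros.
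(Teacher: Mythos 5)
Your argument is correct and is exactly the route the paper takes: the corollary is presented there as an immediate consequence of the uniform convergence in Theorem~\ref{theorem:outside} together with the fact that $\lNaught$ is analytic and non-vanishing away from $E$, and you have simply supplied the routine compactness details. The only slip is that $\lNaught(\infty)=-\ii$ rather than $1$ (it is $\ii\lNaught(\infty)=1$, consistent with $u_n(x;m)\to 1$ as $x\to\infty$), which is harmless since all your argument needs is $\lNaught(\infty)\neq 0$.
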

As an application of these results, let $y\in\mathbb{C}\setminus E$ and let $C_y$ denote a positively-oriented loop surrounding the point $y$.  Then, 
from Cauchy's integral formula it follows that, as $n\rightarrow+\infty$,
\begin{equation}
\frac{\dd^ju_n}{\dd x^j}(ny;m)=\frac{1}{n^j}\frac{\dd^ju_n}{\dd y^j}(ny;m)=\frac{j!}{2\pi\ii n^j}\oint_{C_y}\frac{u_n(ny';m)\,\dd y'}{(y'-y)^{j+1}}=\ii n^{-j}\frac{\dd^j\lNaught}{\dd y^j}(y) + \mathcal{O}(n^{-j-1}), \quad j=1,2,3,\dots,
\label{eq:u-prime-outside}
\end{equation}
where to evaluate the integral we used \eqref{eq:u-outside}.  It is easy to see that the error term enjoys similar uniformity properties as in Theorem~\ref{theorem:outside}.\bigskip
   
Next, we let $E_\mathrm{L}$ (resp., $E_\mathrm{R}$) denote the part of the interior of $E$ lying in the open left (resp., right) half-plane, compare again Figure~\ref{fig:partialE}.  We now develop an asymptotic formula for $u_n(x;m)$ when $n^{-1}x\in E_\mathrm{R}$ and $m\in\mathbb{C}\setminus(\mathbb{Z}+\tfrac{1}{2})$.  Since $E_\mathrm{L}$ and $E_\mathrm{R}$ are related by reflection through the origin, by the symmetry \eqref{eq:u-n-exact-symmetry} this formula will also be sufficient to describe $u_n(x;m)$ for large $n$ when $n^{-1}x\in E_\mathrm{L}$, because $-m\in\mathbb{C}\setminus(\mathbb{Z}+\tfrac{1}{2})$ whenever $m\in\mathbb{C}\setminus(\mathbb{Z}+\tfrac{1}{2})$.  Given $m\in\mathbb{C}\setminus (\mathbb{Z}+\tfrac{1}{2})$, in \eqref{eq:four-factors}--\eqref{eq:N-of-y} below we define complex-valued functions $\mathcal{Z}_n^\bullet(y,w;m)$, $\mathcal{Z}_n^\circ (y,w;m)$, $\mathcal{P}_n^\bullet(y,w;m)$, $\mathcal{P}_n^\circ(y,w;m)$, and $N(y)$,  whose real and imaginary parts are smooth but non-analytic functions of the real and imaginary parts of $y\in E_\mathrm{R}$ and which are entire functions of $w\in\mathbb{C}$, with $N:E_\mathrm{R}\to\mathbb{C}$ non-vanishing.  These functions depend crucially on a smooth but non-analytic function $C=C(y)$ defined on $E_\mathrm{R}$ by a procedure described in Sections~\ref{sec:Boutroux-y-small} and \ref{sec:Boutroux-degenerate}, and also on a related smooth function $B=B(y)$ with $\mathrm{Re}(B(y))<0$ defined by \eqref{eq:B-of-y}. In detail, compare \eqref{eq:four-factors},
\begin{equation}
\begin{split}
\mathcal{Z}_n^\bullet(y,w;m)&=\Theta(\mathscr{z}_n^\bullet(y,w;m)-\ii\pi-\tfrac{1}{2}B(y),B(y)),\ \ \ \ \ 
\mathcal{Z}_n^\circ(y,w;m)=\Theta(\mathscr{z}_n^\circ(y,w;m)+\ii\pi+\tfrac{1}{2}B(y),B(y)),\\
\mathcal{P}_n^\bullet(y,w;m)&=\Theta(\mathscr{p}_n^\bullet(y,w;m) +\ii\pi +\tfrac{1}{2}B(y),B(y)),\ \ \ \ \ 
\mathcal{P}_n^\circ(y,w;m)=\Theta(\mathscr{p}_n^\circ(y,w;m)-\ii\pi -\tfrac{1}{2}B(y),B(y)),
\end{split}
\end{equation}
in which $\Theta(z,B)$ denotes the Riemann theta function defined by \eqref{eq:Riemann-Theta},
and in which the complex-valued phases $\mathscr{z}_n^\bullet$, $\mathscr{z}_n^\circ$, $\mathscr{p}_n^\bullet$, and $\mathscr{p}_n^\circ$ are well-defined affine linear functions of $w\in\mathbb{C}$ and $n\in\mathbb{Z}_{\ge 0}$ with coefficients and constant terms that are smooth functions of $y\in\mathbb{R}$ depending parametrically on $m\in\mathbb{C}\setminus(\mathbb{Z}+\tfrac{1}{2})$.
We then define
\begin{equation}
\dot{u}_n(y,w;m):=N(y)\frac{\mathcal{Z}_n^\bullet(y,w;m)\mathcal{Z}_n^\circ(y,w;m)}{\mathcal{P}_n^\bullet(y,w;m)\mathcal{P}_n^\circ(y,w;m)},\quad y\in E_\mathrm{R},\quad w\in\mathbb{C},\quad m\in\mathbb{C}\setminus(\mathbb{Z}+\tfrac{1}{2}),
\label{eq:udot-elliptic}
\end{equation}
excluding isolated exceptional values of $(y,w)\in E_\mathrm{R}\times\mathbb{C}$ for which the denominator vanishes.
\begin{theorem}[Elliptic asymptotics of $u_n(x;m)$]
Fix $m\in\mathbb{C}\setminus(\mathbb{Z}+\tfrac{1}{2})$.  For each $n\in\mathbb{Z}_{\ge 0}$ and each $y\in E_\mathrm{R}$, the function $\dot{\lNaught}(w):=-\ii\dot{u}_n(y,w;m)$ is a non-equilibrium elliptic function solution of \eqref{eq:autonomous} in the form \eqref{eq:dotV-ODE} with integration constant $C=C(y)$.
If $\epsilon>0$ is an arbitrarily small fixed number and $K_y\subset E_\mathrm{R}$ and $K_w\subset\mathbb{C}$ are compact sets, then
\begin{equation}
u_n(ny+w;m)=\dot{u}_n(y,w;m)+\mathcal{O}(n^{-1}),\quad n\to+\infty,
\label{eq:elliptic-asymptotics-ER}
\end{equation}
holds uniformly on the set of $(y,w,n)$ defined by the conditions $y\in K_y$, $w\in K_w$ such that
\begin{equation}
\begin{split}
\mathrm{dist}(\mathscr{z}_n^\bullet(y,w;m),2\pi\ii\mathbb{Z}+B(y)\mathbb{Z})&\ge\epsilon,\ \ \ \ \ \ \ 
\mathrm{dist}(\mathscr{z}_n^\circ(y,w;m),2\pi\ii\mathbb{Z}+B(y)\mathbb{Z})\ge\epsilon,\\
\mathrm{dist}(\mathscr{p}_n^\bullet(y,w;m),2\pi\ii\mathbb{Z}+B(y)\mathbb{Z})&\ge\epsilon,\ \ \ \ \ \ \ 
\mathrm{dist}(\mathscr{p}_n^\circ(y,w;m),2\pi\ii\mathbb{Z}+B(y)\mathbb{Z})\ge\epsilon.
\end{split}
\label{eq:cheese-carve-outs}
\end{equation}
Under the same conditions and with the same sense of convergence,
\begin{equation}
u_n(-(ny+w);-m)=\frac{1}{\dot{u}_n(y,w;m)}+\mathcal{O}(n^{-1}),\quad n\to+\infty,
\label{eq:elliptic-asymptotics-EL}
\end{equation}
which provides asymptotics of $u_n(ny;m)$ when $y\in E_\mathrm{L}$.
\label{theorem:eye}
\end{theorem}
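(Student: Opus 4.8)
The plan is to apply the Deift--Zhou nonlinear steepest-descent method to Riemann-Hilbert Problem~\ref{rhp:renormalized} with $x$ replaced by $ny+w$, and to read $u_n(ny+w;m)$ off the representation \eqref{eq:u-n-from-Y-formula}. Writing $\lambda^n\ee^{\ii x\varphi(\lambda)}=\ee^{-nV(\lambda;y)}\ee^{\ii w\varphi(\lambda)}$ (with $V$ as in \eqref{eq:V-define}) and its reciprocal, the off-diagonal jump entries in \eqref{eq:Yjump-1}--\eqref{eq:Yjump-4} are the dominant exponentials $\ee^{\mp nV(\lambda;y)}$ times the $\mathcal{O}(1)$, $n$-independent factors $\ee^{\pm\ii w\varphi(\lambda)}$ and the $\Gamma$- and power-function prefactors. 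Using the Boutroux data $C=C(y)$, $B=B(y)$ and the genus-one spectral curve $\mu^2=P(\lambda;y,C(y))$ (whose four branch points are the roots of the quartic $P$ of \eqref{eq:dotV-ODE}) constructed in Sections~\ref{sec:Boutroux-y-small} and \ref{sec:Boutroux-degenerate}, one introduces a scalar $g$-function $g(\lambda;y)$ and carries out the transformation $\mathbf{Y}\mapsto\mathbf{Y}\ee^{ng(\lambda)\sigma_3}$ together with a scalar conjugation absorbing the prefactors. With $g$ chosen so that $\mathrm{Re}(2g-V)$ vanishes on the bands of the curve and is strictly signed on the complementary gaps and on the boundaries of lenses opened about the bands, the deformed jumps become piecewise constant on the bands and equal to $\mathbb{I}$ plus exponentially small errors on the gaps and lens boundaries; the factor $\ee^{\pm\ii w\varphi(\lambda)}$ only modifies the bounded, $n$-independent band jumps.

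Next one builds parametrices. The outer parametrix $\dot{\mathbf{P}}^{\mathrm{out}}(\lambda)$ solves the model problem carrying the piecewise-constant band jumps, with $\dot{\mathbf{P}}^{\mathrm{out}}(\lambda)\to\mathbb{I}$ as $\lambda\to\infty$ and $\dot{\mathbf{P}}^{\mathrm{out}}(\lambda)\OurPower{\lambda}{-(m+\tfrac{1}{2})\sigma_3}$ bounded as $\lambda\to0$, and in particular reproducing the constant monodromy $-\ee^{\pm2\pi\ii m}$ on the relevant portion of $\LZeroBlue$; it is given explicitly in terms of the Riemann theta function $\Theta(\,\cdot\,,B(y))$ and the normalized Abelian integrals of the curve, the four roots of $P(\,\cdot\,;y,C(y))$ entering as the constant shifts $\pm\bigl(\ii\pi+\tfrac{1}{2}B(y)\bigr)$ in the theta arguments. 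At each branch point one installs the standard Airy parametrix, matching $\dot{\mathbf{P}}^{\mathrm{out}}$ on a fixed circle to relative order $\mathcal{O}(n^{-1})$; in fixed disks about $\lambda=0$ and $\lambda=\infty$ no further local model is needed, since $\lvert\lambda^{\pm n}\rvert$ is exponentially small there while the remaining structure is already reproduced by $\dot{\mathbf{P}}^{\mathrm{out}}$. Setting $\mathbf{E}(\lambda):=\mathbf{Y}(\lambda)\dot{\mathbf{P}}(\lambda)^{-1}$ with $\dot{\mathbf{P}}$ the global parametrix, one obtains a small-norm Riemann-Hilbert problem with jump $\mathbb{I}+\mathcal{O}(n^{-1})$, uniformly, provided $\lVert(\dot{\mathbf{P}}^{\mathrm{out}})^{\pm1}\rVert$ is controlled; the latter holds precisely when one stays $\epsilon$-bounded away from the zeros of the theta factors in $\dot{\mathbf{P}}^{\mathrm{out}}$, i.e.\ under the conditions \eqref{eq:cheese-carve-outs}. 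Hence $\mathbf{E}(\lambda)=\mathbb{I}+\mathcal{O}(n^{-1})$ with the same uniformity on $K_y\times K_w$.

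Expanding $\mathbf{Y}=\mathbf{E}\dot{\mathbf{P}}$ as $\lambda\to\infty$ and as $\lambda\to0$ and inserting the coefficients into \eqref{eq:u-n-from-Y-formula} gives $u_n(ny+w;m)=\dot{u}_n(y,w;m)+\mathcal{O}(n^{-1})$, where bookkeeping of the Abelian-integral data identifies the leading ratio of theta functions with \eqref{eq:udot-elliptic}; this is \eqref{eq:elliptic-asymptotics-ER}. The remaining assertion — that $\dot{\lNaught}(w):=-\ii\dot{u}_n(y,w;m)$ is a non-equilibrium elliptic solution of \eqref{eq:autonomous} in the form \eqref{eq:dotV-ODE} with integration constant $C(y)$ — is a statement about the explicit formula \eqref{eq:udot-elliptic} and is checked directly: $\dot{u}_n$ is an elliptic function of $w$ (the quasi-periodicity factors of the four translated theta functions cancel), its zero/pole divisor over a period parallelogram is the image under the inverse Abel map of the four roots of $P(\,\cdot\,;y,C(y))$, and the theta addition/Frobenius identities then force exactly \eqref{eq:dotV-ODE} with that value of $C$; equivalently, this is dictated by the isomonodromic structure of the model problem solved by $\dot{\mathbf{P}}^{\mathrm{out}}$. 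Non-constancy — hence genuinely elliptic rather than trigonometric behaviour — follows because $P(\,\cdot\,;y,C(y))$ has four distinct roots throughout $E_\mathrm{R}$. Finally, \eqref{eq:elliptic-asymptotics-EL} is immediate from the exact symmetry \eqref{eq:u-n-exact-symmetry}, the reflection $E_\mathrm{L}=-E_\mathrm{R}$, and the fact that $-m\in\mathbb{C}\setminus(\mathbb{Z}+\tfrac{1}{2})$ whenever $m\in\mathbb{C}\setminus(\mathbb{Z}+\tfrac{1}{2})$.

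The main obstacle is the \emph{global} validity of the genus-one ansatz on $E_\mathrm{R}$: one must show that the Boutroux conditions single out a smooth (non-analytic) branch $C(y)$ on $E_\mathrm{R}$ for which $P(\lambda;y,C(y))$ has four distinct roots, for which the sign chart of $\mathrm{Re}(2g-V)$ supports the band/gap/lens decomposition above with no spurious transitions, and for which the bands remain bounded away from $\lambda=0$ and $\lambda=\infty$ uniformly for $y$ in compact subsets of $E_\mathrm{R}$. This, together with the uniform control of $\dot{\mathbf{P}}^{\mathrm{out}}$ away from its theta-zeros needed to make the carve-outs \eqref{eq:cheese-carve-outs} sharp, is the heart of the matter; the Airy parametrices, the theta-function outer parametrix, and the small-norm estimate for $\mathbf{E}$ are then routine. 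The factor $\ee^{\pm\ii w\varphi(\lambda)}$ is merely a bounded perturbation of the band jumps and of the phases, which is exactly why the $w$-dependence persists into $\dot{u}_n$ at leading order while the genuine corrections remain $\mathcal{O}(n^{-1})$ uniformly.
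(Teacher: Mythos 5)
Your proposal is correct and follows essentially the same route as the paper: a $y$-dependent $g$-function built from the Boutroux-conditioned genus-one spectral curve, a Szeg\H{o}-type scalar conjugation, lens opening, a theta-function outer parametrix with Airy local parametrices at the four roots of $P(\cdot;y,C(y))$, a small-norm estimate under the carve-outs \eqref{eq:cheese-carve-outs}, and the symmetry \eqref{eq:u-n-exact-symmetry} for $E_\mathrm{L}$. The paper establishes the differential equation for $\dot{p}(w)$ via the isomonodromic/Lax-pair structure of the outer model problem (the quadratic matrix polynomial $\mathbf{G}(\lambda)$ together with the $w$-equation for $\dot{\mathbf{O}}_n^{\mathrm{out}}$), which is the second of the two alternatives you name.
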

The formula \eqref{eq:elliptic-asymptotics-EL} follows from \eqref{eq:elliptic-asymptotics-ER} with the use of the symmetry \eqref{eq:u-n-exact-symmetry} (and that $\dot{u}_n(y,w;m)$ is bounded and bounded away from zero on the indicated set, as it happens).
Thus, provided that $n^{-1}x$ lies in either domain $E_\mathrm{L}$ or $E_\mathrm{R}$ and $m$ is \emph{not} a half-integer, the rational Painlev\'e-III function $u_n(x;m)$ is locally approximated by a non-equilibrium elliptic function solution of the differential equation \eqref{eq:autonomous}.  Note that the fact that the leading term on the right-hand side of \eqref{eq:elliptic-asymptotics-EL} is an elliptic function follows from the first statement of Theorem~\ref{theorem:eye} and the fact that the integrated form \eqref{eq:dotV-ODE} admits the symmetry $(\dot{p},C,y,w)\mapsto (-\dot{p}^{-1},C,-y,-w)$.\smallskip

\begin{rem}  
The fact that in \eqref{eq:elliptic-asymptotics-ER} and \eqref{eq:elliptic-asymptotics-EL} we are approximating a function of a single complex variable $x=ny+w$ with a function of two independent complex variables $(y,w)$ deserves some explanation.  Indeed, given $x$ there are many different choices of parameters $(y,w)$ for which $x=ny+w$, so the form of $\dot{u}_n(y,w;m)$ actually gives a family of approximations for the same quantity.  The variable $w$ captures the local properties of the rational function $u_n(x;m)$; it is the scale on which $u_n(x;m)$ resembles a fixed elliptic function.  On the other hand the variable $y$ captures the way that the elliptic modulus depends on the point of observation within the eye $E$ and unlike the meromorphic dependence on $w$, $\dot{u}_n(y,w;m)$ is a decidedly non-analytic function of $y$.  If we approximate $u_n(x;m)$ by setting $w=0$ and letting $y$ vary, we obtain a globally accurate (on $K_y$) approximation that is unfortunately not analytic in $y$.  However if we fix $y\in E_\mathrm{R}$ and let $w$ vary, we obtain a locally accurate ($w\in K_w$, so $x-ny=w=\mathcal{O}(1)$ as $n\to+\infty$) approximation that is an exact elliptic function depending only parametrically on $y$.  
\label{remark:two-vars}
\end{rem}

If in any of the conditions \eqref{eq:cheese-carve-outs} we put $\epsilon=0$, then the corresponding phase agrees with a point of the lattice $2\pi\ii\mathbb{Z}+B(y)\mathbb{Z}$ and the associated factor in the definition of $\dot{u}_n(y,w;m)$ vanishes.  For $\epsilon>0$, each condition in \eqref{eq:cheese-carve-outs} defines a ``swiss-cheese''-like region in the variables $(y,w)$ given $n\in\mathbb{Z}_{\ge 0}$ and $m\in\mathbb{C}\setminus(\mathbb{Z}+\tfrac{1}{2})$ with holes centered at points corresponding to lattice points.  In fact, if $y\in E_\mathrm{R}$ is also fixed, then the lattice $2\pi\ii\mathbb{Z}+B(y)\mathbb{Z}$ is a uniform lattice and each of the conditions in \eqref{eq:cheese-carve-outs} omits from the complex $w$-plane the union of disks of radius $\epsilon$ centered at the lattice points.  On the other hand, if instead it is $w\in\mathbb{C}$ that is fixed, then each of the conditions \eqref{eq:cheese-carve-outs} omits from the complex $y$-plane neighborhoods of diameter proportional to $\epsilon n^{-1}$ containing the points in a set that can be roughly characterized as a curvilinear grid of spacing proportional to $n^{-1}$.  

\begin{corollary}
Fix $m\in\mathbb{C}\setminus(\mathbb{Z}+\tfrac{1}{2})$ and a compact set $K_y\subset E_\mathrm{R}$.  If $\{y_n\}_{n=N}^\infty\subset K_y$ is a sequence such that $\mathcal{Z}_n^\bullet(y_n,0;m)=0$ for $n=N, N+1,\dots$ (or such that $\mathcal{Z}_n^\circ(y_n,0;m)=0$ for $n=N,N+1,\dots$), then for each sufficiently small $\epsilon>0$ there is exactly one simple zero, and possibly a group of an equal number of additional zeros and poles, of $u_n(ny;m)$ within $|y-y_n|<\epsilon n^{-1}$ for $n$ sufficiently large.  Likewise, if $\{y_n\}_{n=N}^\infty\subset K_y$ is a sequence such that $\mathcal{P}_n^\bullet(y_n,0;m)=0$ for $n=N, N+1,\dots$ (or such that $\mathcal{P}_n^\circ(y_n,0;m)=0$ for $n=N,N+1,\dots$), then for each sufficiently small $\epsilon>0$ there is exactly one simple pole, and possibly a group of an equal number of additional zeros and poles, of $u_n(ny;m)$ within $|y-y_n|<\epsilon n^{-1}$ for $n$ sufficiently large.
\label{corollary:eye-zeros-and-poles:better}
\end{corollary}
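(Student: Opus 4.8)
The plan is to deduce the corollary from the elliptic asymptotics of Theorem~\ref{theorem:eye} by a Rouch\'e argument carried out in the local variable in which the model $\dot{u}_n$ is an honest elliptic function. I treat in detail the first assertion, that $\mathcal{Z}_n^\bullet(y_n,0;m)=0$ forces one simple zero of $u_n(ny;m)$ near $y_n$ (plus a possible balanced cluster); the other three cases are identical after permuting the four theta factors and, in the two $\mathcal{P}$-cases, counting poles instead of zeros. Fix a small $\epsilon>0$ and put $y=y_n+n^{-1}\zeta$, so that $u_n(ny;m)=u_n(ny_n+\zeta;m)$ and $|y-y_n|<\epsilon n^{-1}$ becomes $|\zeta|<\epsilon$. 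Since $\{y_n\}_{n\ge N}\subset K_y$ with $K_y$ a compact subset of the open set $E_\mathrm{R}$, Theorem~\ref{theorem:eye} applies with the base point $y=y_n$ and the local variable $w=\zeta$ ranging over $|\zeta|\le\epsilon$, and yields $u_n(ny_n+\zeta;m)=\dot{u}_n(y_n,\zeta;m)+\mathcal{O}(n^{-1})$ as $n\to+\infty$, uniformly for all such $y_n$ and for $\zeta$ lying outside the ``swiss--cheese'' exclusion discs of some fixed radius $\epsilon'>0$ about the zeros and poles of $\zeta\mapsto\dot{u}_n(y_n,\zeta;m)$, these being exactly the carve-outs \eqref{eq:cheese-carve-outs}, with $\epsilon'$ at our disposal.

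First I analyse the model $f_n(\zeta):=\dot{u}_n(y_n,\zeta;m)$. By the first statement of Theorem~\ref{theorem:eye}, $f_n$ is a non-equilibrium elliptic solution of \eqref{eq:autonomous} in the integrated form \eqref{eq:dotV-ODE}; since there $(\dot{p}')^2$ is a quartic in $\dot{p}$ with distinct roots, $f_n$ is an elliptic function of order two, with period lattice the pull-back of $2\pi\ii\mathbb{Z}+B(y_n)\mathbb{Z}$ under the common, nonzero, $\zeta$-slope of the affine phases $\mathscr{z}_n^\bullet,\mathscr{z}_n^\circ,\mathscr{p}_n^\bullet,\mathscr{p}_n^\circ$ appearing in \eqref{eq:four-factors}. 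Compactness of $K_y$ in $E_\mathrm{R}$, together with $\mathrm{Re}(B(y))<0$, forces this lattice to have a fundamental cell of diameter bounded above and below uniformly in $n\ge N$, so there is $r_0>0$, independent of $n$, such that $|\zeta|<r_0$ meets at most one point of each of the four zero/pole lattices of $f_n$. Because $\Theta(\cdot,B)$ has only simple zeros and the affine phases are non-degenerate, the hypothesis $\mathcal{Z}_n^\bullet(y_n,0;m)=0$ says that $\zeta=0$ is a simple zero of the factor $\mathcal{Z}_n^\bullet(y_n,\cdot;m)$, hence of $f_n$ unless one of the other three factors vanishes at (or $o(1)$-near) $\zeta=0$. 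The crucial input, drawn from the explicit structure of the four phases recorded with \eqref{eq:four-factors} and in particular from their mutual offsets staying a bounded distance from the lattice $2\pi\ii\mathbb{Z}+B(y)\mathbb{Z}$ for $y\in K_y$, is that for $\epsilon$ small and $n$ large the net number of zeros minus poles of $f_n$ in $|\zeta|<\epsilon$ equals $+1$, with any surplus zeros accompanied by an equal number of surplus poles, all confined to a shrinking neighbourhood of $\zeta=0$.

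Next I pass from $f_n$ to $u_n$. Choose a radius $\rho=\rho_n\in(0,\epsilon)$ so that the circle $|\zeta|=\rho$ lies in the swiss--cheese region for a fixed $\epsilon'$ and encircles precisely the zeros and poles of $f_n$ in $|\zeta|<\epsilon$; such $\rho$ exists once $n$ is large, because the excluded discs have radius $\mathcal{O}(\epsilon')$ while the fundamental cell has diameter at least $r_0$. On $|\zeta|=\rho$ the factor $N(y_n)$ and each theta factor in \eqref{eq:udot-elliptic} are bounded above and below uniformly in $n$, so $f_n$ is bounded away from $0$ and $\infty$ there and $u_n(ny_n+\zeta;m)=f_n(\zeta)(1+\mathcal{O}(n^{-1}))$ on that circle. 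Rouch\'e's theorem, applied to the numerator and denominator of $u_n/f_n$ (equivalently, to $\mathrm{d}\log$), then shows that $u_n(ny_n+\cdot\,;m)$ and $f_n$ have the same net count of zeros minus poles inside $|\zeta|<\rho$, namely $+1$. Finally, on the annulus $\rho\le|\zeta|\le\epsilon$, which also lies in the swiss--cheese region, $|u_n(ny_n+\zeta;m)-f_n(\zeta)|=\mathcal{O}(n^{-1})$ while $|f_n(\zeta)|$ is bounded below, so $u_n$ has no zeros or poles there for large $n$; hence the count in $|\zeta|<\epsilon$ is again $+1$, realised by one simple zero together with at most an equal number of additional zeros and poles. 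Undoing $\zeta=n(y-y_n)$ yields the assertion about $u_n(ny;m)$ on $|y-y_n|<\epsilon n^{-1}$, and the $\mathcal{Z}_n^\circ$, $\mathcal{P}_n^\bullet$, $\mathcal{P}_n^\circ$ cases follow by the same argument with the roles of the four factors permuted and, for the $\mathcal{P}$-cases, with poles in place of zeros.

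The principal obstacle is the net-count assertion for the model in the second step: showing that $f_n$ contributes exactly $+1$ to the winding number over $|\zeta|<\epsilon$. This does not follow from Abel's relation alone, since with one zero-lattice for each of $\mathcal{Z}_n^\bullet,\mathcal{Z}_n^\circ$ and one pole-lattice for each of $\mathcal{P}_n^\bullet,\mathcal{P}_n^\circ$ Abel only constrains the sum of the four offsets; it genuinely requires the explicit form of the phases $\mathscr{z}_n^\bullet,\mathscr{z}_n^\circ,\mathscr{p}_n^\bullet,\mathscr{p}_n^\circ$ from \eqref{eq:four-factors} together with uniform (in $y_n\in K_y$ and $n\ge N$) lower bounds on the lattice distances between their constant terms. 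A subsidiary technical wrinkle is the uniform production, for the fixed $\epsilon$ and all large $n$, of a Rouch\'e radius $\rho_n$ that remains inside the swiss--cheese set, which is handled by the uniform upper and lower bounds on the fundamental cell of the period lattice.
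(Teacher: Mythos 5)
Your overall strategy --- passing to the local variable $w=n(y-y_n)$, comparing the winding number of $u_n(ny_n+w;m)$ with that of the model $\dot{u}_n(y_n,w;m)$ around a small circle lying in the region \eqref{eq:cheese-carve-outs} where Theorem~\ref{theorem:eye} gives a uniform $\mathcal{O}(n^{-1})$ error, and reading off the net count of zeros minus poles --- is exactly the argument the paper uses. But you have correctly located, and then left unfilled, the one step that carries the real content: the assertion that the net index of the model over $|w|<\epsilon$ equals exactly $+1$ (resp.\ $-1$), i.e.\ that when $\mathcal{Z}_n^\bullet(y_n,0;m)=0$ the other three theta factors in \eqref{eq:udot-elliptic} have no zeros within a distance of $w=0$ that is bounded below uniformly in $n\ge N$ and $y_n\in K_y$. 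You call this ``the crucial input'' and then, in your closing paragraph, acknowledge it as ``the principal obstacle'' that ``does not follow from Abel's relation alone.'' That is a genuine gap: without it the winding number of the model about $|w|=\epsilon$ is not determined, and the Rouch\'e step yields nothing.

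The paper closes this gap with Lemma~\ref{lem:disjoint-lattices}: at most one of the four factors $\mathcal{Z}_n^\bullet$, $\mathcal{Z}_n^\circ$, $\mathcal{P}_n^\bullet$, $\mathcal{P}_n^\circ$ can vanish at any given $(y,w)$. The proof is not a computation with the phases but an Abel--Jacobi argument on the genus-one surface $\Gamma(y)$: two of the zero/pole lattices \eqref{eq:zeros-of-udot}--\eqref{eq:poles-of-udot} can intersect only if one of $2A(\infty;y)$, $2A(0;y)$, $A(\infty;y)-A(0;y)$, $A(\infty;y)+A(0;y)$ lies in $\Lambda(y)=2\pi\ii\mathbb{Z}+B(y)\mathbb{Z}$, which by the Abel--Jacobi theorem would produce a nonconstant meromorphic function on $\Gamma(y)$ with a single simple pole --- impossible on a surface of positive genus. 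Continuity of the resulting minimal lattice separation $\delta(y)$ (normalized by $|\nu(y)|$ to measure distances in the $w$-plane) and compactness of $K_y$ then give the uniform lower bound $\delta=\min_{y\in K_y}\delta(y)>0$, and one takes $\epsilon=\tfrac{1}{3}\delta$. If you supply this lemma, or an equivalent uniform separation statement, the rest of your argument goes through. Note also that with it in hand the model has exactly one simple zero and no other zeros or poles in $|w|<\epsilon$, so your provision for ``surplus zeros accompanied by an equal number of surplus poles'' \emph{of the model} is unnecessary; that caveat is needed only for $u_n$ itself, whose possible excess pole--zero pairs the index argument cannot exclude.
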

The proof of this result depends on Theorem~\ref{theorem:eye} and some additional technical properties of the zeros of the factors in the formula \eqref{eq:udot-elliptic} and will be given in Section~\ref{sec:properties-of-udot-elliptic}.  The proof is based on an index argument, which computes the net number of zeros over poles within a small disk.  For this reason, we cannot rule out the possible attraction of one or more pole-zero pairs of the rational function $u_n(x;m)$, in excess of a simple zero (or pole), toward a given zero (or singularity) of the approximating function.  However, we do not observe any such ``excess pairing'' in practice.  One approach to ruling out any excess pairing would be to compare against  precise counts of the zeros and poles of $u_n(x;m)$ as documented in \cite{ClarksonLL16}.  However, such a comparison would require accurate approximations in domains that completely cover the eye $E$ without overlaps.  In this paper we avoid analyzing $u_n(x;m)$ near the origin, the corners $y=\pm\tfrac{1}{2}\ii$, and the ``eyebrows'' (except in the special case $m\in\mathbb{Z}+\tfrac{1}{2}$; see below).  These are projects for the future.  Although for these reasons there remains some ambiguity about the distribution of poles and zeros of the rational function $u_n(x;m)$, our analysis gives very detailed information about the distribution of singularities and zeros of the approximation $\dot{u}_n(y,w;m)$.  In particular, we have the following.
\begin{theorem}
Let $m\in\mathbb{C}\setminus(\mathbb{Z}+\tfrac{1}{2})$.  There is a continuous function $\rho:E_\mathrm{R}\to\mathbb{R}_+$, $\rho\in L^1_\mathrm{loc}(E_\mathrm{R})$, such that for any compact set $K\subset E_\mathrm{R}$, 
\begin{equation}
\lim_{n\to+\infty}\frac{1}{n^2}\#\{y\in K,\;\dot{u}_n(y,0;m)=0\}=\lim_{n\to+\infty}\frac{1}{n^2}\#\{y\in K,\;\dot{u}_n(y,0;m)=\infty\}=\int_K\rho(y)\,\dd A(y),
\label{eq:density-integral}
\end{equation}
where $\dd A(y)$ denotes Lebesgue measure in the $y$-plane.  The density $\rho$ is independent of $m\in\mathbb{C}\setminus(\mathbb{Z}+\tfrac{1}{2})$ and satisfies $\rho(y)\to 0$ as $y\to\partial E_\mathrm{R}\setminus\{0\}$ and $\rho(r\ee^{\ii\theta})=h(\theta)r^{-1}+o(r^{-1})$ as $r\downarrow 0$ for some function $h:(-\pi/2,\pi/2)\to\mathbb{R}_+$.
\label{theorem:density}
\end{theorem}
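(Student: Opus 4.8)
The plan is to reduce the enumeration of zeros and poles of $\dot{u}_n(y,0;m)$ to a lattice-point equidistribution statement, and then to identify the resulting density with the Jacobian of a period map. By \eqref{eq:udot-elliptic} and the non-vanishing of $N$ on $E_\mathrm{R}$, the zeros of $y\mapsto\dot{u}_n(y,0;m)$ in $K$ are precisely the points where $\mathcal{Z}_n^\bullet(y,0;m)=0$ or $\mathcal{Z}_n^\circ(y,0;m)=0$, and the poles are precisely the points where $\mathcal{P}_n^\bullet(y,0;m)=0$ or $\mathcal{P}_n^\circ(y,0;m)=0$; the loci where two of these conditions hold simultaneously form a set of measure zero in the $y$-plane and do not affect the limits. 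Using the classical description of the zeros of the Riemann theta function $\Theta(\,\cdot\,,B)$ with $\mathrm{Re}(B)<0$ --- namely that they are simple and fill the shifted lattice $\ii\pi+\tfrac{1}{2}B+2\pi\ii\mathbb{Z}+B\mathbb{Z}$ --- the definitions \eqref{eq:four-factors} show that each of the four factors vanishes exactly when the corresponding phase $\mathscr{z}_n^\bullet$, $\mathscr{z}_n^\circ$, $\mathscr{p}_n^\bullet$, or $\mathscr{p}_n^\circ$, evaluated at $(y,0)$, lies in the $y$-dependent lattice $\Lambda(y):=2\pi\ii\mathbb{Z}+B(y)\mathbb{Z}$; this is precisely the $\epsilon=0$ case of the conditions \eqref{eq:cheese-carve-outs}.

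Next I would pass to real lattice coordinates. Since $\mathrm{Re}(B(y))<0$, the pair $\{2\pi\ii,B(y)\}$ is an $\mathbb{R}$-basis of $\mathbb{C}\cong\mathbb{R}^2$, so every $z\in\mathbb{C}$ has a unique representation $z=a\cdot2\pi\ii+b\cdot B(y)$ with $a,b\in\mathbb{R}$ depending smoothly on $(z,y)$. Each phase at $w=0$ is affine-linear in $n$ with smooth $y$-coefficients, say $\mathscr{z}_n^\bullet(y,0;m)=n\alpha(y)+\beta^\bullet(y;m)$; writing $\alpha(y)=p(y)\cdot2\pi\ii+q(y)B(y)$ and $\beta^\bullet(y;m)=r^\bullet(y;m)\cdot2\pi\ii+s^\bullet(y;m)B(y)$, the vanishing of $\mathcal{Z}_n^\bullet(y,0;m)$ becomes $\bigl(np(y)+r^\bullet(y;m),\,nq(y)+s^\bullet(y;m)\bigr)\in\mathbb{Z}^2$. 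Hence $\#\{y\in K:\mathcal{Z}_n^\bullet(y,0;m)=0\}$ equals the number of $y\in K$ mapped to the origin by the smooth map $G_n^\bullet:K\to\mathbb{R}^2/\mathbb{Z}^2$, $G_n^\bullet(y):=\bigl(np(y)+r^\bullet(y;m),\,nq(y)+s^\bullet(y;m)\bigr)\bmod\mathbb{Z}^2$, whose Jacobian satisfies $\det DG_n^\bullet(y)=n^2J(y)+\mathcal{O}(n)$ with $J(y):=\partial_\xi p\,\partial_\eta q-\partial_\eta p\,\partial_\xi q$ for $y=\xi+\ii\eta$. The structural point is that the coefficient of $n$ in all four phases is the same period $\alpha(y)$ coming from the Boutroux data of the elliptic curve attached to $(y,C(y))$, so $J$ is common to all four factors and, in particular, independent of $m$; the candidate density is then $\rho:=2|J|$, the factor $2$ recording the two $\mathcal{Z}$-factors, which is matched by the two $\mathcal{P}$-factors.

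The enumeration itself is a standard equidistribution argument. Partition $K$ into a grid of squares of side $\delta$; on each square $G_n^\bullet$ differs from an affine map of Jacobian $\approx n^2J$ by an error of size $\mathcal{O}(\delta^2 n)$, and the number of preimages of the fixed point $0\in\mathbb{R}^2/\mathbb{Z}^2$ under such an affine map over one square equals $n^2|J|\delta^2$ up to a boundary correction of order $n\delta$. Summing over the $\mathcal{O}(\delta^{-2})$ squares gives $n^{-2}\#\{y\in K:\mathcal{Z}_n^\bullet(y,0;m)=0\}=\int_K|J|\,\dd A+\mathcal{O}(\delta)+\mathcal{O}(n^{-1}\delta^{-1})$; letting $n\to+\infty$ and then $\delta\to0$ shows the limit exists and equals $\int_K|J|\,\dd A$. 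Adding the four contributions yields \eqref{eq:density-integral} with $\rho=2|J|$, continuity of $\rho$ --- hence $\rho\in L^1_\mathrm{loc}(E_\mathrm{R})$ --- following from smoothness of the periods $\alpha(y)$ and $B(y)$ on $E_\mathrm{R}$; the equality of the zero- and pole-densities is built in through the common leading coefficient $\alpha(y)$, consistently with Theorem~\ref{theorem:eye}, by which $\dot{\lNaught}(w)=-\ii\dot{u}_n(y,w;m)$ is an elliptic function of $w$ with equally many zeros and poles per period. For the behavior at the boundary, near $\partial E_\mathrm{R}\setminus\{0\}$ the quartic $P(\,\cdot\,;y,C(y))$ in \eqref{eq:dotV-ODE} acquires a double root --- the degenerate, trigonometric regime of Section~\ref{sec:Boutroux-degenerate} --- so one period of $\Lambda(y)$ diverges ($\mathrm{Re}(B(y))\to-\infty$) while $\alpha(y)$ stays finite, and tracking the lattice coordinates $p,q$ of $\alpha(y)$ through this degeneration forces $J(y)\to0$, i.e.\ $\rho(y)\to0$; near $y=0$ the small-$y$ Boutroux analysis of Section~\ref{sec:Boutroux-y-small} yields the leading behavior of $\alpha(y)$, and a scaling computation in $y=r\ee^{\ii\theta}$ produces $\rho(r\ee^{\ii\theta})=h(\theta)r^{-1}+o(r^{-1})$ with $h>0$ on $(-\pi/2,\pi/2)$, the positivity of $h$ following from non-degeneracy of the limiting Jacobian.

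The main obstacle I anticipate is establishing the non-degeneracy of the period map, i.e.\ that $J(y)\neq0$ on $E_\mathrm{R}$ --- ideally with a definite sign, so that $y\mapsto(p(y),q(y))$ is a local diffeomorphism. Without control of the set $\{J=0\}$, the unsigned lattice count need not equal $\int_K|J|\,\dd A$, since a region on which $y\mapsto(p,q)$ dropped rank could contribute spuriously to $\#(G_n^\bullet)^{-1}(0)$. This is an analytic property of the Boutroux modulus $C(y)$ and of $B(y)$ that must be extracted from the constructions in Sections~\ref{sec:Boutroux-y-small}--\ref{sec:Boutroux-degenerate}. A secondary difficulty is making the two degeneration analyses quantitative enough to pin down simultaneously the vanishing $\rho(y)\to0$ at the eyebrows and the precise blow-up rate $\rho(r\ee^{\ii\theta})\sim h(\theta)r^{-1}$ at the origin.
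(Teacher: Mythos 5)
Your proposal is correct and follows essentially the same route as the paper: by \eqref{eq:eta-again} the real lattice coordinates $(p(y),q(y))$ of the $n$-coefficient of the phases are $\pm(-K_2(y)/(2\pi),\,K_1(y)/(2\pi))$, so your density $2|J|$ is exactly the paper's $\rho(y)=\tfrac{1}{2\pi^2}|\nabla K_1(y)\times\nabla K_2(y)|$ from \eqref{eq:zero-pole-density}, and the parallelogram-tiling count, the degeneration at $\partial E_\mathrm{R}$, and the polar-coordinate scaling at the origin are all carried out as you describe. The non-degeneracy $J(y)\neq 0$ that you flag as the main obstacle is closed in the paper by observing that the cell area $A_\lozenge(y)$ is proportional (via a change to the $w$-variable) to the Boutroux Jacobian \eqref{eq:Jacobian}, which is strictly negative throughout $E_\mathrm{R}$ by \cite[Chapter II, Corollary 1]{Dubrovin81} because the four roots of $P(\lambda;y,C(y))$ remain distinct there.
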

We would expect that the same statement holds with $\dot{u}_n(y,0;m)$ replaced by $u_n(ny;m)$, but this would require ruling out the excess pairing phenomenon mentioned above.  The density function $\rho(y)$ is defined in \eqref{eq:zero-pole-density} below, and the proof of Theorem~\ref{theorem:density} is given in Section~\ref{sec:properties-of-udot-elliptic}.  Although the proof of Theorem~\ref{theorem:density} does not allow us to consider sets $K$ that depend on $n$ in any serious way, the assumtion that  \eqref{eq:density-integral} holds when $K$ is the disk of radius $n^{-2}$ centered at the origin leads to the prediction that this disk contains $\mathcal{O}(1)$ zeros/singularities of $\dot{u}_n(y,0;m)$ consistent with the empirical observation that the smallest zeros and poles of $u_n(x;m)$ scale like $n^{-1}$ in the $x$-plane \cite{BothnerMS18}.\bigskip 

While the asymptotic approximations of the rational Painlev\'e-III function $u_n(x;m)$ for $n^{-1}x\in E_\mathrm{L}\cup E_\mathrm{R}$ are much more complicated than the simple formula $\ii\lNaught(n^{-1}x)$ valid for $n^{-1}x\in\mathbb{C}\setminus E$, they are easily implemented numerically, once the necessary ingredients developed as part of the proof of Theorem~\ref{theorem:eye} are incorporated.  To quantitatively illustrate the accuracy of the approximations described in Theorems~\ref{theorem:outside} and \ref{theorem:eye}, we compare $u_n(x;m)$ with its approximations for $x$ restricted to a real interval that bisects $E$ in Figures~\ref{fig:RealPlots-m0}--\ref{fig:RealPlots-miOver5}.
\begin{figure}[h]
\begin{center}
\includegraphics[width=0.49\textwidth]{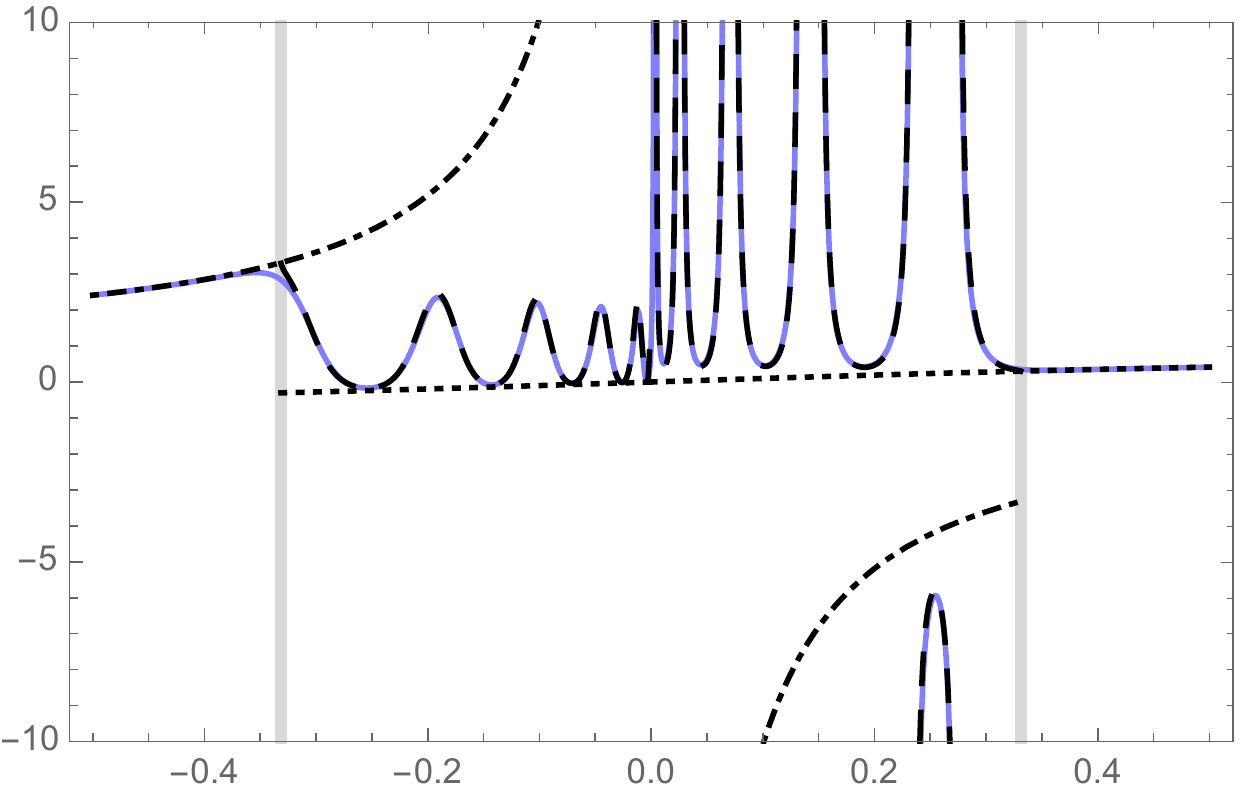}\hfill%
\includegraphics[width=0.49\textwidth]{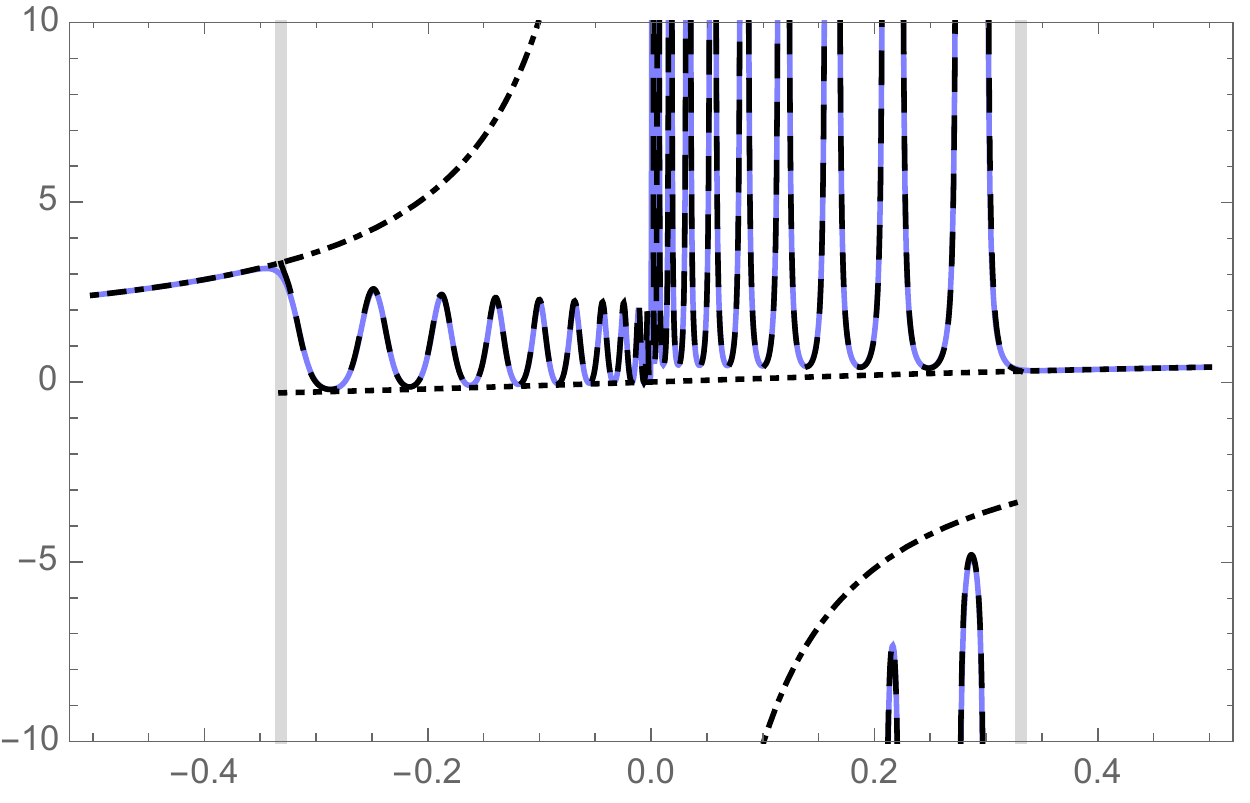}
\end{center}
\caption{Comparison of $u_n(ny;m)$ (blue curve) with its approximations over the interval $-0.5<y<0.5$ for $m=0$ with $n=10$ (left) and $n=20$ (right).  The points where this interval intersects $\partial E$ are shown with vertical gray lines.  The approximation $\dot{u}_n(y,0;m)$ of Theorem~\ref{theorem:eye} is plotted in between the gray lines with black broken curves.  The dotted curve is the analytic continuation into $E$ from the right of the outer approximation $\ii\lNaught(y)$ described in Theorem~\ref{theorem:outside}.  Likewise, the dash/dotted curve is the meromorphic continuation into $E$ from the left of the same outer approximation.}
\label{fig:RealPlots-m0}
\end{figure}
\begin{figure}[h]
\begin{center}
\includegraphics[width=0.49\textwidth]{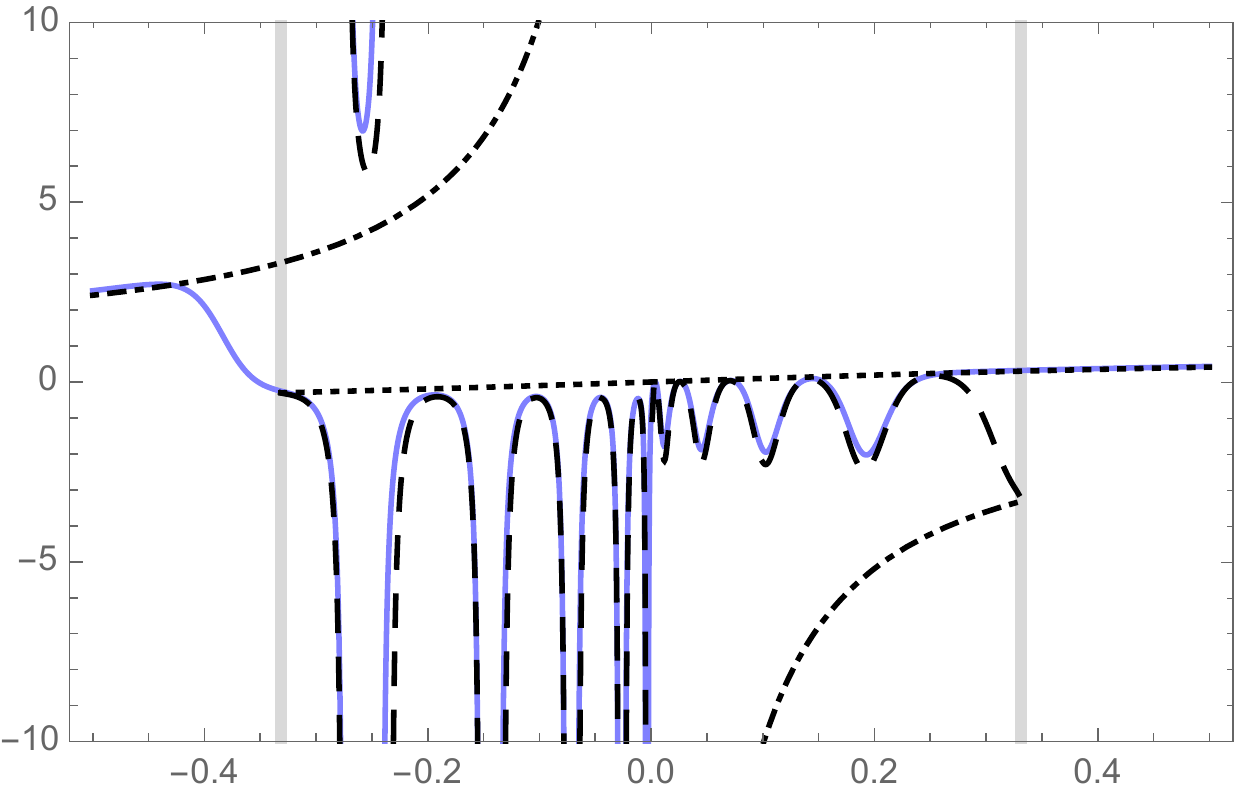}\hfill%
\includegraphics[width=0.49\textwidth]{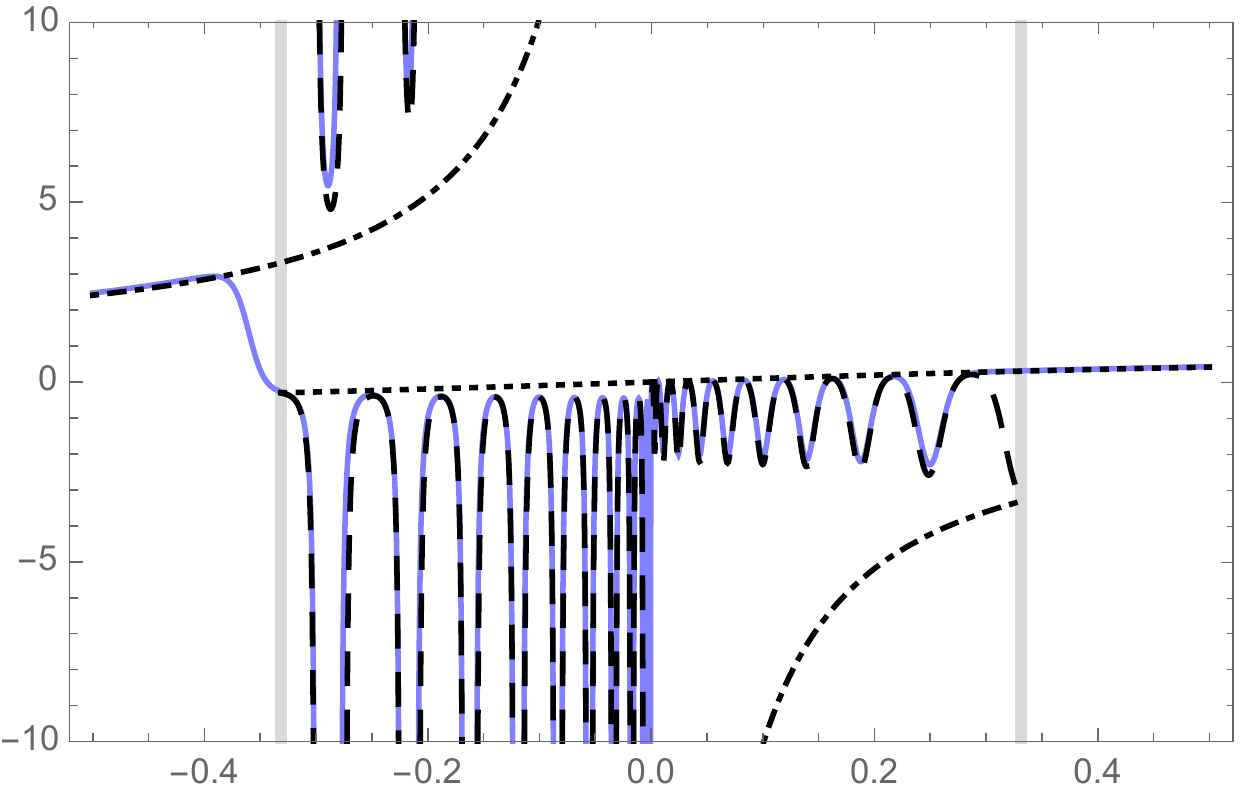}
\end{center}
\caption{As in Figure~\ref{fig:RealPlots-m0} but for $m=1$.}
\label{fig:RealPlots-m1}
\end{figure}
\begin{figure}[h]
\begin{center}
\includegraphics[width=0.49\textwidth]{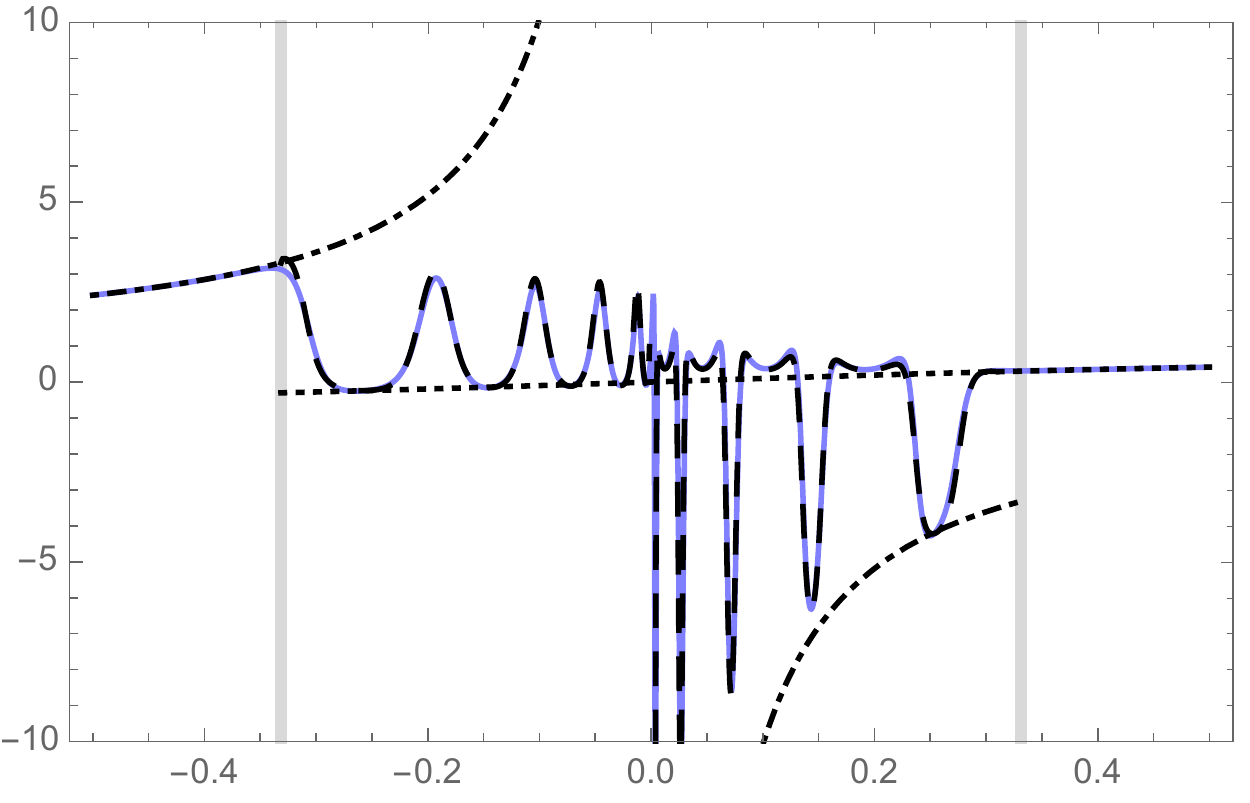}\hfill%
\includegraphics[width=0.49\textwidth]{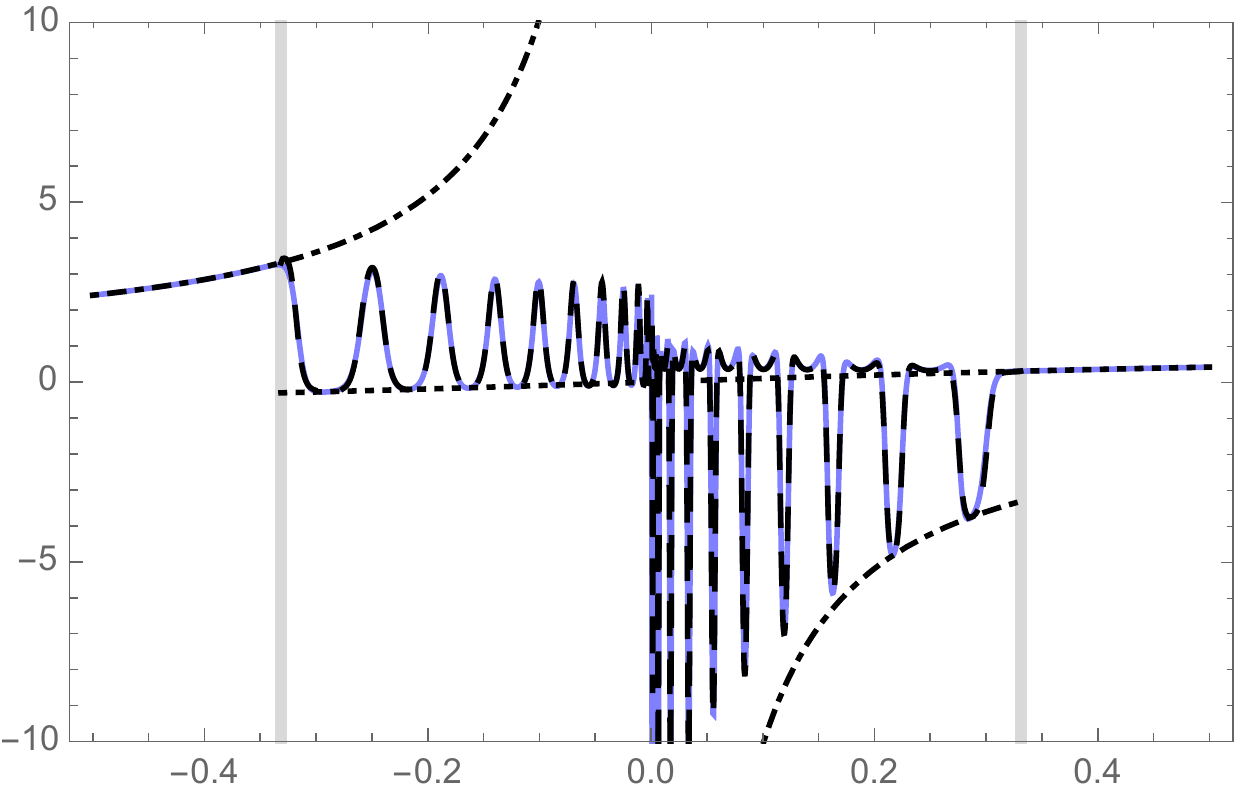}\\
\includegraphics[width=0.49\textwidth]{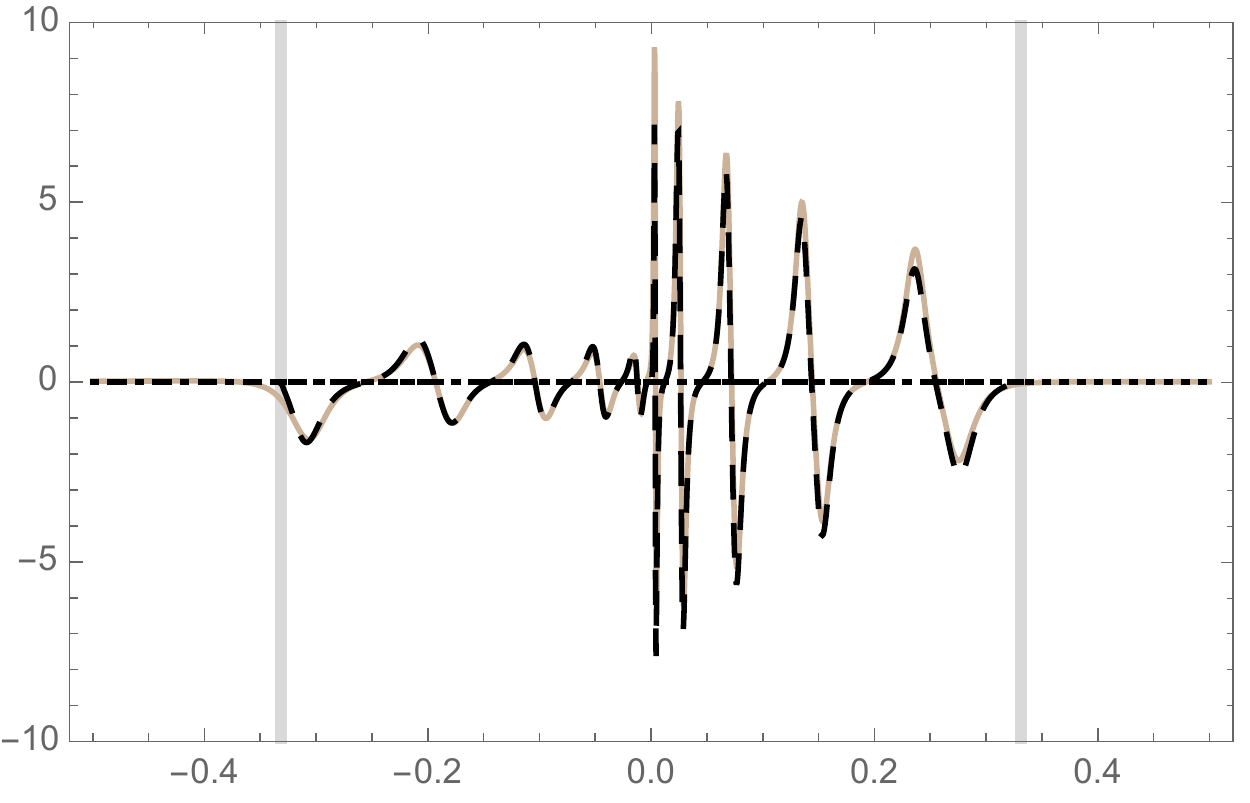}\hfill%
\includegraphics[width=0.49\textwidth]{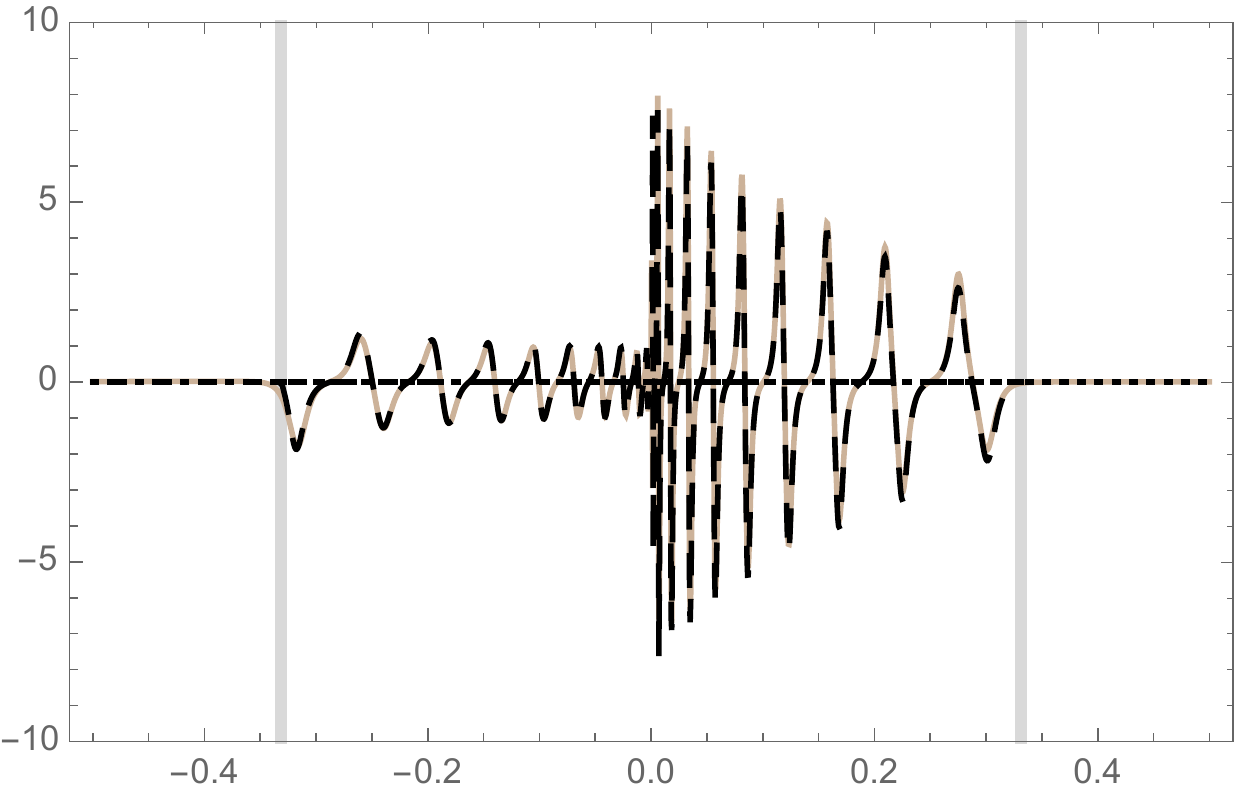}
\end{center}
\caption{As in Figures~\ref{fig:RealPlots-m0}--\ref{fig:RealPlots-m1} but for $m=\tfrac{1}{5}\ii$.  Here the top row compares the real parts and the bottom row compares the imaginary parts (the graph of $\mathrm{Im}(u_n(ny;m))$ is shown with a brown curve).}
\label{fig:RealPlots-miOver5}
\end{figure}
In these figures, we found it compelling to plot the approximate formula $\ii\lNaught(y)$ of Theorem~\ref{theorem:outside} continued into the eye $E$ from the left and right, even though we have no basis for comparing the graphs of these (reciprocal) continuations with that of $u_n(ny;m)$ when $y\in E$.  Indeed, in some situations these graphs appear to form quite accurate upper or lower envelopes of the wild modulated elliptic oscillations of $u_n(ny;m)$ that occur when $y\in E$ and that are captured with locally uniform accuracy by $\dot{u}_n(y,0;m)$.  We have no explanation for these somewhat imprecise observations, but we find them interesting and note that similar phenomena occur for the rational solutions of the Painlev\'e-II equation (also without explanation) as was noted in \cite{BuckinghamM14}.\bigskip

Now, we go into the complex $y$-plane where we can illustrate both the shape of the eye $E$ and the phenomenon of attraction of poles and zeros of $u_n(ny;m)$ to the left ($E_\mathrm{L}$) and right ($E_\mathrm{R}$) halves.  In these figures, the zeros and poles of the rational Painlev\'e function $u_n(ny;m)$ are plotted with the following convention (as in our earlier paper \cite{BothnerMS18}):
\begin{itemize}
\item Zeros of $u_n(x;m)$ that are also zeros of $s_n(x;m-1)$:  blue filled dots.
\item Zeros of $u_n(x;m)$ that are also zeros of $s_{n-1}(x;m)$:  blue unfilled dots.
\item Poles of $u_n(x;m)$ that are also zeros of $s_n(x;m)$:  red filled dots.
\item Poles of $u_n(x;m)$ that are also zeros of $s_{n-1}(x;m-1)$:  red unfilled dots.
\end{itemize}
In addition to displaying the overall attraction of the poles and zeros to the eye domain $E$, the plots in Figures~\ref{fig:m0-ZerosPlus}--\ref{fig:m1Over4-PolesMinus} are also intended to demonstrate the remarkable accuracy of the approximation of Theorem~\ref{theorem:eye} in capturing the locations of individual poles and zeros as described in Corollary~\ref{corollary:eye-zeros-and-poles:better}.  As described in Section~\ref{sec:properties-of-udot-elliptic} below, each of the four factors in the fraction on the right-hand side of \eqref{eq:udot-elliptic} has zeros that may be characterized as the intersection points of integral level curves of two different functions (see \eqref{eq:Zero-Quantum} and \eqref{eq:Pole-Quantum} below) defined on $E_\mathrm{R}$ (and via the symmetry \eqref{eq:u-n-exact-symmetry}, $E_\mathrm{L}$).  We plot the families of level curves for each of the four factors in separate figures in order to demonstrate another phenomenon that is evident but for which we have no good explanation:  the zeros of the separate factors in the approximation $\dot{u}_n(y,0;m)$ as defined by \eqref{eq:udot-elliptic} appear to correspond precisely to the actual zeros of the four polynomial factors in the formula \eqref{eq:un-exact-fraction} for the rational Painlev\'e-III function $u_n(ny;m)$.  This coincidence is what motivates the superscript notation ($\bullet$ versus $\circ$) on the four factors in \eqref{eq:udot-elliptic}; the zeros of the factors with superscript $\bullet$ (resp., $\circ$) apparently correspond in the limit $n\to+\infty$ to filled (resp., unfilled) dots.%
\begin{figure}[h]
\begin{center}
\includegraphics[width=0.3\textwidth]{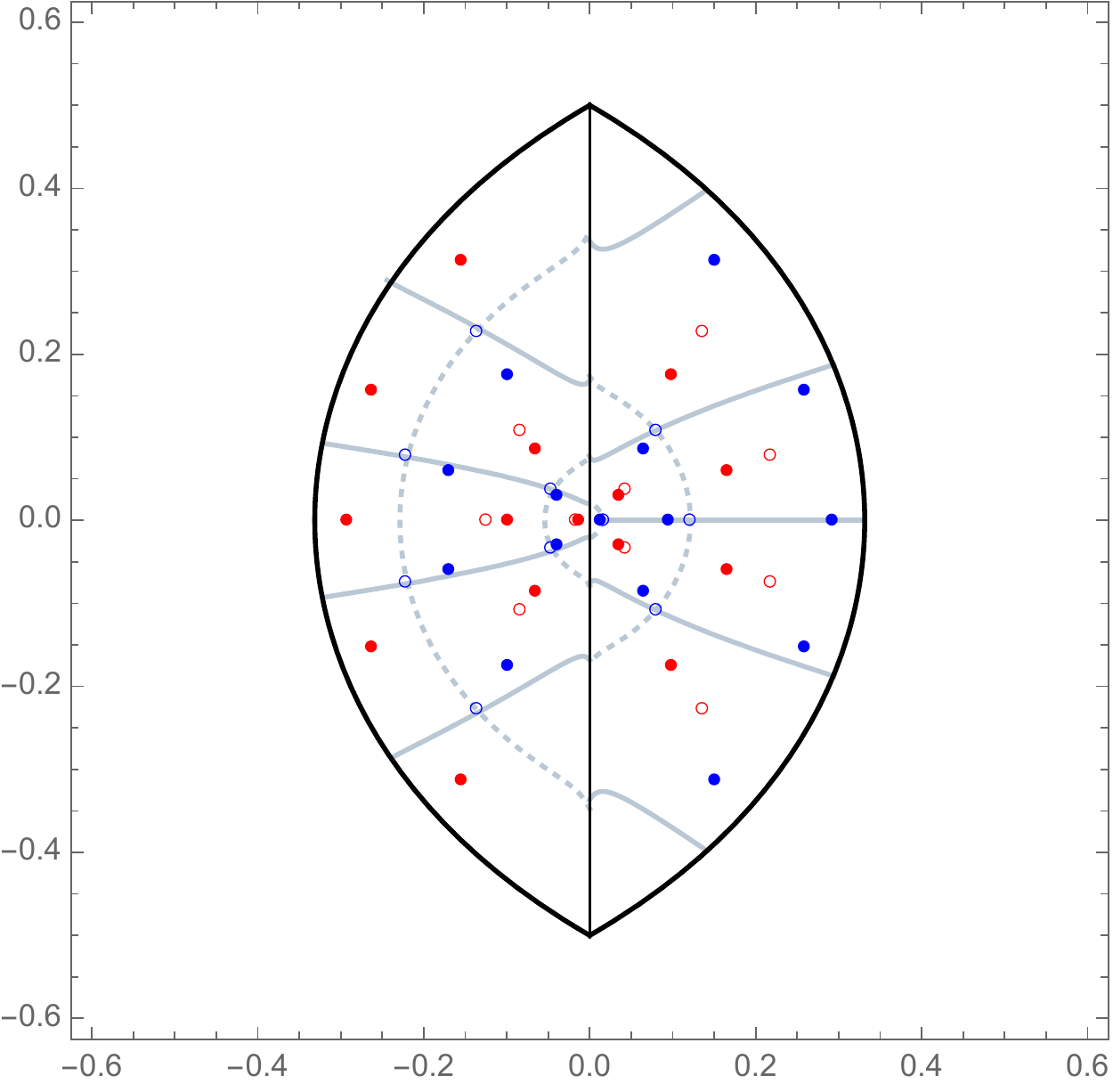}\hfill%
\includegraphics[width=0.3\textwidth]{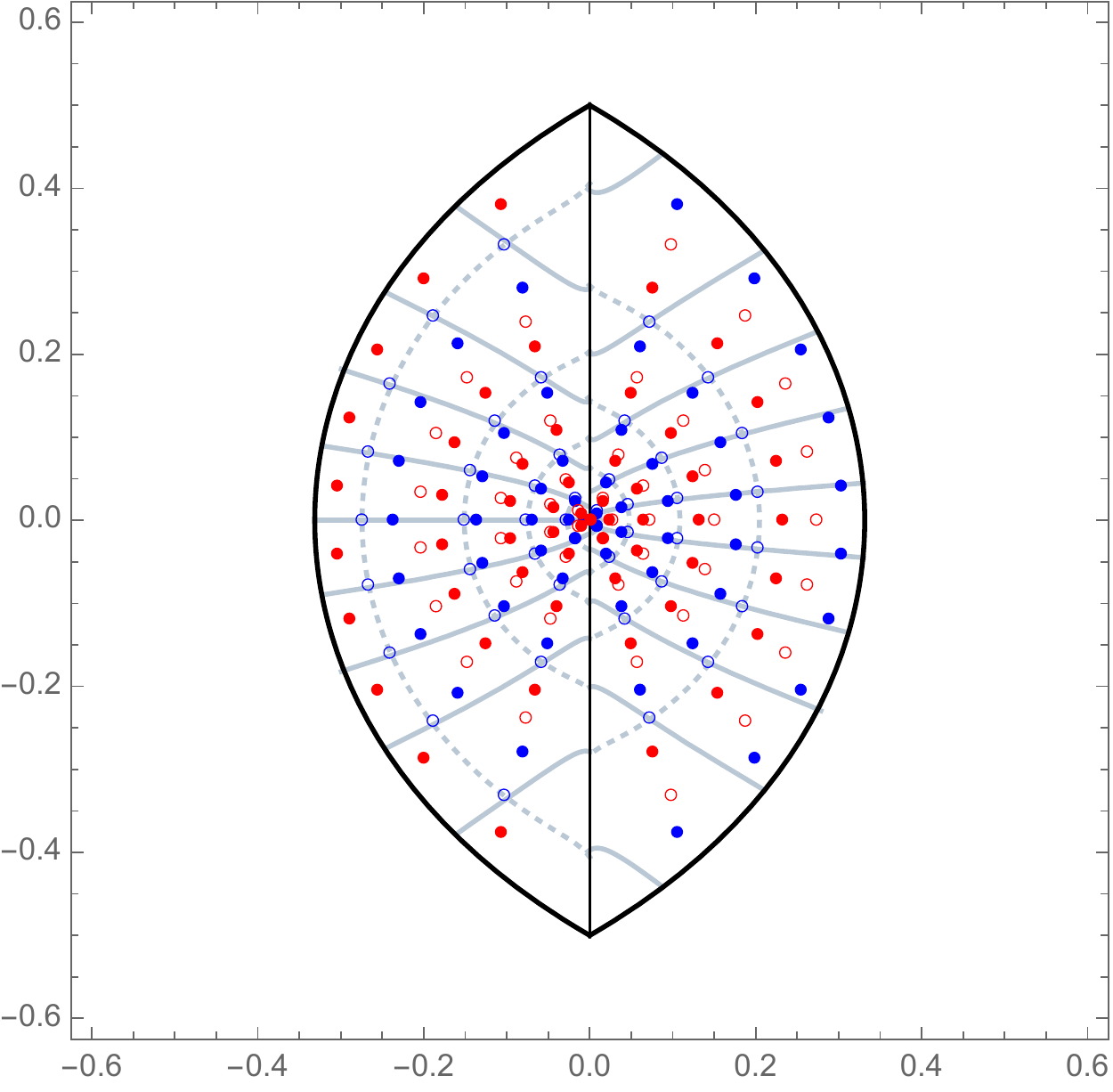}\hfill%
\includegraphics[width=0.3\textwidth]{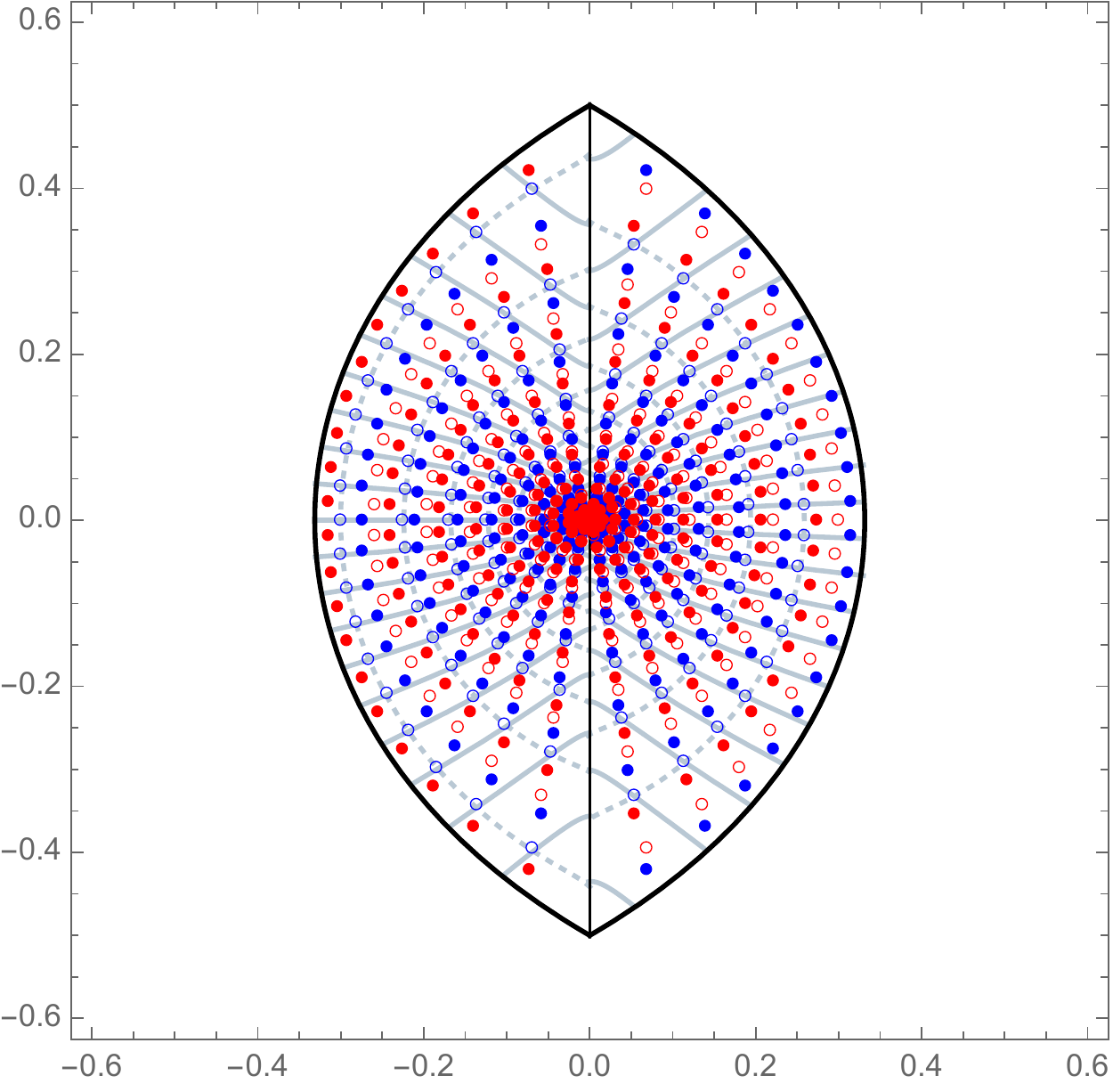}%
\end{center}
\caption{The black curves including the vertical segment form the boundary of the left ($E_\mathrm{L}$) and right ($E_\mathrm{R}$) halves of the eye $E$.  The light blue curves are $\alpha_n^{0,+}(y,0,m)\in\mathbb{Z}$ (solid) and $\beta_n^{0,+}(y,0,m)\in\mathbb{Z}$ (dotted) plotted in the $y$-plane; see \eqref{eq:Zero-Quantum} for definitions of these functions.  These plots are for $m=0$ and $n=5$ (left), $n=10$ (center), and $n=20$ (right).  The blue/red dots are the actual zeros/poles of $u_n(ny;m)$ (filled for zeros of $s_n$ and unfilled for zeros of $s_{n-1}$).  Note how the unfilled blue dots are attracted toward the intersections of the curves, which are the zeros of $\dot{u}_n(y,0;m)$ arising from roots of $\mathcal{Z}_n^\circ(y,0;m)$.}
\label{fig:m0-ZerosPlus}
\end{figure}
\begin{figure}[h]
\begin{center}
\includegraphics[width=0.3\textwidth]{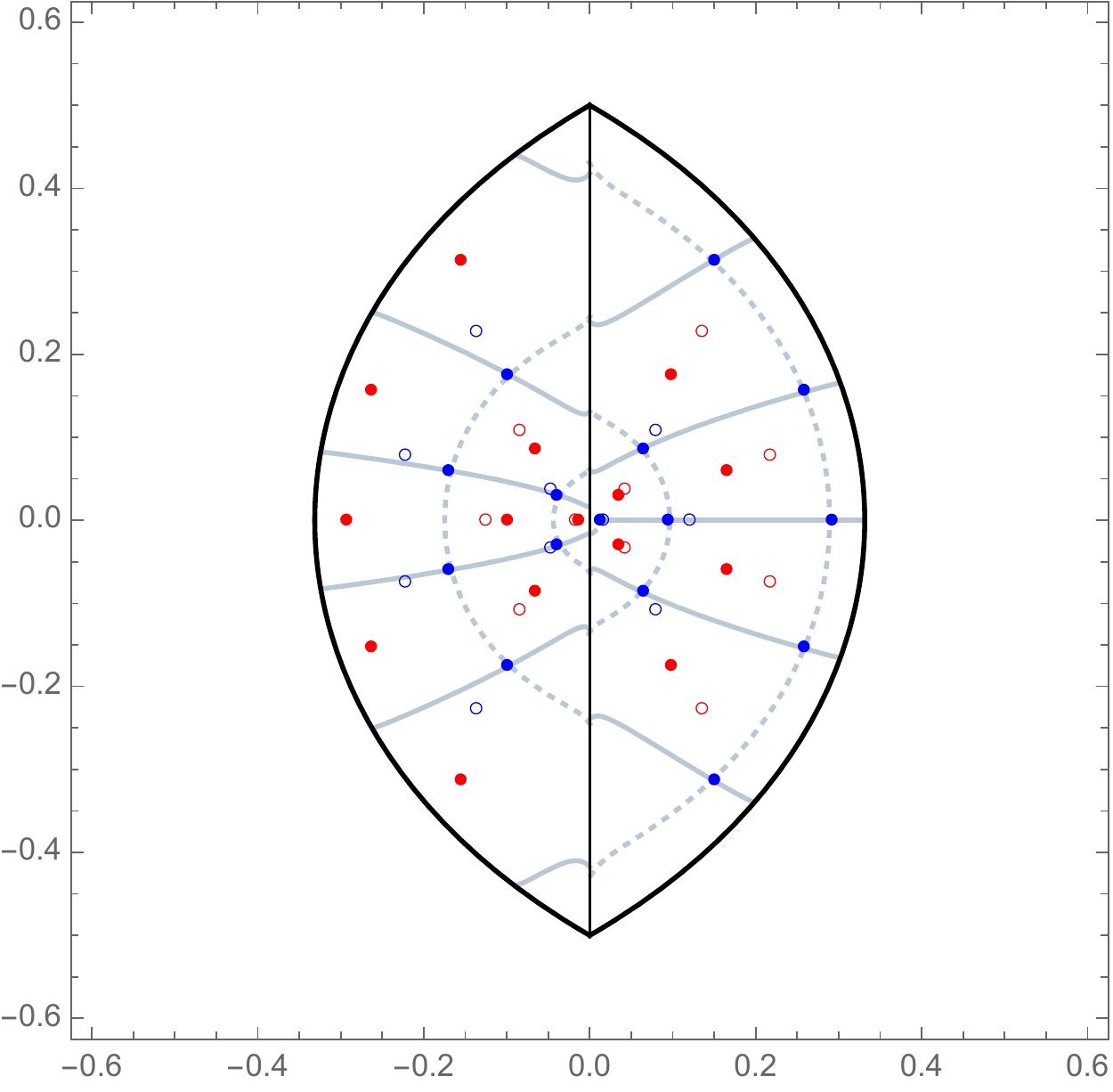}\hfill%
\includegraphics[width=0.3\textwidth]{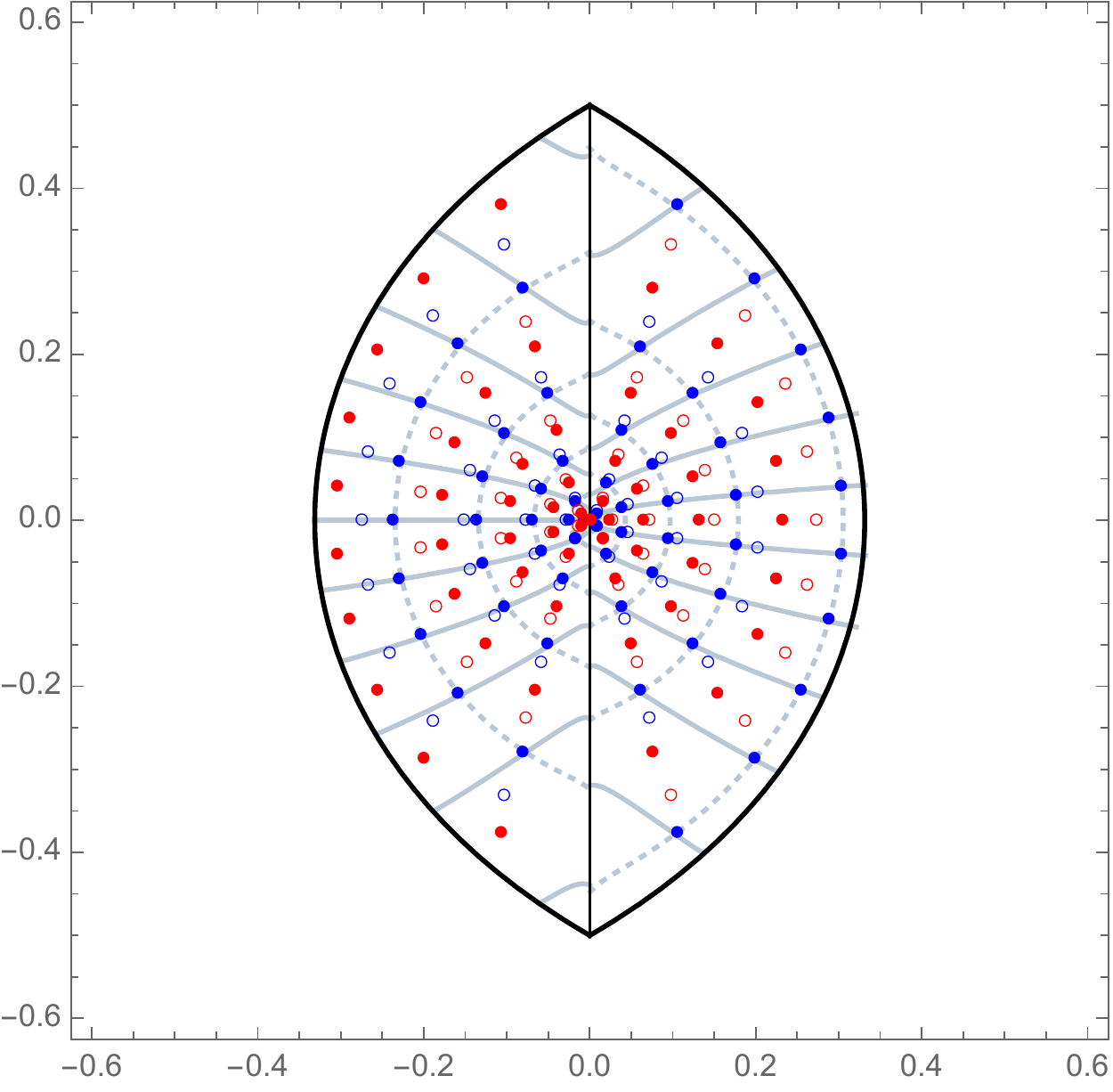}\hfill%
\includegraphics[width=0.3\textwidth]{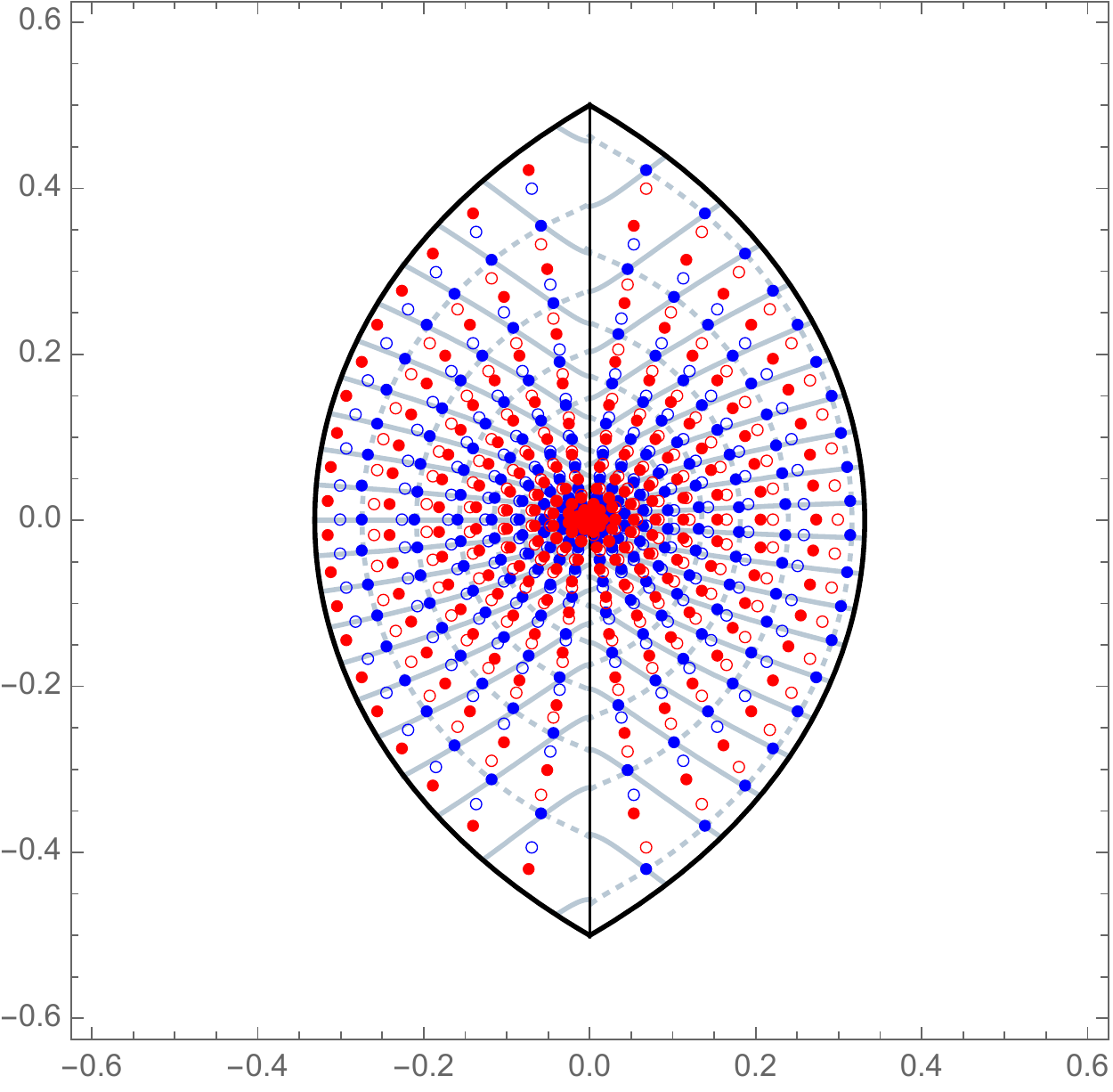}%
\end{center}
\caption{As in Figure~\ref{fig:m0-ZerosPlus} but here the light blue curves are $\alpha_n^{0,-}(y,0,m)\in\mathbb{Z}$ (solid) and $\beta_n^{0,-}(y,0,m)\in\mathbb{Z}$ (dotted); see \eqref{eq:Zero-Quantum} for definitions of these functions.  Note how the filled blue dots are attracted toward the intersections of the curves, which are now the zeros of $\dot{u}_n(y,0;m)$ arising from roots of $\mathcal{Z}_n^\bullet(y,0;m)$.}
\label{fig:m0-ZerosMinus}
\end{figure}
\begin{figure}[h]
\begin{center}
\includegraphics[width=0.3\textwidth]{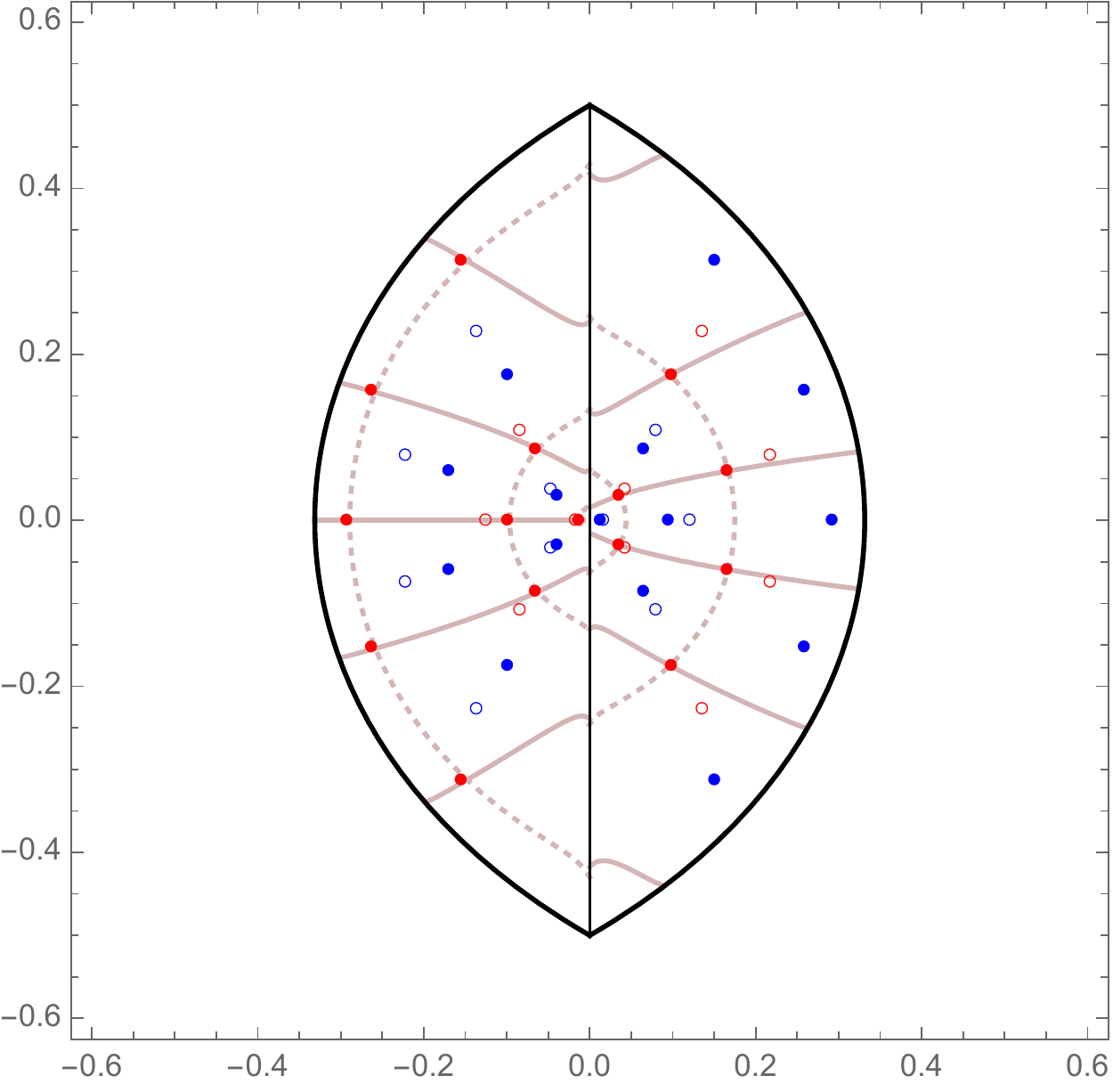}\hfill%
\includegraphics[width=0.3\textwidth]{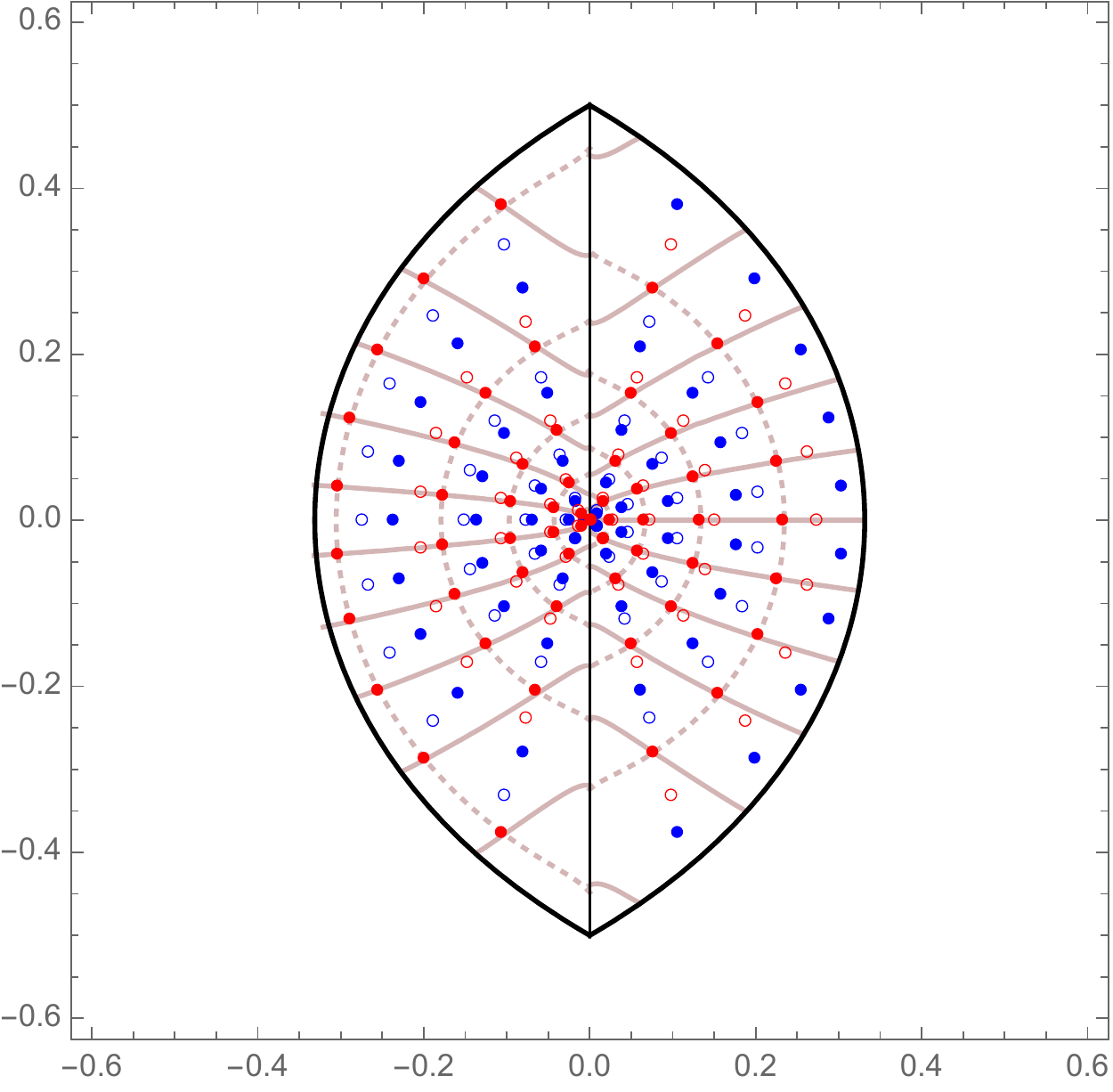}\hfill%
\includegraphics[width=0.3\textwidth]{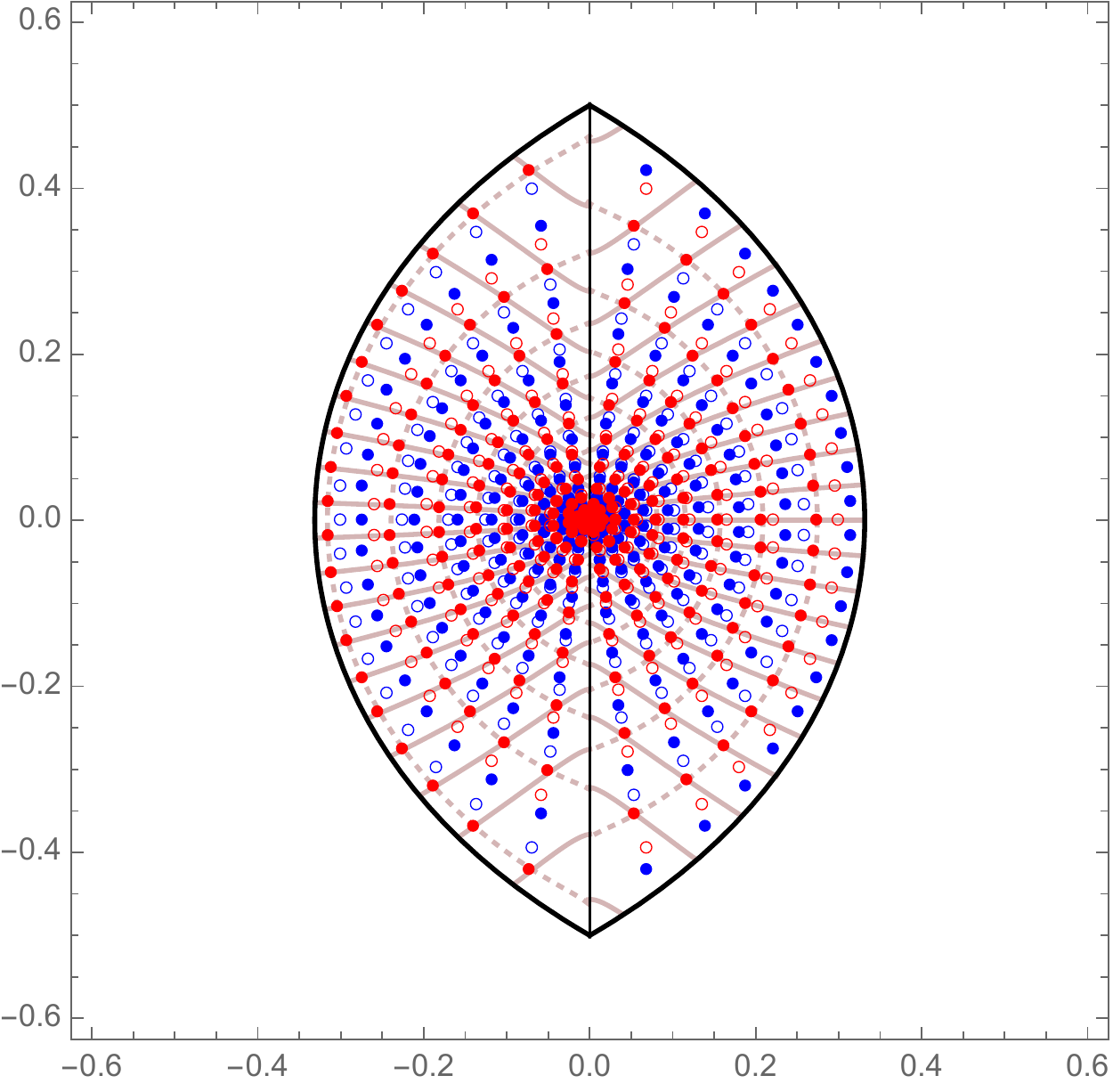}%
\end{center}
\caption{As in Figure~\ref{fig:m0-ZerosPlus} but here the light red curves are $\alpha_n^{\infty,+}(y,0,m)\in\mathbb{Z}$ (solid) and $\beta_n^{\infty,+}(y,0,m)\in\mathbb{Z}$ (dotted); see \eqref{eq:Pole-Quantum} for definitions of these functions.  Note how the filled red dots are attracted toward the intersections of the curves, which are the singularities of $\dot{u}_n(y,0;m)$ arising from roots of $\mathcal{P}_n^\bullet(y,0;m)$.}
\label{fig:m0-PolesPlus}
\end{figure}
\begin{figure}[h]
\begin{center}
\includegraphics[width=0.3\textwidth]{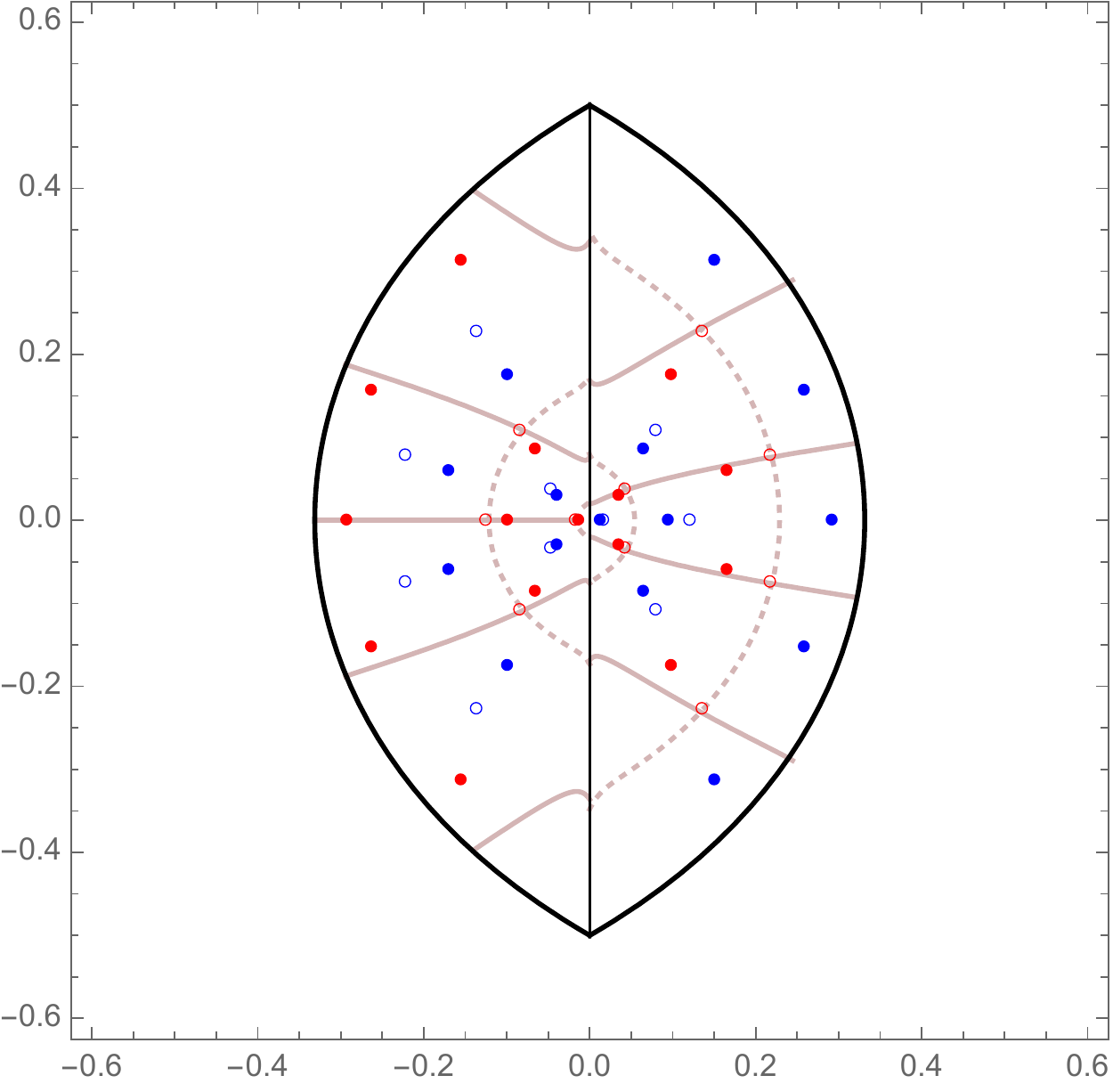}\hfill%
\includegraphics[width=0.3\textwidth]{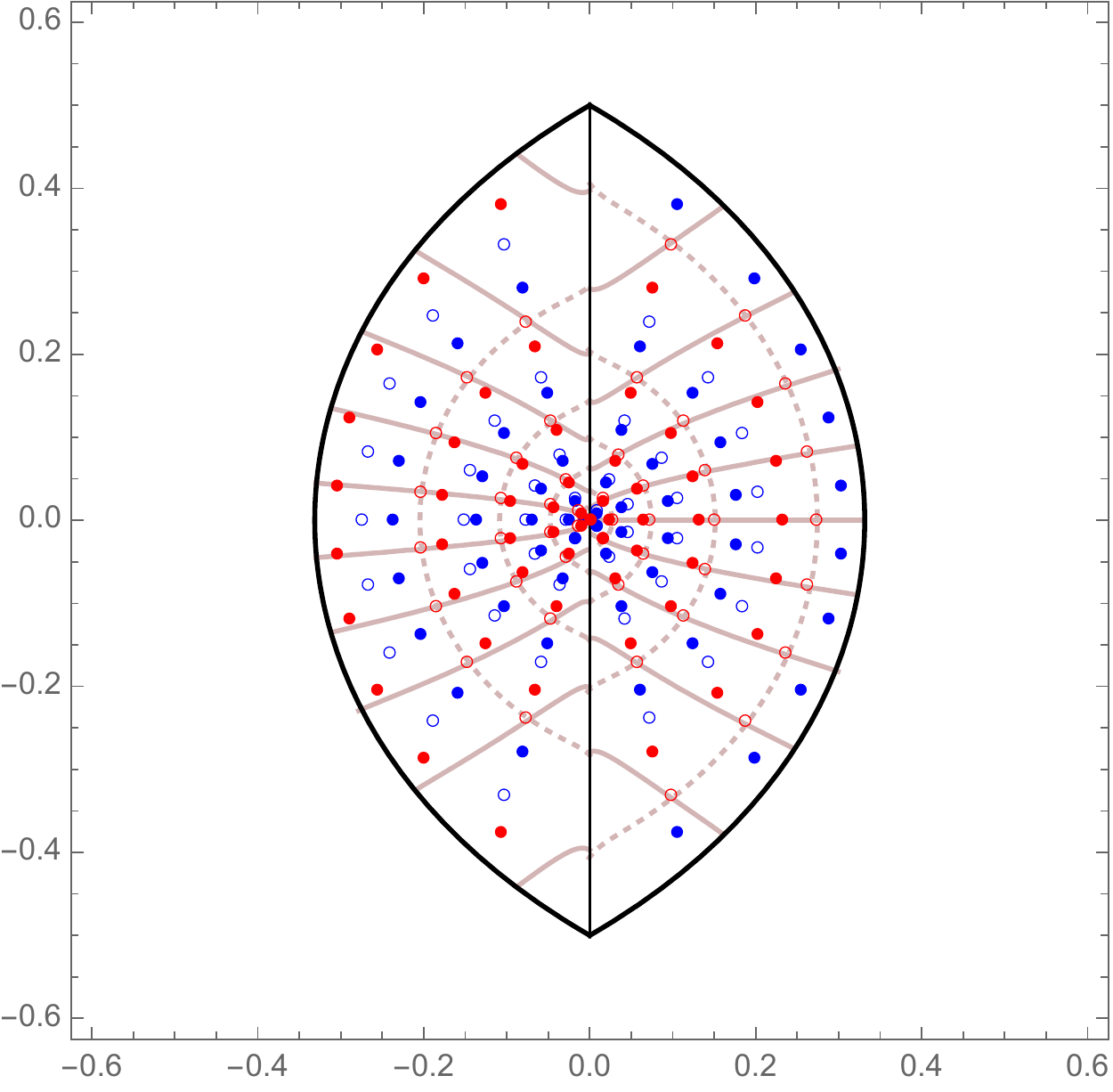}\hfill%
\includegraphics[width=0.3\textwidth]{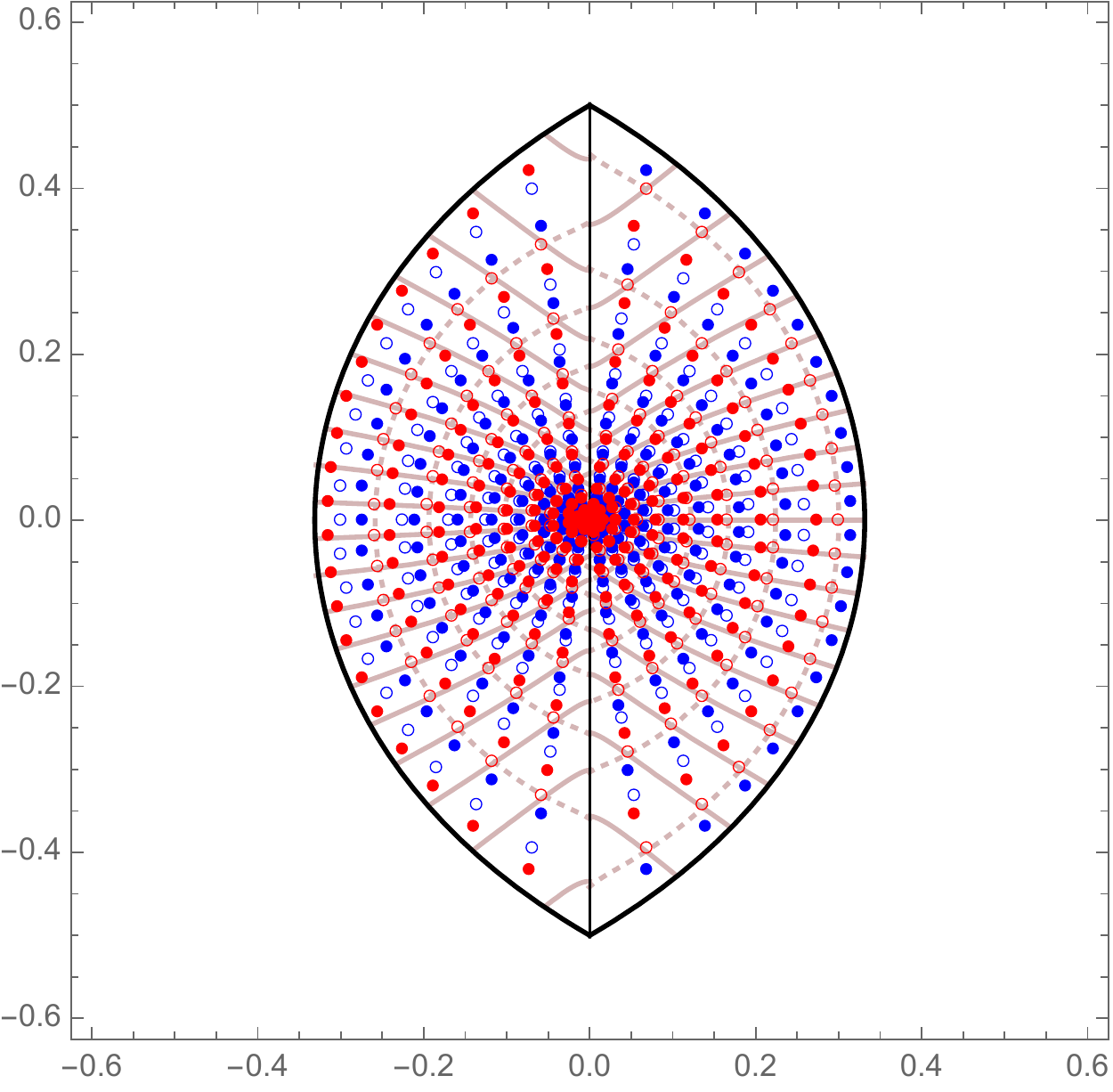}%
\end{center}
\caption{As in Figure~\ref{fig:m0-ZerosPlus} but here the light red curves are $\alpha_n^{\infty,-}(y,0,m)\in\mathbb{Z}$ (solid) and $\beta_n^{\infty,-}(y,0,m)\in\mathbb{Z}$ (dotted); see \eqref{eq:Pole-Quantum} for definitions of these functions.  Note how the unfilled red dots are attracted toward the intersections of the curves, which are now the singularities of $\dot{u}_n(y,0;m)$ arising from roots of $\mathcal{P}_n^\circ(y,0;m)$.}
\label{fig:m0-PolesMinus}
\end{figure}
\begin{figure}[h]
\begin{center}
\includegraphics[width=0.3\textwidth]{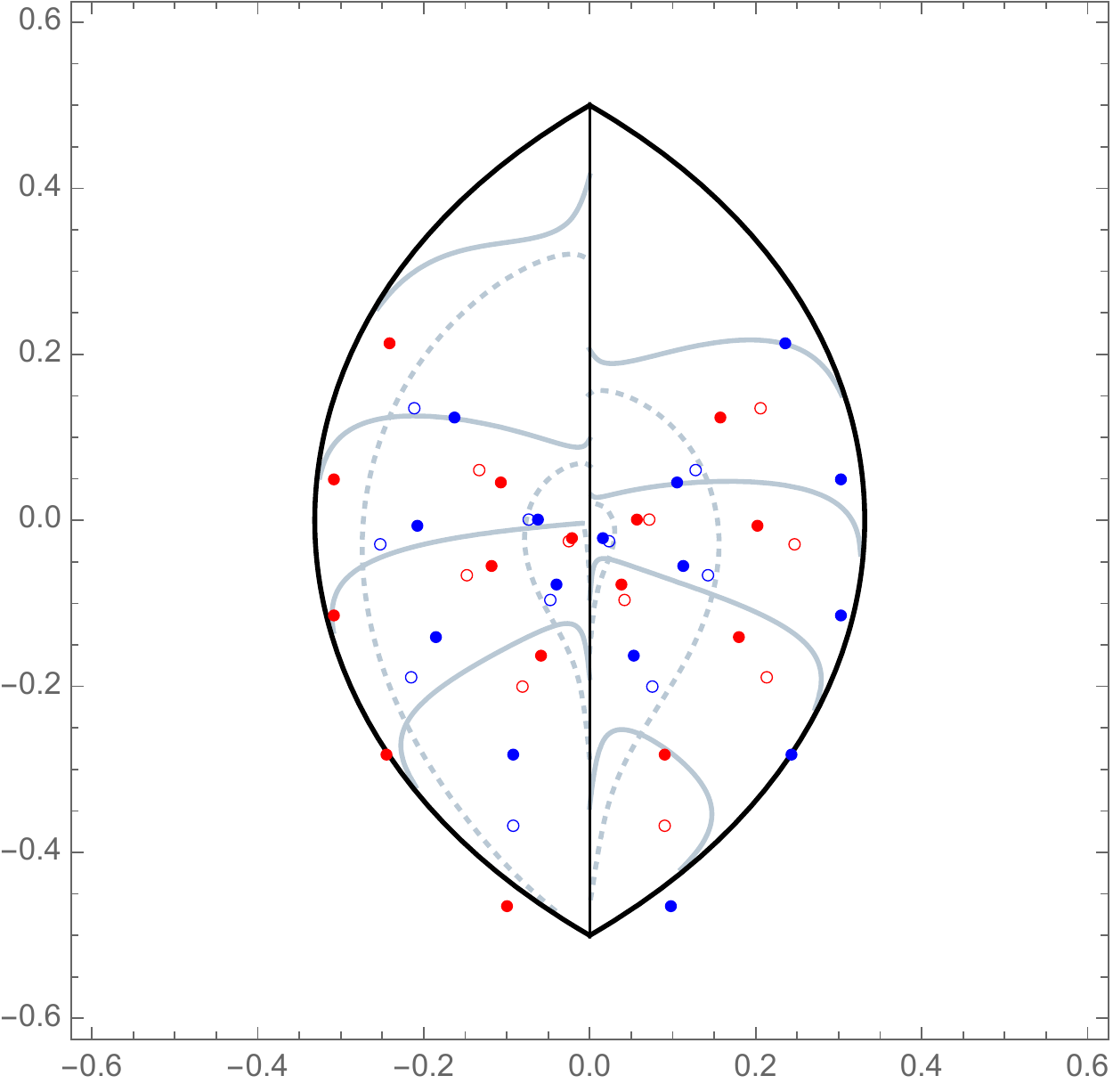}\hfill%
\includegraphics[width=0.3\textwidth]{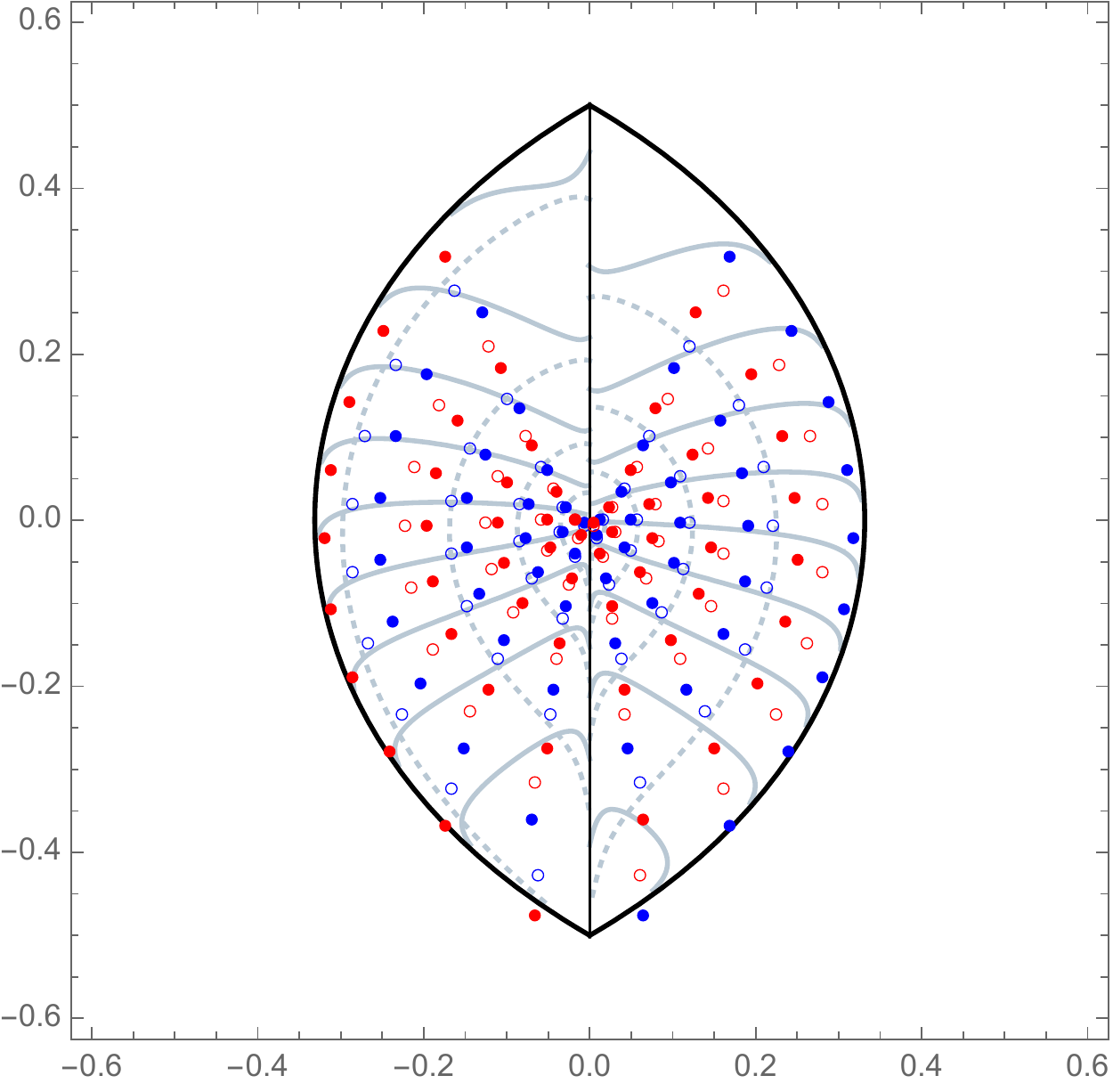}\hfill%
\includegraphics[width=0.3\textwidth]{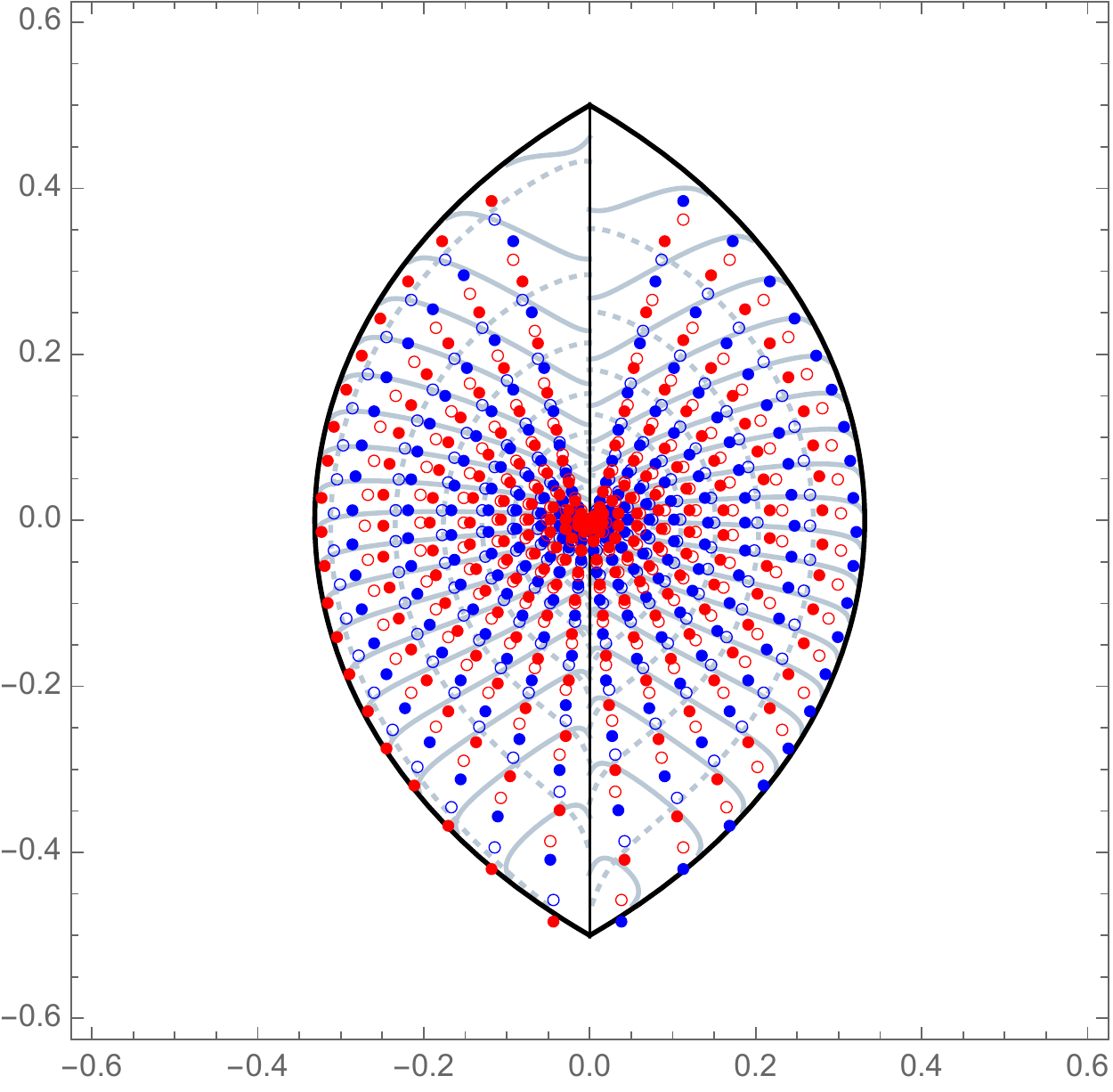}%
\end{center}
\caption{As in Figure~\ref{fig:m0-ZerosPlus} but for $m=\tfrac{4}{5}\ii$.}
\label{fig:m4iOver5-ZerosPlus}
\end{figure}
\begin{figure}[h]
\begin{center}
\includegraphics[width=0.3\textwidth]{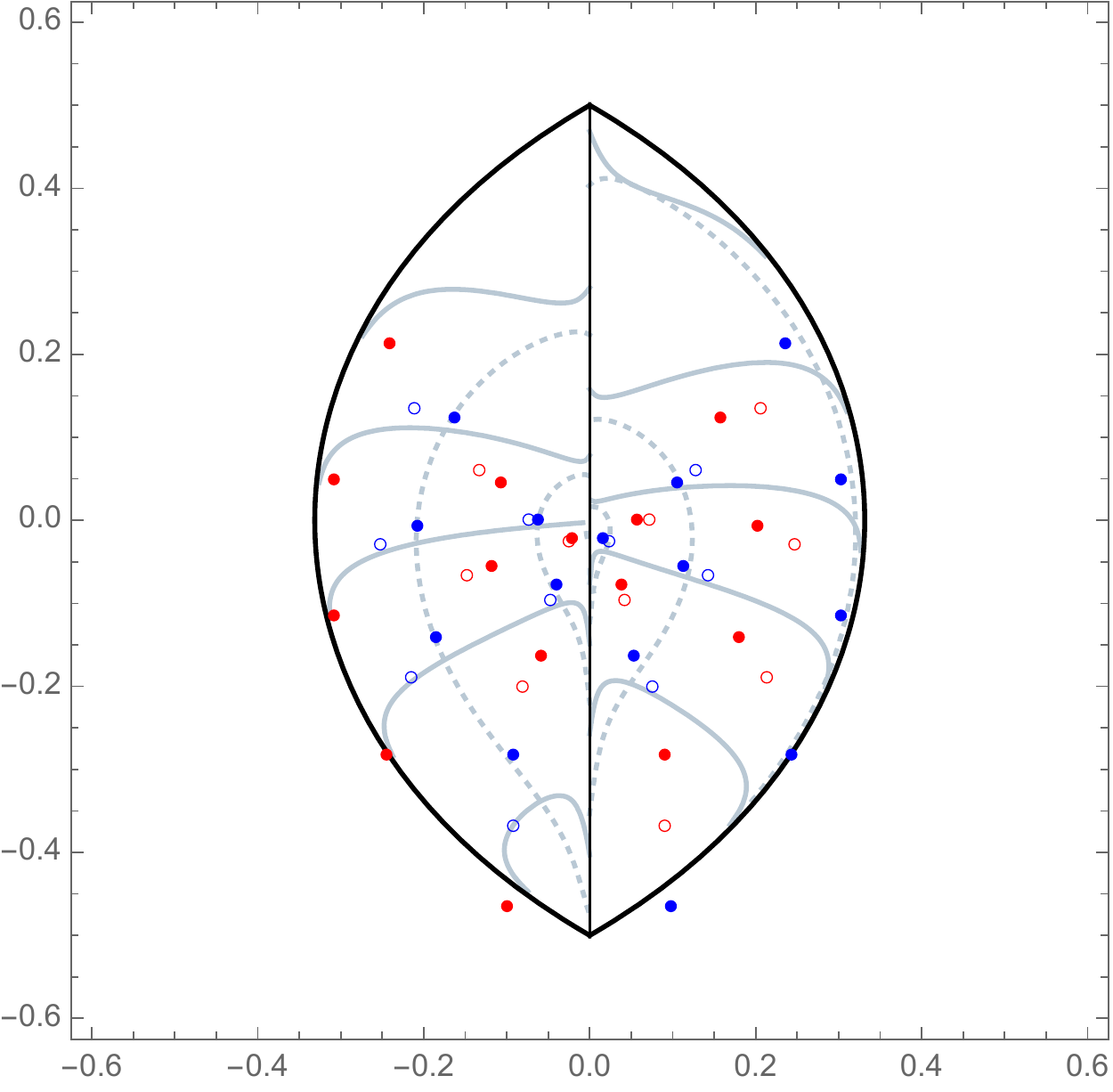}\hfill%
\includegraphics[width=0.3\textwidth]{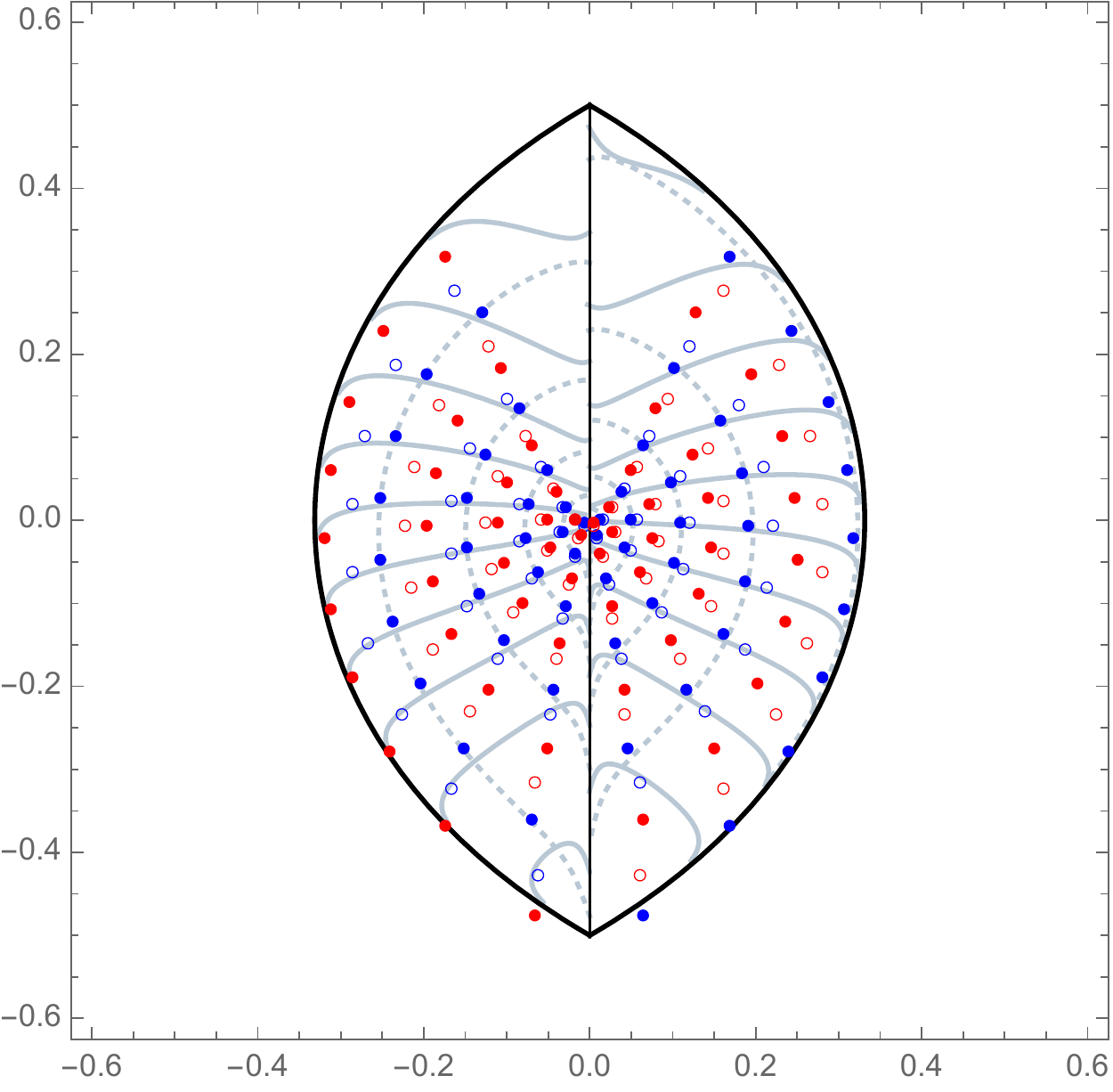}\hfill%
\includegraphics[width=0.3\textwidth]{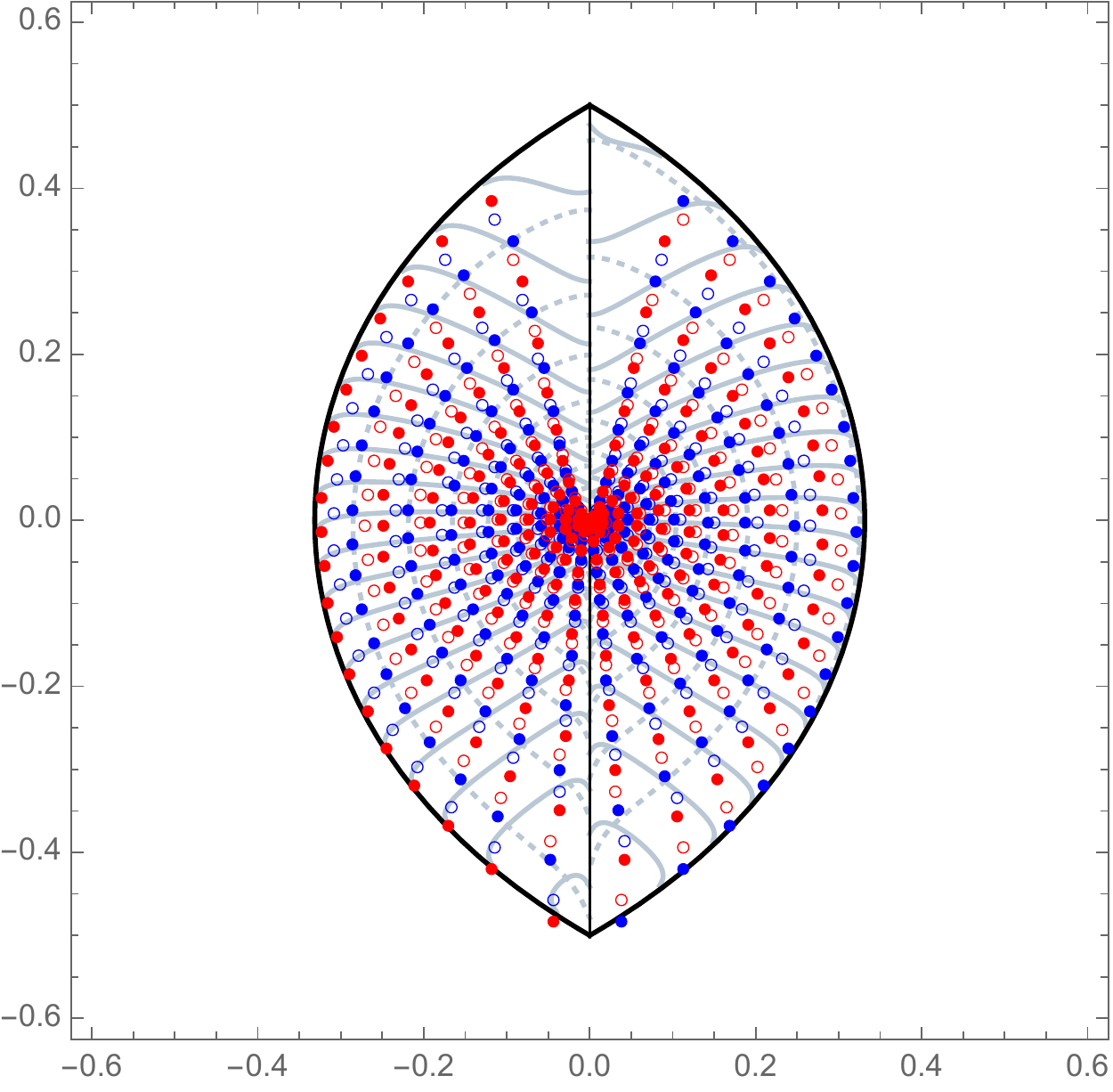}%
\end{center}
\caption{As in Figure~\ref{fig:m0-ZerosMinus} but for $m=\tfrac{4}{5}\ii$.}
\label{fig:m4iOver5-ZerosMinus}
\end{figure}
\begin{figure}[h]
\begin{center}
\includegraphics[width=0.3\textwidth]{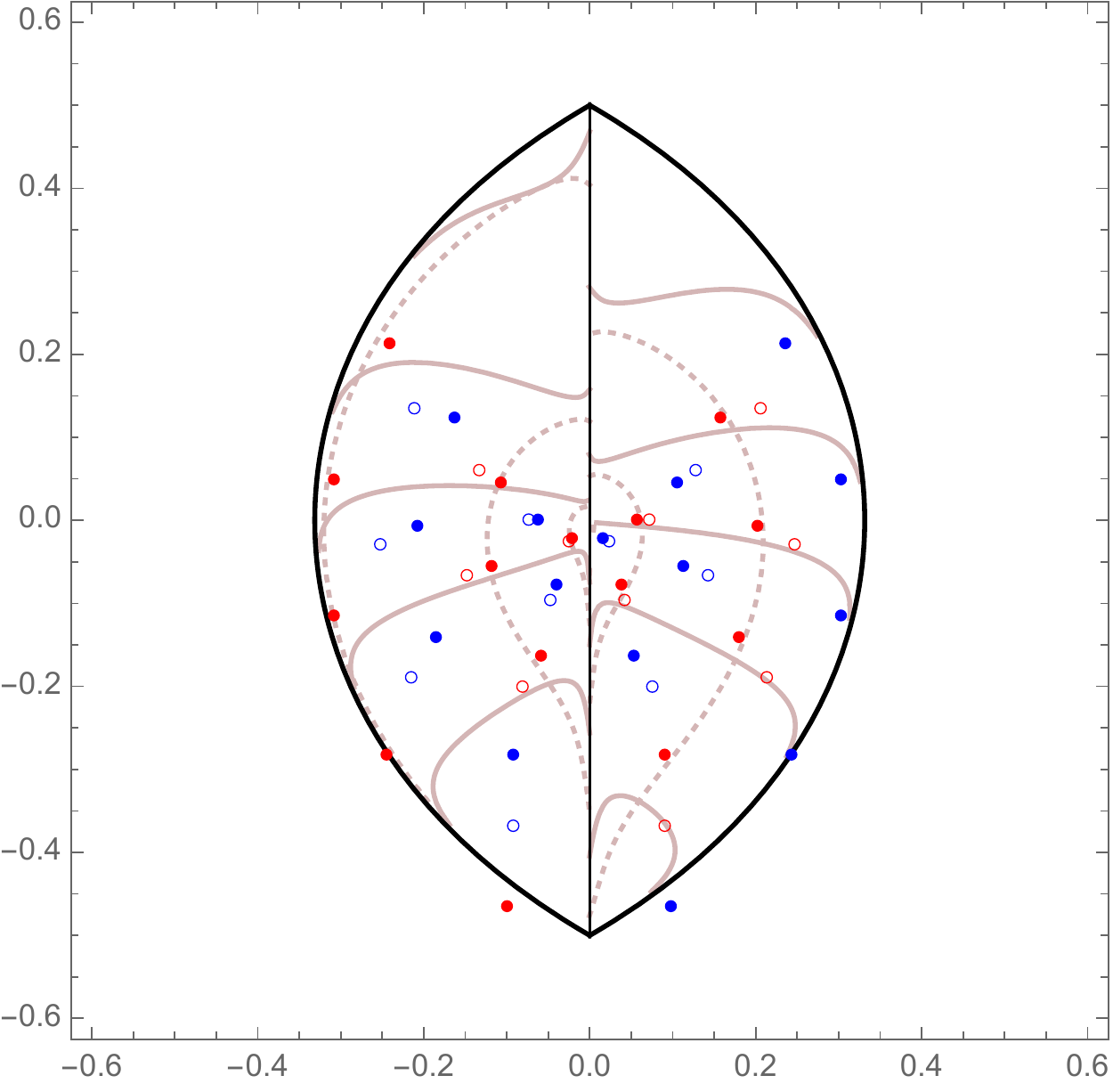}\hfill%
\includegraphics[width=0.3\textwidth]{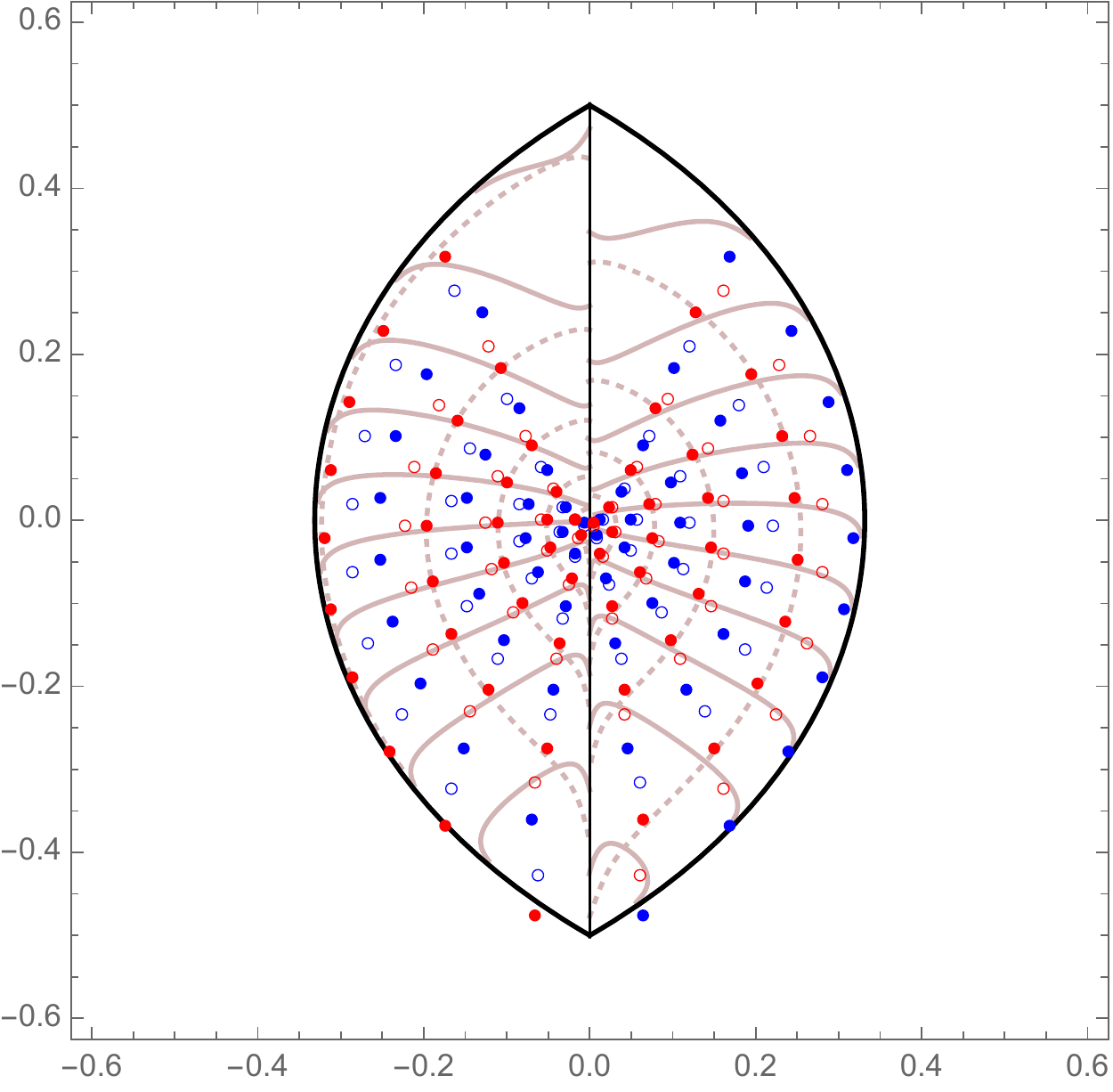}\hfill%
\includegraphics[width=0.3\textwidth]{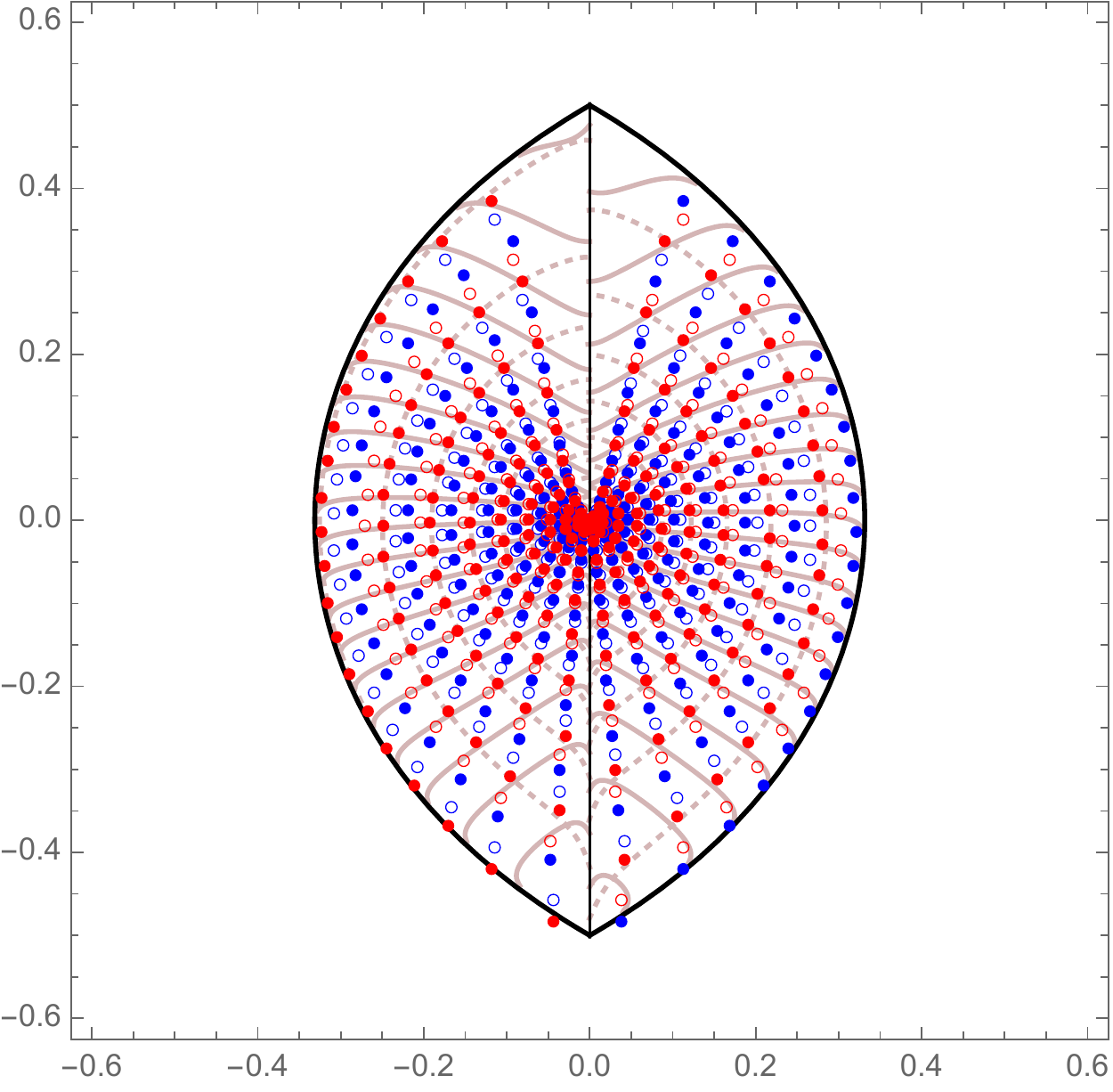}%
\end{center}
\caption{As in Figure~\ref{fig:m0-PolesPlus} but for $m=\tfrac{4}{5}\ii$.}
\label{fig:m4iOver5-PolesPlus}
\end{figure}
\begin{figure}[h]
\begin{center}
\includegraphics[width=0.3\textwidth]{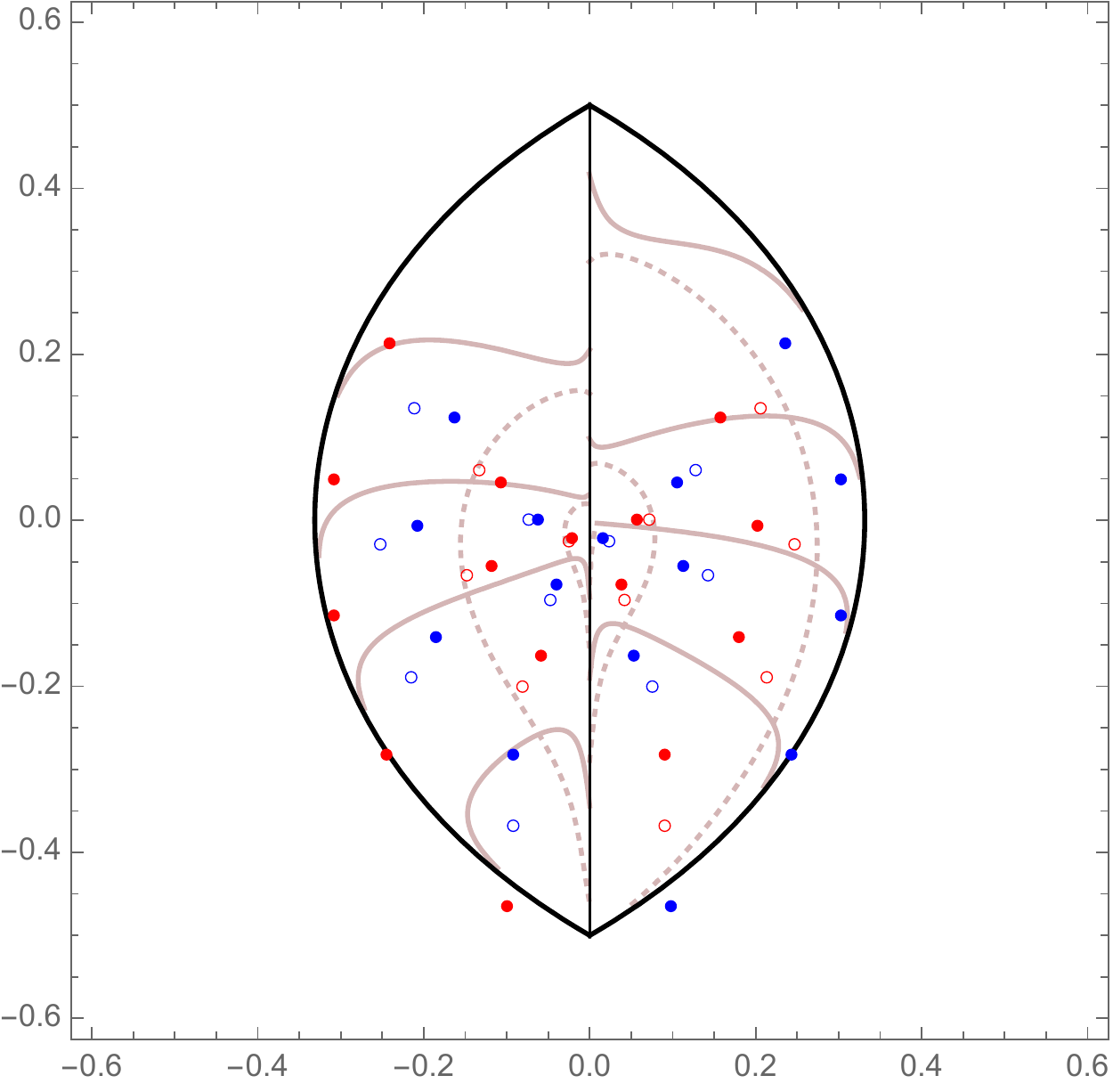}\hfill%
\includegraphics[width=0.3\textwidth]{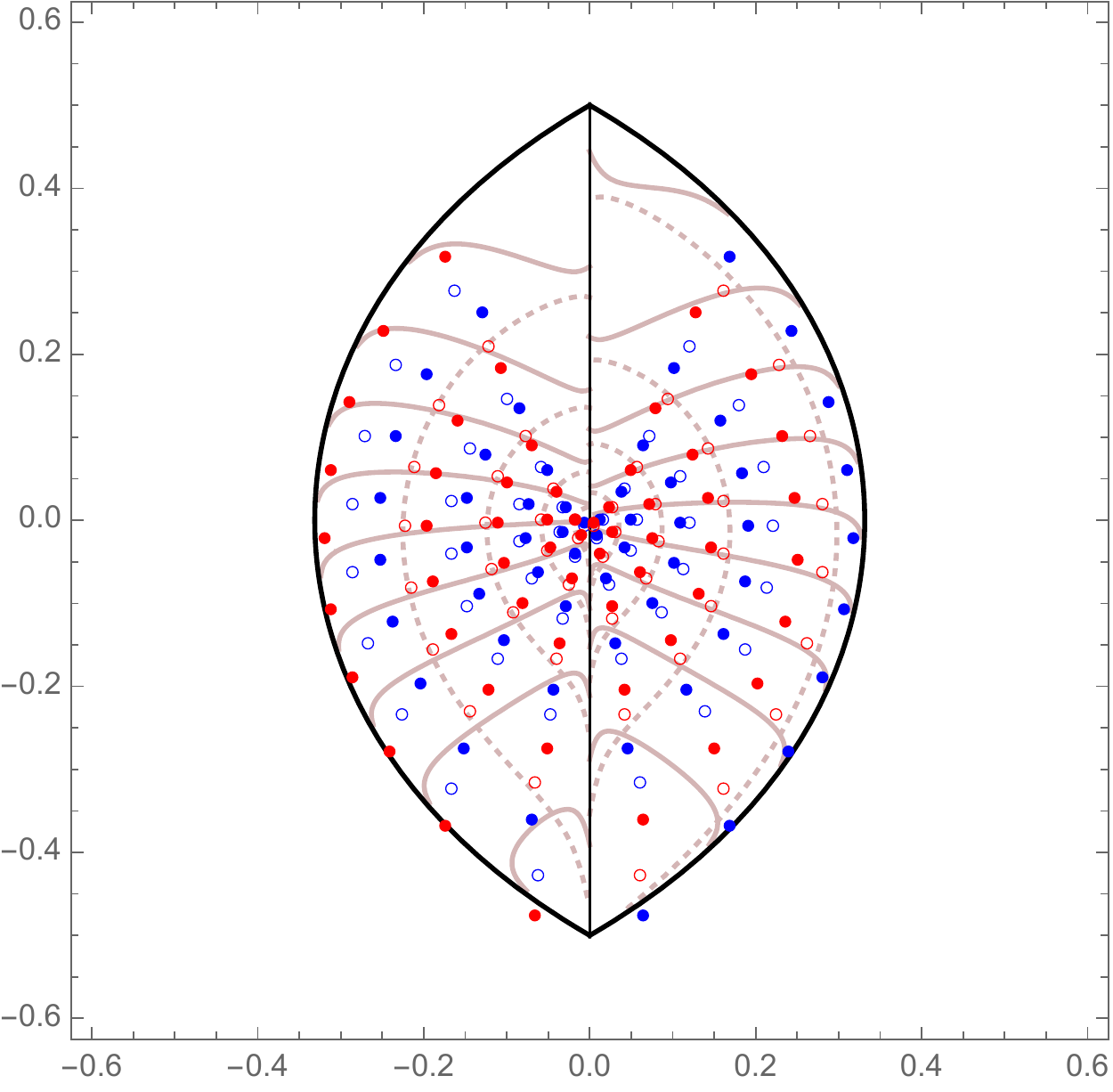}\hfill%
\includegraphics[width=0.3\textwidth]{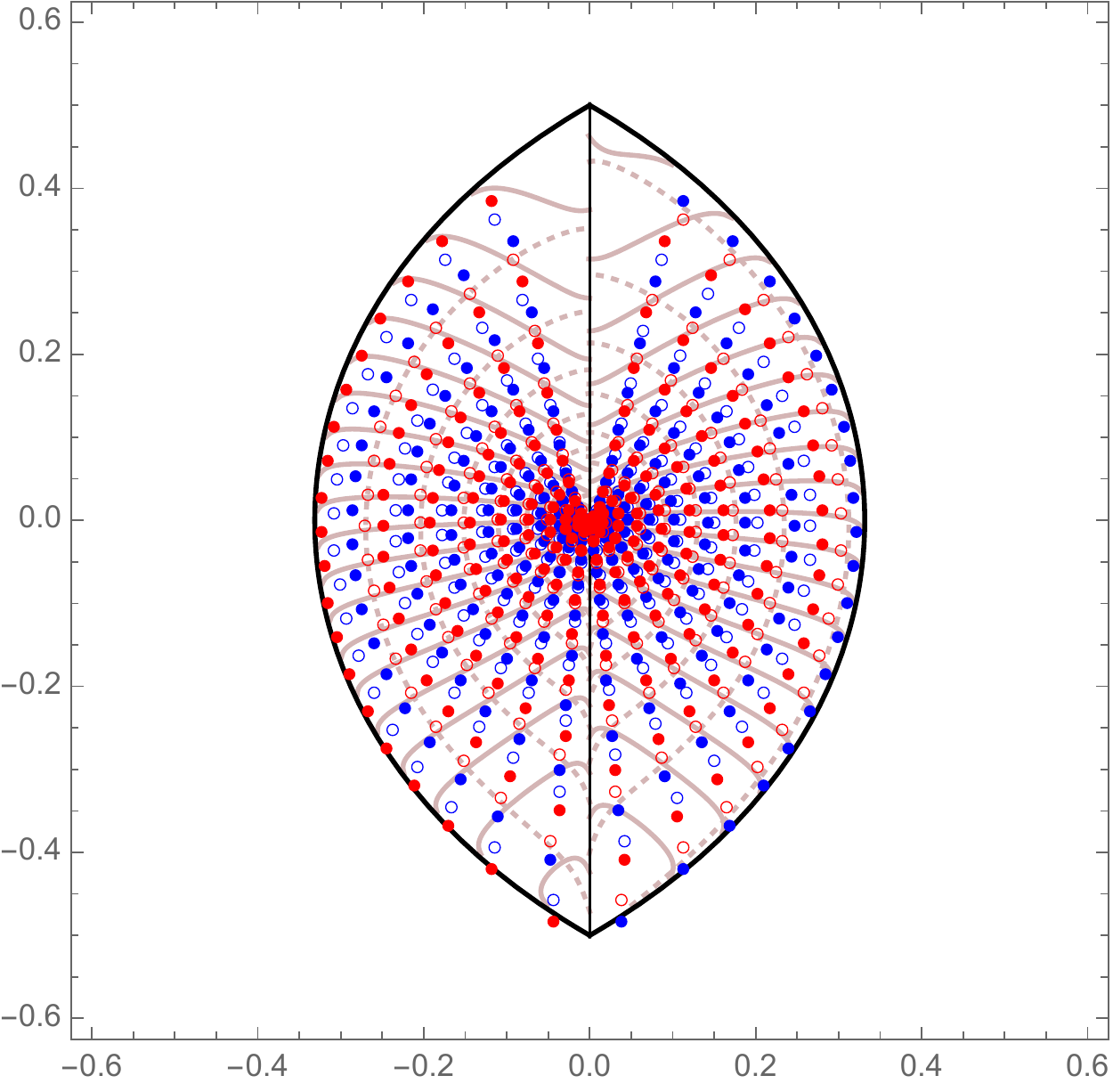}%
\end{center}
\caption{As in Figure~\ref{fig:m0-PolesMinus} but for $m=\tfrac{4}{5}\ii$.}
\label{fig:m4iOver5-PolesMinus}
\end{figure}
\begin{figure}[h]
\begin{center}
\includegraphics[width=0.3\textwidth]{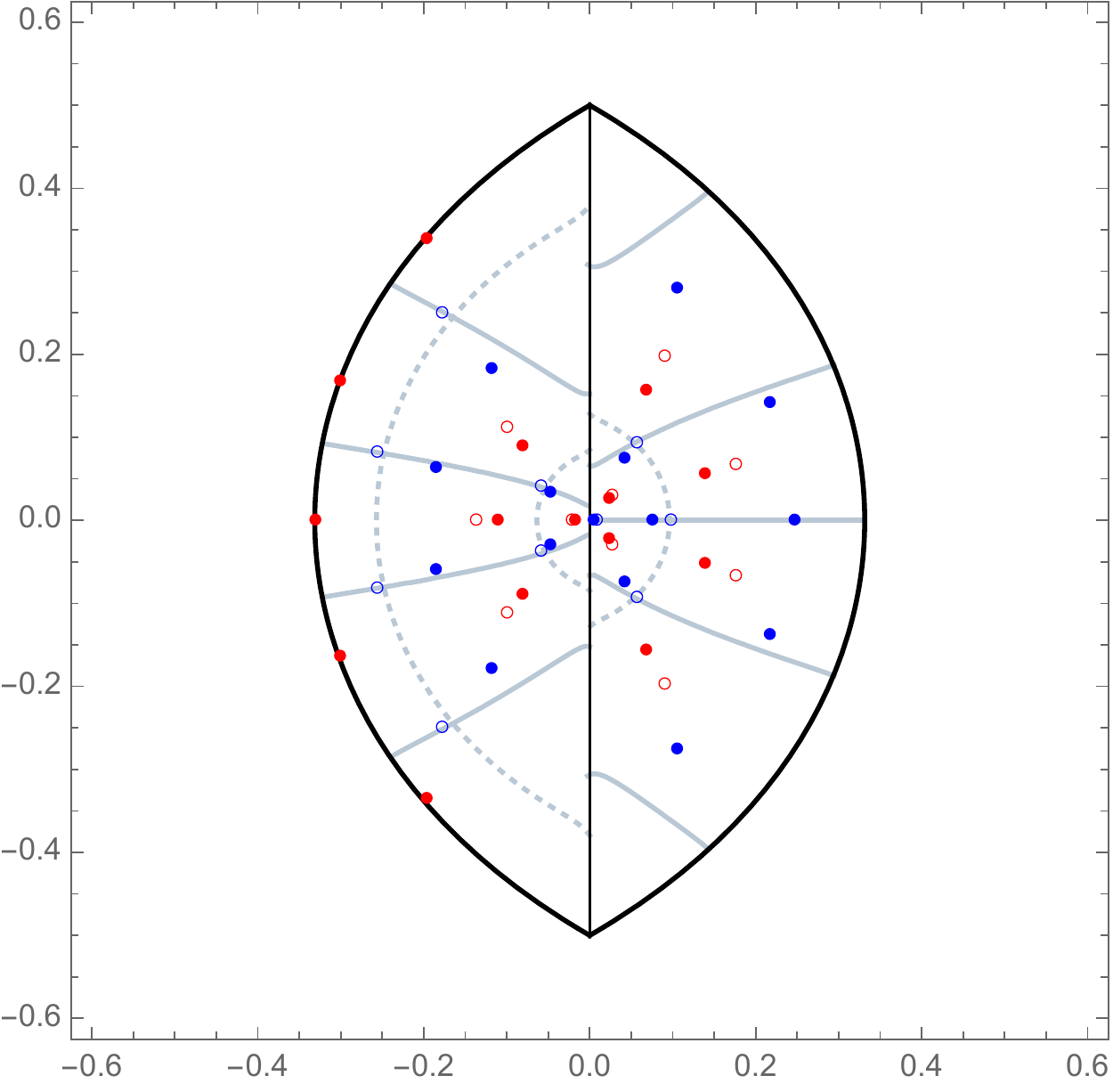}\hfill%
\includegraphics[width=0.3\textwidth]{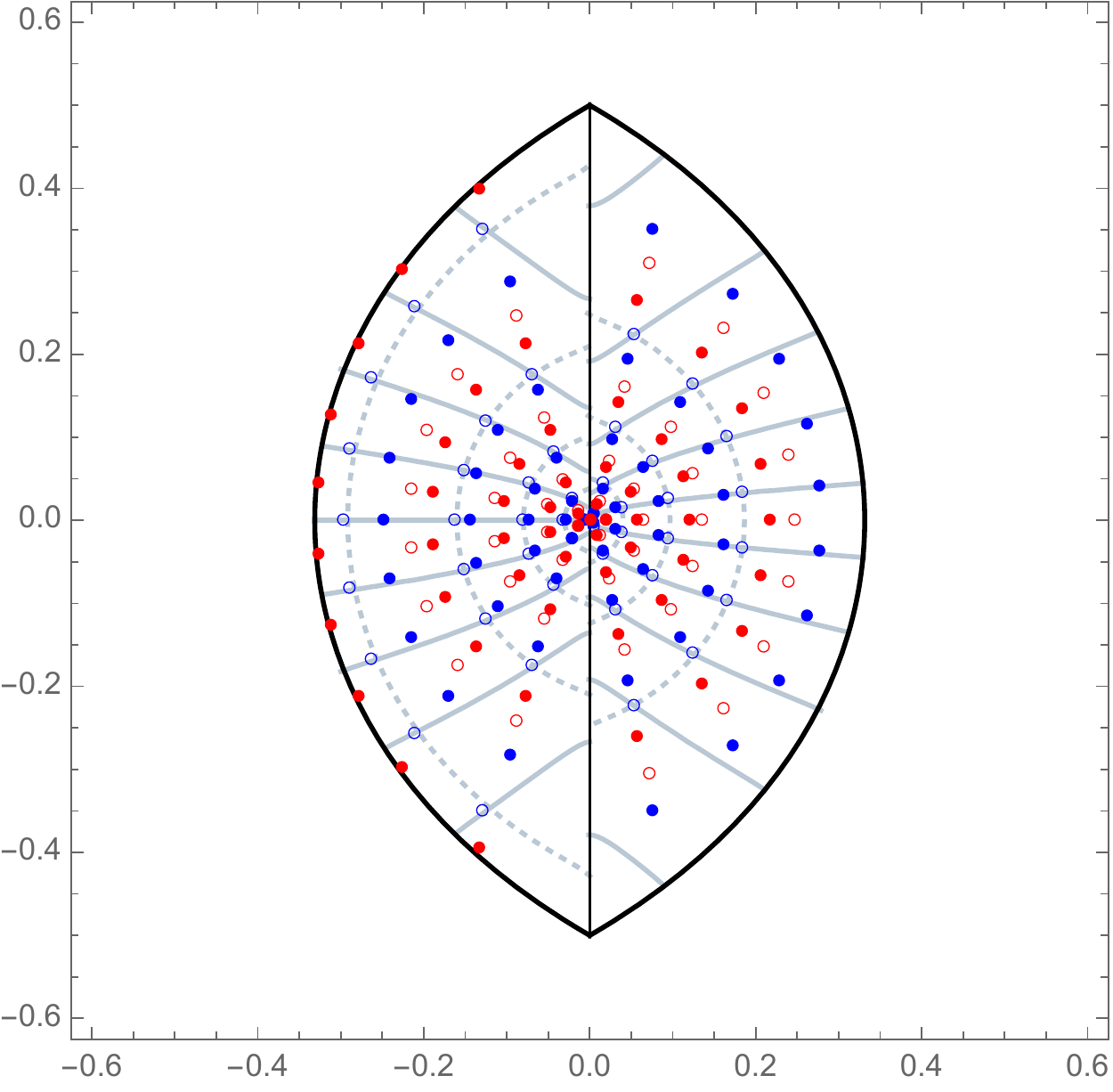}\hfill%
\includegraphics[width=0.3\textwidth]{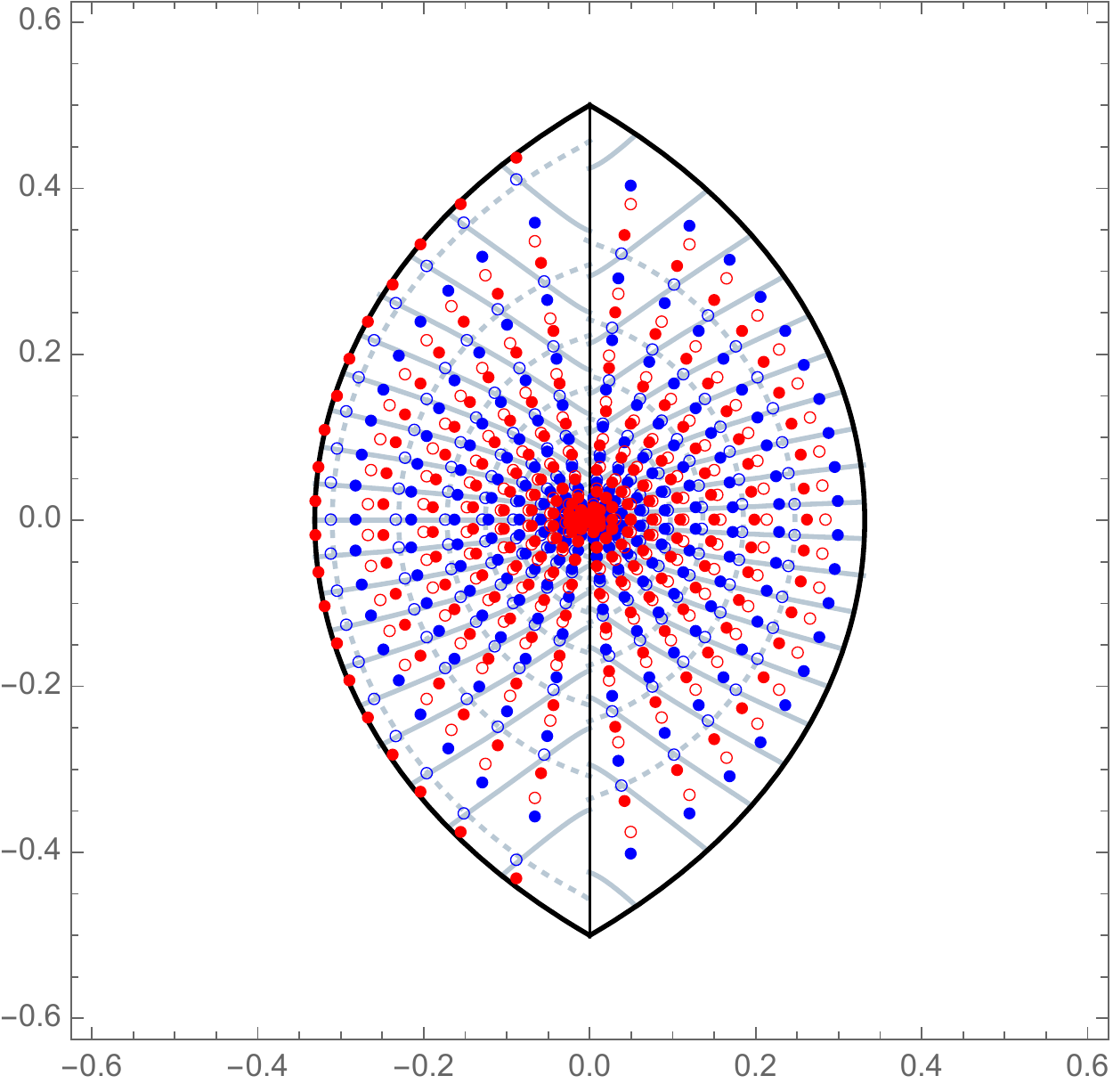}%
\end{center}
\caption{As in Figure~\ref{fig:m0-ZerosPlus} but for $m=\tfrac{1}{4}$.}
\label{fig:m1Over4-ZerosPlus}
\end{figure}
\begin{figure}[h]
\begin{center}
\includegraphics[width=0.3\textwidth]{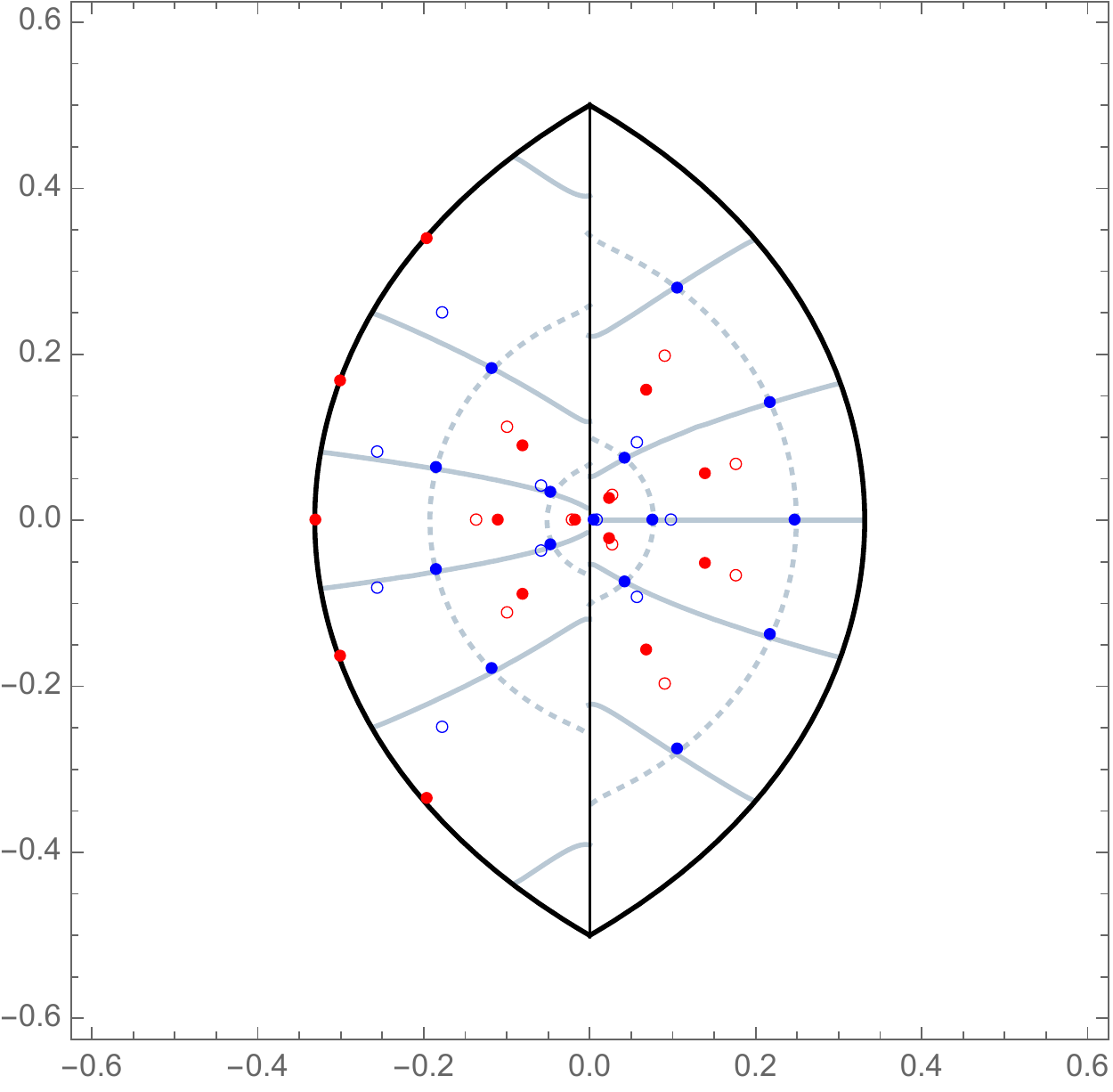}\hfill%
\includegraphics[width=0.3\textwidth]{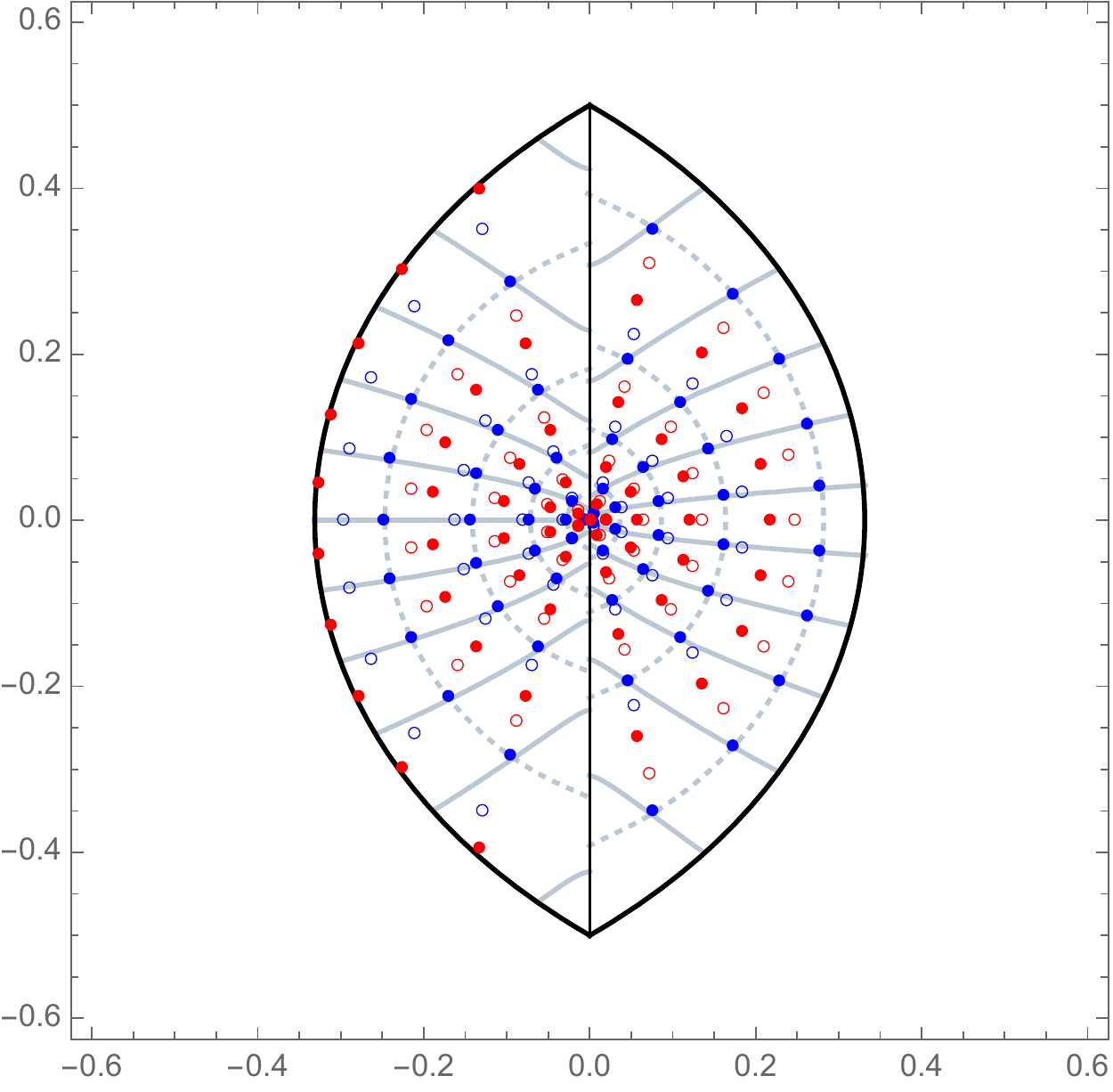}\hfill%
\includegraphics[width=0.3\textwidth]{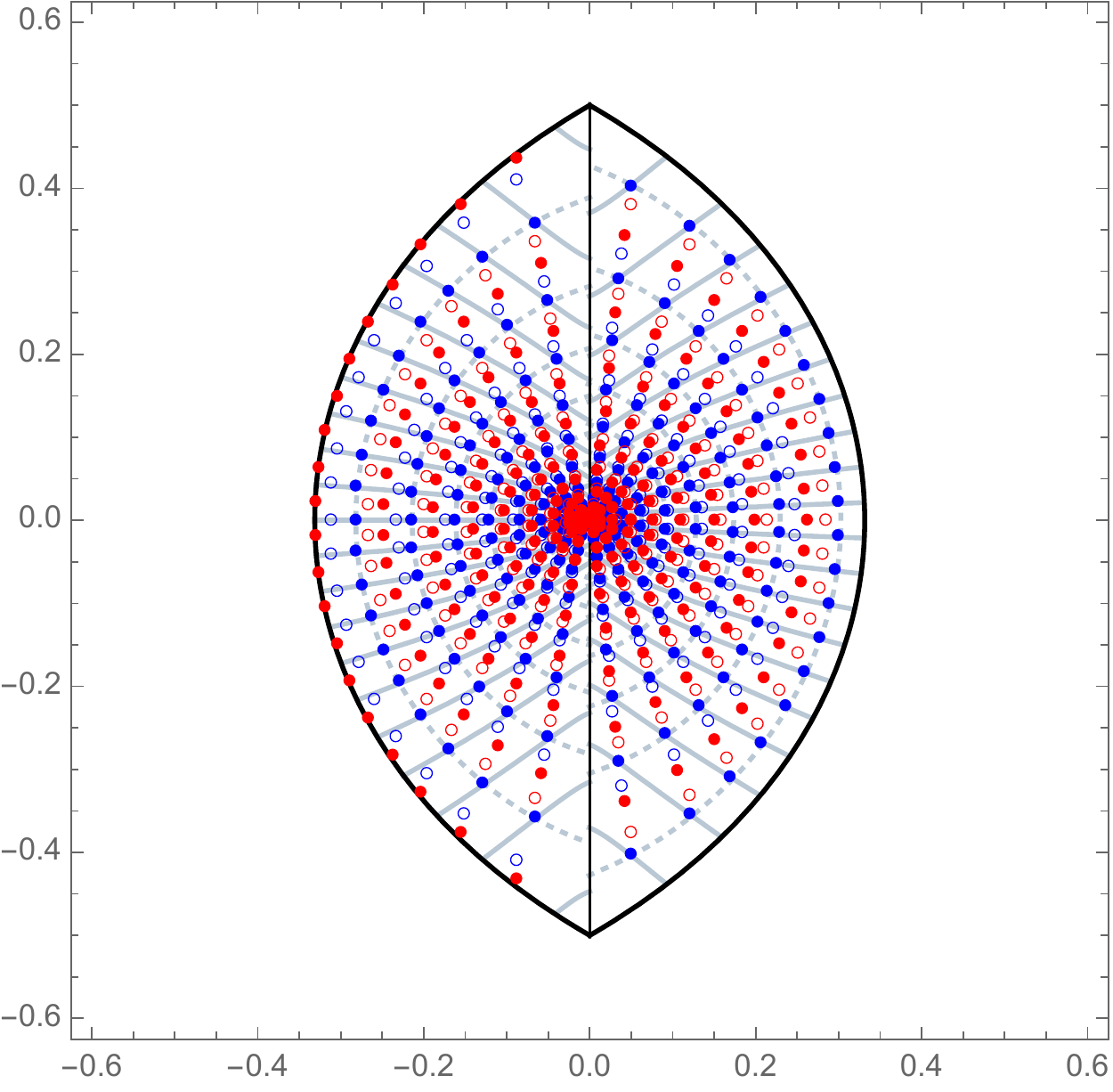}%
\end{center}
\caption{As in Figure~\ref{fig:m0-ZerosMinus} but for $m=\tfrac{1}{4}$.}
\label{fig:m1Over4-ZerosMinus}
\end{figure}
\begin{figure}[h]
\begin{center}
\includegraphics[width=0.3\textwidth]{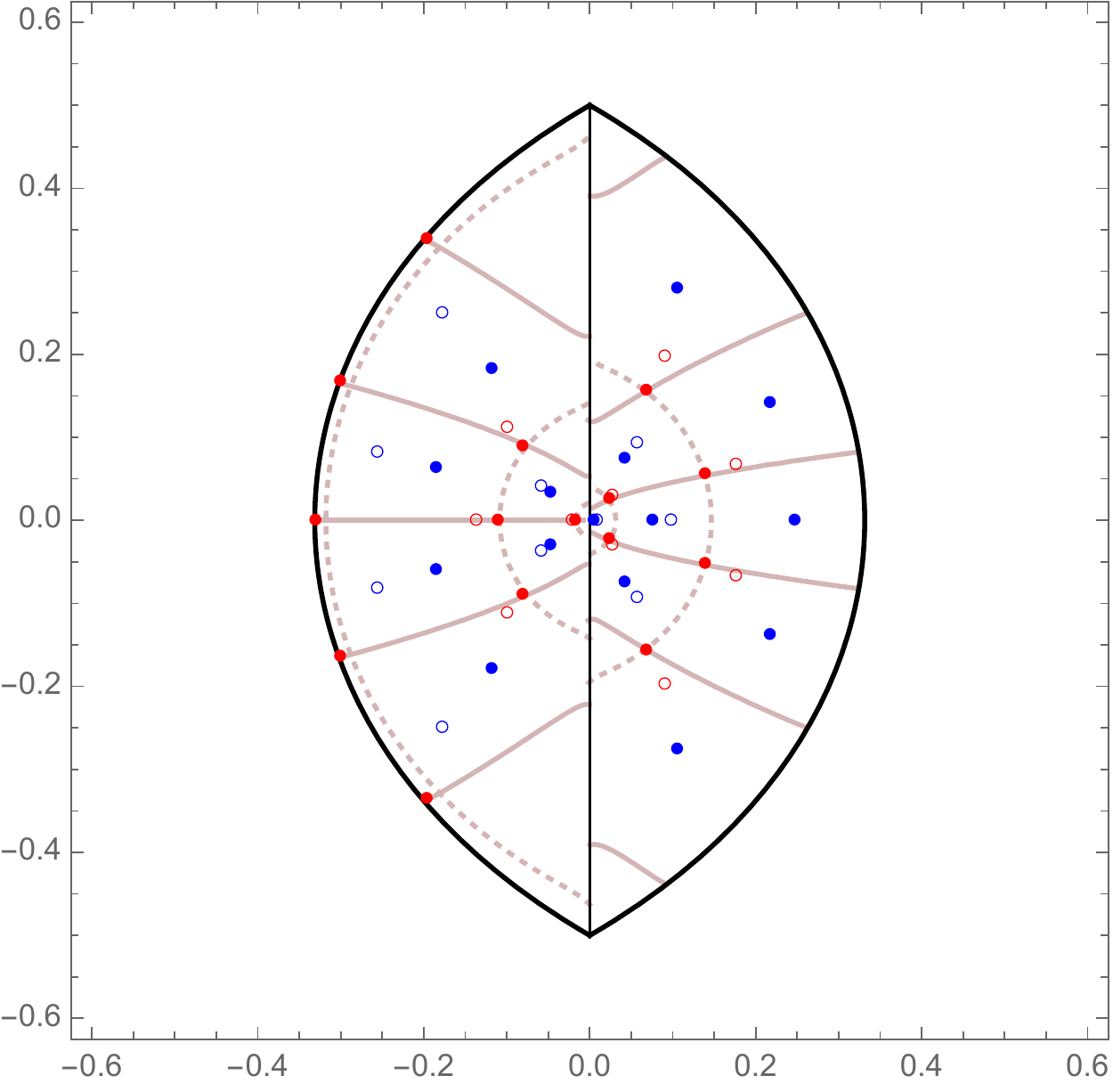}\hfill%
\includegraphics[width=0.3\textwidth]{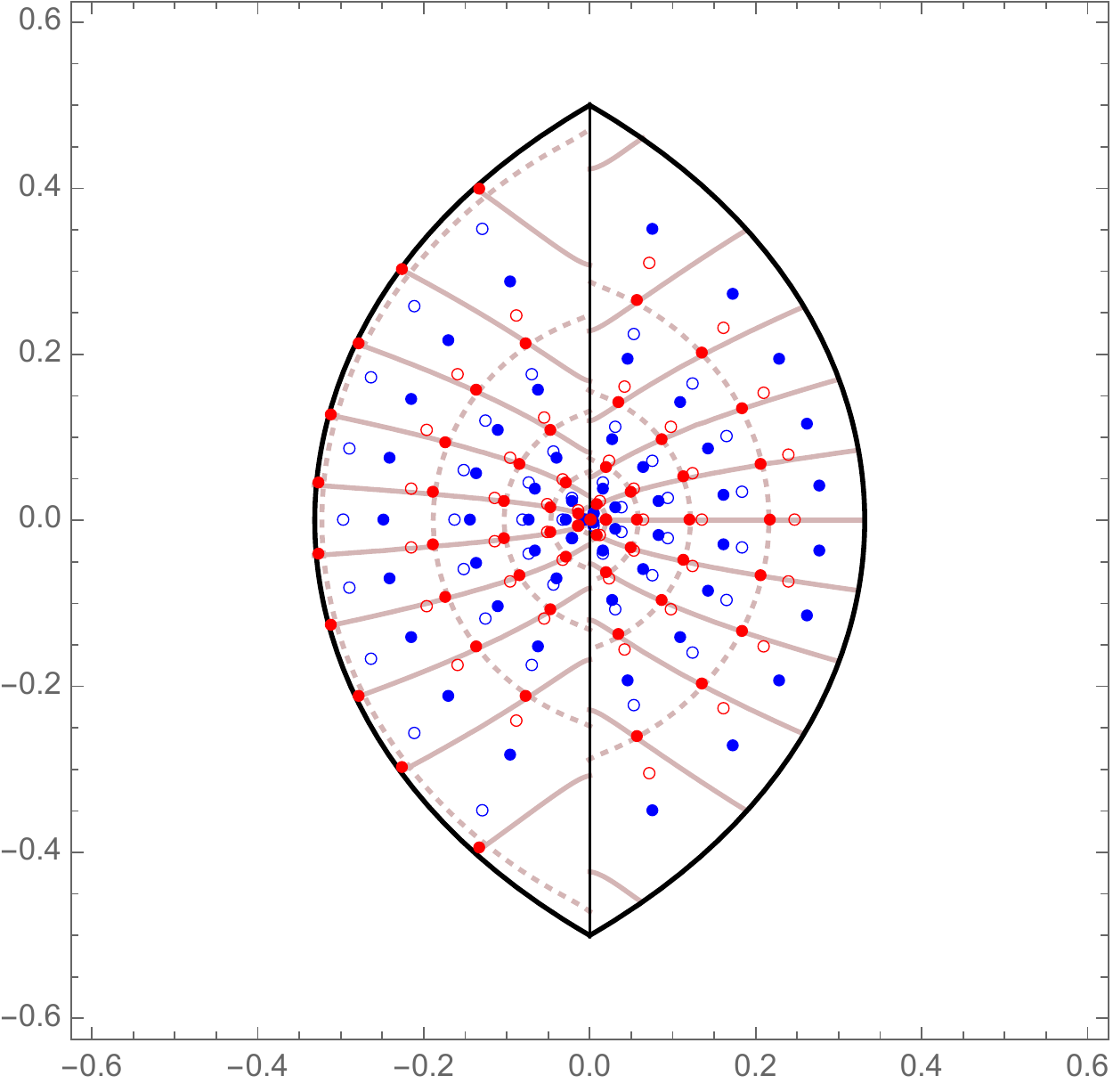}\hfill%
\includegraphics[width=0.3\textwidth]{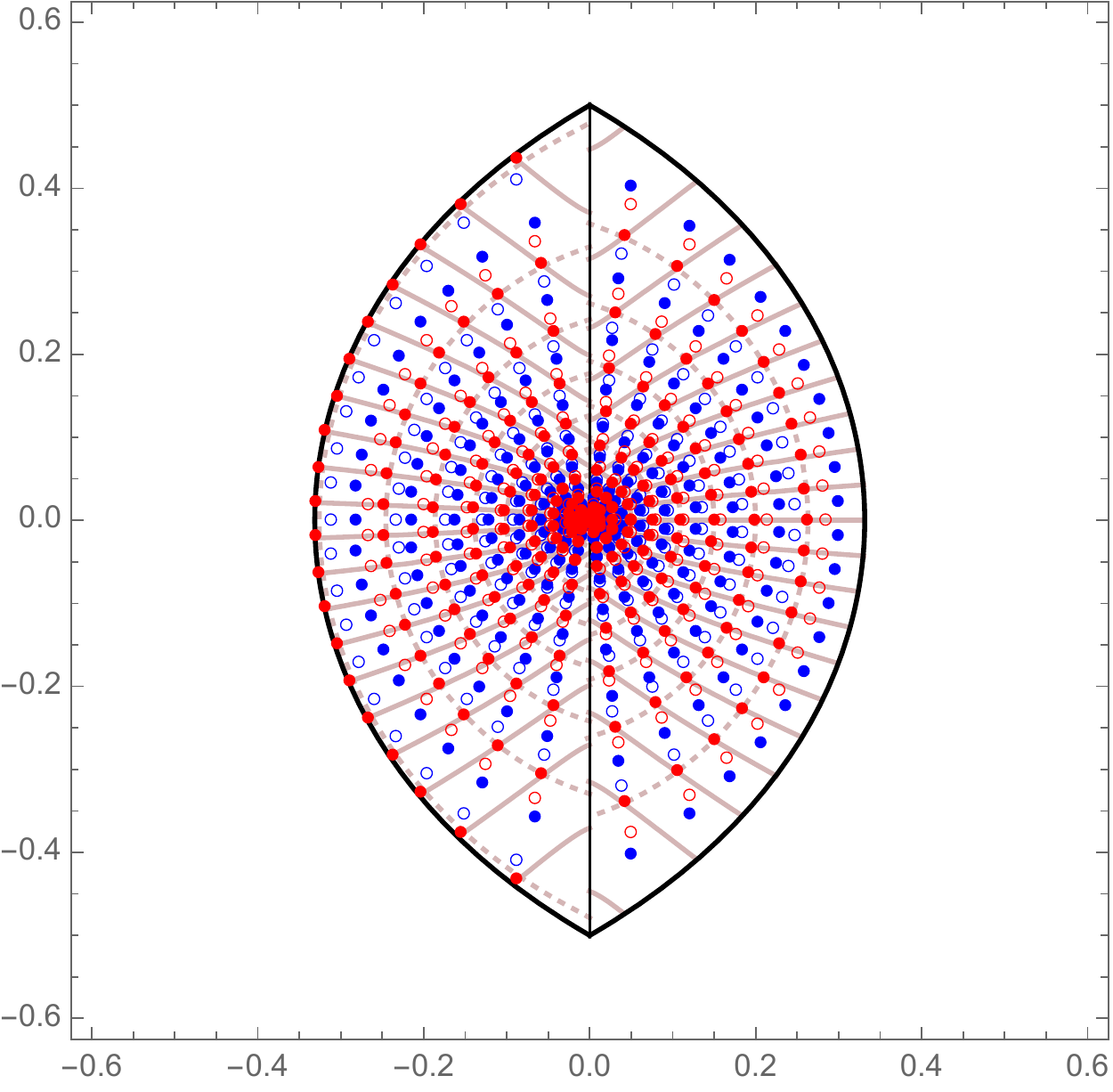}%
\end{center}
\caption{As in Figure~\ref{fig:m0-PolesPlus} but for $m=\tfrac{1}{4}$.}
\label{fig:m1Over4-PolesPlus}
\end{figure}
\begin{figure}[h]
\begin{center}
\includegraphics[width=0.3\textwidth]{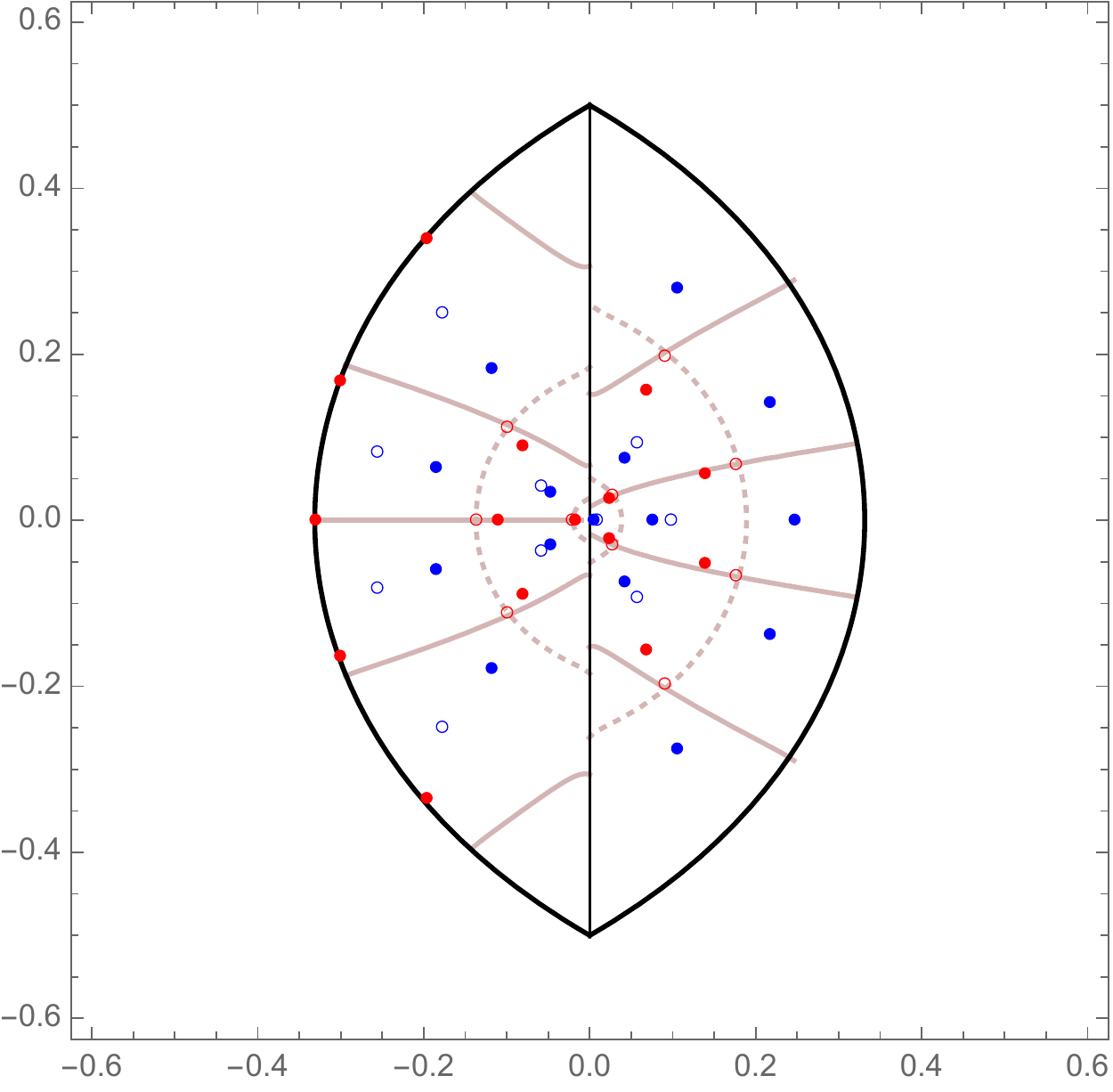}\hfill%
\includegraphics[width=0.3\textwidth]{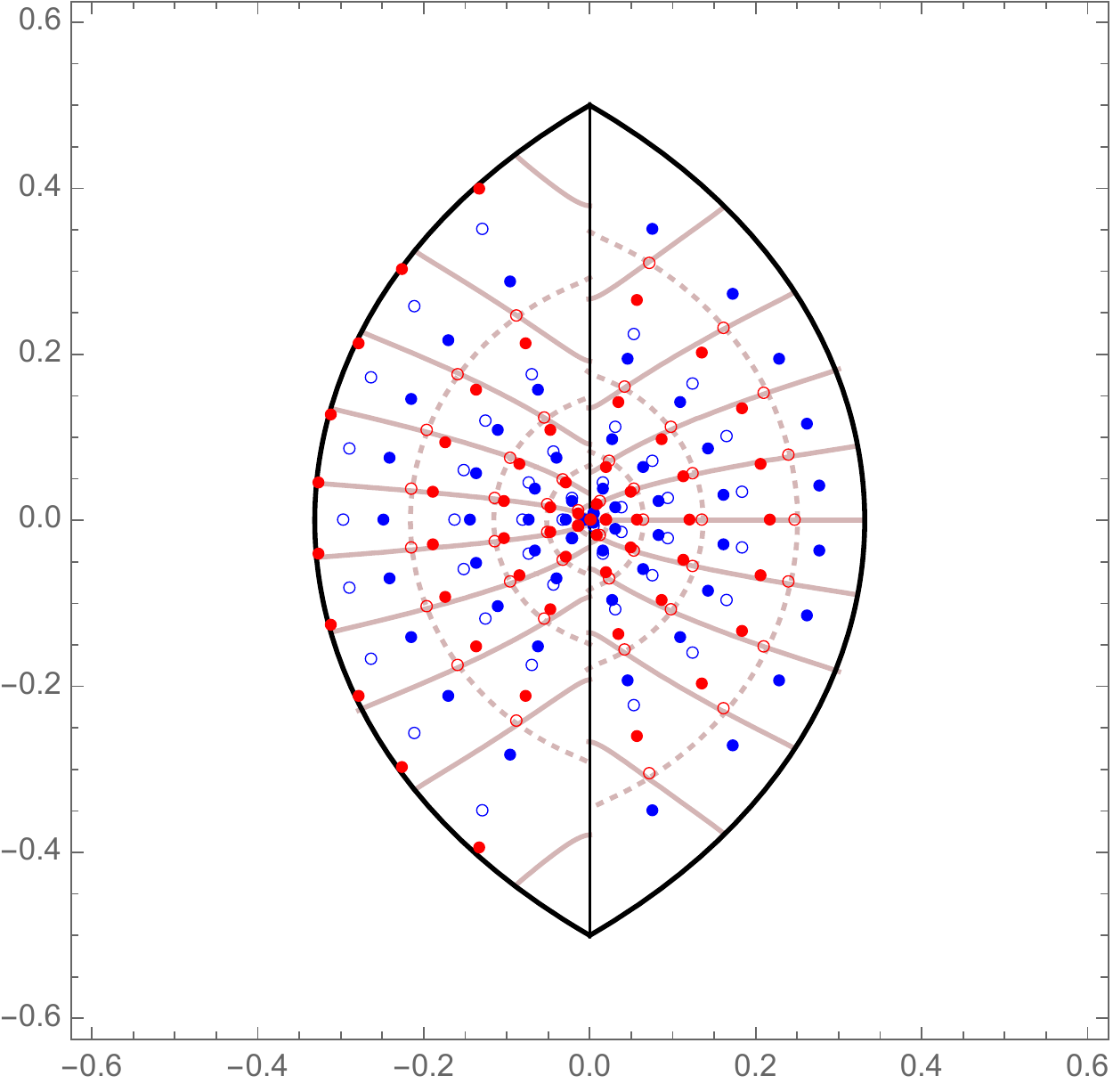}\hfill%
\includegraphics[width=0.3\textwidth]{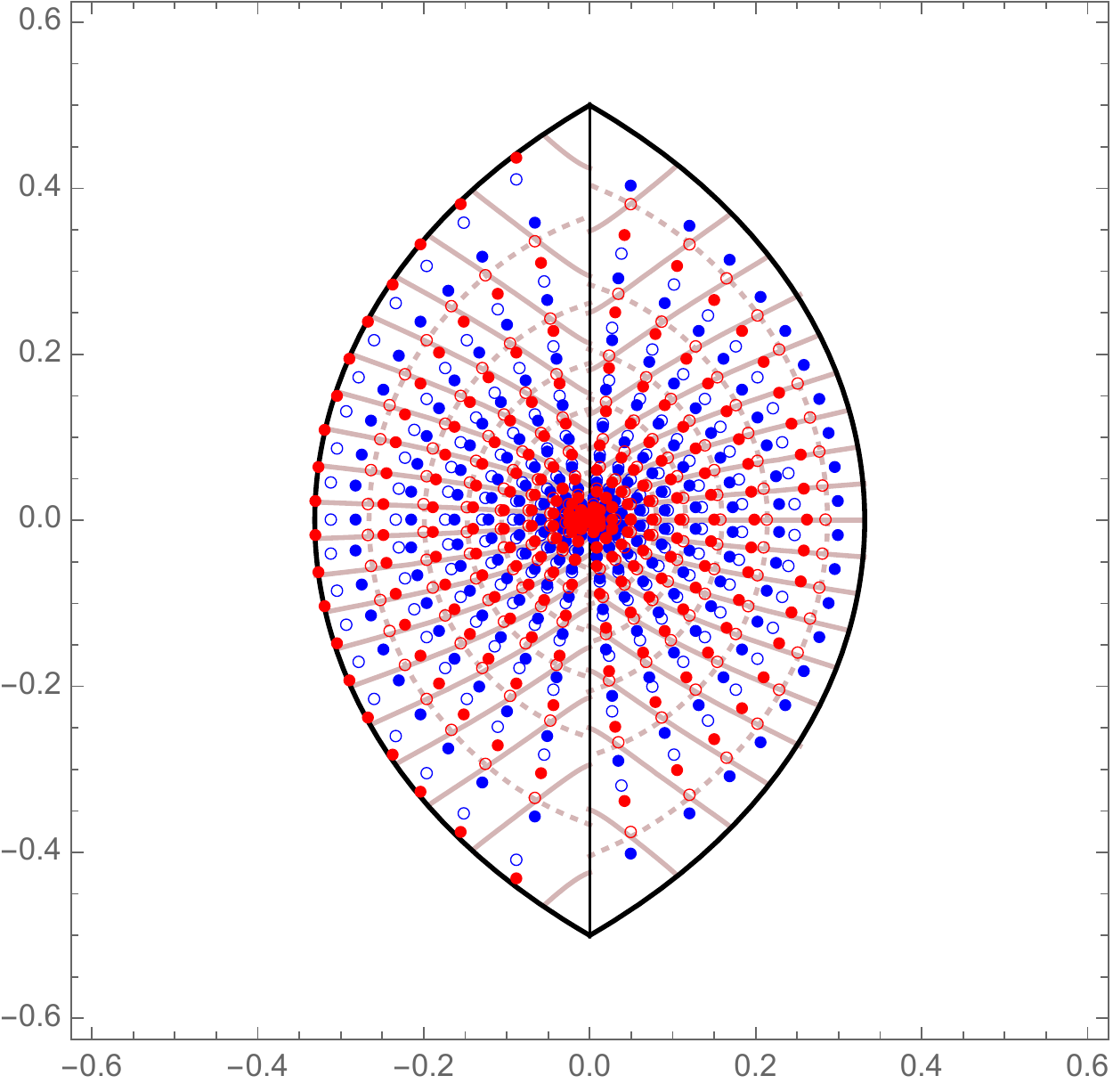}%
\end{center}
\caption{As in Figure~\ref{fig:m0-PolesMinus} but for $m=\tfrac{1}{4}$.}
\label{fig:m1Over4-PolesMinus}
\end{figure}%

Another feature of the plots in Figures~\ref{fig:m0-ZerosPlus}--\ref{fig:m1Over4-PolesMinus} is that only one pole or zero is evidently attracted to each crossing point of the curves, which suggests that the excess pairing phenomenon that cannot be ruled out by our index-based proof of Corollary~\ref{corollary:eye-zeros-and-poles:better} does in fact not occur.  Finally, these plots illustrate the most important properties of the pole/zero density function $\rho(y)$ described in Theorem~\ref{theorem:density}, namely the infinite density at the origin and the dilution of poles/zeros near the boundaries of $\partial E_\mathrm{L}$ and $\partial E_\mathrm{R}$ (which include the imaginary axis vertically bisecting $E$).

Clearly, when $m\in\mathbb{C}\setminus(\mathbb{Z}+\tfrac{1}{2})$ there are many poles and zeros in the domains $E_\mathrm{L}$ and $E_\mathrm{R}$ when $n$ is large, and in this situation we say that the \emph{eye is open}.  
On the other hand, the large-$n$ asymptotic behavior of $u_n(x;m)$ when $n^{-1}x$ is in a neighborhood of the eye $E$ is \emph{completely different} than described above when $m\in\mathbb{Z}+\tfrac{1}{2}$.  We refer to the closures (i.e., including endpoints) of the arcs of $\partial E_\mathrm{L}$ and $\partial E_\mathrm{R}$ in the open left and right half-planes respectively as the ``eyebrows'' of the eye $E$, denoting them by $\pEZeroBlue$ and $\pEInftyRed$, respectively.
Our first result is that, in a sense, the eye is closed when $m\in\mathbb{Z}+\tfrac{1}{2}$.
\begin{theorem}[Equilibrium asymptotics of $u_n(x;m)$ for $m\in\mathbb{Z}+\tfrac{1}{2}$]
Suppose that $m=-(k+\tfrac{1}{2})$ for $k\in\mathbb{Z}_{\ge 0}$.  Let $K\subset\mathbb{C}\setminus\pEInftyRed$ be bounded away from $\pEInftyRed$, i.e., $\mathrm{dist}(y,\pEInftyRed)>0$.  Then
\begin{equation}
\frac{1}{u_n(ny;m)}=\frac{1}{\ii\lNaughtInftyRed(y)} + \mathcal{O}(n^{-1}),\quad n\to+\infty,\quad y\in K,
\end{equation}
where $\lNaughtInftyRed(y)$ denotes the meromorphic continuation of $\lNaught(y)$ from a neighborhood of $y=\infty$ to the maximal domain $\mathbb{C}\setminus\pEInftyRed$  as a non-vanishing function whose only singularity is a simple pole at the origin $y=0$, and the error estimate is uniform for $y\in K$.  Likewise, if $m=k+\tfrac{1}{2}$ for $k\in\mathbb{Z}_{\ge 0}$ and $K\subset\mathbb{C}\setminus\pEZeroBlue$ is bounded away from $\pEZeroBlue$, then
\begin{equation}
u_n(ny;m)=\ii\lNaughtZeroBlue(y)+\mathcal{O}(n^{-1}),\quad n\to+\infty,\quad y\in K,
\end{equation}
where $\lNaughtZeroBlue(y)$ denotes the analytic continuation of $\lNaught(y)$ from a neighborhood of $y=\infty$ to the maximal domain $\mathbb{C}\setminus\pEZeroBlue$ as a function whose only zero is simple and lies at the origin, and the error estimate is uniform for $y\in K$.
\label{theorem:closed-eye-equilibrium}
\end{theorem}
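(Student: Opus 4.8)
\emph{Plan of proof.} The idea is to run the nonlinear steepest descent method on Riemann--Hilbert Problem~\ref{rhp:renormalized} with $x=ny$, exploiting the drastic degeneration of that problem when $m\in\mathbb{Z}+\tfrac12$. First I would reduce to the case $m=-(k+\tfrac12)$, $k\in\mathbb{Z}_{\ge 0}$: the case $m=k+\tfrac12$ follows from the exact symmetry \eqref{eq:u-n-exact-symmetry}, since writing $u_n(ny;k+\tfrac12)=1/u_n(n(-y);-(k+\tfrac12))$ and applying the first assertion at the point $-y$ gives $u_n(ny;k+\tfrac12)=1/(\ii\lNaughtInftyRed(-y))+\mathcal{O}(n^{-1})$, while one checks from $\lNaught(-y)=-1/\lNaught(y)$ that $\lNaughtZeroBlue(y)=-1/\lNaughtInftyRed(-y)$ and that $y\mapsto -y$ carries $\pEInftyRed$ onto $\pEZeroBlue$. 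For the remainder take $m=-(k+\tfrac12)$ and observe that $1/\Gamma(\tfrac12+m)=1/\Gamma(-k)=0$ and $-\ee^{\pm2\pi\ii m}=1$, so the jump matrices \eqref{eq:Yjump-3}--\eqref{eq:Yjump-4} on $\LInftyBlue\cup\LZeroBlue$ both collapse to the identity; hence $\mathbf{Y}$ extends analytically across those arcs, and (after amalgamating $\LInftyRed$ with $\LZeroRed$ into a single arc $\Sigma$ from $\infty$ to $0$, as permitted by Remark~\ref{rmk:surgery}) the problem reduces to a $2\times2$ Riemann--Hilbert problem on $\Sigma$ with a purely upper-triangular jump whose $(1,2)$-entry is $\mp\frac{\sqrt{2\pi}}{k!}\lambda^{k}\OurPower{\lambda}{-1/2}\ee^{-nV(\lambda;y)}$, with $V$ as in \eqref{eq:V-define}, subject to $\mathbf{Y}\to\mathbb{I}$ at $\infty$ and $\mathbf{Y}\lambda^{k\sigma_3}$ bounded at $\lambda=0$.

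The triangular structure is what makes this tractable: no local Airy- or Bessel-type parametrices are needed. Next I would build a genus-zero global parametrix $\dot{\mathbf{Y}}$ by a $g$-function (equivalently a scalar/square-root) construction attached to the two critical points $\lNaught_0^\pm(y)$ of $V(\cdot;y)$, which, because $\lNaught_0^+\lNaught_0^-\equiv 1$ and $V(\lambda^{-1};y)=-V(\lambda;y)$, obey $\mathrm{Re}\,V(\lNaught_0^+(y);y)=-\mathrm{Re}\,V(\lNaught_0^-(y);y)$, and which also accommodates the power $\lambda^{k\sigma_3}$ forced at $\lambda=0$ exactly as in the generic exterior analysis. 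Solving the attendant scalar boundary-value problem produces precisely the function $\lNaughtInftyRed$ of the statement: it is the continuation of $\lNaught=\lNaught_0^+$ obtained by sliding the branch cut of $\sqrt{\tfrac1{4y^2}+1}$ off the segment $[-\tfrac12\ii,\tfrac12\ii]$ onto the eyebrow $\pEInftyRed$, a move that sweeps through the origin and converts the zero of $\lNaught$ there into the simple pole of $\lNaughtInftyRed$; this is the one place where a genus-zero ansatz can be sustained for $m=-(k+\tfrac12)$ but not for generic $m$ (where the second colour of jump forces a genus jump inside $E$). The parametrix so built is bounded with bounded inverse precisely where $\lNaughtInftyRed$ is finite, analytic and non-vanishing, i.e.\ on $\mathbb{C}\setminus\pEInftyRed$ apart from the controlled simple pole at $y=0$, which feeds into \eqref{eq:u-n-from-Y-formula} harmlessly.

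With the parametrix in hand I would deform $\Sigma$ onto a steepest-descent contour, verify that the error $\mathbf{E}:=\mathbf{Y}\dot{\mathbf{Y}}^{-1}$ solves a small-norm Riemann--Hilbert problem (jump $\mathbb{I}+\mathcal{O}(n^{-1})$, in fact exponentially small away from the branch points and from $\lambda=0,\infty$) uniformly for $y$ in compact subsets of $\mathbb{C}\setminus\pEInftyRed$, conclude $\mathbf{E}=\mathbb{I}+\mathcal{O}(n^{-1})$, and extract $u_n(ny;m)=\ii\lNaughtInftyRed(y)+\mathcal{O}(n^{-1})$ from \eqref{eq:u-n-from-Y-formula}. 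Since $\lNaughtInftyRed$ is bounded away from $0$ on any such compact set $K$, this is equivalent to $1/u_n(ny;m)=1/(\ii\lNaughtInftyRed(y))+\mathcal{O}(n^{-1})$. For $y$ in the exterior of $E$ one has $\lNaughtInftyRed\equiv\lNaught$, so the formula is consistent with Theorem~\ref{theorem:outside}.

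The hard part will be the \emph{global} part of the construction. One must show that the genus-zero ansatz survives throughout $\mathbb{C}\setminus\pEInftyRed$, equivalently that $\Sigma$ can always be routed so that the conjugated exponent has the correct sign off the band, so that the sole obstruction (the curve across which, were both colours of jump present, the genus would have to increase) is the single eyebrow $\pEInftyRed$ and not, say, the interior of $E_\mathrm{R}$ or the imaginary rays belonging to the critical curve \eqref{eq:critical}. This rests on the detailed topology of the level sets of $\mathrm{Re}\,V(\lNaught_0^\pm(\cdot);\cdot)$ in the $y$-plane developed in the earlier sections, together with the independent verification that $\lNaughtInftyRed$ does continue as a non-vanishing analytic function on the whole slit plane $\mathbb{C}\setminus\pEInftyRed$ whose only singularity is the simple pole at the origin — the trace of the fixed singularity of the Painlev\'e-III equation at $x=0$.
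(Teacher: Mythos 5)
Your overall strategy matches the paper's: for $m=-(k+\tfrac12)$ the factor $\Gamma(\tfrac12+m)^{-1}=\Gamma(-k)^{-1}=0$ together with $-\ee^{\pm 2\pi\ii m}=1$ kills the jumps \eqref{eq:Yjump-3}--\eqref{eq:Yjump-4} on $\LInftyBlue\cup\LZeroBlue$, so the only surviving obstruction to the exterior analysis of Theorem~\ref{theorem:outside} is the placement of $\LInftyRed$, which fails only on $\pEInftyRed$; hence the genus-zero asymptotics continue through $\pEZeroBlue$ into $E$, and the limit is the single-valued branch $\lNaughtInftyRed(y)$ with its simple pole at the origin. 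Your reduction of $m=k+\tfrac12$ to $m=-(k+\tfrac12)$ via \eqref{eq:u-n-exact-symmetry} and $\lNaught(-y)=-\lNaught(y)^{-1}$ is correct and is only an organizational difference from the paper, which records both continuations $\lNaughtZeroBlue$ and $\lNaughtInftyRed$ directly.

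The gap is your claim that the triangular structure lets you dispense with local parametrices, so that $\mathbf{E}=\mathbf{Y}\dot{\mathbf{Y}}^{-1}$ already has jump $\mathbb{I}+\mathcal{O}(n^{-1})$. Two things go wrong. First, near the critical point $\lambda=\lNaught(y)$ the conjugated jump is $\mathbb{I}+\mathcal{O}(1)$ (the exponent $2g-V$ has a double zero there), while the diagonal outer parametrix \eqref{eq:M-dot-out}, here $[\lambda/(\lambda-\lNaught(y))]^{-k\sigma_3}$, is unbounded at that point for $k\ge 1$; the small-norm hypothesis therefore fails without an inner parametrix. The paper supplies the parabolic-cylinder parametrix of Section~\ref{sec:parametrix-for-outside} and Appendix~\ref{app:PC}; for $m=-(k+\tfrac12)$ its jump matrices degenerate to triangular ones but its solution does not (note $\beta=\ii\ee^{-\ii\pi m}2^{-m}(m+\tfrac12)\neq 0$ for $k\ge1$), so it cannot be omitted. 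Second, and decisively for the statement you are proving: the outer parametrix is diagonal, whereas \eqref{eq:u-n-from-Y-formula} is a ratio built from off-diagonal data, so with only $\dot{\mathbf{M}}^{\mathrm{out}}$ the extraction step degenerates to $0/0$. The leading term $\ii\lNaught(y)$ is produced entirely by the $\mathcal{O}(1)$ off-diagonal mismatch $a(\lambda;y,m)$ in \eqref{eq:Mismatch-0} and the explicit correction matrix $\dot{\mathbf{F}}$ of \eqref{eq:F-dot-outside} --- i.e., by the local parametrix --- so your plan as written cannot produce the value $\ii\lNaughtInftyRed(y)$ at all. The repair is exactly what the paper does: carry over the full apparatus of Sections~\ref{sec:parametrix-for-outside}--\ref{sec:error-outside} (outer parametrix, parabolic-cylinder inner parametrix, and $\dot{\mathbf{F}}$), observing that it goes through verbatim for $y$ inside $E$ once the jump on the obstructed arc is trivial, and then identify the continued limit as $\lNaughtInftyRed$ (resp.\ $\lNaughtZeroBlue$) as you do.
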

The functions $\lNaughtInftyRed(y)$ and $\lNaughtZeroBlue(y)$ both agree with $\lNaught(y)$ for $y\in\mathbb{C}\setminus E$, and they are reciprocals of one another when $y\in E$.
Theorem~\ref{theorem:closed-eye-equilibrium} is proved in Section~\ref{sec:Half-Integer-Away-From-Edge}.  Note that this result is consistent with Theorem~\ref{theorem:outside}, which does not require any condition on $m\in\mathbb{C}$.  Moreover, it gives a far-reaching generalization of Theorem~\ref{theorem:outside} for the special case of $m\in\mathbb{Z}+\tfrac{1}{2}$.  The uniform nature of the convergence implies that $u_n(ny;m)$ can have no poles or zeros in $K$ for sufficiently large $n$, unless the set $K$ contains the origin, in which case an index argument predicts a unique simple pole near the origin for $m=-(k+\tfrac{1}{2})$ and a unique simple zero near the origin for $m=k+\tfrac{1}{2}$.  However, it is proven in \cite{ClarksonLL16} that there is a simple pole or zero \emph{exactly} at the origin if $n$ is sufficiently large (given $k\in\mathbb{Z}_{\ge 0}$).  Therefore, we have the following.
\begin{corollary}
Suppose that $m=-(k+\tfrac{1}{2})$, $k\in\mathbb{Z}_{\ge 0}$.  If $K\subset\mathbb{C}$ is bounded away from $\pEInftyRed$, then
$u_n(\cdot;m)$ has no zeros or poles in the set $nK$ for $n$ sufficiently large, except for a simple pole at the origin.  On the other hand, if $m=k+\tfrac{1}{2}$, $k\in\mathbb{Z}_{\ge 0}$ and $K\subset\mathbb{C}$ is bounded away from $\pEZeroBlue$, then $u_n(\cdot;m)$ has no zeros or poles in the set $nK$ for $n$ sufficiently large, except for a simple zero at the origin.
\label{corollary:half-integer-m-outside-zeros-poles}
\end{corollary}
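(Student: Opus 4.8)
The plan is to deduce the corollary from Theorem~\ref{theorem:closed-eye-equilibrium}, the argument principle, and the result of \cite{ClarksonLL16} concerning the pole/zero at the origin. I would prove the case $m=-(k+\tfrac12)$, $k\in\mathbb{Z}_{\ge 0}$, in detail, and then obtain the case $m=k+\tfrac12$ from the symmetry \eqref{eq:u-n-exact-symmetry}: since $u_n(ny;k+\tfrac12)=1/u_n(n(-y);-(k+\tfrac12))$, and since reflection through the origin interchanges $E_\mathrm{L}$ and $E_\mathrm{R}$ and hence the eyebrows $\pEInftyRed$ and $\pEZeroBlue$, a set $K$ bounded away from $\pEZeroBlue$ corresponds to $-K$ bounded away from $\pEInftyRed$, and the second assertion follows from the first applied to $-K$ (a simple pole at the origin for $u_n(\,\cdot\,;-(k+\tfrac12))$ becoming a simple zero at the origin for $u_n(\,\cdot\,;k+\tfrac12)$).

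First I would reduce to the case of compact $K$. Because $\ii\lNaughtInftyRed(y)\to 1$ as $y\to\infty$ while Theorem~\ref{theorem:outside} gives $u_n(ny;m)=\ii\lNaught(y)+\mathcal{O}(n^{-1})$ uniformly on sets bounded away from $E$, one can fix $R>0$ so large that $u_n(\,\cdot\,;m)$ has neither zeros nor poles in $nK\cap\{|y|\ge R\}$ for all large $n$; it therefore suffices to treat the compact set $K\cap\{|y|\le R\}$, still bounded away from $\pEInftyRed$, which I rename $K$. Next, I would enlarge $K$ slightly to a compact $\tilde K$ with $K\subset\mathrm{int}(\tilde K)$ and $\tilde K$ still bounded away from $\pEInftyRed$, and apply Theorem~\ref{theorem:closed-eye-equilibrium} to obtain $1/u_n(ny;m)=1/(\ii\lNaughtInftyRed(y))+\mathcal{O}(n^{-1})$ uniformly on $\tilde K$. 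The key structural point is that $1/(\ii\lNaughtInftyRed)$ is holomorphic on all of $\mathbb{C}\setminus\pEInftyRed$ — the simple pole of $\lNaughtInftyRed$ at the origin becomes a simple zero of the reciprocal — and is in particular bounded on $\tilde K$, with its unique zero there simple and located at $y=0$, and with no poles. Uniform boundedness of $1/u_n(ny;\cdot)$ on $\tilde K$ for large $n$ then forces $u_n(\,\cdot\,;m)$ to have no zeros in $n\tilde K$, hence none in $nK$; consequently $y\mapsto 1/u_n(ny;m)$ is holomorphic on a neighborhood of $K$ for all large $n$.

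The counting step is then an application of the argument principle. I would choose a bounded open set $\Omega$ with $0\in\Omega$, $K\subset\Omega$, $\overline\Omega\subset\mathrm{int}(\tilde K)$, and $0\notin\partial\Omega$. On the compact set $\partial\Omega$ the limit $1/(\ii\lNaughtInftyRed)$ is bounded away from zero, so by the uniform convergence $1/u_n(ny;\cdot)$ is non-vanishing on $\partial\Omega$ for large $n$, and $\frac{1}{2\pi\ii}\oint_{\partial\Omega}\partial_y\log\!\big(1/u_n(ny;m)\big)\,\dd y$ is a non-negative integer that converges to the corresponding integral for $1/(\ii\lNaughtInftyRed)$, whose value is $1$ because that function has exactly one zero in $\Omega$ (simple, at the origin) and no poles. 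Hence the integer equals $1$ for all large $n$, so $u_n(\,\cdot\,;m)$ has exactly one pole in $n\Omega\supseteq nK$, and it is simple; by the localization built into the argument (Hurwitz's theorem) this pole lies in any prescribed neighborhood of the origin once $n$ is large.

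Finally, to pin this pole exactly to the origin I would invoke \cite{ClarksonLL16}, which asserts that for $n$ large $u_n(x;m)$ has a simple pole precisely at $x=0$ when $m=-(k+\tfrac12)$ (and a simple zero precisely at $x=0$ when $m=k+\tfrac12$). Combined with the count, the unique pole of $u_n(\,\cdot\,;m)$ in $nK$ is therefore a simple pole at the origin, which completes the proof. I expect the only delicate point to be the passage through a neighborhood of the origin, where the equilibrium approximant $\ii\lNaughtInftyRed(y)$ is singular and one must work throughout with reciprocals; it is worth noting that the ``excess pairing'' obstruction that limits Corollary~\ref{corollary:eye-zeros-and-poles:better} does not arise here, because the non-vanishing of $\lNaughtInftyRed$ already precludes any zeros of $u_n(\,\cdot\,;m)$ on $nK$, so no spurious pole–zero pairs can occur.
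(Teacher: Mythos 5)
Your proposal is correct and follows essentially the same route the paper takes: uniform convergence from Theorem~\ref{theorem:closed-eye-equilibrium} (applied to the reciprocal to handle the singularity of $\lNaughtInftyRed$ at the origin), an index/argument-principle count yielding a unique simple pole (resp.\ zero) near the origin, and the result of \cite{ClarksonLL16} to place it exactly at $y=0$, with the positive half-integer case obtained via the symmetry \eqref{eq:u-n-exact-symmetry}. Your elaboration of the reduction to compact $K$ and the contour-integral bookkeeping fills in details the paper leaves implicit, but the argument is the same.
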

This result can be combined with Theorem~\ref{theorem:closed-eye-equilibrium} to show immediately as in \eqref{eq:u-prime-outside} that the convergence of $u_n(ny;m)$ for $y\in K$ extends to all derivatives.
Corollary~\ref{corollary:half-integer-m-outside-zeros-poles} also shows that if $m\in\mathbb{Z}+\tfrac{1}{2}$, all of the poles/zeros but one are attracted toward one or the other of the eyebrows as $n\to+\infty$, depending on the sign of $m$; this is what we mean when we say that the \emph{eye is closed}.  Counting arguments suggest it is reasonable that the poles and zeros should be organized near curves rather than in a two-dimensional area such as $E_\mathrm{L}\cup E_\mathrm{R}$ in this case.  Indeed, in \cite{ClarksonLL16} it is also shown that the total number of zeros and poles of $u_n(x;m)$ scales as $n$ as $n\to+\infty$ when $m\in\mathbb{Z}+\tfrac{1}{2}$, while for $m\in\mathbb{C}\setminus(\mathbb{Z}+\tfrac{1}{2})$ the number scales as $n^2$.  Our methods allow for the following precise statement concerning the nature of convergence of the poles/zeros to one or the other of the eyebrows for $m\in\mathbb{Z}+\tfrac{1}{2}$. 
The following results refer to a ``tubular neighborhood'' $T$ of the eyebrow $\pEInftyRed$ defined as follows: for sufficiently small positive constants $\delta_1$ and $\delta_2$, 
\begin{equation}
T=T_{\delta_1,\delta_2}:=\left\{y\in\mathbb{C}: |\arg(y)|\le\frac{\pi}{2}-\delta_1,\;|\mathrm{Re}(V(\lNaught(y);y))|\le\delta_2\right\}.
\label{eq:T-define}
\end{equation}
Since points on the eyebrow $\pEInftyRed$ satisfy $\mathrm{Re}(V(\lNaught(y));y)=0$, the set $T$ contains points on both sides of $\pEInftyRed$, and the angular condition bounds the set $T$ away from the endpoints $y=\pm\tfrac{1}{2}\ii$ of $\pEInftyRed$.  Note that $V(\lNaught(y)^{-1};y)=-V(\lNaught(y);y)\pmod{2\pi\ii}$.
\begin{theorem}[Layered trigonometric asymptotics of $u_n(x;m)$ for $m\in\mathbb{Z}+\tfrac{1}{2}$]
Let $m=-(\tfrac{1}{2}+k)$, $k\in\mathbb{Z}_{\ge 0}$, and let 
$T$ be as defined in \eqref{eq:T-define}.  Then the following asymptotic formul\ae\ hold in which the error terms are uniform on the indicated sub-domains of $T$ from which small discs of radius proportional to an arbitrarily small multiple of $n^{-1}$
centered at each zero or pole of the indicated approximation are excised:
\begin{itemize}
\item If $y\in T$ with $\mathrm{Re}(V(\lNaught(y)^{-1};y))\le -\tfrac{1}{2}kn^{-1}\ln(n)$, then $u_n(ny;m)=\dot{u}_n+\mathcal{O}(n^{-1})$ where $\dot{u}_n$ is given explicitly by \eqref{eq:dot-u-UU-left}.
\item For $\ell=1,\dots,k$,
\begin{itemize}
\item
If $y\in T$ with $-\tfrac{1}{2}(k-2\ell+2)n^{-1}\ln(n)\le\mathrm{Re}(V(\lNaught(y)^{-1};y))\le -\tfrac{1}{2}(k-2\ell+\tfrac{3}{2})n^{-1}\ln(n)$, then $u_n(ny;m)=\dot{u}_n+\mathcal{O}(n^{-1/2})$ where $\dot{u}_n$ is given explicitly by \eqref{eq:dot-u-UU-first}.
\item
If $y\in T$ with $-\tfrac{1}{2}(k-2\ell+\tfrac{3}{2})n^{-1}\ln(n)\le\mathrm{Re}(V(\lNaught(y)^{-1};y))\le-\tfrac{1}{2}(k-2\ell+\tfrac{1}{2})n^{-1}\ln(n)$, then $u_n(ny;m)=\dot{u}_n+\mathcal{O}(n^{-1/2})$ where $\dot{u}_n$ is given explicitly by \eqref{eq:dot-u-UL-1} or \eqref{eq:dot-u-UL-2}.
\item
If $y\in T$ with $-\tfrac{1}{2}(k-2\ell+\tfrac{1}{2})n^{-1}\ln(n)\le\mathrm{Re}(V(\lNaught(y)^{-1};y))\le-\tfrac{1}{2}(k-2\ell)n^{-1}\ln(n)$, then $u_n(ny;m)=\dot{u}_n+\mathcal{O}(n^{-1/2})$ where $\dot{u}_n$ is given explicitly by \eqref{eq:dot-u-UU-last}.
\end{itemize}
\item
If $y\in T$ with $\mathrm{Re}(V(\lNaught(y)^{-1};y))\ge \tfrac{1}{2}kn^{-1}\ln(n)$, then $u_n(ny;m)=\dot{u}_n+\mathcal{O}(n^{-1})$ where $\dot{u}_n$ is given explicitly by \eqref{eq:dot-u-UU-right}.
\end{itemize}
These results imply corresponding asymptotic formul\ae\ for $u_n(ny;m)$ if $m=\tfrac{1}{2}+k$, $k\in\mathbb{Z}_{\ge 0}$ by the exact symmetry \eqref{eq:u-n-exact-symmetry}; in particular the eyebrow near which the asymptotics are nontrivial is then the left one, $\pEZeroBlue$.
\label{thm:edge-formulae}
\end{theorem}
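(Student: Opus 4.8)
\emph{Proof strategy.} By the exact symmetry \eqref{eq:u-n-exact-symmetry} it is enough to treat $m=-(k+\tfrac12)$ with $k\in\mathbb{Z}_{\ge0}$; passing to $m=k+\tfrac12$ then amounts to the substitution $y\mapsto-y$, which interchanges $\pEInftyRed$ with $\pEZeroBlue$ and, by \eqref{eq:V-define}, replaces $V(\lNaught(y);y)$ by $-V(\lNaught(y);y)=V(\lNaught(y)^{-1};y)\pmod{2\pi\ii}$, so that all threshold conditions on $\mathrm{Re}(V(\lNaught(y)^{-1};y))$ are preserved. For $m=-(k+\tfrac12)$ one has $1/\Gamma(\tfrac12+m)=1/\Gamma(-k)=0$ and $-\ee^{\pm2\pi\ii m}=1$, so in Riemann-Hilbert Problem~\ref{rhp:renormalized} with $x=ny+w$ the jump matrices on the blue arcs $\LZeroBlue$ and $\LInftyBlue$ collapse to the identity. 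Hence $\mathbf{Y}=\mathbf{Y}_n(\lambda;ny+w,-(k+\tfrac12))$ is analytic in $\mathbb{C}\setminus(\LZeroRed\cup\LInftyRed)$, has there the unit upper-triangular jumps with $(1,2)$-entry $\mp\tfrac{\sqrt{2\pi}}{k!}\OurPower{\lambda}{k-\tfrac12}\lambda^n\ee^{\ii(ny+w)\varphi(\lambda)}$ whose $n$-dependent factor is the exponential $\ee^{-nV(\lambda;y)}$ of \eqref{eq:V-define}, tends to $\mathbb{I}$ as $\lambda\to\infty$, and at $\lambda=0$ (since $\mathbf{Y}(\lambda)\OurPower{\lambda}{-(m+\tfrac12)\sigma_3}=\mathbf{Y}(\lambda)\OurPower{\lambda}{k\sigma_3}$ has a limit) is allowed a pole of order at most $k$ in its first column and a zero of order at least $k$ in its second column. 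Throughout, the sign in \eqref{eq:increment-argument-red} is at our disposal by Remark~\ref{rmk:surgery}.

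Next I would run the nonlinear steepest-descent reduction of this RHP in the tubular set $T$ of \eqref{eq:T-define}: deform $\LZeroRed\cup\LInftyRed$ onto the steepest-descent trajectories of $\mathrm{Re}(V(\cdot\,;y))$ furnished by the genus-zero (``degenerate'') Boutroux construction of Sections~\ref{sec:Boutroux-y-small}--\ref{sec:Boutroux-degenerate}, introduce the associated $g$-function and the corresponding $\sigma_3$-conjugation, and factor the jump so that it is exponentially close to $\mathbb{I}$ off a single band lying along $\pEInftyRed$. This leaves a genus-zero global parametrix expressible through elementary (trigonometric) functions, together with local parametrices near any band endpoints (of Airy type) and near $\lambda=0$ (of Bessel type, with order fixed by $m$). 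The half-integer value $m=-(k+\tfrac12)$ is exactly what makes this last parametrix degenerate: its ingredients become spherical Bessel functions, whose closed forms are (polynomial of degree $k$ in $1/z$)$\,\cos z$ plus (polynomial of degree $k$ in $1/z$)$\,\sin z$. Equivalently, one may realise $\mathbf{Y}_n(\cdot\,;x,-(k+\tfrac12))$ from the base case $\mathbf{Y}_n(\cdot\,;x,-\tfrac12)$ --- which is essentially explicit: the first column of $\mathbf{Y}$ is then entire, bounded, and $\to(1,0)^\top$, hence constant, so \eqref{eq:u-n-from-Y-formula} reduces $u_n(ny+w;-\tfrac12)$ to a ratio of Bessel-type contour integrals --- by $k$ explicit Schlesinger transformations $\mathbf{Y}\mapsto\mathbf{R}_\ell(\lambda)\mathbf{Y}$, $\ell=1,\dots,k$, each $\mathbf{R}_\ell$ rational in $\lambda$ with one simple pole and raising by one the admissible order of growth at $\lambda=0$.

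The layered structure then emerges by dominant balance. The $k+1$ monomials $z^{-j}$, $j=0,\dots,k$, in the spherical-Bessel prefactors (equivalently, the $k+1$ distinct contributions generated by the $\mathbf{R}_\ell$) are weighted, after the $g$-function conjugation, by powers of $\ee^{-2n\mathrm{Re}(V(\lNaught(y)^{-1};y))}$; two successive contributions interchange dominance when this quantity is comparable to a power of $n$, i.e.\ when $n\,\mathrm{Re}(V(\lNaught(y)^{-1};y))$ is within $\mathcal{O}(\ln n)$ of definite multiples, which is precisely the $n^{-1}\ln n$ scale appearing in the statement. Matching the thresholds $-\tfrac12(k-2\ell+2)n^{-1}\ln n$, $-\tfrac12(k-2\ell+\tfrac32)n^{-1}\ln n$, $\dots$ to these cross-over loci identifies the layers $\ell=1,\dots,k$ and their sub-layers: each layer contains a thin oscillatory sub-band of width $\tfrac12n^{-1}\ln n$ (the ``UL'' sub-layer, where a genus-zero configuration with an open band is in force, producing \eqref{eq:dot-u-UL-1}--\eqref{eq:dot-u-UL-2}) flanked by two transition sub-layers of width $\tfrac14n^{-1}\ln n$ (the ``UU'' sub-layers, \eqref{eq:dot-u-UU-first} and \eqref{eq:dot-u-UU-last}), so that the poles and zeros accumulate on the $k$ sub-bands, consistent with their total number scaling like $n$. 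In each (sub-)region one substitutes the corresponding dominant-balance form of the global and local parametrices into the small-norm problem for the error matrix $\mathbf{E}:=\mathbf{Y}\cdot(\text{parametrix})^{-1}$: where a single contribution strictly dominates the residual jumps are $\mathcal{O}(n^{-1})$ and the error is $\mathcal{O}(n^{-1})$ (the outermost regimes, i.e.\ the first and last bullets, which also match Theorems~\ref{theorem:outside} and \ref{theorem:closed-eye-equilibrium} away from the eyebrow), whereas inside a (sub-)layer two contributions are of comparable magnitude, the residual jumps are controlled only to relative order $n^{-1/2}$, and one obtains the stated $\mathcal{O}(n^{-1/2})$.

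It then remains to extract $u_n$: reading $Y^\infty_{1,12}$ off the $\lambda\to\infty$ expansion and $Y^0_{0,11}$, $Y^0_{0,12}$ off the $\lambda\to0$ behaviour of the assembled approximant in each (sub-)region and inserting these into \eqref{eq:u-n-from-Y-formula} yields, after simplification, precisely the indicated one among \eqref{eq:dot-u-UU-left}, \eqref{eq:dot-u-UU-first}, \eqref{eq:dot-u-UL-1}, \eqref{eq:dot-u-UL-2}, \eqref{eq:dot-u-UU-last}, \eqref{eq:dot-u-UU-right}; the discs of radius $\propto n^{-1}$ excised around the zeros and poles of $\dot u_n$ are the points where these elementary approximants vanish or blow up and the additive estimate must be read projectively, exactly as for Theorems~\ref{theorem:eye} and \ref{theorem:closed-eye-equilibrium}. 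I expect the genuine difficulty to lie in the bookkeeping of the previous paragraph: pinning down the exact cross-over thresholds among the $k+1$ competing contributions uniformly for $w\in K_w$ and for $y\in T$ --- the angular cut-off $\delta_1$ keeping $y$ away from the corners $y=\pm\tfrac12\ii$, where a different local analysis would be required, and the width cut-off $\delta_2$ keeping $y$ out of the equilibrium regime --- and then verifying that the small-norm estimates for $\mathbf{E}$ genuinely survive inside the transition layers, the unavoidable loss from $\mathcal{O}(n^{-1})$ to $\mathcal{O}(n^{-1/2})$ there being the price of working where a single genus-zero parametrix ceases to be a good leading approximation.
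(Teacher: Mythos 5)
Your reduction of the Riemann--Hilbert problem is right: for $m=-(k+\tfrac12)$ the jumps on $\LZeroBlue\cup\LInftyBlue$ vanish, the problem lives on $\LZeroRed\cup\LInftyRed$ with the single exponential $\ee^{-nV(\lambda;y)}$, the layers appear on the scale $n^{-1}\ln n$, the error rates $\mathcal{O}(n^{-1})$ outside and $\mathcal{O}(n^{-1/2})$ inside the transition layers are correct, and the final extraction via \eqref{eq:u-n-from-Y-formula} is as you describe. But the analytical core of your argument does not match the geometry of this regime and leaves the essential mechanism unsupplied. For $y$ near $\pEInftyRed$ there is no band, no Airy endpoint, and no Bessel parametrix at $\lambda=0$: the eyebrow is a locus in the $y$-plane (where $\mathrm{Re}(V(\lNaught(y);y))=0$), not a band in the $\lambda$-plane, and the correct choice is the \emph{trivial} $g$-function $g\equiv 0$, under which the whole contour is already exponentially suppressed except in small disks around the two simple saddles $\lambda_1=\lNaught(y)^{-1}$ and $\lambda_2=\lNaught(y)$, which for $y\in T$ sit on (nearly) the same level of $\mathrm{Re}\,V$ and therefore both contribute. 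The degeneration at $\partial E$ is case (ii) of the spectral-curve classification (two double roots), not an opening band, so the ``genus-zero configuration with an open band'' you invoke for the UL sub-layers is not what happens there.

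The idea you are missing is how the integer $k$ (the prescribed pole/zero order of $\mathbf{Y}\lambda^{k\sigma_3}$ at the origin) gets \emph{distributed between the two saddles}, and how that distribution must change as $y$ crosses the layers. The paper's proof introduces a one-parameter family of diagonal outer parametrices $\lambda^{-k\sigma_3}(\lambda-\lNaught(y)^{-1})^{\alpha_1\sigma_3}(\lambda-\lNaught(y))^{\alpha_2\sigma_3}$ indexed by partitions $\alpha_1+\alpha_2=k$, installs at each saddle a Fokas--Its--Kitaev/Hermite-polynomial parametrix of degree $\alpha_j$ (RHP~\ref{rhp:FIK-Hermite-red}), and observes that the mismatch on the disk boundaries carries factors $\ee^{\pm n(V(\lambda_j;y)-V(\lambda_{3-j};y))}n^{\pm(\alpha_j-\alpha_{3-j})}$; demanding that these be bounded forces exactly the thresholds $-\tfrac12(k-2\ell+j/2)\,n^{-1}\ln n$ and dictates which partition and which residual model (the unconditionally solvable upper-upper problem, or the soliton-type upper-lower problem whose determinant $\Delta$ produces the zeros) governs each sub-layer. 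Your ``dominant balance among $k+1$ contributions'' correctly predicts the \emph{count} of regimes, and your alternative of building $k$ from the explicit $k=0$ case by Schlesinger transformations is plausible in principle, but as written it is a heuristic: you do not exhibit the objects whose competition produces the specific formul\ae\ \eqref{eq:dot-u-UU-left}--\eqref{eq:dot-u-UU-right}, nor a local model valid at the saddles when two of your contributions are comparable, so the proof cannot be completed along the route you sketch without importing precisely this missing construction.
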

The inequalities on $y$ in the statement of the theorem describe a dissection of $T$ into finitely-many (depending on $k$) ``layers'' roughly parallel to the right eyebrow $\pEInftyRed$ and overlapping at their common boundaries.  The order of the layers as written in the theorem corresponds to $y$ crossing $\pEInftyRed$ from inside $E$ to outside, and the ``interior'' layers described by the index $\ell$ are each of width proportional to $n^{-1}\ln(n)$.  The approximation $\dot{u}_n$ assigned to each layer is a fractional linear (M\"obius) function of $n^\beta\ee^{2nV(\lNaught(y)^{-1};y)}$ where the power $\beta$ and the coefficients of the linear expressions in the numerator/denominator depend on the layer.  The latter coefficients are relatively slowly-varying functions of $y$ alone that are explicitly built from $\lNaught(y)$, and hence the dominant local behavior in any given layer is essentially trigonometric with respect to $y$.  We wish to stress that, unlike the approximation formula \eqref{eq:udot-elliptic} whose ingredients involve implicitly-defined functions of $y\in E_\mathrm{R}$ and elements of algebraic geometry, the approximation $\dot{u}_n$ in each layer is an elementary function of $V(\lambda;y)$ and $\lNaught(y)$.  In particular, it is easy to check that when $y$ is in the innermost or outermost layers but bounded away from $\pEInftyRed$ (the ``overlap domain''), Theorem~\ref{thm:edge-formulae} is consistent with Theorem~\ref{theorem:closed-eye-equilibrium}.\bigskip 

The analogue of Corollary~\ref{corollary:eye-zeros-and-poles:better} in the present context is the following.
\begin{corollary}
Let $m=-(k+\tfrac{1}{2})$, $k\in\mathbb{Z}_{\ge 0}$, and let $T$ be defined as in \eqref{eq:T-define}.  If $\{y_n\}_{n=N}^\infty\subset T$ is a sequence for which $y_n$ is a zero of $\dot{u}_n$ for all $n\ge N$,
then for each $\epsilon>0$ sufficiently small there is exactly one simple zero, and possibly a group of an equal number of additional zeros and poles, of $u_n(ny;m)$ within $|y-y_n|<\epsilon n^{-1}$ for $n$ sufficiently large.  Likewise, if $\{y_n\}_{n=N}^\infty\subset T$ is a sequence for which $y_n$ is a pole of $\dot{u}_n$ for all $n\ge N$,
then for each $\epsilon>0$ sufficiently small there is exactly one simple pole, and possibly a group of an equal number of additional zeros and poles, of $u_n(ny;m)$ within $|y-y_n|<\epsilon n^{-1}$ for $n$ sufficiently large.
\label{corollary:eyebrow-zeros-and-poles}
\end{corollary}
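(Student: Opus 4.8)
The plan is to argue exactly as in the proof of Corollary~\ref{corollary:eye-zeros-and-poles:better}, i.e.\ by an index (argument-principle) computation on a small circle, but with the layered trigonometric approximation of Theorem~\ref{thm:edge-formulae} in place of the elliptic one. Fix $m=-(k+\tfrac12)$, a small $\epsilon>0$, and a sequence $\{y_n\}_{n\ge N}\subset T$ with $\dot u_n(y_n)=0$; the pole case is identical with the roles of numerator and denominator exchanged, and the reflected statement for $m=k+\tfrac12$ and the left eyebrow $\pEZeroBlue$ then follows from the exact symmetry \eqref{eq:u-n-exact-symmetry}. For each large $n$ I would apply the argument principle for meromorphic functions to $y\mapsto u_n(ny;m)$ on the circle $\Gamma_n$ of radius $\epsilon n^{-1}$ centred at $y_n$, comparing it with the approximant $\dot u_n$ furnished by Theorem~\ref{thm:edge-formulae}. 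Since the layers in the dissection of $T$ have width of order $n^{-1}\ln n\gg\epsilon n^{-1}$, and (as follows from the construction) the zeros and poles of $\dot u_n$ lie in the interiors of the layers at distances bounded below by a multiple of $n^{-1}\ln n$ from the dissection boundaries, the closed disc bounded by $\Gamma_n$ sits inside a single layer for $n$ large, so one clean formula $u_n(ny;m)=\dot u_n+\mathcal O(n^{-\gamma})$ with $\gamma\in\{\tfrac12,1\}$ applies there, uniformly off the excised discs.

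The real content is a local description of the zeros and poles of $\dot u_n$ near $y_n$. In each layer $\dot u_n$ is a M\"obius function $\dot u_n=(a(y)\zeta(y)+b(y))/(c(y)\zeta(y)+d(y))$ of the rapidly varying quantity $\zeta(y)=n^{\beta}\ee^{2nV(\lNaught(y)^{-1};y)}$, whose coefficients $a,b,c,d$ are built explicitly from $\lNaught(y)$ and are therefore slowly varying. As $\zeta$ never vanishes, a zero of $\dot u_n$ sits exactly where $\zeta(y)=-b(y)/a(y)$, equivalently where $2n\,V(\lNaught(y)^{-1};y)$ lies in a prescribed coset of $2\pi\ii\mathbb Z$; splitting into real and imaginary parts exhibits the zeros of $\dot u_n$ as the transverse intersection points of a level curve of $\mathrm{Re}\,V(\lNaught(\cdot)^{-1};\cdot)$ with the $n^{-1}(\mathrm{const}+2\pi\mathbb Z)$-family of level curves of $\mathrm{Im}\,V(\lNaught(\cdot)^{-1};\cdot)$ --- precisely the level-curve picture underlying Corollary~\ref{corollary:eye-zeros-and-poles:better}. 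I would then record the technical facts needed: (i) $\partial_y V(\lNaught(y)^{-1};y)\neq0$ throughout $T$ (the angular cutoff in \eqref{eq:T-define} keeps $T$ away from the corners $y=\pm\tfrac12\ii$), so the two families of level curves meet transversally and each zero of $\dot u_n$ is simple, with the numerator $a\zeta+b$ having derivative of size $\asymp n$ there; and (ii) the M\"obius coefficients satisfy $a(y_n),b(y_n)\neq0$ and $ad-bc\neq0$ on $T$, so that the zeros and poles of $\dot u_n$ are distinct points separated from one another by distances $\gtrsim c_0 n^{-1}$ for a fixed $c_0>0$. Granting these, for $\epsilon<\tfrac12 c_0$ and $n$ large the open disc $|y-y_n|<\epsilon n^{-1}$ contains exactly the one simple zero $y_n$ of $\dot u_n$ and no pole of $\dot u_n$, while on $\Gamma_n$ one has $c_1\epsilon\le|\dot u_n|\le c_2\epsilon$ for fixed $c_1,c_2>0$, since there $|a\zeta+b|\asymp n|y-y_n|=\epsilon$ and $|c\zeta+d|\asymp1$.

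The comparison is then routine. Taking the proportionality constant for the radii of the excised discs in Theorem~\ref{thm:edge-formulae} smaller than both $\epsilon$ and $c_0-\epsilon$ ensures $\Gamma_n$ avoids every excised disc, so $u_n(ny;m)=\dot u_n+e_n$ on $\Gamma_n$ with $\|e_n\|_{\Gamma_n}=\mathcal O(n^{-\gamma})$; as $\epsilon$ is fixed, $n^{-\gamma}<c_1\epsilon$ for $n$ large, giving $|e_n|<|\dot u_n|$ on $\Gamma_n$. Hence $u_n(ny;m)/\dot u_n$ has winding number zero about the origin along $\Gamma_n$, so $u_n(ny;m)$ has the same number of zeros minus poles inside $\Gamma_n$ as $\dot u_n$, namely $+1$: a single simple zero, or a simple zero together with an equal number of additional zeros and poles (as for Corollary~\ref{corollary:eye-zeros-and-poles:better}, the index argument cannot exclude such excess pairing; if the nominal radius happens to place a pole of the rational function $u_n$ on $\Gamma_n$, one first shrinks it slightly to dodge the finitely many poles of $u_n$, which affects neither the net count in the open disc nor the conclusion).

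Main obstacle: I expect the bulk of the work to be establishing the non-degeneracy facts (i) and (ii) --- together with the order-$n^{-1}$ separation of the zeros and poles of $\dot u_n$ --- uniformly on $T$ and \emph{for each} of the layers of Theorem~\ref{thm:edge-formulae}, including checking that the innermost and outermost layers, where $\dot u_n$ reduces to $\ii\lNaughtZeroBlue(y)$ or $1/(\ii\lNaughtInftyRed(y))$ as in Theorem~\ref{theorem:closed-eye-equilibrium} and hence has no zeros or poles on $T$, produce no stray $y_n$; once this local structure of $\dot u_n$ is in hand, the Rouch\'e/argument-principle step is standard. A secondary nuisance is the bookkeeping across layer overlaps, i.e.\ making sure that the disc $|y-y_n|<\epsilon n^{-1}$ always lies in a region carrying a single valid approximation with a clean lower bound on $\Gamma_n$ and a uniform error bound off the excised discs.
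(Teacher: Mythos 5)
Your proposal is correct and follows essentially the same route as the paper: the paper's own proof of Corollary~\ref{corollary:eyebrow-zeros-and-poles} is a terse version of exactly this index (winding-number) computation on a circle of radius $\epsilon n^{-1}$, resting on the observation that the excised discs in Theorem~\ref{thm:edge-formulae} are an arbitrarily small fraction of the $\asymp n^{-1}$ spacing between the zeros and poles of $\dot{u}_n$ in each layer. The non-degeneracy facts you flag as the main obstacle are precisely what the paper handles via the explicit M\"obius/level-curve structure of $\dot{u}_n$ recorded in Theorem~\ref{theorem:eyebrow-curves}.
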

As before, we suspect that with additional work one should be able to preclude the excess pairing phenomenon, so that the poles and zeros of $u_n(x;m)$ and its approximation $\dot{u}_n$ are in one-to-one correspondence.  Now in each layer of $T$, the poles and zeros of $\dot{u}_n$ are easily seen to lie exactly along certain explicit curves roughly parallel to the eyebrow.
\begin{theorem}
Suppose that $m=-(\tfrac{1}{2}+k)$, $k\in\mathbb{Z}_{\ge 0}$ and let $T$ be as in \eqref{eq:T-define}.  The zeros and poles of the piecewise-meromorphic approximating function $\dot{u}_n$ on $T$ lie on a system of $4k+2$ non-intersecting curves roughly parallel to the eyebrow $\pEInftyRed$.  From left-to-right, these are:
\begin{itemize}
\item a curve of poles given by \eqref{eq:left-pole-curve}
\item a curve of zeros given by \eqref{eq:left-zero-curve}
\item For $\ell=1,\dots,k$,
\begin{itemize}
\item a curve of zeros given by \eqref{eq:mid-UL-zero-curve}
\item a curve of poles given by \eqref{eq:mid-UL-pole-curve}
\item a curve of poles given by \eqref{eq:mid-UU-pole-curve}
\item a curve of zeros given by \eqref{eq:mid-UU-zero-curve}.
\end{itemize}
\end{itemize}
Analogous results hold for the approximation to $u_n(ny;m)$ for $m=\tfrac{1}{2}+k$, $k\in\mathbb{Z}_{\ge 0}$, obtained from $\dot{u}_n$ via the symmetry \eqref{eq:u-n-exact-symmetry} ($y\mapsto -y$, $m\mapsto -m$, $\dot{u}_n\mapsto \dot{u}_n^{-1}$).
\label{theorem:eyebrow-curves}
\end{theorem}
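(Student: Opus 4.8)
The plan is to take Theorem~\ref{thm:edge-formulae} as the starting point and extract the geometry of the zeros and poles directly from the explicit M\"obius representations of $\dot u_n$ on the various layers of $T$. On each layer, \eqref{eq:dot-u-UU-left}, \eqref{eq:dot-u-UU-first}, \eqref{eq:dot-u-UL-1}, \eqref{eq:dot-u-UL-2}, \eqref{eq:dot-u-UU-last}, \eqref{eq:dot-u-UU-right} present $\dot u_n$ as a fractional linear function $M_\ell$ of the single quantity $\xi_n(y):=n^{\beta_\ell}\ee^{2nV(\lNaught(y)^{-1};y)}$, with coefficients that are slowly-varying functions of $y$ built from $\lNaught(y)$. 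Hence the zeros of $\dot u_n$ in that layer are exactly the solutions of $\xi_n(y)=\zeta_\ell(y)$ and the poles exactly the solutions of $\xi_n(y)=\pi_\ell(y)$, where $\zeta_\ell(y),\pi_\ell(y)$ are the $\mathcal{O}(1)$ roots of the numerator and denominator of $M_\ell$; in the degenerate layers where $M_\ell$ is affine in $\xi$, reciprocal-affine, or constant, the relevant root sits at $\xi=\infty$ or $\xi=0$ and produces no finite locus (on $T$ the exponent $V(\lNaught(y)^{-1};y)$ is bounded and bounded away from its singularity at $y=0$, so $\xi_n(y)$ is finite and nonzero). First I would tabulate, reading off \eqref{eq:dot-u-UU-left}--\eqref{eq:dot-u-UU-right}, exactly which root of which $M_\ell$ is genuine; this should yield precisely $4k+2$ genuine root-equations, ordered as claimed: the innermost layer \eqref{eq:dot-u-UU-left} supplying a denominator and a numerator root near its outer edge (deep inside $E$ it collapses to the equilibrium $\ii\lNaughtInftyRed(y)$ of Theorem~\ref{theorem:closed-eye-equilibrium}, with neither), each of the $k$ intermediate groups supplying four (the UL sub-layer a numerator and a denominator root, the UU sub-layers a denominator and a numerator root), and the outermost layer \eqref{eq:dot-u-UU-right} again collapsing to the equilibrium and supplying none.

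Next I would turn each genuine root-equation into a curve. Taking logarithms, $\xi_n(y)=\zeta_\ell(y)$ is equivalent to $2nV(\lNaught(y)^{-1};y)=\log\zeta_\ell(y)-\beta_\ell\ln n+2\pi\ii\mathbb{Z}$, which splits into the real (modulus) condition $\mathrm{Re}(V(\lNaught(y)^{-1};y))=\tfrac{1}{2n}\bigl(\ln|\zeta_\ell(y)|-\beta_\ell\ln n\bigr)$ together with an imaginary-part (quantization) condition on $\mathrm{Im}(V(\lNaught(y)^{-1};y))$. The left-hand side of the modulus condition is a harmonic function of $y$ that vanishes exactly on $\pEInftyRed$ and, by the analysis underlying Theorem~\ref{theorem:closed-eye-equilibrium} (which is precisely why $T$ is cut off by $|\arg(y)|\le\tfrac{\pi}{2}-\delta_1$ in \eqref{eq:T-define}), has non-vanishing gradient along the part of $\pEInftyRed$ meeting $T$; since the right-hand side equals $-\beta_\ell(2n)^{-1}\ln n+\mathcal{O}(n^{-1})$, the implicit function theorem produces for large $n$ a smooth simple arc --- this is the asserted curve \eqref{eq:left-zero-curve}, \eqref{eq:mid-UL-zero-curve}, or \eqref{eq:mid-UU-zero-curve} --- that is an $\mathcal{O}(n^{-1}\ln n)$ normal perturbation of $\pEInftyRed$ and, by the very way the exponents $\beta_\ell$ were assigned in Theorem~\ref{thm:edge-formulae}, lies inside the layer on which $M_\ell$ is used (and inside $T$, since the $\mathcal{O}(n^{-1}\ln n)$ offset is eventually $\le\delta_2$). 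The quantization condition then confines the actual zeros of $\dot u_n$ to a discrete subset of this arc with spacing $\mathcal{O}(n^{-1})$, which is the precise meaning of ``the zeros lie on the curve.'' Replacing $\zeta_\ell$ by $\pi_\ell$ gives the curves of poles \eqref{eq:left-pole-curve}, \eqref{eq:mid-UL-pole-curve}, \eqref{eq:mid-UU-pole-curve} in the same way.

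It then remains to check that these $4k+2$ arcs are pairwise non-intersecting and ``roughly parallel'' to $\pEInftyRed$. All of them are level curves of the single harmonic function $\mathrm{Re}(V(\lNaught(y)^{-1};y))$, at the levels $\tfrac{1}{2n}(\ln|\zeta_\ell(y)|-\beta_\ell\ln n)$ and $\tfrac{1}{2n}(\ln|\pi_\ell(y)|-\beta_\ell\ln n)$, so it is enough to see that these level values are mutually separated uniformly on $T$: across the $k$ groups the $\beta_\ell$ differ, while within a group the leading terms coincide but the $\mathcal{O}(1)$ corrections $\ln|\zeta_\ell|$, $\ln|\pi_\ell|$ keep the levels $\mathcal{O}(n^{-1})$ apart --- a finite list of strict inequalities readable from \eqref{eq:dot-u-UU-left}--\eqref{eq:dot-u-UU-right}. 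Being nested level sets of a function vanishing on $\pEInftyRed$, the arcs are then non-crossing, limit to $\pEInftyRed$ as $n\to+\infty$, and inherit their left-to-right order from the ordering of the level values. Finally, the corresponding statement for $m=\tfrac{1}{2}+k$ follows by applying the exact symmetry \eqref{eq:u-n-exact-symmetry}, which sends $(y,\dot u_n)\mapsto(-y,\dot u_n^{-1})$ and interchanges zeros with poles, together with the identity $V(\lNaught(y)^{-1};y)=-V(\lNaught(y);y)\pmod{2\pi\ii}$ noted before \eqref{eq:T-define}, carrying curves parallel to $\pEInftyRed$ to curves parallel to $\pEZeroBlue$.

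The analytic ingredients --- the M\"obius representations from Theorem~\ref{thm:edge-formulae} and the harmonicity and boundary behaviour of $\mathrm{Re}(V(\lNaught(y)^{-1};y))$ on $T$ --- are already in hand, so I expect the only genuine obstacle to be the bookkeeping of the first paragraph: one must determine from each of the six explicit formulas \eqref{eq:dot-u-UU-left}--\eqref{eq:dot-u-UU-right} exactly which numerator/denominator root is \emph{genuine} (finite, nonzero, and producing a level value strictly interior to the layer of validity once the $\mathcal{O}(n^{-1})$ excision discs are removed) and which is spurious, so that the count comes out to exactly $4k+2$ with the stated left-to-right pattern of poles and zeros; the companion check, the uniform separation of the $4k+2$ level values that yields non-intersection, is of the same elementary character but considerably lighter.
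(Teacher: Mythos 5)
Your proposal is correct and follows essentially the same route as the paper: read off, from each explicit M\"obius formula of Theorem~\ref{thm:edge-formulae}, the modulus condition under which the numerator or denominator vanishes, identify each such condition with one of the curves \eqref{eq:left-zero-curve}--\eqref{eq:mid-UU-pole-curve}, and order them left-to-right using $|\lNaught(y)|<1$ on $\pEInftyRed$ together with the agreement of adjacent layer formulas on their overlaps (which is what makes the count come out to $4k+2$ rather than double-counting the shared curves). Your additional remarks on the implicit function theorem and on the uniform $\mathcal{O}(n^{-1})$ separation of the level values are a slightly more explicit justification of the non-intersection claim than the paper gives, but the substance is identical.
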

Corollary~\ref{corollary:eyebrow-zeros-and-poles} and Theorem~\ref{theorem:eyebrow-curves} are proved in Section~\ref{sec:eyebrow-zeros-and-poles}.  To illustrate the accuracy of these results, we compare the exact locations of zeros and poles of $u_n(ny;m)$ for $m=-(k+\tfrac{1}{2})$, $k\in\mathbb{Z}_{\ge 0}$, with the curves described in Theorem~\ref{theorem:eyebrow-curves} in Figures~\ref{fig:EdgeCurves-k0}--\ref{fig:EdgeCurves-k2}.
\begin{figure}[h]
\begin{center}
\includegraphics[width=0.3\linewidth]{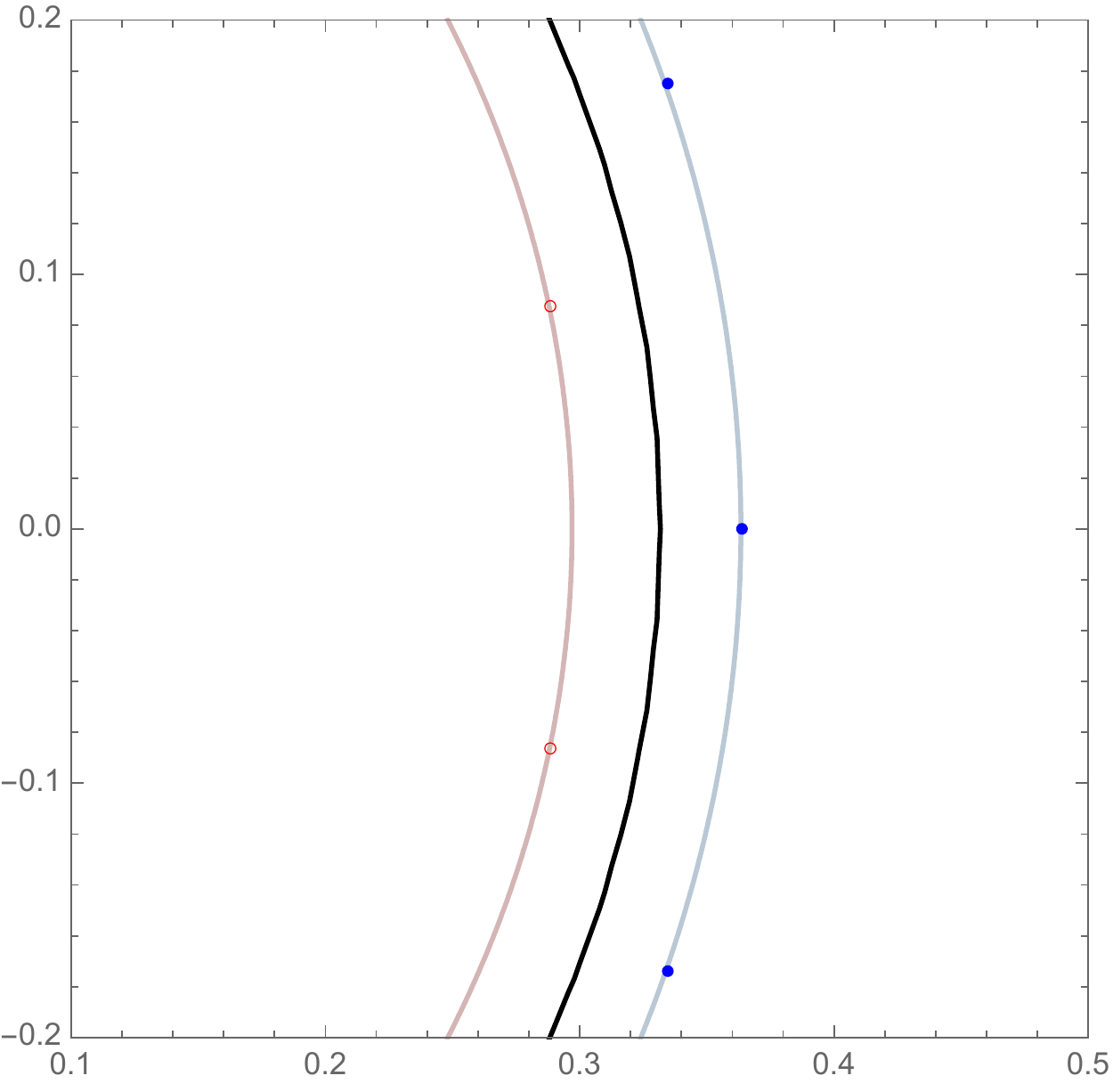}%
\hspace{0.04\linewidth}%
\includegraphics[width=0.3\linewidth]{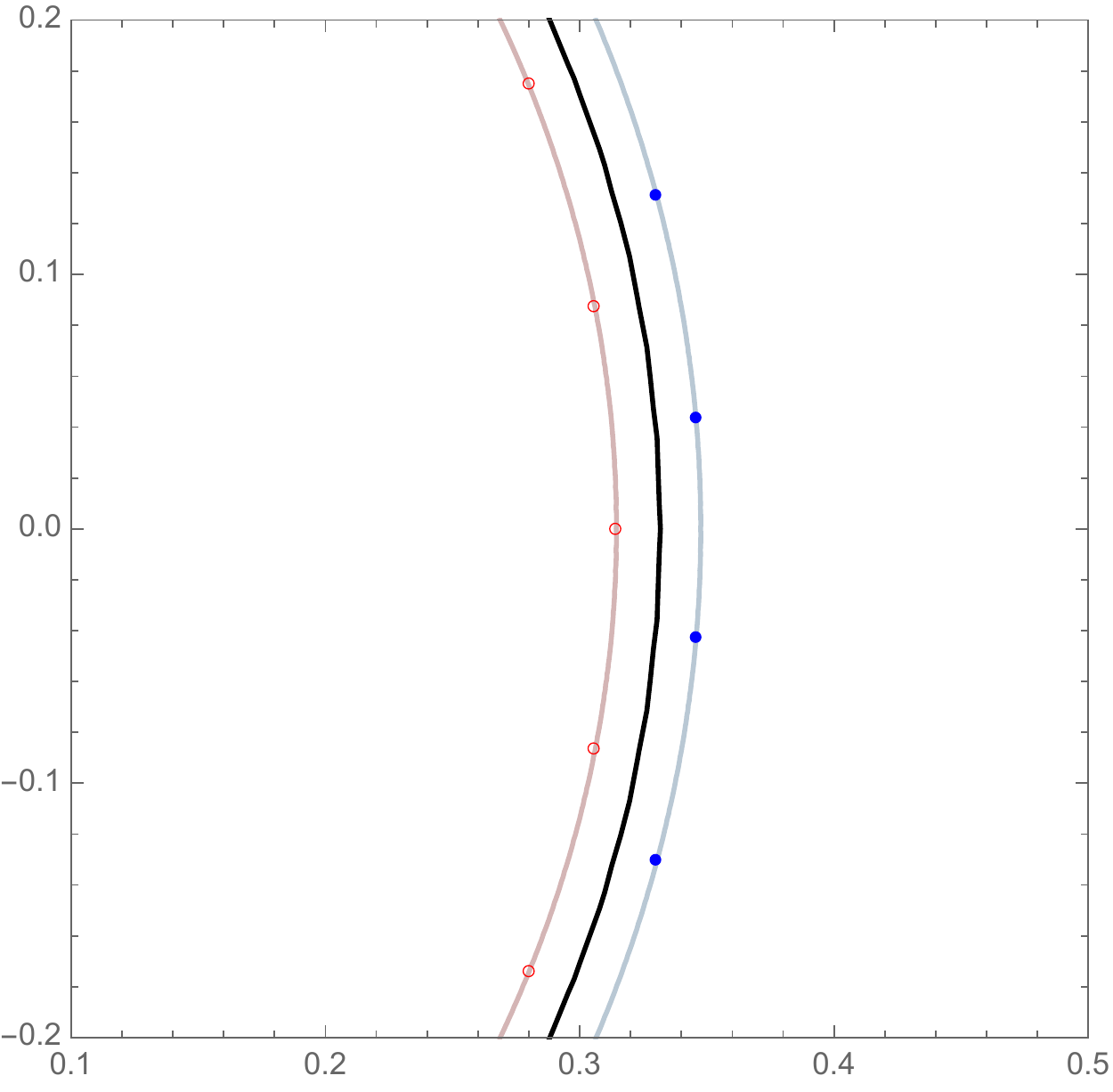}%
\hspace{0.04\linewidth}%
\includegraphics[width=0.3\linewidth]{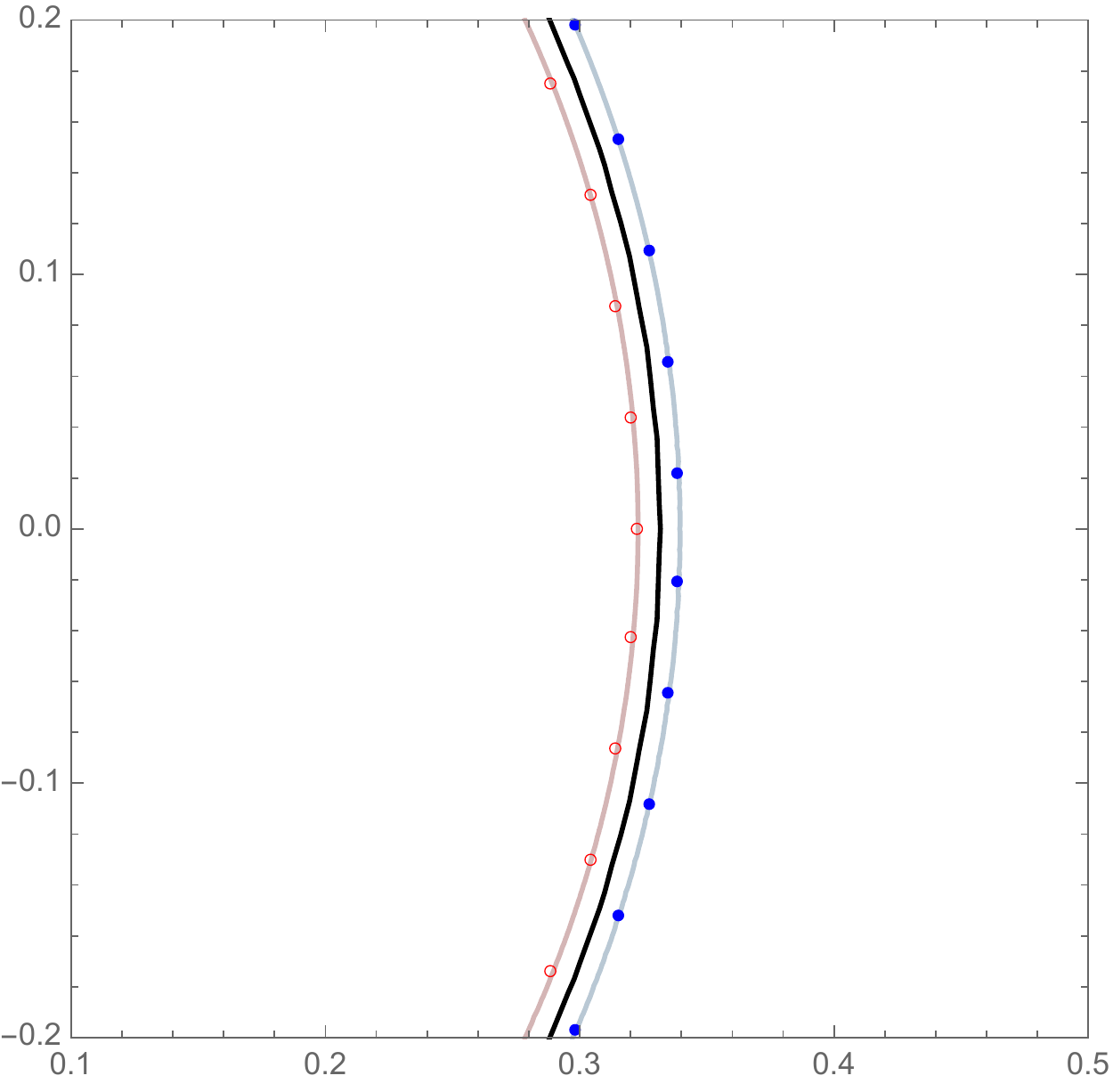}%
\end{center}
\caption{The pole (red) and zero (blue) curves of $\dot{u}$ for $k=0$ and $n=5,10,20$ from left-to-right, shown together with the actual poles (red dots) and zeros (blue dots) of $u_n(ny;-(\tfrac{1}{2}+k))$ and the eyebrow $\pEInftyRed$ (black curve).}
\label{fig:EdgeCurves-k0}
\end{figure}
\begin{figure}[h]
\begin{center}
\includegraphics[width=0.3\linewidth]{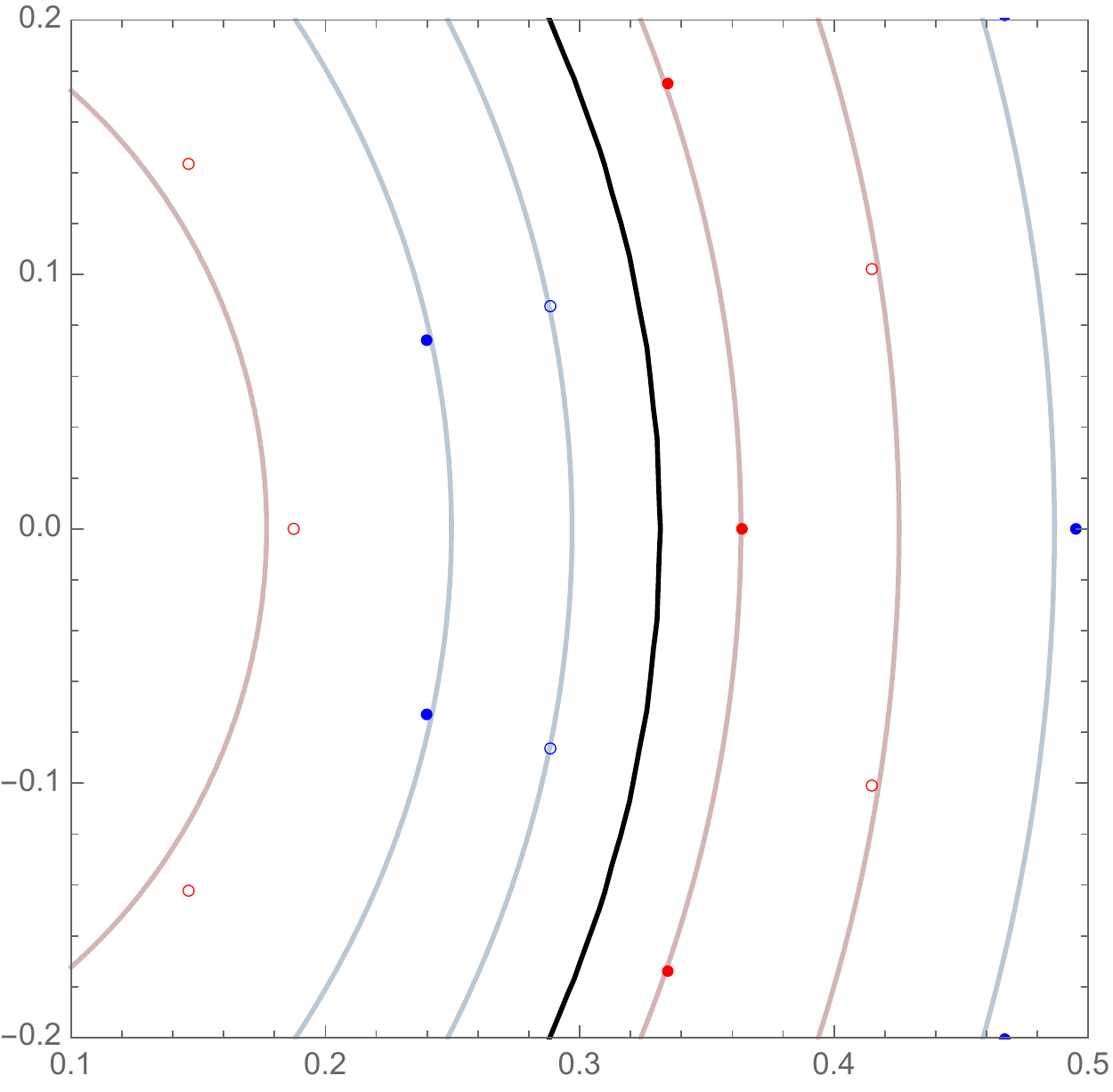}%
\hspace{0.04\linewidth}%
\includegraphics[width=0.3\linewidth]{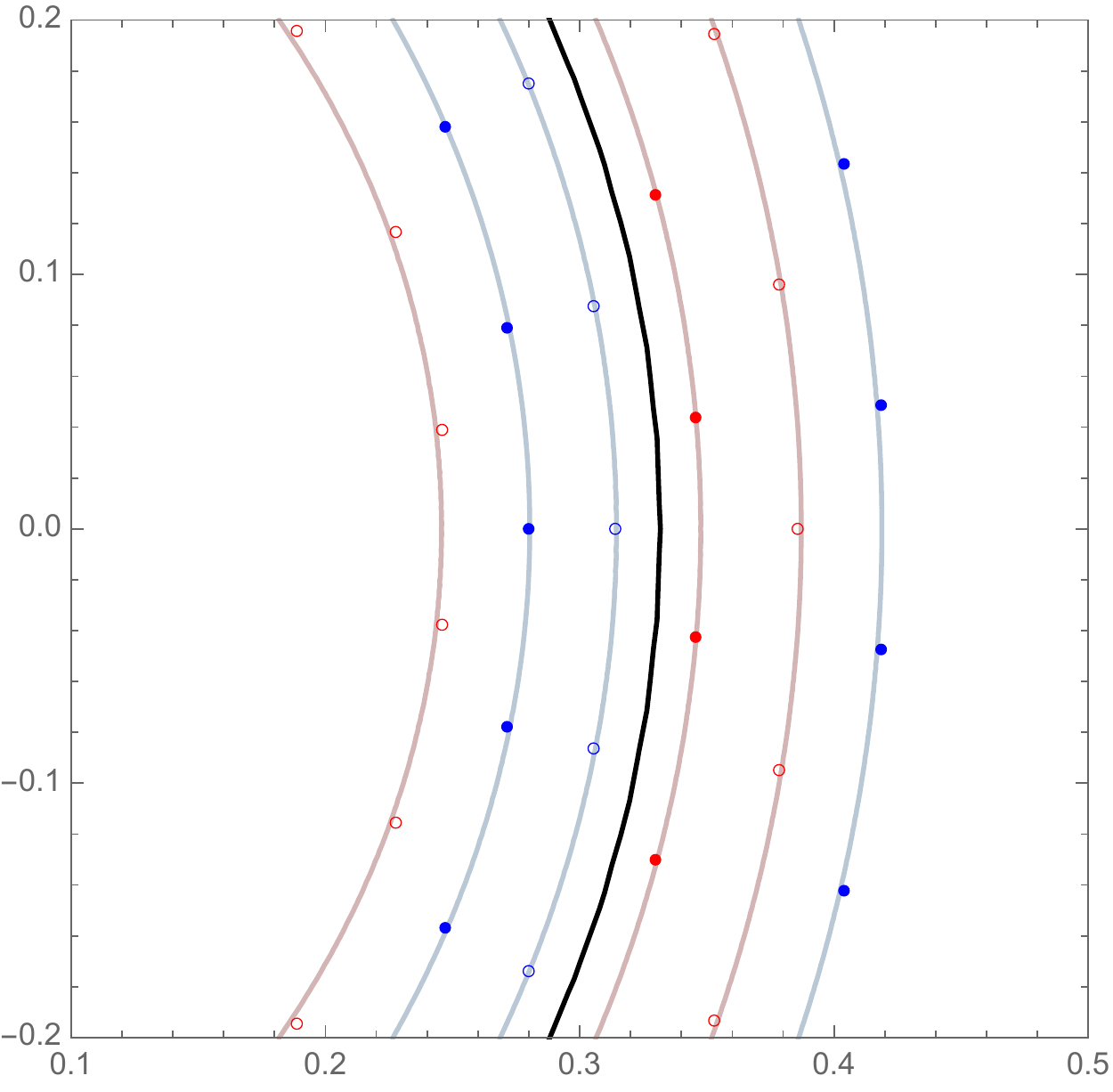}%
\hspace{0.04\linewidth}%
\includegraphics[width=0.3\linewidth]{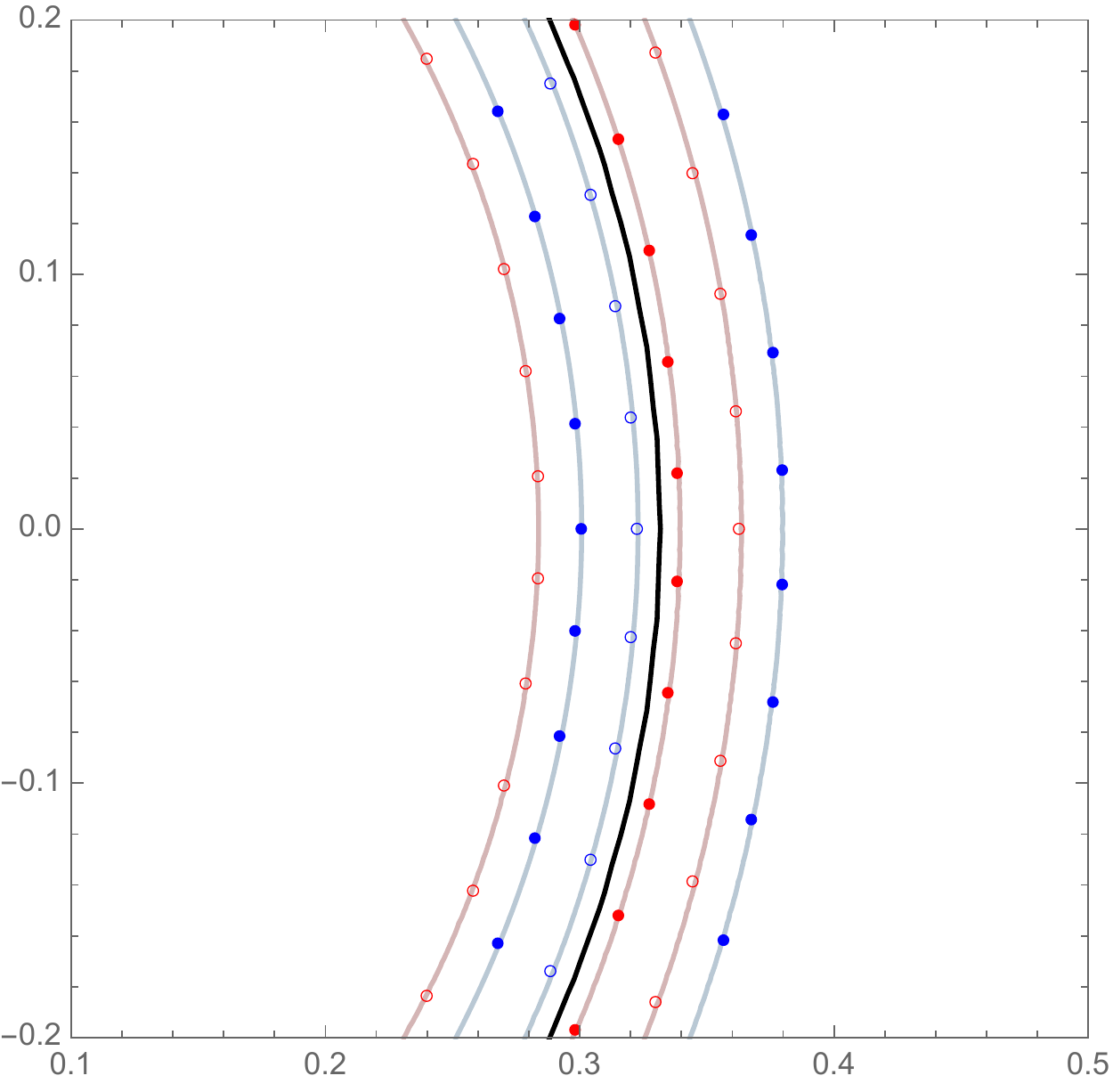}%
\end{center}
\caption{As in Figure~\ref{fig:EdgeCurves-k0} but for $k=1$.}
\label{fig:EdgeCurves-k1}
\end{figure}
\begin{figure}[h]
\begin{center}
\includegraphics[width=0.3\linewidth]{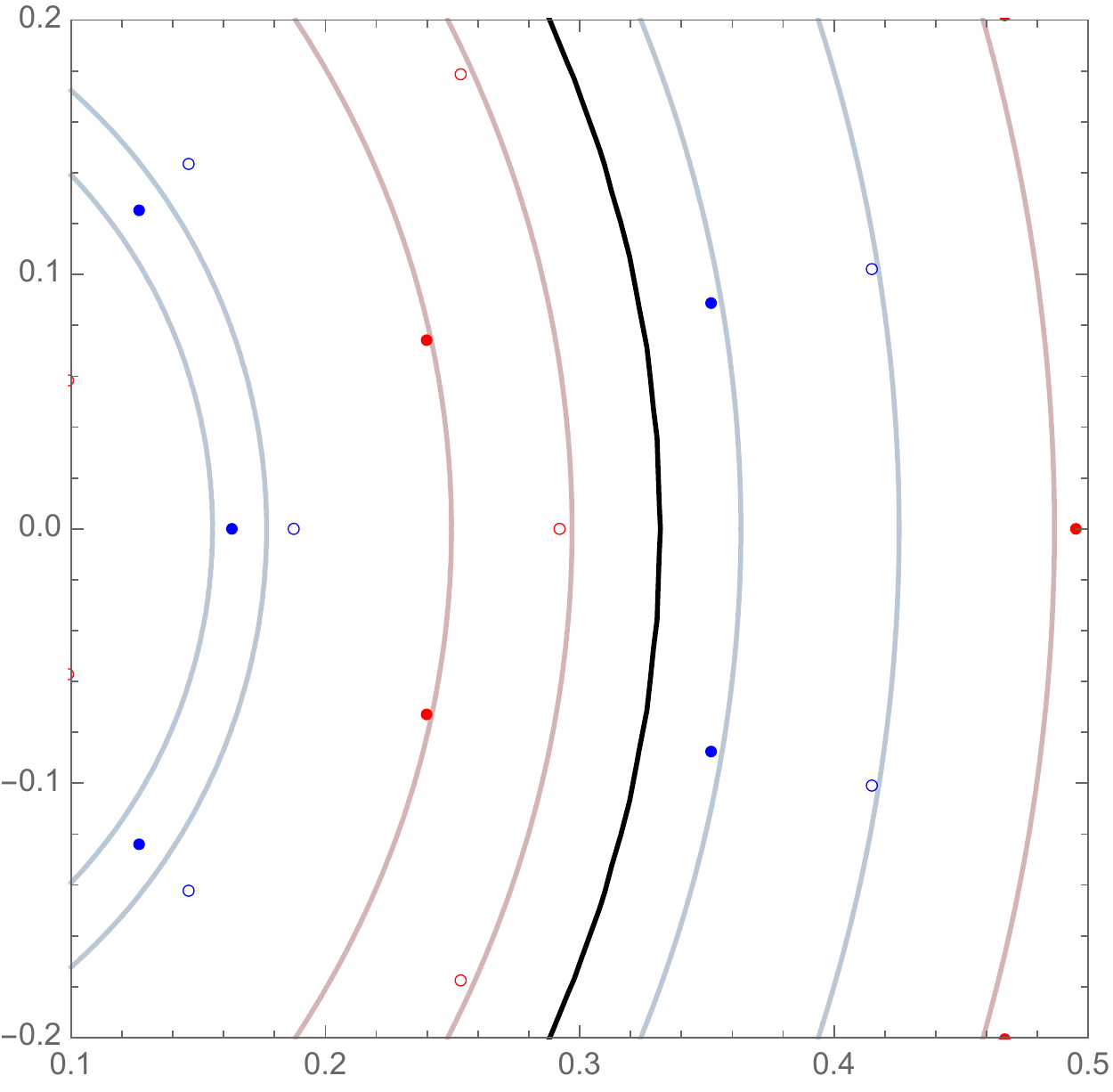}%
\hspace{0.04\linewidth}%
\includegraphics[width=0.3\linewidth]{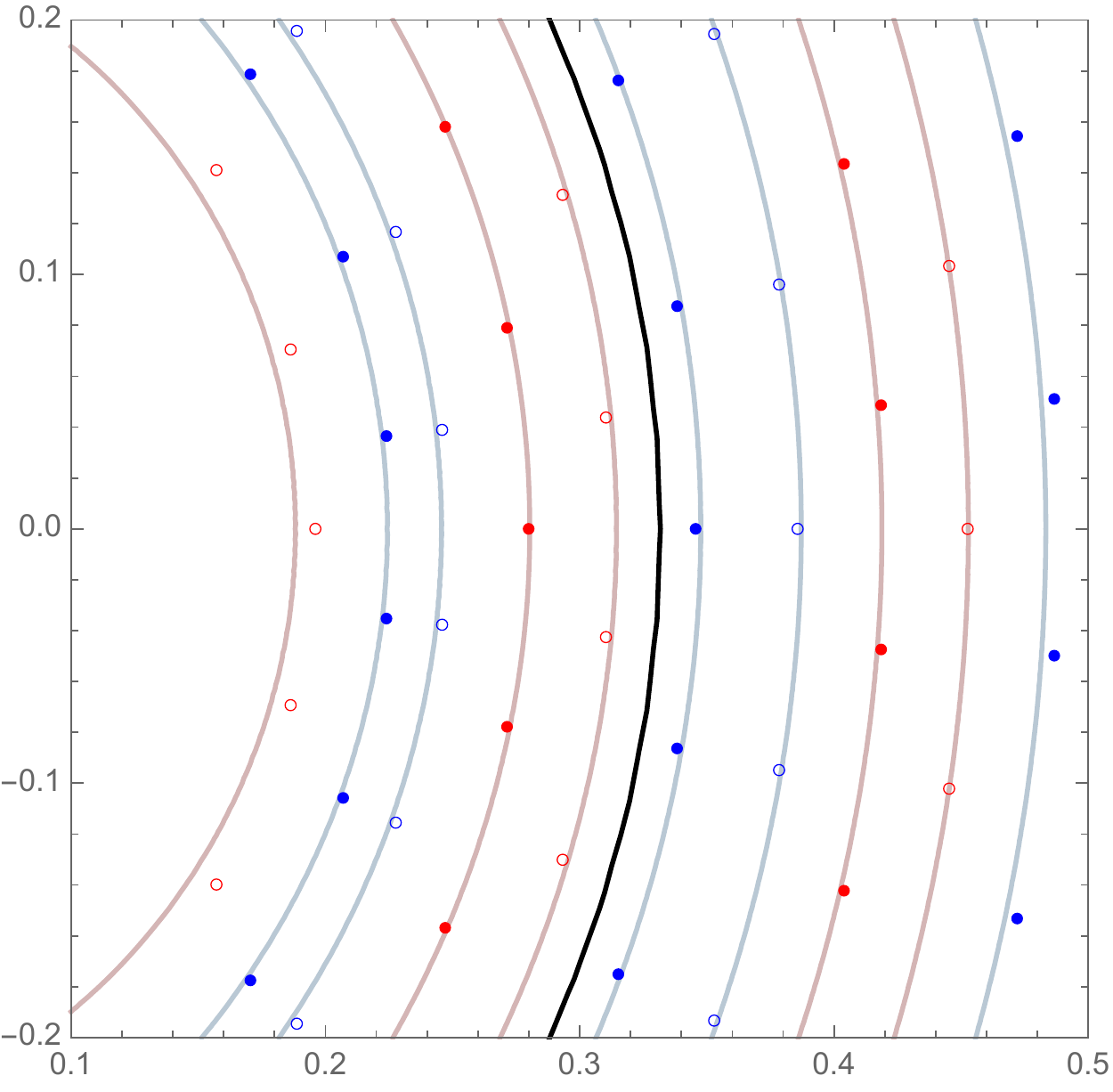}%
\hspace{0.04\linewidth}%
\includegraphics[width=0.3\linewidth]{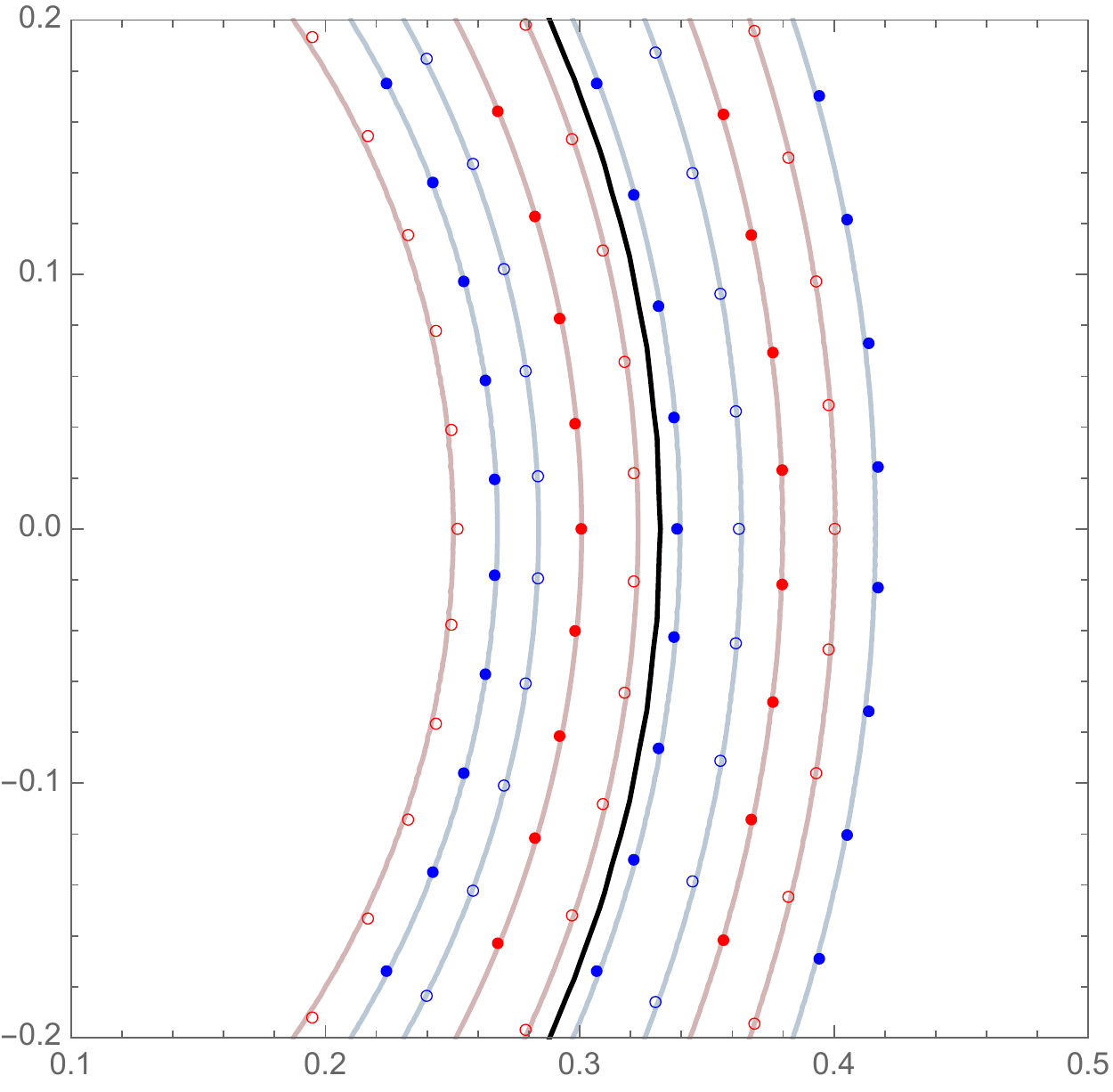}%
\end{center}
\caption{As in Figure~\ref{fig:EdgeCurves-k0} but for $k=2$.}
\label{fig:EdgeCurves-k2}
\end{figure}
In addition to illustrating the accuracy of the approximation by $\dot{u}_n$, these figures demonstrate another phenomenon for which we do not yet have an explanation:  for any given curve, the poles/zeros attracted are those contributed by exactly one of the four polynomial factors in \eqref{eq:un-exact-fraction}.  Furthermore, there appears again to be no excess pairing of poles and zeros.\bigskip

Evidently, the large-$n$ asymptotic behavior of $u_n(x;m)$ is completely different for $m=\pm (k+\tfrac{1}{2})$, $k\in\mathbb{Z}_{\ge 0}$, and for $m=\pm (k+\tfrac{1}{2})+\epsilon$, however small $\epsilon\neq 0$ is.  In other words, even crude aspects of the large-$n$ asymptotic behavior of $u_n(x;m)$ for $n^{-1}x$ in a neighborhood of the eye $E$ fail to be uniformly valid with respect to the second parameter $m$ near half-integer values of the latter.   Thus, given $m\in\mathbb{C}$, the eye is either open or closed in the large-$n$ limit.  On the other hand, the polynomials $s_n(x;m)$ in the formula \eqref{eq:un-exact-fraction} are actually polynomials in both arguments $x$ and $m$ \cite{ClarksonLL16}, and in this sense the limits of $n\to+\infty$ and $m\to \mathbb{Z}+\tfrac{1}{2}$ do not commute. Capturing the process of the closing of the eye requires connecting $m$ with $n$ in a suitable \emph{double-scaling limit} so that $m$ tends to a given half-integer as $n\to+\infty$.  In a subsequent paper, we will show that in the right double-scaling limit, \emph{all three types of solutions of the autonomous model equation \eqref{eq:autonomous} play a role in describing $u_n(ny;m)$ as $n\to+\infty$.}    



\section{Spectral Curve and $g$-function}
%
When $n$ is large, the exponential factors $\ee^{\pm nV(\lambda;y)}$ appearing in the jump conditions \eqref{eq:Yjump-1}--\eqref{eq:Yjump-4} need to be balanced in general by some compensating factors that can be used to control exponential growth.  We therefore introduce a ``$g$-function'' $g(\lambda;y)$ that is taken to be bounded and analytic in $\mathbb{C}\setminus L$ with $g(\lambda;y)\to g_\infty(y)$ as $\lambda\to\infty$ for some $g_\infty(y)$ to be determined, and we set
\begin{equation}
\mathbf{M}_n(\lambda;y,m):=\ee^{ng_\infty(y)\sigma_3}\mathbf{Y}_n(\lambda;ny,m)\ee^{-ng(\lambda;y)\sigma_3}.
\label{eq:Y-M-g-function}
\end{equation}
Thus, representing \eqref{eq:Yjump-1}--\eqref{eq:Yjump-4} in the general form $\mathbf{Y}_{n+}(\lambda;x,m)=\mathbf{Y}_{n-}(\lambda;x,m)\mathbf{V}(\lambda;x,m)$, we obtain the corresponding jump conditions for $\mathbf{M}_n(\lambda;y,m)$ in the form $\mathbf{M}_{n+}(\lambda;y,m)=\mathbf{M}_{n-}(\lambda;y,m)\ee^{ng_-(\lambda;y)\sigma_3}\mathbf{V}(\lambda;ny,m)\ee^{-ng_+(\lambda;y)\sigma_3}$.  Noting that
\begin{equation}
\ee^{ng_-(\lambda;y)\sigma_3}\mathbf{V}(\lambda;ny,m)\ee^{-ng_+(\lambda;y)\sigma_3}=
\begin{bmatrix}
\ee^{-n(g_+(\lambda;y)-g_-(\lambda;y))}V_{11}(\lambda;ny,m) & \ee^{n(g_+(\lambda;y)+g_-(\lambda;y))}V_{12}(\lambda;ny,m)\\
\ee^{-n(g_+(\lambda;y)+g_-(\lambda;y))}V_{21}(\lambda;ny,m) & \ee^{n(g_+(\lambda;y)-g_-(\lambda;y))}V_{22}(\lambda;ny,m)\end{bmatrix}
\end{equation}
we place the following conditions on $g$.  We want $g$ to be chosen so that $L$ can be deformed and then split into several arcs along each of which one of the following alternatives holds (recall that $V$ is defined by \eqref{eq:V-define}):
\begin{itemize}
\item $g_+(\lambda;y)-g_-(\lambda;y)=\ii K$ where $K\in\mathbb{R}$ is constant (implying that $g'(\lambda;y)$ has no jump discontinuity across the arc), and $\mathrm{Re}(2g_\pm(\lambda;y)-V(\lambda;y))<0$, or
\item $g_+(\lambda;y)+g_-(\lambda;y)-V(\lambda;y)=\ii K$ where $K\in\mathbb{R}$ is constant (implying that $g_+'(\lambda;y)+g_-'(\lambda;y)-V'(\lambda;y)=0$ holds along the arc), 
while $\mathrm{Re}(2g(\lambda;y)-V(\lambda;y))>0$ on both sides of the arc, or
\item $g_+(\lambda;y)+g_-(\lambda;y)-V(\lambda;y)=\ii K$ where $K\in\mathbb{R}$ is constant (implying that $g_+'(\lambda;y)+g_-'(\lambda;y)-V'(\lambda;y)=0$ holds along the arc),
while $\mathrm{Re}(2g(\lambda;y)-V(\lambda;y))<0$ on both sides of the arc, or
\item $g_+(\lambda;y)-g_-(\lambda;y)=\ii K$ where $K\in\mathbb{R}$ is constant (implying that $g'(\lambda;y)$ has no jump discontinuity across the arc), and $\mathrm{Re}(2g_\pm(\lambda;y)-V(\lambda;y))>0$.
\end{itemize}
The real constant $K$ will generally be different in each maximal arc.

\subsection{The spectral curve and its degenerations}
If we assume that $g'(\lambda;y)$ has a finite number of arcs of discontinuity along $L\setminus\{0\}$, then obviously $(g'(\lambda;y)-\tfrac{1}{2}V'(\lambda;y))_+=(g'(\lambda;y)-\tfrac{1}{2}V'(\lambda;y))_-$ except along these arcs.  Along the arcs of discontinuity where instead the condition $g_+(\lambda;y)+g_-(\lambda;y)-V(\lambda;y)=\ii K$ holds, by differentiation along the arc we have $(g'(\lambda;y)-\tfrac{1}{2}V'(\lambda;y))_+=-(g'(\lambda;y)-\tfrac{1}{2}V'(\lambda;y))_-$.  It follows that $(g'(\lambda;y)-\tfrac{1}{2}V'(\lambda;y))^2$ is an analytic function of $\lambda$ except at $\lambda=0$, which is the only singularity of $V'(\lambda;y)$.  Now since $g'(\lambda;y)=\mathcal{O}(\lambda^{-2})$ as $\lambda\to\infty$ and $g'(\lambda;y)=\mathcal{O}(1)$ as $\lambda\to 0$, it follows that
\begin{equation}
g'(\lambda;y)-\frac{1}{2}V'(\lambda;y)=\begin{cases} \displaystyle \frac{\ii y}{2}\lambda^{-2} + \frac{1}{2}\lambda^{-1} + \mathcal{O}(1),&\quad\lambda\to 0\smallskip\\
\displaystyle \frac{\ii y}{2} + \frac{1}{2}\lambda^{-1} + \mathcal{O}(\lambda^{-2}),&\quad\lambda\to\infty
\end{cases},
\label{eq:gprimeminushalfVprime-asymp}
\end{equation}
and hence if $y\neq 0$,
\begin{equation}
\left(g'(\lambda;y)-\frac{1}{2}V'(\lambda;y)\right)^2 = \begin{cases}
\displaystyle -\frac{y^2}{4}\lambda^{-4} + \frac{\ii y}{2}\lambda^{-3}+\mathcal{O}(\lambda^{-2}),&\quad
\lambda\to 0\smallskip\\
\displaystyle -\frac{y^2}{4}+\frac{\ii y}{2}\lambda^{-1}+\mathcal{O}(\lambda^{-2}),&\quad\lambda\to\infty
\end{cases}.
\label{eq:gprimeminushalfVprime-squared-asymp}
\end{equation}

Therefore, if $y=0$, Liouville's theorem shows that
\begin{equation}
g'(\lambda;0)-\frac{1}{2}V'(\lambda;0)=\frac{1}{2}\lambda^{-1},
\end{equation}
while if $y\neq 0$ we necessarily have that
\begin{equation}
\left(g'(\lambda;y)-\frac{1}{2}V'(\lambda;y)\right)^2=\frac{1}{\lambda^4}P(\lambda;y,C),
\label{eq:tildeP4}
\end{equation}
where $P(\cdot;y,C)$ is the quartic polynomial defined by \eqref{eq:dotV-ODE} and it only remains to determine $C$.  
Since the zero locus of $P(\lambda;y,C)$ is obviously symmetric with respect to the involution $\lambda\mapsto \lambda^{-1}$, the following configurations for $P(\lambda;y,C)$ include all possibilities, given that $y\neq 0$:
\begin{itemize}
\item[(i)] All four roots coincide, in which case the four-fold root must lie at either $\lambda=1$ or $\lambda=-1$, i.e., $P(\lambda;y,C)=-\tfrac{1}{4}y^2(\lambda\mp 1)^4 = -\tfrac{1}{4}y^2\lambda^4\pm y^2\lambda^3 -\tfrac{3}{2}y^2\lambda^2 \pm y^2\lambda -\tfrac{1}{4}y^2$.  Comparing with \eqref{eq:dotV-ODE}, we see that this situation occurs only if $y=\pm\tfrac{1}{2}\ii$, and then only if also $C=-\tfrac{3}{2}y^2=\tfrac{3}{8}$.  In this case, since $P(\lambda;y,C)$ is a perfect square, we have either $g'(\lambda;y)-\tfrac{1}{2}V'(\lambda;y)=\tfrac{1}{2}\ii y(1\mp\lambda^{-1})^2=\tfrac{1}{2}\ii y(1\mp 2\lambda^{-1}+\lambda^{-2})$ or $g'(\lambda;y)-\tfrac{1}{2}V'(\lambda;y)=-\tfrac{1}{2}\ii y(1\mp\lambda^{-1})^2=-\tfrac{1}{2}\ii y(1\mp 2\lambda^{-1}+\lambda^{-2})$.
Since $g'(\lambda;y)=\mathcal{O}(\lambda^{-2})$ as $\lambda\to\infty$, only the former is consistent with \eqref{eq:V-define}, and then we see that in fact $g'(\lambda;y)-\tfrac{1}{2}V'(\lambda;y)=-\tfrac{1}{2}V'(\lambda;y)$, i.e., $g'(\lambda;y)=0$ in this case, which implies that $g(\lambda;y)=g_\infty(y)$.  This case turns out to be relevant exactly for $y=\pm\tfrac{1}{2}\ii$.
\item[(ii)] There are two double roots that are exchanged\footnote{That it is impossible to have two double roots that are fixed individually by the involution can be seen as follows.  It would be necessary to have one double root at $\lambda=1$ and another double root at $\lambda=-1$, and therefore $P(\lambda;y;C)=-\tfrac{1}{4}y^2(\lambda^2-1)^2=-\tfrac{1}{4}y^2\lambda^4+\tfrac{1}{2}y^2\lambda^2 -\tfrac{1}{4}y^2$.  Comparing with \eqref{eq:dotV-ODE} shows that this situation cannot occur for $y\neq 0$.} by the involution, in which case there is a number $\lNaught\neq \pm 1$ such that $P(\lambda;y;C)=-\tfrac{1}{4}y^2(\lambda-\lNaught)^2(\lambda-\lNaught^{-1})^2 = 
-\tfrac{1}{4}y^2\lambda^4+\tfrac{1}{2}y^2(\lNaught+\lNaught^{-1})\lambda^3 -\tfrac{1}{4}y^2(\lNaught^2+4+\lNaught^{-2})\lambda^2 +\tfrac{1}{2}y^2(\lNaught+\lNaught^{-1})\lambda -\tfrac{1}{4}y^2$.  Comparing with \eqref{eq:dotV-ODE} shows that this is possible for all $y\neq 0$, provided that $\lNaught$ is determined as a function of $y$ up to reciprocation by $\lNaught+\lNaught^{-1}=\ii y^{-1}$ and then $C$ is given the value $C=-\tfrac{1}{4}y^2(\lNaught^2+4+\lNaught^{-2})=-\tfrac{1}{4}(2y^2-1)$.  In this case, $P(\lambda;y,C)$ is again a perfect square and hence either $g'(\lambda;y)-\tfrac{1}{2}V'(\lambda;y)=\tfrac{1}{2}\ii y(1-\lNaught\lambda^{-1})(1-\lNaught^{-1}\lambda^{-1})$ or $g'(\lambda;y)-\tfrac{1}{2}V'(\lambda;y)=-\tfrac{1}{2}\ii y(1-\lNaught\lambda^{-1})(1-\lNaught^{-1}\lambda^{-1})$.  Only the former is consistent with \eqref{eq:V-define} given that $g'(\lambda;y)=\mathcal{O}(\lambda^{-2})$ as $\lambda\to\infty$ and again we deduce that $g'(\lambda;y)=0$ and hence also $g(\lambda;y)=g_\infty(y)$.  This turns out to be the case corresponding to $y\in\mathbb{C}\setminus E$.
\item[(iii)]
There is one double root and two simple roots, with the double root being fixed by the involution and hence occurring at $\lambda=\pm 1$ and the simple roots being permuted by the involution and hence being given by $\lambda=\lambda_0$ and $\lambda=\lambda_0^{-1}$ for some $\lambda_0\neq\pm 1$.  Thus $P(\lambda;y,C)=-\tfrac{1}{4}y^2(\lambda\mp 1)^2(\lambda-\lambda_0)(\lambda-\lambda_0^{-1})=-\tfrac{1}{4}y^2\lambda^4 +\tfrac{1}{4}y^2(\lambda_0\pm 2+\lambda_0^{-1})\lambda^3-\tfrac{1}{2}y^2(1\pm(\lambda_0+\lambda_0^{-1}))\lambda^2 +\tfrac{1}{4}y^2(\lambda_0\pm 2+\lambda_0^{-1})\lambda-\tfrac{1}{4}y^2$.  Comparing with \eqref{eq:dotV-ODE} shows that this configuration is possible for all $y\neq 0$, provided that $\lambda_0$ is determined up to reciprocation by $\lambda_0+\lambda_0^{-1}=2\ii y^{-1}\mp 2$ and that $C$ is assigned the value $C=-\tfrac{1}{2}y^2(1\pm (\lambda_0+\lambda_0^{-1}))=\tfrac{1}{2}y^2\mp \ii y$.  This case turns out to be relevant only when $y\in (E\cap\ii\mathbb{R})\setminus\{\pm\tfrac{1}{2}\ii\}$.
\item[(iv)] There are four simple roots, none of which equal\footnote{If there are four roots and one of them is $\lambda=\pm 1$, then the others are $\lambda=\mp 1$, $\lambda=\lambda_0$ and $\lambda=\lambda_0^{-1}$ with $\lambda_0^2\neq 1$.  Thus $P(\lambda;y,C)=-\tfrac{1}{4}y^2(\lambda^2-1)(\lambda-\lambda_0)(\lambda-\lambda_0^{-1})=-\tfrac{1}{4}y^2\lambda^4+\tfrac{1}{4}y^2(\lambda_0+\lambda_0^{-1})\lambda^3-\tfrac{1}{4}y^2(\lambda_0+\lambda_0^{-1})\lambda +\tfrac{1}{4}y^2$.  Comparing with \eqref{eq:dotV-ODE} shows that this case is not possible for $y\neq 0$.} $1$ or $-1$,
in which case for some $\lambda_0$ and $\lambda_1$ with $\lambda_0^2\neq 1$, $\lambda_1^2\neq 1$, $\lambda_1\neq\lambda_0$ and $\lambda_1\neq\lambda_0^{-1}$, we have
$P(\lambda;y,C)=-\tfrac{1}{4}y^2(\lambda-\lambda_0)(\lambda-\lambda_0^{-1})(\lambda-\lambda_1)(\lambda-\lambda_1^{-1})=-\tfrac{1}{4}y^2\lambda^4+\tfrac{1}{4}y^2(\lambda_0+\lambda_0^{-1}+\lambda_1+\lambda_1^{-1})\lambda^3-\tfrac{1}{4}y^2((\lambda_0+\lambda_0^{-1})(\lambda_1+\lambda_1^{-1})+2)\lambda^2 + \tfrac{1}{4}y^2(\lambda_0+\lambda_0^{-1}+\lambda_1+\lambda_1^{-1})\lambda-\tfrac{1}{4}y^2$.
Comparing with \eqref{eq:dotV-ODE} shows that this case is possible for all $y\neq 0$ with arbitrary $C$, and that then $\lambda_0$ and $\lambda_1$ are determined up to reciprocation and exchange by the identities $\lambda_0+\lambda_0^{-1}+\lambda_1+\lambda_1^{-1}=2\ii y^{-1}$ and $(\lambda_0+\lambda_0^{-1})(\lambda_1+\lambda_1^{-1})=-2-4Cy^{-2}$.  This turns out to be the case for $y\in E_\mathrm{L}\cup E_\mathrm{R}$.
\end{itemize}\smallskip
Note that in either of the cases that $P(\lambda;y,C)$ is not a perfect square it is necessary to take care in placing the branch cuts of the square root to obtain $g'(\lambda;y)-\tfrac{1}{2}V'(\lambda;y)$ from $(g'(\lambda;y)-\tfrac{1}{2}V'(\lambda;y))^2=\lambda^{-4}P(\lambda;y,C)$ in order that the asymptotic relations \eqref{eq:gprimeminushalfVprime-asymp} hold rather than just \eqref{eq:gprimeminushalfVprime-squared-asymp}.

\subsection{Boutroux integral conditions}
\label{sec:Boutroux}
In order to ensure that the constant $K$ associated with each distinguished arc of $L$ is real, it is necessary in the above case (iv) to impose further conditions.  Given $y$ and $C$ such that this is the case, let $\Gamma=\{(\lambda,\mu):\,\mu^2=\lambda^{-4}P(\lambda;y,C)\}$ be the genus-$1$ Riemann surface or algebraic variety associated with the equation $\mu^2=\lambda^{-4}P(\lambda;y,C)$ in $\mathbb{C}^2$ with coordinates $(\lambda,\mu)$.  Let $(\mathfrak{a},\mathfrak{b})$ be a canonical homology basis on $\Gamma$ and take concrete representatives that do not pass through the preimages on $\Gamma$ of each of the two points $\lambda=0$ or $\lambda=\infty$.  Then we impose the \emph{Boutroux conditions}
\begin{equation}
\mathfrak{B}_\mathfrak{a}(u,v;y):=\mathrm{Re}\left(\oint_\mathfrak{a}\mu\,\dd\lambda\right)=0\quad\text{and}\quad
\mathfrak{B}_\mathfrak{b}(u,v;y):=\mathrm{Re}\left(\oint_\mathfrak{b}\mu\,\dd\lambda\right)=0,
\label{eq:Boutroux}
\end{equation}
where $C=u+\ii v$, i.e., $u:=\mathrm{Re}(C)$ and $v:=\mathrm{Im}(C)$.
It follows from \eqref{eq:gprimeminushalfVprime-asymp} that the differential $\mu\,\dd\lambda$ has real residues at the two points of $\Gamma$ over $\lambda=0$ and the two points over $\lambda=\infty$; therefore taken together the conditions \eqref{eq:Boutroux} do not depend on the choice of homology basis.  We expect that the two real conditions \eqref{eq:Boutroux} should determine $u$ and $v$ as functions of $y\in\mathbb{C}$.  Differentiation of the algebraic identity relating $\mu$ and $\lambda$ gives
\begin{equation}
2\mu\frac{\partial\mu}{\partial u}=\frac{1}{\lambda^2}\quad\text{and}\quad 2\mu\frac{\partial\mu}{\partial v} = \frac{\ii}{\lambda^2}
\end{equation}
from which it follows (since the paths $\mathfrak{a}$ and $\mathfrak{b}$ may be locally taken to be independent of $y$ and $C$) that
\begin{equation}
\frac{\partial\mathfrak{B}_\mathfrak{a,b}}{\partial u}(u,v;y) = \frac{1}{2}\mathrm{Re}\left(\oint_{\mathfrak{a},\mathfrak{b}}\frac{\dd\lambda}{\mu\lambda^2}\right)\quad\text{and}\quad
\frac{\partial\mathfrak{B}_\mathfrak{a,b}}{\partial v}(u,v;y)=-\frac{1}{2}\mathrm{Im}\left(\oint_{\mathfrak{a},\mathfrak{b}}\frac{\dd\lambda}{\mu\lambda^2}\right).
\end{equation}
Therefore, the Jacobian determinant of the equations \eqref{eq:Boutroux} equals 
\begin{equation}
\begin{split}
\det\left(\frac{\partial(\mathfrak{B}_\mathfrak{a},\mathfrak{B}_\mathfrak{b})}{\partial (u,v)}\right) &= -\frac{1}{4}\mathrm{Re}\left(\oint_\mathfrak{a}\frac{\dd\lambda}{\mu\lambda^2}\right)\mathrm{Im}\left(\oint_\mathfrak{b}\frac{\dd\lambda}{\mu\lambda^2}\right) +\frac{1}{4}
\mathrm{Re}\left(\oint_\mathfrak{b}\frac{\dd\lambda}{\mu\lambda^2}\right)\mathrm{Im}\left(\oint_\mathfrak{a}\frac{\dd\lambda}{\mu\lambda^2}\right)\\
&=\frac{1}{4}\mathrm{Im}\left(\left[\oint_\mathfrak{a}\frac{\dd\lambda}{\mu\lambda^2}\right]\left[\oint_\mathfrak{b}\frac{\dd\lambda}{\mu\lambda^2}\right]^*\right).
\end{split}
\label{eq:Jacobian}
\end{equation}
Noting that $\mu^{-1}\lambda^{-2}\dd\lambda$ is a nonzero differential spanning the (one-dimensional) vector space of holomorphic differentials on $\Gamma$, it follows from \cite[Chapter II, Corollary 1]{Dubrovin81} that the above Jacobian is strictly negative under the assumption that the four roots of $P(\lambda;y,C)$ are distinct.  Thus, an application of the implicit function theorem allows us to extend any solution of the integral conditions \eqref{eq:Boutroux} for which $P(\lambda;y_0,u_0+\ii v_0)$ has distinct roots to a neighborhood of $y_0$ on which $u$ and $v$ are smooth real-valued functions of $y$ satisfying $u(y_0)=u_0$ and $v(y_0)=v_0$. In fact, one can show that the Jacobian determinant \eqref{eq:Jacobian} \emph{blows up} as the spectral curve degenerates, and it is in this way that the implicit function theorem ultimately fails.


\section{Asymptotics of $u_n(ny;m)$ for $y\in \mathbb{C}\setminus E$} 
\label{sec:outside}

In this section, we study Riemann-Hilbert Problem~\ref{rhp:renormalized} with $x=ny$ (i.e., we set $w=0$) and assume that $y$ lies in a neighborhood of $y=\infty$ to be determined.

\subsection{Placement of arcs of $L$ and determination of $\partial E$}
\label{sec:placement-of-arcs}
We first show that for $y$ sufficiently large in magnitude, we may take $C=-\tfrac{1}{4}(2y^2-1)$ and hence $P(\lambda;y,C)$ has two double roots; therefore the spectral curve is reducible leading to $g(\lambda;y)=g_\infty(y)$ for a suitable value of the latter constant.  For $y$ large we take the double root $\lNaught=\lNaught(y)$ satisfying $\lNaught+\lNaught^{-1}=\ii y^{-1}$ to be the branch for which $\lNaught(y)=-\ii(1-\tfrac{1}{2}y^{-1}+\mathcal{O}(y^{-2}))$ as $y\to\infty$.  Then, we choose $g_\infty(y):=\tfrac{1}{2}V(\lNaught(y);y)$.  Thus,
\begin{equation}
\begin{split}
2g(\lambda;y)-V(\lambda;y)&=2g_\infty(y)-V(\lambda;y)=V(\lNaught(y);y)-V(\lambda;y)\\
&=\log(\lambda)-\log(\lNaught(y))+\ii y(\lambda-\lambda^{-1})-\ii y(\lNaught(y)-\lNaught(y)^{-1})
=\ii y(\lambda+2\ii-\lambda^{-1}) + \mathcal{O}(1),
\end{split}
\end{equation}
where the $\mathcal{O}(1)$ error term applies in the limit $y\to\infty$ uniformly for $\lambda$ in compact subsets of $\mathbb{C}\setminus\{0\}$.  
%
Taking into account that $2g(\lambda;y)-V(\lambda;y)$ has a double zero at $\lambda=\lNaught(y)$,
one can show that
if $|y|$ is sufficiently large, taking the common intersection point of all four contour arcs to be the point $\lNaught(y)$, it is possible to arrange the arcs so that $\mathrm{Re}(2g(\lambda;y)-V(\lambda;y))<0$ (resp., $\mathrm{Re}(2g(\lambda;y)-V(\lambda;y))>0$) holds on $\LInftyRed\cup\LZeroRed$ (resp., on $\LInftyBlue\cup\LZeroBlue$), with the inequality being strict except at the intersection point $\lambda=\lNaught(y)$, compare Figure~\ref{fig:y-5-Exp-i-pi-over-4}.
\begin{figure}[h]
\begin{center}
\includegraphics{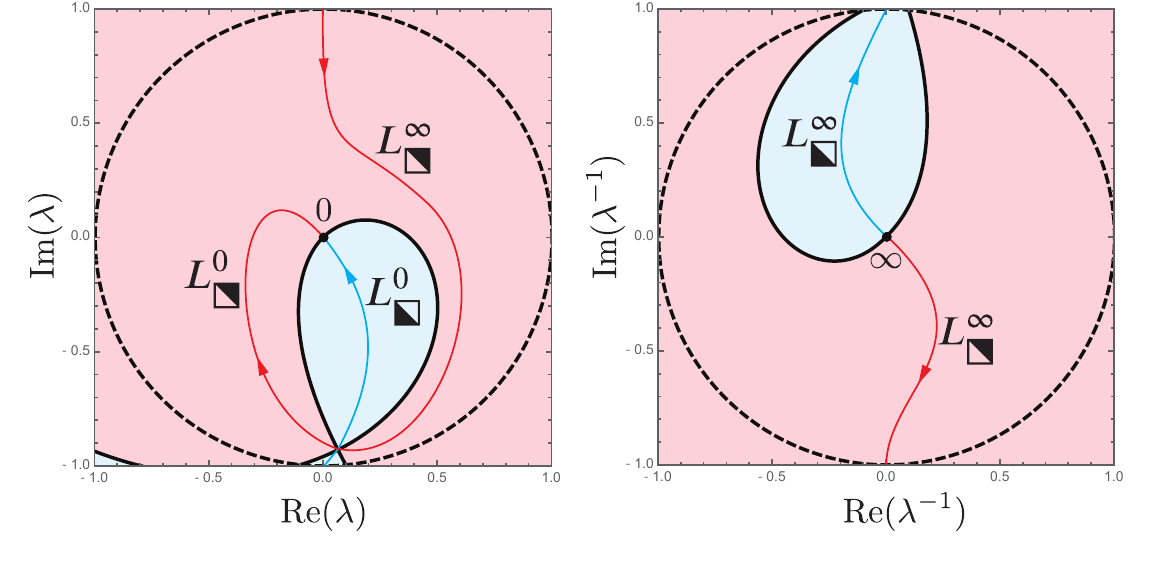}
\end{center}
\caption{For $y=5\ee^{\ii\pi/4}$, the domain where $\mathrm{Re}(2g(\lambda;y)-V(\lambda;y))<0$ is shaded in red, and the domain where $\mathrm{Re}(2g(\lambda;y)-V(\lambda;y))>0$ is shaded in blue. Left panel:  the $\lambda$-plane.  Right panel:  the $\lambda^{-1}$-plane.  The unit circle is shown with a dashed curve in each plot.  Suitable contour arcs matching the scheme described in Section~\ref{sec:analytic-representation} including the argument increment conditions \eqref{eq:increment-argument-red}--\eqref{eq:increment-argument-blue} (for one choice of the arbitrary sign in \eqref{eq:increment-argument-red}) are also shown.}
\label{fig:y-5-Exp-i-pi-over-4}
\end{figure}
The function $\lNaught(y)$ has an analytic continuation from the neighborhood of $y=\infty$ to the maximal domain $y\in\mathbb{C}\setminus I$, where $I$ denotes the imaginary segment connecting the two branch points $\pm\tfrac{1}{2}\ii$.  As $y$ is brought in from the point at infinity, it remains possible to place the arcs of the contour $L$ as described above at least until  either $y$ meets the branch cut $I$ of $\lNaught(y)$ or the topology of the zero level set of $\mathrm{Re}(2g(\lambda;y)-V(\lambda;y))$ changes.  The latter occurs precisely when the only other critical point $\lambda=\lNaught(y)^{-1}$ moves onto the zero level set;  since $\mathrm{Re}(V(\lambda^{-1};y))=-\mathrm{Re}(V(\lambda;y))$, whenever both $\lNaught(y)$ and $\lNaught(y)^{-1}$ lie on the same level of $\mathrm{Re}(V(\lambda;y))$ we necessarily have $\mathrm{Re}(V(\lNaught(y);y))=0$.
The set of $y\in\mathbb{C}\setminus I$ where the latter condition holds true is plotted in Figure~\ref{fig:partialE}.
\begin{figure}[h]
\begin{center}
\includegraphics{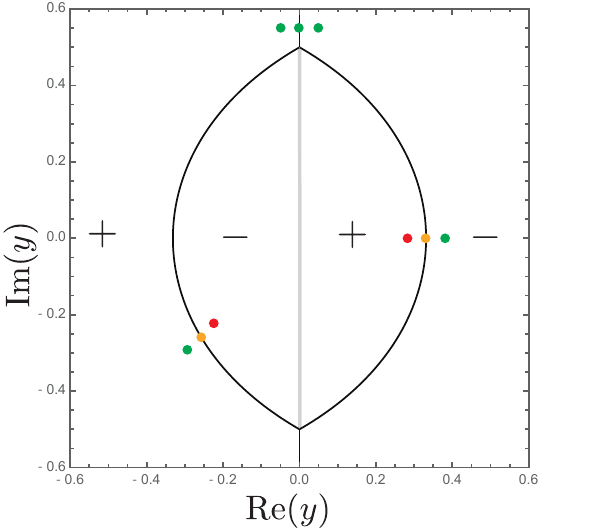}
\end{center}
\caption{The curves in the $y$-plane where $\mathrm{Re}(V(\lNaught(y);y))=0$.  The branch cut $I$ of $\lNaught(y)$ (gray line) joins the two junction points.  The dots correspond to plots in subsequent figures.  The domain $E$ is defined as the bounded region between the two black curves, bisected by the branch cut of $\lNaught(y)$.  The sign of $\mathrm{Re}(V(\lNaught(y);y))$ is indicated in each region.  In particular, $E_\mathrm{R}$ (resp., $E_\mathrm{L}$) is the bounded region where $\mathrm{Re}(V(\lNaught(y);y))>0$ (resp., $\mathrm{Re}(V(\lNaught(y);y))<0$) holds.}
\label{fig:partialE}
\end{figure}
Because $y\in\ii\mathbb{R}\setminus I$ implies that $|\lNaught(y)|=1$, it is easy to confirm that indeed $\mathrm{Re}(V(\lNaught(y);y))=0$ for such $y$, see also Figure~\ref{fig:partialE}.  The rest of the points comprise a closed curve $\partial E$ with two smooth arcs meeting at the branch points $\pm \tfrac{1}{2}\ii$ and bounding the eye-shaped domain $E$ defined in Section~\ref{sec:results}.  

The following figures illustrate how the domains such as shown in Figure~\ref{fig:y-5-Exp-i-pi-over-4} change as the value of $y$ varies near the arcs of the curve shown in Figure~\ref{fig:partialE}.  Figure~\ref{fig:y-real} 
\begin{figure}[h]
\begin{center}
\includegraphics{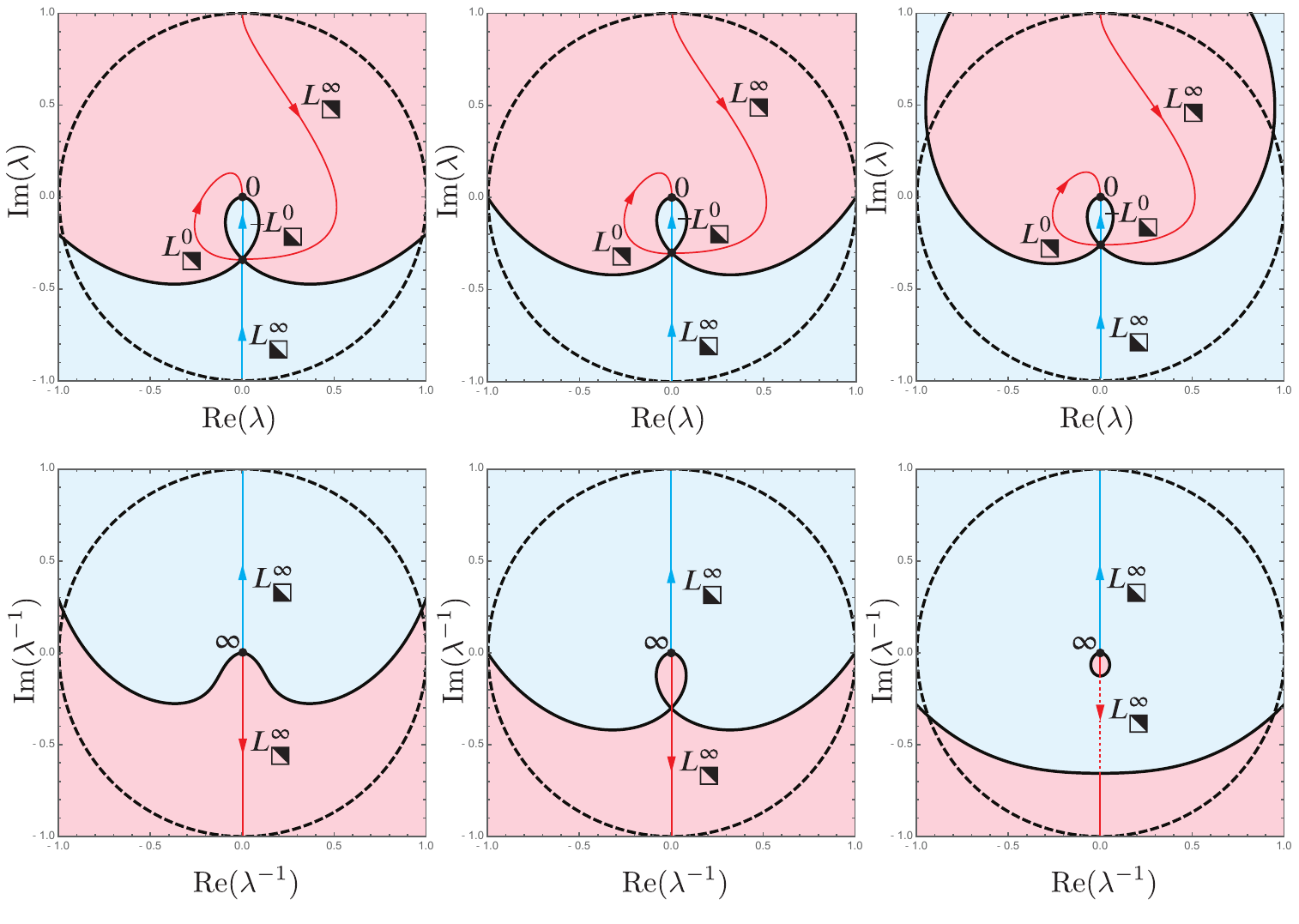}
\end{center}
\caption{The domain where $\mathrm{Re}(2g(\lambda;y)-V(\lambda;y))<0$ in red and where $\mathrm{Re}(2g(\lambda;y)-V(\lambda;y))>0$ in blue for $y=0.381372$ (left column), $y=0.331372$ (middle column), and $y=0.281372$ (right column), corresponding to the green, amber, and red dots, respectively, on the real axis in Figure~\ref{fig:partialE}.  The top row shows a neighborhood of the unit disk in the $\lambda$-plane, while the bottom row shows the exterior of the unit disk in the $\lambda^{-1}$-plane.  In the plots in the right-hand column, the level curve has broken and it is no longer possible to place the contour arc $\LInftyRed$ connecting $\lNaught(y)$ and $\infty$ completely within the red region.  This phase transition, which apparently occurs only on the right edge of the domain $E$, is only relevant if the jump matrix on $\LInftyRed$ is not the identity, i.e., if $m-\tfrac{1}{2}\notin\mathbb{Z}_{\ge 0}$.  These plots show contours with $\Delta\arg(\squareurblack)=2\mathrm{Arg}(x)-2\pi=-2\pi$.  The other choice $\Delta\arg(\squareurblack)=2\mathrm{Arg}(x)+2\pi=2\pi$ would also be compatible with the sign chart.}
\label{fig:y-real}
\end{figure}
concerns the three points on the real axis and Figure~\ref{fig:y-diag} 
\begin{figure}[h]
\begin{center}
\includegraphics{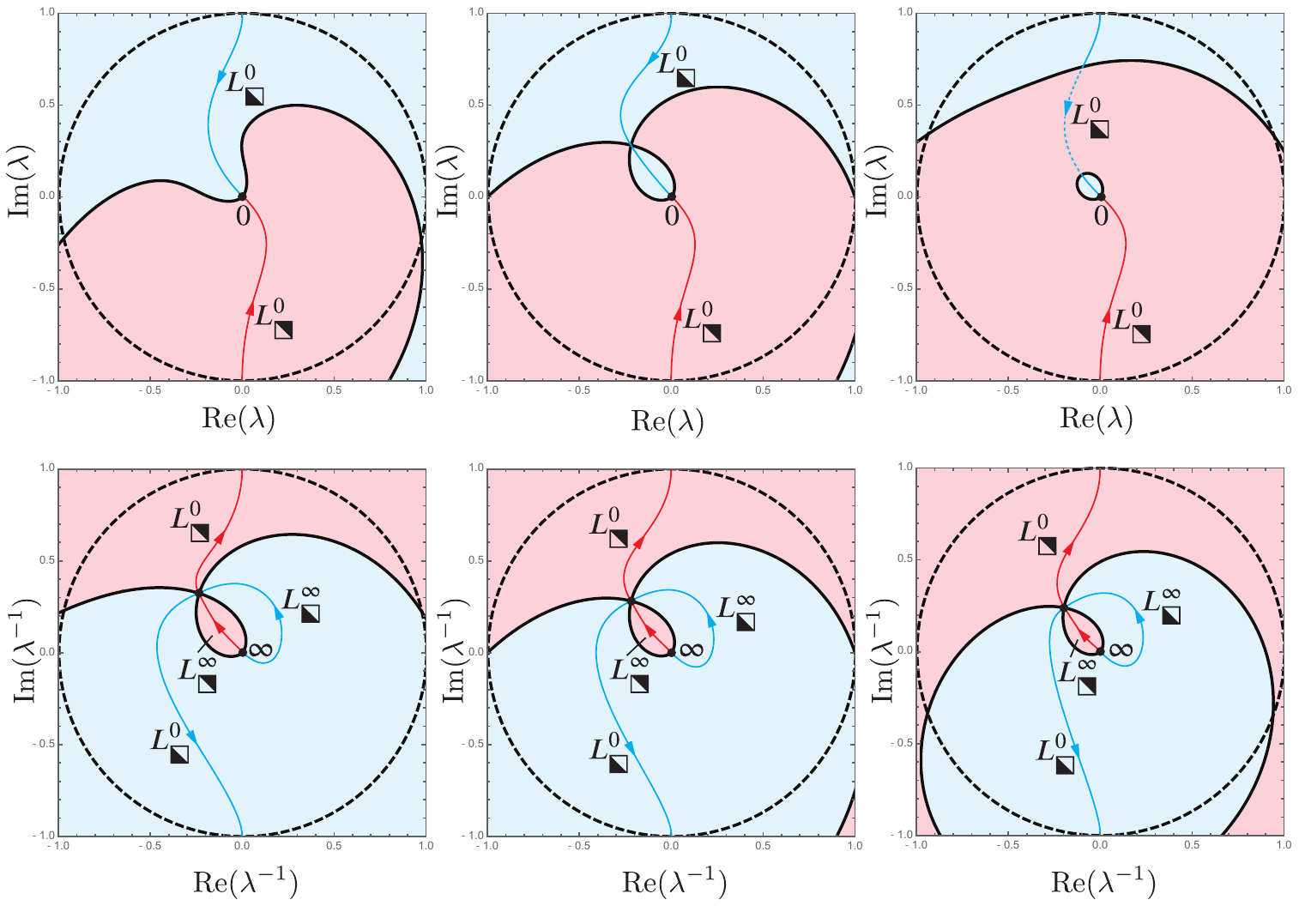}
\end{center}
\caption{As in Figure~\ref{fig:y-real} except for $y=0.414768\ee^{-3\pi\ii/4}$ (left column), $y=0.364768\ee^{-3\pi\ii/4}$ (middle column), and $y=0.314768\ee^{-3\pi\ii/4}$ (right column), corresponding to the green, amber, and red dots, respectively, on the diagonal in Figure~\ref{fig:partialE}.  In the plots in the right-hand column, the level curve has broken and it is no longer possible to place the contour arc $\LZeroBlue$ connecting $\lNaught(y)$ and $0$ completely within the blue region.
This phase transition, which apparently occurs only on the left edge of the domain $E$,
is only relevant if the jump matrix on $\LZeroBlue$ is not the identity, i.e., if $-m-\tfrac{1}{2}\notin\mathbb{Z}_{\ge 0}$.  These plots show contours with $\Delta\arg(\squareurblack)=2\mathrm{Arg}(x)+2\pi=\pi/2$.  In this case, the other choice of $\Delta\arg(\squareurblack)=2\mathrm{Arg}(x)-2\pi$ could only be arranged by a surgery of $\LInftyRed\cup\LZeroRed$ that would result in contours incompatible with the sign chart.}
\label{fig:y-diag}
\end{figure}
concerns the three points on the diagonal.  Figure~\ref{fig:y-imag} 
\begin{figure}[h]
\begin{center}
\includegraphics{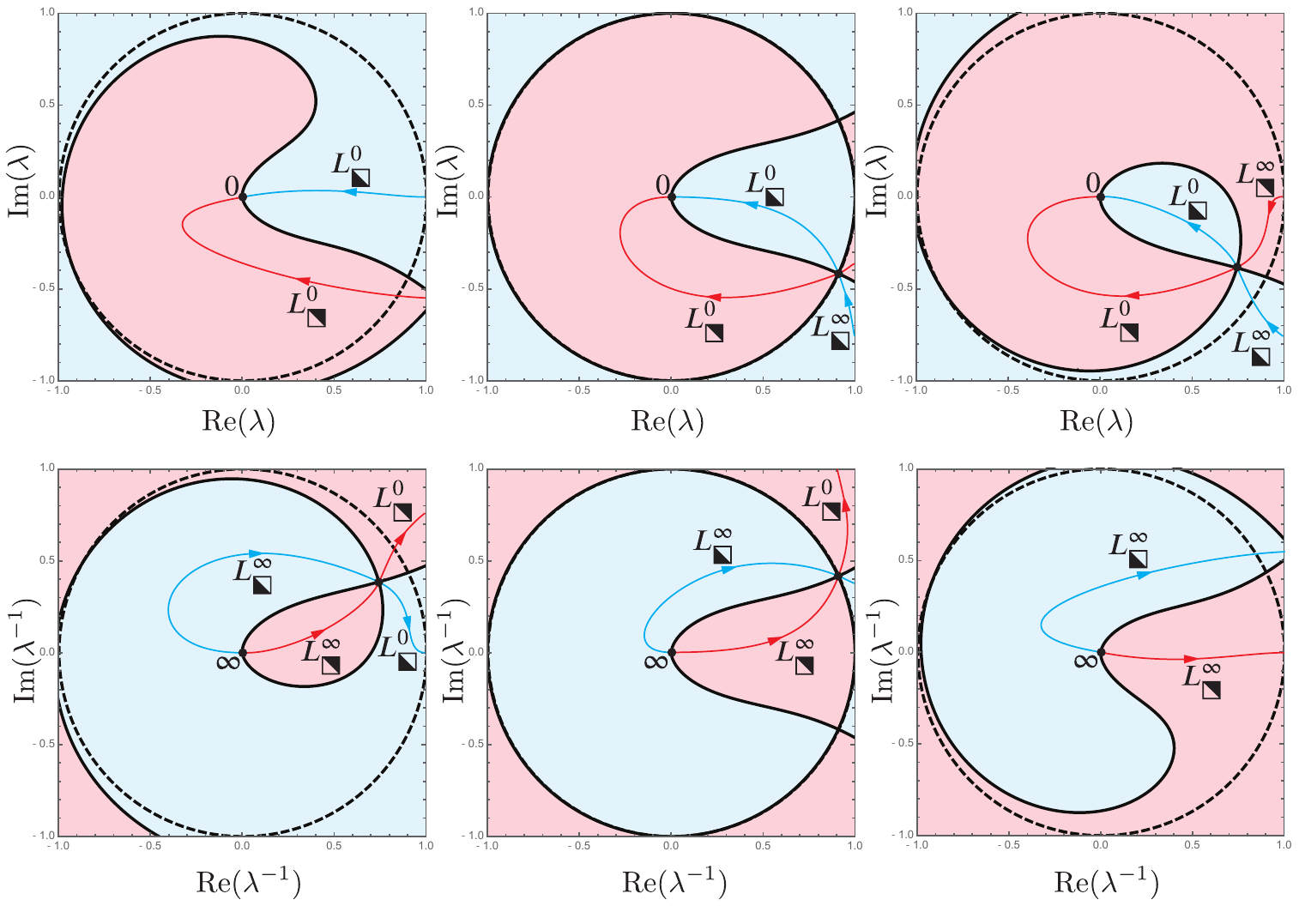}
\end{center}
\caption{As in Figure~\ref{fig:y-real} except for $y=-0.05+0.55\ii$ (left column), $y=0.55\ii$ (middle column), and $y=0.05+0.55\ii$ (right column), corresponding to the three green dots near the positive imaginary axis in Figure~\ref{fig:partialE}.  The topological change in sign chart has no effect on the placement of the contours.  The configurations with $\mathrm{Re}(y)\le 0$ require the choice $\Delta\arg(\squareurblack)=2\mathrm{Arg}(x)-2\pi\approx -\pi$.  On the other hand, the configuration with $\mathrm{Re}(y)>0$, although pictured here with $\Delta\arg(\squareurblack)=2\mathrm{Arg}(x)-2\pi$, admits surgery of the contours $\LZeroRed$ and $\LInftyRed$ within the domain $\mathrm{Re}(2g(\lambda;y)-V(\lambda;y))<0$ and hence the choice $\Delta\arg(\squareurblack)=2\mathrm{Arg}(x)+2\pi$ is also possible.}
\label{fig:y-imag}
\end{figure}
shows that although there is a topological change in the level curve as $y$ crosses the imaginary axis in the exterior of $E$, this does not obstruct the placement of the contour arcs of $L$.  On the other hand, the topological change that occurs when $y$ lies along the arc of $\partial E$ in the right half-plane (resp., left half-plane) only obstructs placement of the arc $\LInftyRed$ (resp., $\LZeroBlue$) and therefore we write $\partial E$ as the union of two closed arcs:  $\partial E=\pEInftyRed\cup\pEZeroBlue$. 
Note that the surgery allowing for a sign change $\Delta\arg(\squareurblack)=2\mathrm{Arg}(x)-2\pi\leftrightarrow\Delta\arg(\squareurblack)=2\mathrm{Arg}(x)+2\pi$ (see Remark~\ref{rmk:surgery}) is compatible with the sign-chart/contour placement scheme provided that the domain $\mathrm{Re}(2g(\lambda;y)-V(\lambda;y))<0$ consists of a single component.  If it consists of two components, then the contours $\LZeroRed$ and $\LInftyRed$ necessarily lie in distinct components and the surgery becomes impossible.  The former holds in the exterior of $E$ for $\mathrm{Re}(y)>0$ and the latter for $\mathrm{Re}(y)\le 0$.

\subsection{Parametrix construction}  
\label{sec:parametrix-for-outside}
Let $y$ be fixed outside of $E$, let $\delta>0$ be a fixed sufficiently small (given $y$) constant, and let $D$ denote the simply-connected neighborhood of $\lambda=\lNaught(y)$ defined by the inequality $|2g(\lambda;y)-V(\lambda;y)|<\delta^2$.  We will define a parametrix $\dot{\mathbf{M}}_n(\lambda;y,m)$ for $\mathbf{M}_n(\lambda;y,m)$ in \eqref{eq:Y-M-g-function} by a piecewise formula:
\begin{equation}
\dot{\mathbf{M}}_n(\lambda;y,m)=\begin{cases} \dot{\mathbf{M}}^{\mathrm{out}}(\lambda;y,m),\quad & \lambda\in\mathbb{C}\setminus(L\cup\overline{D})\\
\dot{\mathbf{M}}_n^{\mathrm{in}}(\lambda;y,m),\quad &\lambda\in D\setminus L.
\end{cases}
\label{eq:M-dot-piecewise}
\end{equation}

Noting that the jump matrix for $\mathbf{M}_n(\lambda;y,m)$ converges uniformly on $L\setminus D$ (with exponential accuracy) to $\mathbb{I}$ except on $\LZeroBlue$, where the limit is instead $-\ee^{2\pi\ii m\sigma_3}$, and that $\mathbf{M}_n(\lambda;y,m)\OurPower{\lambda}{-(m+1/2)\sigma_3}$ should have a limit as $\lambda\to 0$, we define $\dot{\mathbf{M}}^{\mathrm{out}}(\lambda;y,m)$ as the following diagonal matrix:
\begin{equation}
\dot{\mathbf{M}}^{\mathrm{out}}(\lambda;y,m):= \left[\frac{\lambda}{\lambda-\lNaught(y)}\right]^{(m+\tfrac{1}{2})\sigma_3},\ \ \lambda\in\mathbb{C}\setminus(L\cup\overline{D}),
\label{eq:M-dot-out}
\end{equation}
where the branch cut is taken to be $\LZeroBlue$ and the branch is chosen such that the right-hand side tends to $\mathbb{I}$ as $\lambda\to\infty$.  
In order to define $\dot{\mathbf{M}}_n^{\mathrm{in}}(\lambda;y,m)$, we will find a certain canonical matrix function that satisfies exactly the jump conditions of $\mathbf{M}_n(\lambda;y,m)$ within the neighborhood $D$ and then we will multiply the result on the left by a matrix holomorphic in $D$ to arrange a good match with $\dot{\mathbf{M}}^{\mathrm{out}}(\lambda;y,m)$ on $\partial D$.  For the first part, we introduce a conformal mapping $W:D\to\mathbb{C}$ by the following relation:
\begin{equation}
W^2 = 2g(\lambda;y)-V(\lambda;y),\ \ \ \ \lambda\in D.
\label{eq:w-definition}
\end{equation}
Because $2g(\lambda;y)-V(\lambda;y)$ is a locally analytic function vanishing precisely to second order\footnote{Indeed, $2g''(\lNaught(y);y)-V''(\lNaught(y);y)=-\lNaught(y)^{-2}-2\ii y\lNaught(y)^{-3}=(1-\lNaught(y)^2)\lNaught(y)^{-2}(1+\lNaught(y)^2))^{-1}$ can only vanish if $\lNaught(y)=\pm 1$ which corresponds to $y=\pm \tfrac{1}{2}\ii$.} at $\lambda=\lNaught(y)$, the relation \eqref{eq:w-definition} defines two different analytic functions of $\lambda$ both vanishing to first order at $\lambda=\lNaught(y)$.  We choose the analytic solution $W=W(\lambda;y)$
that is negative real in the direction tangent to $\LZeroBlue$.  Then we deform the arcs of $L$ within $D$ so that in this neighborhood $\LZeroBlue$ and $\LInftyBlue$ correspond exactly to negative and positive real values of $W$, while $\LZeroRed$ and $\LInftyRed$ correspond exactly to negative and positive imaginary values of $W$.  By definition of $D$, its image $W(D;y)$ under $W$ is the disk of radius $\delta$ centered at the origin, see Figure~\ref{fig:Outside-Local}.
\begin{figure}[h]
\begin{center}
\includegraphics{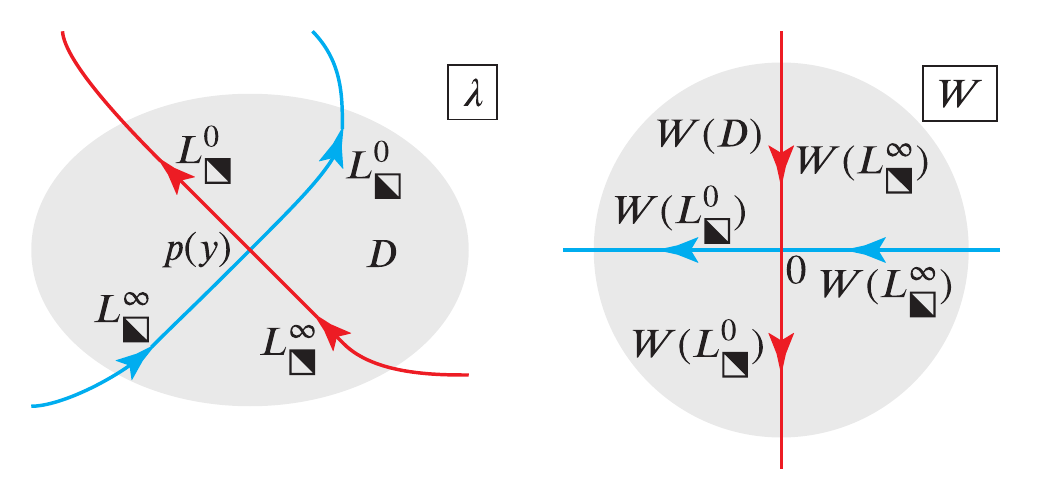}
\end{center}
\caption{The neighborhood $D$ and its image under the conformal mapping $W=W(\lambda)=W(\lambda;y)$.}
\label{fig:Outside-Local}
\end{figure}
Consider the matrix $\mathbf{N}_n(\lambda;y,m)$ defined in terms of $\mathbf{M}_n(\lambda;y,m)$ for $\lambda\in D$ by $\mathbf{N}_n(\lambda;y,m):=
\mathbf{M}_n(\lambda;y,m)d(\lambda;y,m)^{\sigma_3}$, where
\begin{equation}
d(\lambda;y,m):=\begin{cases} \OurPower{\lambda}{-\tfrac{1}{2}(m+1)}(-\ee^{-\ii\pi m}),&\mathrm{Im}(W(\lambda;y))>0,\; \mathrm{Re}(W(\lambda;y))\neq 0,\smallskip\\
\OurPower{\lambda}{-\tfrac{1}{2}(m+1)},&\mathrm{Im}(W(\lambda;y))<0,\;\mathrm{Re}(W(\lambda;y))\neq 0.
\end{cases}
\label{eq:scalar-d-define}
\end{equation}
Recalling the precise definition of $\OurPower{\lambda}{\pm\tfrac{1}{2}(m+1)}$, with its cut along $\LInftyBlue\cup\LZeroBlue$, it follows that $d(\lambda;y,m)$ can be continued to the whole domain $D$ as an analytic nonvanishing function.  The jump conditions satisfied by $\mathbf{N}_n(\lambda;y,m)$ within $D$ are then the following:
\begin{equation}
\mathbf{N}_{n+}(\lambda;y,m)=\mathbf{N}_{n-}(\lambda;y,m)\begin{bmatrix}1 & \displaystyle -\frac{\sqrt{2\pi}}{\Gamma(\tfrac{1}{2}-m)}\ee^{nW(\lambda;y)^2}\\
0 & 1\end{bmatrix},\quad \lambda\in \LZeroRed\cap D,
\label{eq:N-jump-first}
\end{equation}
\begin{equation}
\mathbf{N}_{n+}(\lambda;y,m)=\mathbf{N}_{n-}(\lambda;y,m)\begin{bmatrix}1 & \displaystyle \frac{\sqrt{2\pi}\ee^{2\pi\ii m}}{\Gamma(\tfrac{1}{2}-m)}\ee^{nW(\lambda;y)^2}\\
0 & 1\end{bmatrix},\quad \lambda\in \LInftyRed\cap D,
\end{equation}
\begin{equation}
\mathbf{N}_{n+}(\lambda;y,m)=\mathbf{N}_{n-}(\lambda;y,m)\begin{bmatrix}1 & 0\\
\displaystyle -\frac{\sqrt{2\pi}\ee^{-\ii\pi m}}{\Gamma(\tfrac{1}{2}+m)}\ee^{-nW(\lambda;y)^2} & 1\end{bmatrix},\quad\lambda\in\LInftyBlue\cap D,
\end{equation}
and
\begin{equation}
\mathbf{N}_{n+}(\lambda;y,m)=\mathbf{N}_{n-}(\lambda;y,m)\begin{bmatrix}-\ee^{2\pi\ii m} & 0\\
\displaystyle -\frac{\sqrt{2\pi}\ee^{-\ii\pi m}}{\Gamma(\tfrac{1}{2}+m)}\ee^{-nW(\lambda;y)^2} & -\ee^{-2\pi\ii m}\end{bmatrix},\quad\lambda\in\LZeroBlue\cap D.
\label{eq:N-jump-last}
\end{equation}
Although we will only use its values for $\lambda\in\mathbb{C}\setminus D$, the outer parametrix $\dot{\mathbf{M}}^{\mathrm{out}}(\lambda;y,m)$ has a convenient representation also for $\lambda\in D$ in terms of the conformal coordinate $W=W(\lambda)$:
\begin{equation}
\dot{\mathbf{M}}^{\mathrm{out}}(\lambda;y,m)=f(\lambda;y,m)^{\sigma_3}W(\lambda;y)^{-(m+\tfrac{1}{2})\sigma_3},\quad \lambda\in D,
\label{eq:scalar-f-define}
\end{equation}
where the power function of $W$ refers to the principal branch cut for $W<0$, and where $f(\lambda;y,m)$ is holomorphic and nonvanishing in $D$.
Now letting $\zeta:=n^{1/2}W(\lambda;y)$, we define precisely a matrix $\mathbf{P}(\zeta;m)$ as the solution of the following model Riemann-Hilbert problem.
\begin{rhp}
Given any $m\in\mathbb{C}$, seek a $2\times 2$ matrix function $\zeta\mapsto\mathbf{P}(\zeta;m)$ with the following properties:
\begin{itemize}
\item[1.]\textbf{Analyticity:}  $\zeta\mapsto\mathbf{P}(\zeta;m)$ is analytic for $\mathrm{Re}(\zeta^2)\neq 0$, taking continuous boundary values on the four rays of $\mathrm{Re}(\zeta^2)=0$ oriented as in the right-hand panel of Figure~\ref{fig:Outside-Local}.
\item[2.]\textbf{Jump conditions:}  The boundary values $\mathbf{P}_\pm(\zeta;m)$ taken on the four rays of $\mathrm{Re}(\zeta^2)=0$ satisfy the following jump conditions (cf., \eqref{eq:N-jump-first}--\eqref{eq:N-jump-last}):
\begin{equation}
\mathbf{P}_+(\zeta;m)=\mathbf{P}_-(\zeta;m)\begin{bmatrix}1 & \displaystyle -\frac{\sqrt{2\pi}}{\Gamma(\tfrac{1}{2}-m)}\ee^{\zeta^2}\\0 & 1\end{bmatrix},\quad \arg(\zeta)=-\frac{\pi}{2},
\label{eq:N-zeta-jump-first}
\end{equation}
\begin{equation}
\mathbf{P}_+(\zeta;m)=\mathbf{P}_-(\zeta;m)\begin{bmatrix}1 & \displaystyle \frac{\sqrt{2\pi}\ee^{2\pi\ii m}}{\Gamma(\tfrac{1}{2}-m)}\ee^{\zeta^2}\\0 & 1\end{bmatrix},\quad \arg(\zeta)=\frac{\pi}{2},
\end{equation}
\begin{equation}
\mathbf{P}_+(\zeta;m)=\mathbf{P}_-(\zeta;m)\begin{bmatrix}1 & 0\\\displaystyle -\frac{\sqrt{2\pi}\ee^{-\ii\pi m}}{\Gamma(\tfrac{1}{2}+m)}\ee^{-\zeta^2} & 1\end{bmatrix},\quad \arg(\zeta)=0,
\end{equation}
and
\begin{equation}
\mathbf{P}_+(\zeta;m)=\mathbf{P}_-(\zeta;m)\begin{bmatrix}-\ee^{2\pi\ii m} & 0\\
\displaystyle -\frac{\sqrt{2\pi}\ee^{-\ii\pi m}}{\Gamma(\tfrac{1}{2}+m)}\ee^{-\zeta^2} & -\ee^{-2\pi\ii m}\end{bmatrix},\quad \arg(-\zeta)=0.
\label{eq:N-zeta-jump-last}
\end{equation}
\item[3.]\textbf{Asymptotics:}
$\mathbf{P}(\zeta;m)$ is required to satisfy the normalization condition 
\begin{equation}
\lim_{\zeta\to\infty}\mathbf{P}(\zeta;m)\zeta^{(m+\tfrac{1}{2})\sigma_3}=\mathbb{I}.
\label{eq:N-zeta-normalize}
\end{equation}
Here, $\zeta^p$ refers to the principal branch.
\end{itemize}
\label{rhp:ParabolicCylinder}
\end{rhp}
This problem will be solved in all details in Appendix~\ref{app:PC}.  From it, we define the inner parametrix $\dot{\mathbf{M}}_n^{\mathrm{in}}(\lambda;y,m)$ as follows:
\begin{equation}
\dot{\mathbf{M}}_n^{\mathrm{in}}(\lambda;y,m):=d(\lambda;y,m)^{\sigma_3}n^{\tfrac{1}{2}(m+\tfrac{1}{2})\sigma_3}f(\lambda;y,m)^{\sigma_3}\mathbf{P}(n^{1/2}W(\lambda;m);m)d(\lambda;y,m)^{-\sigma_3},\quad \lambda\in D\setminus L.
\end{equation}
As shown in Appendix~\ref{app:PC}, $\mathbf{P}(\zeta;m)\zeta^{(m+\tfrac{1}{2})\sigma_3}$ has a complete asymptotic expansion in descending powers of $\zeta$ as $\zeta\to\infty$  (see \eqref{eq:N-expansion}).
Taking into account the explicit leading terms from the expansion \eqref{eq:N-expansion} and using  the fact that $W(\lambda;y)$ is bounded away from zero for $\lambda\in\partial D$, we get
\begin{equation}
\dot{\mathbf{M}}_n^{\mathrm{in}}(\lambda;y,m)\dot{\mathbf{M}}^\mathrm{out}(\lambda;y,m)^{-1}=
n^{m\sigma_3/2}\begin{bmatrix}1+\mathcal{O}(n^{-1}) & a(\lambda;y,m)+\mathcal{O}(n^{-1})\\
\mathcal{O}(n^{-1}) & 1+\mathcal{O}(n^{-1})\end{bmatrix}n^{-m\sigma_3/2},\quad n\to+\infty,\quad\lambda\in\partial D,
\label{eq:Mismatch-0}
\end{equation}
holding uniformly for the indicated values of $\lambda\in \partial D$, where
\begin{equation}
a(\lambda;y,m):=-\ii\ee^{\ii\pi m}2^md(\lambda;y,m)^2f(\lambda;y,m)^2W(\lambda;y)^{-1},\quad\lambda\in\partial D.
\end{equation}

\subsection{Error analysis and proof of Theorem~\ref{theorem:outside}}
\label{sec:error-outside}
To compare the unknown $\mathbf{M}_n(\lambda;y,m)$ with its parametrix $\dot{\mathbf{M}}_n(\lambda;y,m)$, note the constant conjugating factors in \eqref{eq:Mismatch-0} and consider the matrix function $\mathbf{F}_n(\lambda;y,m)$ defined by $\mathbf{F}_n(\lambda;y,m):=n^{-m\sigma_3/2}\mathbf{M}_n(\lambda;y,m)\dot{\mathbf{M}}_n(\lambda;y,m)^{-1}n^{m\sigma_3/2}$, which is well-defined for $\lambda\in \mathbb{C}\setminus (L\cup\partial D)$.  This matrix satisfies a jump condition of the form $\mathbf{F}_{n+}(\lambda;y,m)=\mathbf{F}_{n-}(\lambda;y,m)(\mathbb{I}+\text{exponentially small})$ as $n\to+\infty$ uniformly for $\lambda\in L$ (in fact, $\mathbf{F}_{n+}(\lambda;y,m)=\mathbf{F}_{n-}(\lambda;y,m)$ exactly if $\lambda\in L\cap D$).  To see this for $\lambda\in L\setminus\overline{D}$, note that
\begin{equation}
\mathbf{F}_{n+}(\lambda;y,m)=\mathbf{F}_{n-}(\lambda;y,m)n^{-m\sigma_3/2}\dot{\mathbf{M}}^\mathrm{out}_-(\lambda;y,m)
\mathbf{V}_n(\lambda;y,m)\dot{\mathbf{V}}(\lambda;y,m)^{-1}\dot{\mathbf{M}}^\mathrm{out}_-(\lambda;y,m)^{-1}n^{m\sigma_3/2},\quad\lambda\in L\setminus\overline{D},
\end{equation}
where $\mathbf{M}_{n+}(\lambda;y,m)=\mathbf{M}_{n-}(\lambda;y,m)\mathbf{V}_n(\lambda;y,m)$ and $\dot{\mathbf{V}}(\lambda;y,m)$ is the corresponding jump matrix for $\dot{\mathbf{M}}^\mathrm{out}(\lambda;y,m)$, which is just the diagonal part of $\mathbf{V}_n(\lambda;y,m)$ and which hence reduces to the identity matrix except on $\LZeroBlue$.  The desired result then follows because $\mathbf{V}_n(\lambda;y,m)\dot{\mathbf{V}}(\lambda;y,m)^{-1}-\mathbb{I}$ is exponentially small in the limit $n\to +\infty$, while $\dot{\mathbf{M}}^\mathrm{out}_-(\lambda;y,m)$ and its inverse are independent of $n$ and bounded because $\lambda$ is excluded from $D$.  Finally, 
for $\lambda\in\partial D$, taken with clockwise orientation, we have
\begin{equation}
\mathbf{F}_{n+}(\lambda;y,m)=\mathbf{F}_{n-}(\lambda;y,m)n^{-m\sigma_3/2}
\dot{\mathbf{M}}_n^{\mathrm{in}}(\lambda;y,m)
\dot{\mathbf{M}}^{\mathrm{out}}(\lambda;y,m)^{-1}n^{m\sigma_3/2},\quad\lambda\in\partial D.
\label{eq:Fjump-circle}
\end{equation}
The jump contour for $\mathbf{F}_n(\lambda;y,m)$ (and also for the related matrix $\mathbf{E}_n(\lambda;y,m)$ defined below) in a typical case of $y\in\mathbb{C}\setminus E$ is shown in Figure~\ref{fig:OutsideError}.\smallskip
\begin{figure}[h]
\begin{center}
\includegraphics{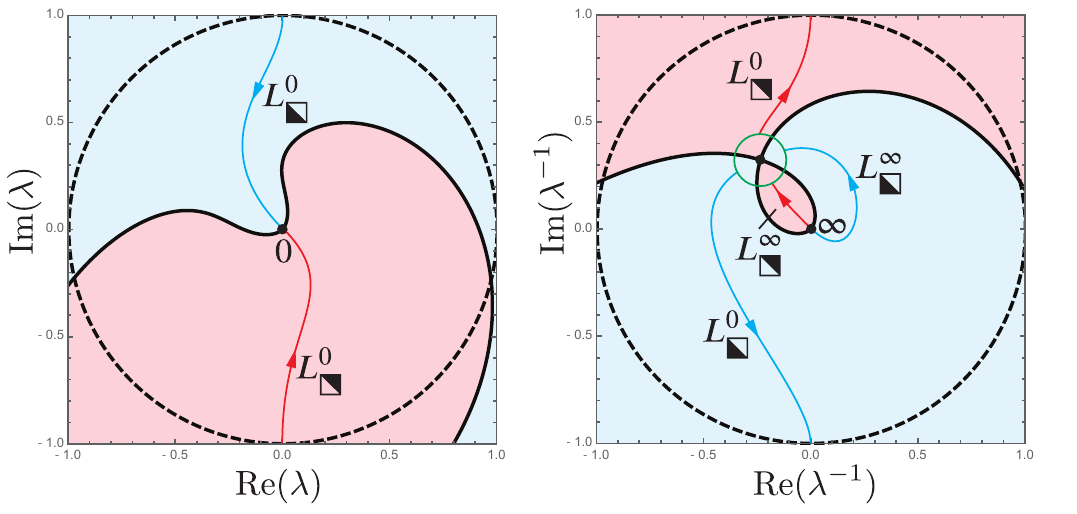}
\end{center}
\caption{The jump contour for $\mathbf{F}_n(\lambda;y,m)$ and $\mathbf{E}_n(\lambda;y,m)$ for  $y=0.364768\ee^{-3\pi\ii/4}\in\mathbb{C}\setminus E$ corresponding to the left-most column of Figure~\ref{fig:y-diag}.  The jump contour is the union of the cyan and red arcs (which comprise $L\setminus \overline{D}$), and the green circle $\partial D$, which is taken to have clockwise orientation for the purposes of defining the boundary values.}
\label{fig:OutsideError}
\end{figure}

Taking into account the leading term on the upper off-diagonal in \eqref{eq:Mismatch-0},
we define a parametrix for $\mathbf{F}_n(\lambda;y,m)$ as a triangular matrix independent of $n$:
\begin{equation}
\dot{\mathbf{F}}(\lambda;y,m):=\mathbb{I}+\frac{1}{2\pi\ii}\oint_{\partial D}
\begin{bmatrix}
0 & a(\xi;y,m)\\
0 & 0\end{bmatrix}\frac{\dd\xi}{\xi-\lambda} \quad\text{(clockwise orientation for $\partial D$)}.
\label{eq:dot-F-def}
\end{equation}
Since $d(\cdot;y,m)$ and $f(\cdot;y,m)$ are analytic and nonvanishing within $D$, and since $W(\cdot;y)$ is univalent on $D$ with $W(\lNaught(y);y)=0$, 
the above Cauchy integral
can be evaluated by residues.  In particular, if $\lambda\in\mathbb{C}\setminus\overline{D}$, then
\begin{equation}
\dot{\mathbf{F}}(\lambda;y,m)=\begin{bmatrix}
1 & \ii\ee^{\ii\pi m}2^md(\lNaught(y);y,m)^{2}f(\lNaught(y);y,m)^{2}W'(\lNaught(y);y)^{-1}(\lNaught(y)-\lambda)^{-1} \\0 & 1\end{bmatrix},
\quad\lambda\in\mathbb{C}\setminus\overline{D}.
\label{eq:F-dot-outside}
\end{equation}
Note that $\dot{\mathbf{F}}(\lambda;y,m)$ is analytic for $\lambda\in\mathbb{C}\setminus\partial D$,
$\dot{\mathbf{F}}(\lambda;y,m)\to\mathbb{I}$ as $\lambda\to\infty$, and across $\partial D$ satisfies (by the Plemelj formula from \eqref{eq:dot-F-def})  the jump condition
\begin{equation}
\dot{\mathbf{F}}_+(\lambda;y,m)=\dot{\mathbf{F}}_-(\lambda;y,m)\begin{bmatrix}1 & a(\lambda;y,m)\\0&1\end{bmatrix},\quad\text{$\lambda\in\partial D$ with clockwise orientation.}
\label{eq:Fdot-circle-jump}
\end{equation}

At last, we consider the matrix $\mathbf{E}_n(\lambda;y,m):=\mathbf{F}_n(\lambda;y,m)\dot{\mathbf{F}}(\lambda;y,m)^{-1}$.  The matrix $\mathbf{E}_n(\lambda;y,m)$ is analytic for $\lambda\in\mathbb{C}\setminus (L\cup\partial D)$, tends to the identity as $\lambda\to\infty$, and takes continuous boundary values from each component of its domain of analyticity, including at the origin.  
The jump conditions satisfied by $\mathbf{E}_n(\lambda;y,m)$ are as follows.  Firstly, since $\mathbf{F}_n(\lambda;y,m)$ extends continuously to the arcs of $L\cap D$ while $\dot{\mathbf{F}}(\lambda;y,m)^{-1}$ is analytic for $\lambda\in\mathbb{C}\setminus\partial D$, it follows by Morera's theorem that $\mathbf{E}_n(\lambda;y,m)$ is in fact analytic on the arcs of $L\cap D$.  For $\lambda\in L\setminus\overline{D}$, $\dot{\mathbf{F}}(\lambda;y,m)$ is analytic with analytic inverse, both of which are bounded; since $\mathbf{F}_{n+}(\lambda;y,m)=\mathbf{F}_{n-}(\lambda;y,m)(\mathbb{I}+\text{exponentially small})$, it follows that as $n\to+\infty$, $\mathbf{E}_{n-}(\lambda;y,m)^{-1}\mathbf{E}_{n+}(\lambda;y,m)-\mathbb{I}$ is small beyond all orders uniformly for bounded $m\in\mathbb{C}$.  Finally, for $\lambda\in\partial D$, 
\begin{equation}
\begin{split}
\mathbf{E}_{n+}(\lambda;y,m)&=\mathbf{F}_{n+}(\lambda;y,m)\dot{\mathbf{F}}_+(\lambda;y,m)^{-1}\\
&=\mathbf{F}_{n-}(\lambda;y,m)\begin{bmatrix}1+\mathcal{O}(n^{-1}) & a(\lambda;y,m)+\mathcal{O}(n^{-1})\\
\mathcal{O}(n^{-1}) & 1+\mathcal{O}(n^{-1})\end{bmatrix}\begin{bmatrix}1 & -a(\lambda;y,m)\\0 & 1\end{bmatrix}\dot{\mathbf{F}}_-(\lambda;y,m)^{-1}\\
&=\mathbf{E}_{n-}(\lambda;y,m)\dot{\mathbf{F}}_-(\lambda;y,m)\begin{bmatrix}1+\mathcal{O}(n^{-1}) & a(\lambda;y,m)+\mathcal{O}(n^{-1})\\
\mathcal{O}(n^{-1}) & 1+\mathcal{O}(n^{-1})\end{bmatrix}\begin{bmatrix}1 & -a(\lambda;y,m)\\0 & 1\end{bmatrix}\dot{\mathbf{F}}_-(\lambda;y,m)^{-1}\\
&=\mathbf{E}_{n-}(\lambda;y,m)(\mathbb{I}+\mathcal{O}(n^{-1})),\quad n\to+\infty,\quad\lambda\in\partial D,
\end{split}
\label{eq:E-jump}
\end{equation}
where the $\mathcal{O}(n^{-1})$ terms are uniform on $\partial D$.
Here, we used \eqref{eq:Mismatch-0}, \eqref{eq:Fjump-circle}, and \eqref{eq:Fdot-circle-jump} on the second line.  The jump contour for $\mathbf{E}_n(\lambda;y,m)$ is therefore exactly the same as that for $\mathbf{F}_n(\lambda;y,m)$; see Figure~\ref{fig:OutsideError}.  
From these considerations, we see that uniformly for $(y,m)$ in compact subsets of $(\mathbb{C}\setminus \overline{E})\times\mathbb{C}$, 
$\mathbf{E}_n(\lambda;y,m)$ satisfies the conditions of a small-norm Riemann-Hilbert problem for $|n|$ sufficiently large, and the unique solution satisfies $\mathbf{E}_n(\lambda;y,m)=\mathbb{I}+\mathcal{O}(n^{-1})$
uniformly for $\lambda\in\mathbb{C}\setminus (L\cup\partial D)$.  Moreover, $\mathbf{E}_n(\lambda;y,m)$ is 
well-defined at $\lambda=0$ with $\mathbf{E}_n(0;y,m)=\mathbb{I}+\mathcal{O}(n^{-1})$ as $n\to +\infty$, and
$\mathbf{E}_n(\lambda;y,m)=\mathbb{I}+\mathbf{E}_{n,1}(y,m)\lambda^{-1}+\mathcal{O}(\lambda^{-2})$ as $\lambda\to\infty$ with $\mathbf{E}_{n,1}(y,m)=\mathcal{O}(n^{-1})$
as $n\to +\infty$.  
Now, we have the exact identity 
\begin{equation}
\begin{split}
\mathbf{M}_n(\lambda;y,m)&=n^{m\sigma_3/2}\mathbf{F}_n(\lambda;y,m)n^{-m\sigma_3/2}\dot{\mathbf{M}}_n(\lambda;y,m)\\ &=n^{m\sigma_3/2}\mathbf{E}_n(\lambda;y,m)\dot{\mathbf{F}}(\lambda;y,m)n^{-m\sigma_3/2}\dot{\mathbf{M}}_n(\lambda;y,m),
\end{split}
\end{equation}
and therefore from \eqref{eq:Y-M-g-function} and $g(\lambda;y)=g_\infty(y)=\tfrac{1}{2}V(\lNaught(y);y)$, we get 
\begin{equation}
\mathbf{Y}_n(\lambda;ny,m)=\ee^{-nV(\lNaught(y);y)\sigma_3/2}n^{m\sigma_3/2}\mathbf{E}_n(\lambda;y,m)\dot{\mathbf{F}}(\lambda;y,m)n^{-m\sigma_3/2}\dot{\mathbf{M}}_n(\lambda;y,m)\ee^{nV(\lNaught(y);y)\sigma_3/2}.
\label{eq:Y-representation-I}
\end{equation}
Now recall the formula \eqref{eq:u-n-from-Y-formula} for the rational solution $u=u_n(x;m)$ of the Painlev\'e-III equation \eqref{eq:PIII}.
Since to calculate the quantities in this formula we only need $\mathbf{Y}_n(\lambda;ny,m)$ for $\lambda$ in neighborhoods of the origin and infinity, we can safely replace $\dot{\mathbf{M}}^{(n)}(\lambda;y,m)$ in \eqref{eq:Y-representation-I} by the diagonal outer parametrix $\dot{\mathbf{M}}^\mathrm{out}(\lambda;y,m)$ which commutes with $n^{-m\sigma_3/2}$.  Therefore, if $\mathbf{Q}_{n}(\lambda;y,m):=\mathbf{E}_n(\lambda;y,m)\dot{\mathbf{F}}(\lambda;y,m)\dot{\mathbf{M}}^\mathrm{out}(\lambda;y,m)$, then \eqref{eq:u-n-from-Y-formula} can be rewritten as
\begin{equation}
u_n(ny;m)=\frac{-\ii\displaystyle\lim_{\lambda\to\infty}\lambda Q_{n,12}(\lambda;y,m)}{\displaystyle\left[\lim_{\lambda\to 0}Q_{n,11}(\lambda;y,m)\OurPower{\lambda}{-(m+1/2)}\right]
\left[\lim_{\lambda\to 0}Q_{n,12}(\lambda;y,m)\OurPower{\lambda}{m+1/2}\right]}.
\label{eq:u-from-Y-rewrite}
\end{equation}
Now all three factors of $\mathbf{Q}_n(\lambda;y,m)$ tend to $\mathbb{I}$ as $\lambda\to\infty$ but $\dot{\mathbf{M}}^\mathrm{out}(\lambda;y,m)$ is also diagonal, 
\begin{equation}
\begin{split}
\lim_{\lambda\to\infty}\lambda Q_{n,12}(\lambda;y,m)&=\lim_{\lambda\to\infty}\lambda \dot{F}_{12}(\lambda;y,m)+E_{n,1,12}(y,m)\\
&=-\ii\ee^{\ii\pi m}2^md(\lNaught(y);y,m)^2f(\lNaught(y);y,m)^2W'(\lNaught(y);y)^{-1}+\mathcal{O}(n^{-1})
\end{split}
\end{equation}
where in the second line we used \eqref{eq:F-dot-outside} and $\mathbf{E}_{n,1}(y,m)=\mathcal{O}(n^{-1})$.
Also, since $\dot{\mathbf{M}}^\mathrm{out}(\lambda;y,m)\OurPower{\lambda}{-(m+1/2)\sigma_3}$ tends to a limit of the form $h(y,m)^{\sigma_3}$ as $\lambda\to 0$, where $h(y,m)\neq 0$, 
\begin{multline}
\left[\lim_{\lambda\to 0}Q_{n,11}(\lambda;y,m)\OurPower{\lambda}{-(m+1/2)}\right]
\left[\lim_{\lambda\to 0}Q_{n,12}(\lambda;y,m)\OurPower{\lambda}{m+1/2}\right]\\
\begin{aligned}
&=\left[
E_{n,11}(0;y,m)\dot{F}_{11}(0;y,m)+E_{n,12}(0;y,m)\dot{F}_{21}(0;y,m)\right]
\left[E_{n,11}(0;y,m)\dot{F}_{12}(0;y,m)+E_{n,12}(0;y,m)\dot{F}_{22}(0;y,m)\right]\\
&=\left[E_{n,11}(0;y,m)\right]\left[E_{n,11}(0;y,m)\ii\ee^{\ii\pi m}2^md(\lNaught(y);y,m)^2f(\lNaught(y);y,m)^2W'(\lNaught(y);y)^{-1}\lNaught(y)^{-1}+ E_{n,12}(0;y,m)\right]\\
&=\ii\ee^{\ii\pi m}2^md(\lNaught(y);y,m)^2f(\lNaught(y);y,m)^2w'(\lNaught(y);y)^{-1}\lNaught(y)^{-1}+\mathcal{O}(n^{-1}),
\end{aligned}
\end{multline}
where in the third line we used \eqref{eq:F-dot-outside} and in the fourth line we used $\mathbf{E}_n(0;y,m)=\mathbb{I}+\mathcal{O}(n^{-1})$.
Using these results in \eqref{eq:u-from-Y-rewrite} then gives the asymptotic formula \eqref{eq:u-outside} and completes the proof of Theorem~\ref{theorem:outside}.

\section{Asymptotics of $u_n(ny+w;m)$ for $y\in E$ and $m\in\mathbb{C}\setminus(\mathbb{Z}+\tfrac{1}{2})$}
\label{sec:interior}
To study $u_n(x;m)$ for values of $x$ corresponding to the interior of $E$, we wish to capture two different effects: (i) the rapid oscillation visible in plots showing a locally regular pattern of poles and zeros on a microscopic length scale $\Delta x\sim 1$ and (ii) the gradual modulation of this pattern over macroscopic length scales $\Delta x\sim n$.  To separate these scales, we write $x=ny+w$ as described in Section~\ref{sec:results}.  As mentioned in Remark~\ref{remark:two-vars}, considering $u_n(ny+w;m)$ as a function of $w$ for fixed $y\in E$ captures the microscopic behavior of $u_n$, while setting $w=0$ and considering $u_n(ny;m)$ as a function of $y$ captures instead the macroscopic behavior of $u_n$.  A similar approach to the rational solutions of the Painlev\'e-II equation was taken in \cite{BuckinghamM14}.  In this section we will develop an approximation of $u_n(ny+w;m)$ that depends not on the combination $ny+m$ but rather separately on $y$ and $m$ in such a way as to explicitly separate these scales.  In particular, it will turn out that the approximation is meromorphic in $w$ for each fixed $y$ but generally is not analytic at all in $y$.

\subsection{Spectral curves satisfying the Boutroux integral conditions for $y\in E$}
\label{sec:Boutroux-in-E}
We tie the spectral curve to the value $y$ of the macroscopic coordinate and compensate for nonzero values of the microscopic coordinate $w$ later in the construction of a parametrix.
\subsubsection{Solving the Boutroux integral conditions for $y$ small}
\label{sec:Boutroux-y-small}
To construct a $g$-function for $y$ small, we assume that the spectral curve corresponds to a polynomial $P(\lambda;y,C)$ with four distinct roots.  We write $y$ in polar form as $y=r\ee^{\ii\theta}$ and we write $C$ in the form $C=y\tilde{C}$.  For $r>0$ we may divide the equations \eqref{eq:Boutroux} through by $\sqrt{r}$ and consider instead the \emph{renormalized Boutroux integral conditions}
\begin{equation}
\tilde{\mathfrak{B}}_\mathfrak{a}(\tilde{C};r,\theta):=\mathrm{Re}\left(\oint_\mathfrak{a}\tilde{\mu}\,\dd\lambda\right)=0\quad\text{and}\quad
\tilde{\mathfrak{B}}_\mathfrak{b}(\tilde{C};r,\theta):=\mathrm{Re}\left(\oint_\mathfrak{b}\tilde{\mu}\,\dd\lambda\right)=0
\label{eq:renormalized-Boutroux}
\end{equation}
where
\begin{equation}
\tilde{\mu}^2 = \ee^{\ii\theta}\left[\frac{1}{2}\ii\lambda^{-1}+\tilde{C}\lambda^{-2}+\frac{1}{2}\ii\lambda^{-3}\right]-\frac{1}{4}r\ee^{2\ii\theta}(1+\lambda^{-4}).
\label{eq:renormalized-rho}
\end{equation}
Note that if $\tilde{u}:=\mathrm{Re}(\tilde{C})$ and $\tilde{v}:=\mathrm{Im}(\tilde{C})$, then just as in \eqref{eq:Jacobian} one has that
\begin{equation}
\det\left(\frac{\partial(\tilde{\mathfrak{B}}_\mathfrak{a},\tilde{\mathfrak{B}}_\mathfrak{b})}{\partial (\tilde{u},\tilde{v})}\right) =\frac{1}{4}\mathrm{Im}\left(\left[\oint_\mathfrak{a}\frac{\dd\lambda}{\tilde{\mu}\lambda^2}\right]\left[\oint_\mathfrak{b}\frac{\dd\lambda}{\tilde{\mu}\lambda^2}\right]^*\right)
\label{eq:renormalized-Jacobian}
\end{equation}
which is nonzero as long as $\tilde{\mu}$ (see the algebraic relation \eqref{eq:renormalized-rho}) has distinct branch points on the Riemann sphere of the $\lambda$-plane, see \cite[Chapter II, Corollary 1]{Dubrovin81}. We now first set $r=0$ and attempt to determine $\tilde{C}$ as a function of $\theta$.  It is convenient to then reduce the cycle integrals in \eqref{eq:renormalized-Boutroux} to contour integrals connecting pairs of branch points in the finite $\lambda$-plane, and since when $r=0$ the differential $\tilde{\mu}\,\dd\lambda$ has a double pole with zero residue (in an appropriate local coordinate) at the branch point $\lambda=0$ we can integrate by parts to transfer ``half'' of the double pole to each of the finite nonzero roots of $\tilde{\mu}^2$ which (again in appropriate local coordinates) are simple zeros of $\tilde{\mu}\,\dd\lambda$.  In this way we obtain conditions equivalent to \eqref{eq:renormalized-Boutroux} for $r=0$ involving a differential that is holomorphic at all three branch points in the finite $\lambda$-plane.  These conditions are the following:
\begin{equation}
\tilde{\mathfrak{B}}^0_\mathfrak{a}(\tilde{C};\theta):=\mathrm{Re}\left(\oint_\mathfrak{a}\tilde{\mu}_0\,\dd\lambda\right)=0\quad\text{and}\quad
\tilde{\mathfrak{B}}^0_\mathfrak{b}(\tilde{C};\theta):=\mathrm{Re}\left(\oint_\mathfrak{b}\tilde{\mu}_0\,
\dd\lambda\right)=0,
\label{eq:Boutroux-r=0}
\end{equation}
where
\begin{equation}
\tilde{\mu}_0^2:=\ii\ee^{\ii\theta}\frac{(\lambda-\ii \tilde{C})^2}{\lambda(\lambda^2-2\ii \tilde{C}\lambda+1)}.
\end{equation}
The desired simplification is then that the cycle integrals in \eqref{eq:Boutroux-r=0} over $\mathfrak{a}$ and $\mathfrak{b}$ may be replaced (up to a harmless factor of $2$) by path integrals from $\lambda=0$ to the two roots of the quadratic $\lambda^2-2\ii \tilde{C}\lambda+1$ respectively.\bigskip

If $\ee^{\ii\theta}=1$, we may solve \eqref{eq:Boutroux-r=0} in this simplified form by assuming $\tilde{C}$ to be real and positive.  Indeed, then the roots of $\lambda^2-2\ii \tilde{C}\lambda+1$ are the values $\lambda=\ii(\tilde{C}\pm\sqrt{\tilde{C}^2+1})$ which lie on the positive and negative imaginary axes.  It is easy to see that when $\theta=0$, $\tilde{\mu}_0^2>0$ holds for purely imaginary $\lambda$ between $\lambda=\ii(\tilde{C}-\sqrt{\tilde{C}^2+1})$ and $\lambda=0$.  Therefore it is immediate that 
\begin{equation}
\mathrm{Re}\left(\int_0^{\ii (\tilde{C}-\sqrt{\tilde{C}^2+1})}\tilde{\mu}_0\,\dd\lambda\right)=0,\quad \theta=0,\quad \tilde{C}>0.
\end{equation}
The remaining Boutroux integral condition then reduces under the hypotheses $\theta=0$ and $\tilde{C}>0$ to a purely real-valued integral condition on $\tilde{C}$:
\begin{equation}
J(\tilde{C}):=\int_0^{\tilde{C}+\sqrt{\tilde{C}^2+1}}\frac{t-\tilde{C}}{\sqrt{t}\sqrt{1+2\tilde{C}t-t^2}}\,\dd t=0
\end{equation}
Obviously $\lim_{\tilde{C}\downarrow 0}J(\tilde{C})$ exists and the limit is positive.  Also, by rescaling $t=\tilde{C}s$, 
\begin{equation}
J(\tilde{C})=-\sqrt{\tilde{C}}\int_0^1\frac{1-s}{\sqrt{s}\sqrt{\tilde{C}^{-2}+2s-s^2}}\,\dd s + \sqrt{\tilde{C}}\int_1^2\frac{s-1}{s\sqrt{2-s}}\,\dd s + o(\sqrt{\tilde{C}}),\quad \tilde{C}\to\infty,
\end{equation}
and clearly the first term is the dominant one so $J(\tilde{C})<0$ for large positive $\tilde{C}$.  Also, by direct calculation, 
\begin{equation}
J'(\tilde{C})=-\frac{1}{2}\int_0^{\tilde{C}+\sqrt{\tilde{C}^2+1}}\frac{\dd t}{\sqrt{t}\sqrt{1+2\tilde{C}t-t^2}}<0,\quad \tilde{C}>0,
\end{equation}
so there exists a unique simple root $\tilde{C}_0>0$ of $J(\tilde{C})$.  Numerical computation shows that $\tilde{C}_0\approx 0.860437$.\bigskip  

If $\ee^{\ii\theta}=-1$, we can invoke the symmetry $\lambda\mapsto -\lambda$ and $\tilde{C}\mapsto -\tilde{C}$ of $\tilde{\mu}_0^2$ to deduce that the equations \eqref{eq:Boutroux-r=0} hold for $\tilde{C}=-\tilde{C}_0\approx -0.860437$.\bigskip  

When $r=0$, the elliptic curve given by \eqref{eq:renormalized-rho} has distinct branch points on the Riemann sphere unless $\tilde{C}=\pm\ii$,
and hence the Jacobian \eqref{eq:renormalized-Jacobian} of the equations \eqref{eq:Boutroux-r=0} is nonzero for $\ee^{\ii\theta}=\pm 1$.
The solution of the $r=0$ system can therefore be continued to other values of $\ee^{\ii\theta}$ until the condition $\tilde{C}\neq\pm\ii$ is violated.
It is easy to check that $\tilde{C}=\pm\ii$ is consistent with \eqref{eq:Boutroux-r=0} only for $\ee^{\ii\theta}=\mp\ii$.  Therefore the solutions of the $r=0$ system \eqref{eq:Boutroux-r=0} obtained for $\ee^{\ii\theta}=\pm 1$ can be uniquely continued by the implicit function theorem to fill out an infinitesimal circle surrounding the origin $y=0$ with the possible exception of its intersection with the imaginary axis. Fixing any phase factor $\ee^{\ii\theta}\neq \pm\ii$, we can then continue the solution of the full (rescaled) system \eqref{eq:renormalized-Boutroux} to small $r>0$ (in fact, also for $r<0$, although the solution is not relevant), and the radial continuation can only be obstructed if branch points collide.  

\subsubsection{Degenerate spectral curves satisfying the Boutroux integral conditions}
\label{sec:Boutroux-degenerate}
The only possible values of $y\in\mathbb{C}$ for which all four roots of $P(\lambda;y,C)$ coincide are $y=\pm\frac{1}{2}\ii$, which lie on the boundary of $E$.  For all $y\in\mathbb{C}$ it is possible to have either a pair of distinct double roots or a double root and two simple roots, provided $C$ is appropriately chosen as a function of $y$.  We will now show that these degenerate configurations are inconsistent with the Boutroux integral conditions \eqref{eq:Boutroux}, which have to be interpreted in a limiting sense, provided that $y$ lies in the interior of $E$ but does not also lie on the imaginary axis.\smallskip

Consider first a nearly degenerate configuration of roots in which two simple roots of $P$ are very close to a point $\lambda=\lNaught$ and two reciprocal simple roots are very close to $\lambda=\lNaught^{-1}$.  Then we may choose the cycle $\mathfrak{a}$ to encircle the pair of roots near, say, $\lambda=\lNaught$.  As the spectral curve degenerates with the cycle $\mathfrak{a}$ fixed, we may observe that $\mu$ becomes in the limit an analytic function of $\lambda$ in the interior of $\mathfrak{a}$ and therefore $\oint_\mathfrak{a}\mu\,\dd\lambda\to 0$ and hence $\mathrm{Re}(\oint_\mathfrak{a}\mu\,\dd\lambda)\to 0$, so one of the Boutroux integral conditions is automatically satisfied in the limit.  The cycle $\mathfrak{b}$ should then be chosen to connect the small branch cut near $\lambda=\lNaught$ with the small reciprocal branch cut near $\lambda=\lNaught^{-1}$.  In the limit that the spectral curve degenerates and $\mu^2$ becomes a perfect square, the second Boutroux integral condition becomes
\begin{equation}
\mathrm{Re}\left(\oint_\mathfrak{b}\mu\,\dd\lambda\right)\to 2\mathrm{Re}\left(\int_{\lNaught}^{\lNaught^{-1}}\frac{\ii y}{2}\frac{(\lambda-\lNaught)(\lambda-\lNaught^{-1})}{\lambda^2}\,\dd\lambda\right)=\mathrm{Re}(V(\lNaught;y)-V(\lNaught^{-1};y))
\end{equation}
where $\lNaught+\lNaught^{-1}=\ii y^{-1}$.  \emph{The condition on $y\in\mathbb{C}$ that this quantity vanishes is precisely that either $y\in\partial E$ or $y$ lies on the imaginary axis outside of $E$.  Therefore the Boutroux conditions cannot be satisfied by such a degenerate spectral curve if $y$ is in the interior of $E$.}\smallskip

Next consider a nearly degenerate configuration in which a pair of simple roots of $P$ lie very close to $\lambda=\pm 1$ and another pair of reciprocal simple roots tend to distinct reciprocal limits satisfying whose sum is $2\ii y^{-1}\mp 2$.  Again taking $\mathfrak{a}$ to surround the coalescing pair of roots shows that $\mathrm{Re}(\oint_\mathfrak{a}\mu\,\dd\lambda)\to 0$ in the limit.  Then, in the same limit, up to signs,
\begin{equation}
\mathrm{Re}\left(\oint_\mathfrak{b}\mu\,\dd\lambda\right)\to 2\mathrm{Re}\left(\int_{\lambda^\pm}^{\pm 1}\frac{\ii y}{2}\frac{\lambda\mp 1}{\lambda^2}r^\pm(\lambda;y)\,\dd\lambda\right)=:F^\pm(y),
\end{equation}
where $\lambda^\pm+(\lambda^\pm)^{-1}=2\ii y^{-1}\mp 2$ and $r^\pm(\lambda;y)^2=(\lambda-\lambda^\pm)(\lambda-(\lambda^\pm)^{-1})$ with $r^\pm$ having a branch cut connecting the two roots of $r^\pm(\lambda;y)^2$ and, say, $r^\pm=\lambda+\mathcal{O}(1)$ as $\lambda\to\infty$.  It is easy to show that $F^+(y)=0$ for $y$ on the segment between $y=0$ and $y=\tfrac{1}{2}\ii$, and that $F^-(y)=0$ for $y$ on the segment between $y=0$ and $y=-\tfrac{1}{2}\ii$.  However, neither function $F^\pm(y)$ vanishes identically, so the equations $F^\pm(y)=0$ define a system of curves in the complex $y$-plane.  The only branches of these curves in the interior of $E$ lie on the imaginary axis as illustrated in Figure~\ref{fig:FplusFminus}.
\begin{figure}[h]
\begin{center}
\includegraphics{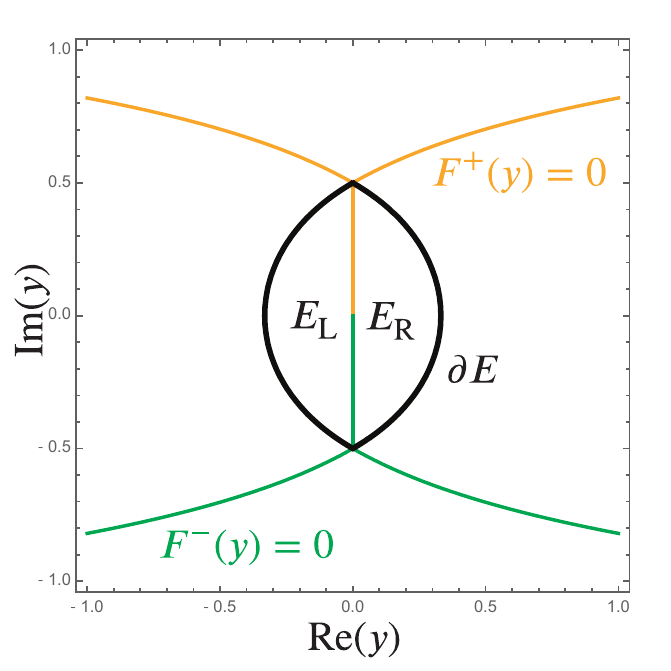}
\end{center}
\caption{The locus $F^+(y)=0$ (orange) and $F^-(y)=0$ (green).  The curve $\partial E$ is shown in black and the sub-domains $E_\mathrm{L}$ and $E_\mathrm{R}$ of $E\setminus ((\partial E)\cup\ii\mathbb{R})$ are indicated.}
\label{fig:FplusFminus}
\end{figure}
Therefore, continuation along radial paths of the Boutroux conditions from the infinitesimal semicircles about the origin in the right and left half-planes defines a unique spectral curve for each $y\in E_\mathrm{L}\cup E_\mathrm{R}$, recalling that $E_\mathrm{L}$ ($E_\mathrm{R}$) is the part of the interior of $E$ in the open left (right) half-plane. 
%

\subsection{Stokes graph and construction of the $g$-function}
For the rest of Section~\ref{sec:interior} we will be concerned with the approximation of $u_n(ny+w;m)$ for large $n$ when $m\in\mathbb{C}\setminus (\mathbb{Z}+\tfrac{1}{2})$ and 
$w$ is bounded, while $y\in E_\mathrm{L}\cup E_\mathrm{R}$.  Actually, due to the exact symmetry \eqref{eq:u-n-exact-symmetry},
it is sufficient to assume that $y\in E_\mathrm{R}$, as $E_\mathrm{L}$ is the reflection through the origin of $E_\mathrm{R}$.  Thus we assume for the rest of Section~\ref{sec:interior} that $y\in E_\mathrm{R}$ and at the end invoke \eqref{eq:u-n-exact-symmetry} to extend the results to $y\in E_\mathrm{L}$.\smallskip

Given $y\in E_\mathrm{R}$, let $C=C(y)$ be determined by the procedure described in Section~\ref{sec:Boutroux-in-E}  so that the Boutroux conditions \eqref{eq:Boutroux} are satisfied.  The \emph{Stokes graph} of $y$ is the system of arcs (\emph{edges}) in the complex $\lambda$-plane emanating from the four distinct roots of $P(\lambda;y,C(y))$ (\emph{vertices}, when taken along with $\lambda=0,\infty$) along which the condition $(g'(\lambda;y)-\tfrac{1}{2}V'(\lambda;y))^2\,\dd\lambda^2=\lambda^{-4}P(\lambda;y,C(y))\,\dd\lambda^2<0$ holds.  The Boutroux conditions \eqref{eq:Boutroux} imply that the Stokes graph is connected.  
In particular, each pair of roots of $P(\lambda;y,C(y))$ that coalesce at $\partial E$ is directly connected by an edge of the Stokes graph.  Denoting the union of these two edges by $\Sigma^\mathrm{out}(y)$, let $R(\lambda;y)$ be the function analytic 
for $\lambda\in\mathbb{C}\setminus\Sigma^\mathrm{out}(y)$ that satisfies $R(\lambda;y)^2=P(\lambda;y,C(y))$ and $R(\lambda;y)=\tfrac{1}{2}\ii y\lambda^2+\mathcal{O}(\lambda)$ as $\lambda\to\infty$.  According to \eqref{eq:gprimeminushalfVprime-asymp} and \eqref{eq:tildeP4}, $g'(\lambda;y)$ may then be defined by
\begin{equation}
g'(\lambda;y)=\frac{1}{2}V'(\lambda;y)+\frac{R(\lambda;y)}{\lambda^2},\quad\lambda\in\mathbb{C}\setminus\Sigma^\mathrm{out}(y),\quad y\in  E_\mathrm{R}.
\label{eq:gprime-E}
\end{equation}
Note that the apparent singularity at $\lambda=0$ is removable, and $g'(\lambda;y)$ is integrable at $\lambda=\infty$. Figures~\ref{fig:StokesGraphReal}, \ref{fig:StokesGraphUpperArc}, and \ref{fig:StokesGraphLowerArc} below illustrate how the Stokes graph varies with $y\in E_\mathrm{R}$.
\begin{figure}[h]
\begin{center}
\includegraphics{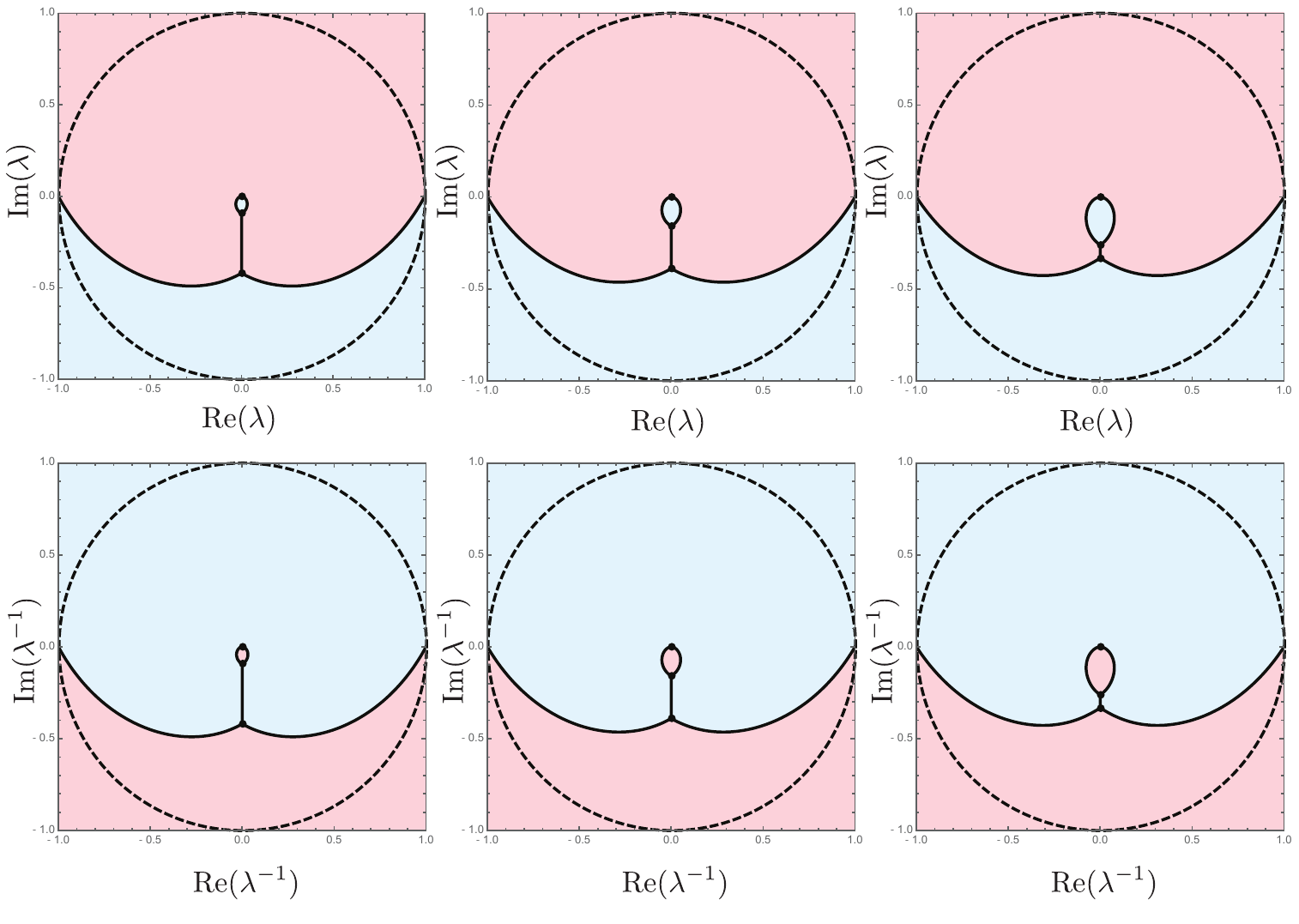}
\end{center}
\caption{The Stokes graph and sign charts for $\mathrm{Re}(2g(\lambda;y)-V(\lambda;y))$ (red/blue for negative/positive) for $y=0.16$ (left column), $y=0.24$ (center column), and $y=0.32$ (right column).  Observe as expected that for small $y$ there are roots close to $\lambda=0$ and $\lambda=\infty$, while as $y\to\partial E$  there are two coalescing pairs of roots, each pair connected by an edge of the graph.}
\label{fig:StokesGraphReal}
\end{figure}
\begin{figure}[h]
\begin{center}
\includegraphics{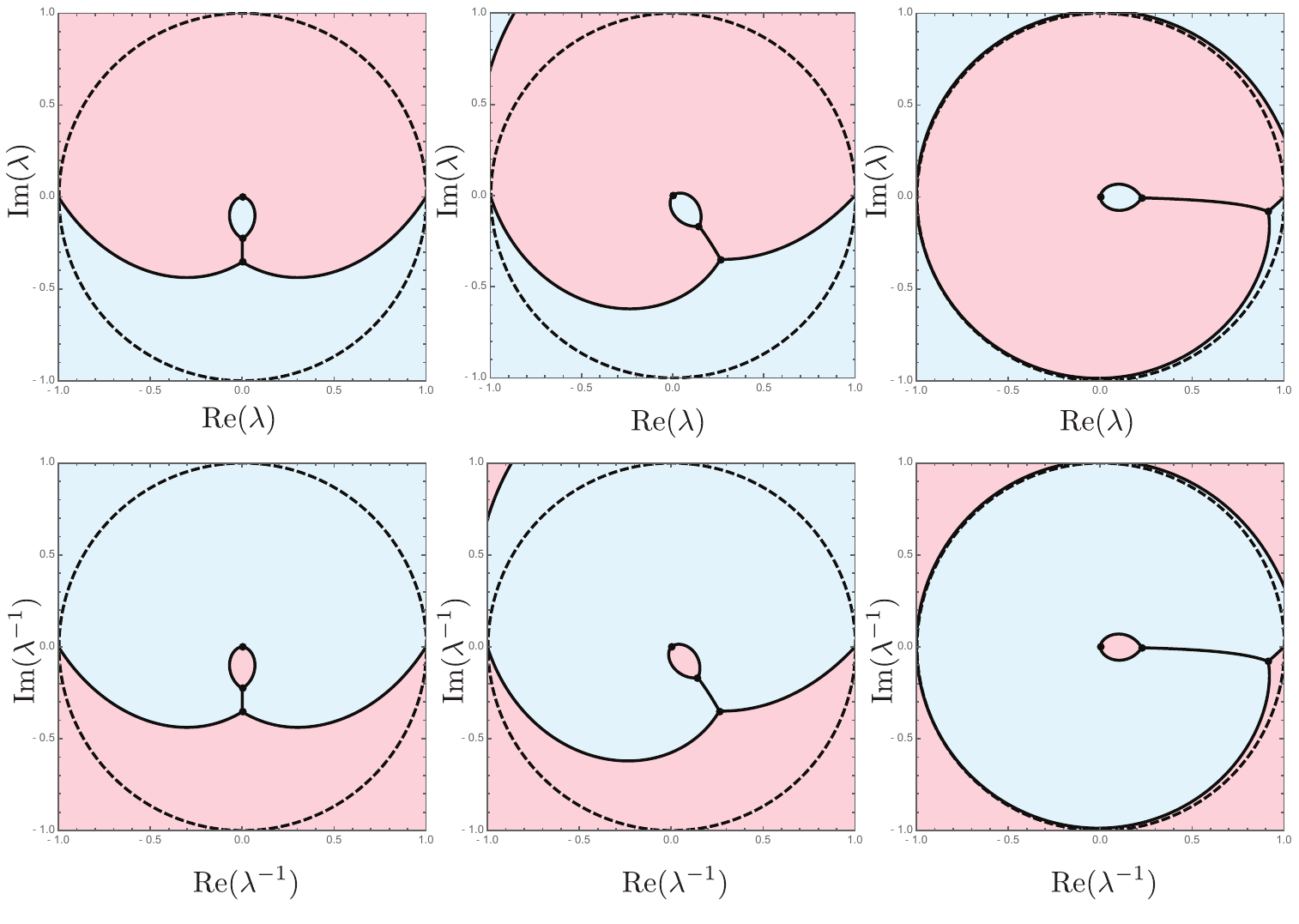}
\end{center}
\caption{As in Figure~\ref{fig:StokesGraphReal}, but for $y=0.3$ (left column), $y=0.3\ee^{\ii\pi/4}$ (center column), and $y=0.3\ee^{99\pi\ii/200}$ (right column).  Observe that as $y$ approaches the positive imaginary axis (where $F^+(y)=0$) from within $E_\mathrm{R}$, a pair of roots coalesce at $\lambda=1$.}
\label{fig:StokesGraphUpperArc}
\end{figure}
\begin{figure}[h]
\begin{center}
\includegraphics{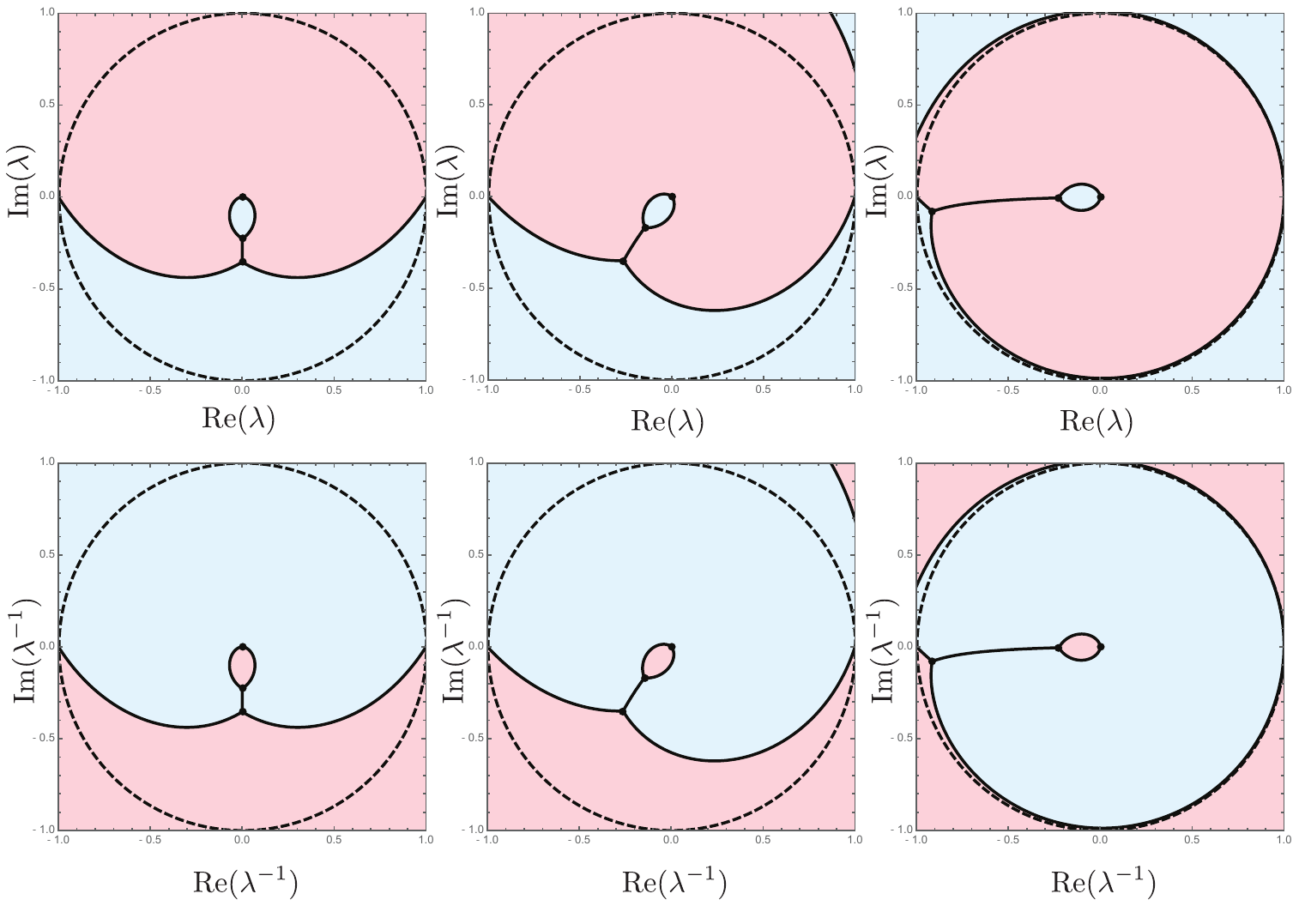}
\end{center}
\caption{As in Figure~\ref{fig:StokesGraphReal}, but for $y=0.3$ (left column), $y=0.3\ee^{-\ii\pi/4}$ (center column), and $y=0.3\ee^{-99\pi\ii/200}$ (right column).  Observe that as $y$ approaches the negative imaginary axis (where $F^-(y)=0$) from within $E_\mathrm{R}$, a pair of roots coalesce at $\lambda=-1$.}
\label{fig:StokesGraphLowerArc}
\end{figure}
A comparison of the top and bottom rows of these figures illustrates the fact that the Stokes graph of $y\in E_\mathrm{R}$ is invariant while $\mathrm{Re}(2g(\lambda;y)-V(\lambda;y))$ changes sign under the involution $\lambda\mapsto \lambda^{-1}$.\bigskip  


Given the Stokes graph, we may lay over the arcs $\Sigma^\mathrm{out}(y)$ and in the complement of the Stokes graph a contour $L$ consisting of arcs $\LInftyRed$, $\LZeroRed$, $\LInftyBlue$, and $\LZeroBlue$ that satisfy the increment-of-argument conditions \eqref{eq:increment-argument-red}--\eqref{eq:increment-argument-blue}.  There are two topologically distinct cases differentiated by the sign $\mathrm{Im}(y)$, as illustrated in Figure~\ref{fig:StokesGraph} for $y\in E_\mathrm{R}$ with $\mathrm{Im}(y)>0$   
\begin{figure}[h]
\begin{center}
\includegraphics{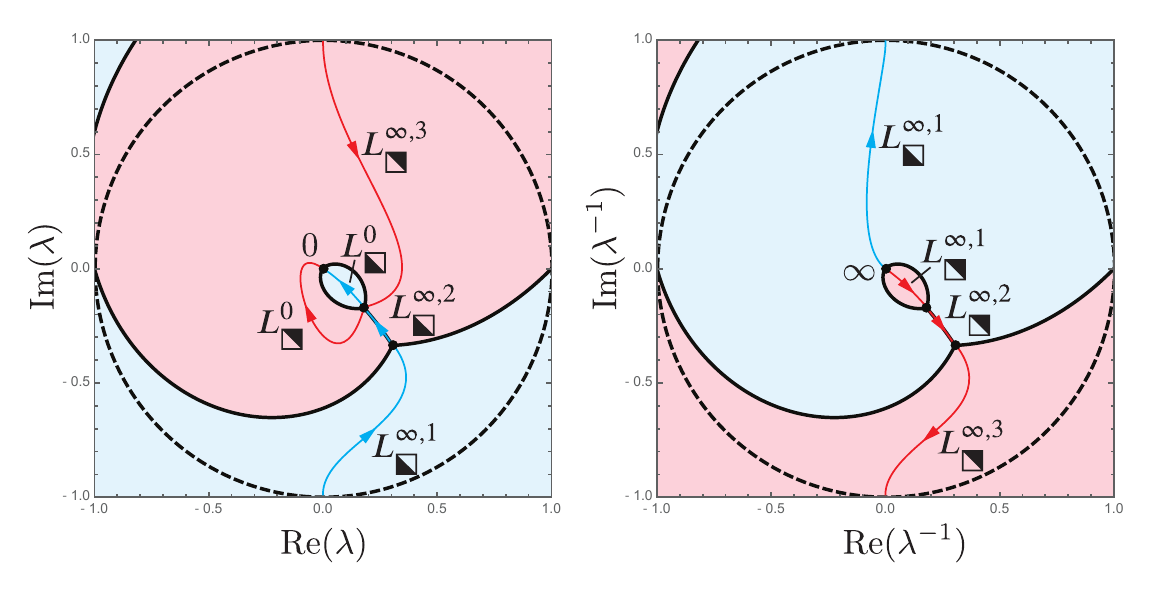}
\end{center}
\caption{The Stokes graph (black curves) for $y=0.2+0.25\ii\in E_\mathrm{R}$.  Suitable contour arcs matching the argument increment conditions \eqref{eq:increment-argument-red}--\eqref{eq:increment-argument-blue} are also shown.  As in preceding figures, the sign of $\mathrm{Re}(2g(\lambda;y)-V(\lambda;y))$ is indicated with red (negative) and blue (positive) shading.}
\label{fig:StokesGraph}
\end{figure}
and in Figure~\ref{fig:StokesGraphLowerRight} for $y\in E_\mathrm{R}$ with $\mathrm{Im}(y)<0$.
\begin{figure}[h]
\begin{center}
\includegraphics{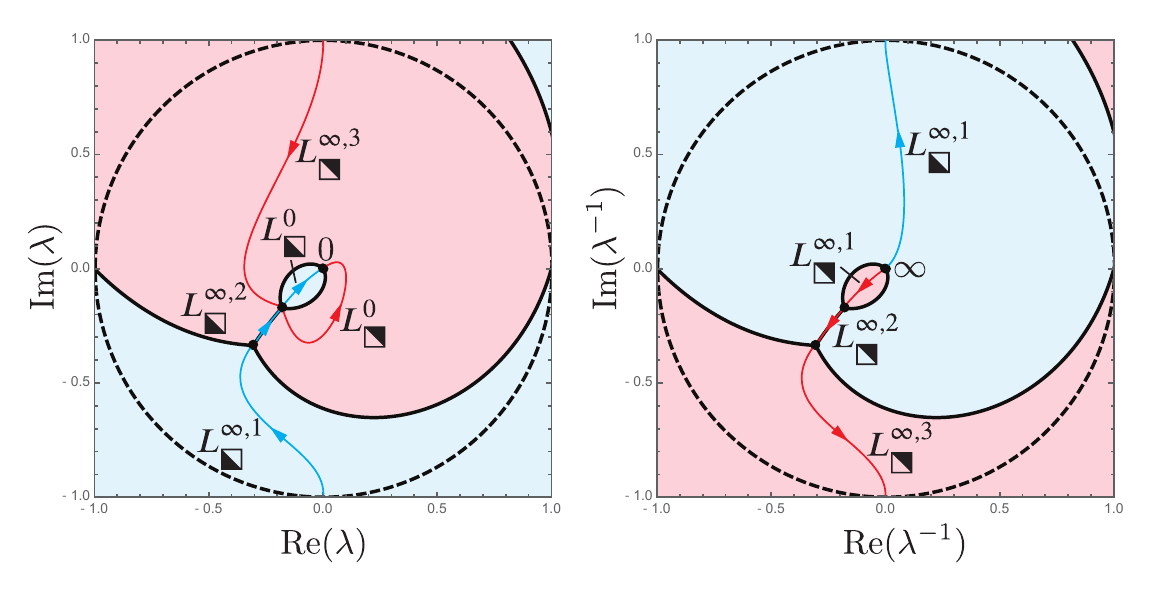}
\end{center}
\caption{As in Figure~\ref{fig:StokesGraph} but for $y=0.2-0.25\ii\in E_\mathrm{R}$.}
\label{fig:StokesGraphLowerRight}
\end{figure}
If $y\in E_\mathrm{R}$ with $\mathrm{Im}(y)=0$, we may use either configuration and obtain consistent results because as a rational function $u_n(x;m)$ is single-valued.  In the rest of this section, we will for simplicity suppose frequently that $y\in E_\mathrm{R}\setminus\mathbb{R}$ simply for the convenience of being able to speak of contour $L$ as a well-defined notion.
The vertices of the Stokes graph on the Riemann sphere are the four roots of $P(\lambda;y,C(y))$, each of which has degree $3$, and the points $\{0,\infty\}$, each of which has degree $2$.\smallskip  

The solution $\mathbf{Y}$ of Riemann-Hilbert Problem~\ref{rhp:renormalized} depends parametrically on $x=ny+w$, and when we consider $y\in E_\mathrm{R}$ we are introducing a $g$-function $g$ that depends on $y$ but not on $w$.  Therefore, in this setting the analogue of the definition \eqref{eq:Y-M-g-function} is instead
\begin{equation}
\mathbf{M}_n(\lambda;y,w,m):=\ee^{ng_\infty(y)\sigma_3}\mathbf{Y}_n(\lambda;ny+w,m)\ee^{-ng(\lambda;y)\sigma_3},
\label{eq:Y-M-Eye}
\end{equation}
i.e., the matrix $\mathbf{M}$ related to $\mathbf{Y}$ via \eqref{eq:Y-M-Eye} will depend on both $y$ and $w$ as independent parameters.  

\subsubsection{The $g$-function and its properties}
When $y\in E_\mathrm{R}\setminus\mathbb{R}$, 
the self-intersection point of $L$ is identified with the root of $P(\lambda;y,C(y))$ adjacent to $0$  in the Stokes graph.
Therefore, for $y\in E_\mathrm{R}\setminus\mathbb{R}$, the arcs $\LZeroBlue$ and $\LZeroRed$ each connect two distinct vertices of the Stokes graph, while $\LInftyBlue$ joins three consecutive vertices and $\LInftyRed$ joins four consecutive vertices.  We break these latter arcs at the intermediate vertices; thus $\LInftyBlue=\LInftyBlueOne\cup\LInftyBlueTwo$ and $\LInftyRed=\LInftyRedOne\cup\LInftyRedTwo\cup\LInftyRedThree$ with the components ordered by orientation away from $\infty$ and where $\LInftyBlueTwo$ and $\LInftyRedTwo$ are the two disjoint components of $\Sigma^\mathrm{out}(y)$.  The different sub-arcs are illustrated in Figures~\ref{fig:StokesGraph} and \ref{fig:StokesGraphLowerRight}.

With these definitions, $g(\lambda;y)$ is determined up to an integration constant by \eqref{eq:gprime-E} and the condition that $g(\lambda;y)$ is analytic for $\lambda\in\mathbb{C}\setminus(\Sigma^\mathrm{out}(y)\cup\LInftyRedThree)$.  Then, assuming that the branch cut of $\log(\lambda)$ in \eqref{eq:V-define} is disjoint from the contour $L$,  $g_+(\lambda;y)+g_-(\lambda;y)-V(\lambda;y)$ is constant along the two arcs of $\Sigma^\mathrm{out}(y)$, and we choose the integration constant (given the arbitrary choice of overall branch for $\log(\lambda)$ in \eqref{eq:V-define}) so that $g_+(\lambda;y)+g_-(\lambda;y)-V(\lambda;y)=0$ holds as an identity for $\lambda\in\LInftyRedTwo\subset\Sigma^\mathrm{out}(y)$.  In particular, $g(\infty;y)$ is well-defined mod $2\pi\ii\mathbb{Z}$.  The Stokes graph of $y$ then coincides with the zero level set of the function $\mathrm{Re}(2g(\lambda;y)-V(\lambda;y))$.  In Figures~\ref{fig:StokesGraph} and \ref{fig:StokesGraphLowerRight}, the region where $\mathrm{Re}(2g(\lambda;y)-V(\lambda;y))<0$ is shaded red while the region where $\mathrm{Re}(2g(\lambda;y)-V(\lambda;y))>0$ is shaded blue. The advantage of placing the arcs of $L$ in relation to the Stokes graph of $y$ as shown in Figures~\ref{fig:StokesGraph} and \ref{fig:StokesGraphLowerRight} is that the following conditions hold:
\begin{itemize}
\item For $\lambda\in\LZeroRed$, $g_+(\lambda;y)=g_-(\lambda;y)$ and $\mathrm{Re}(2g(\lambda;y)-V(\lambda;y))<0$.
\item For $\lambda\in\LZeroBlue$, $g_+(\lambda;y)=g_-(\lambda;y)$ and $\mathrm{Re}(2g(\lambda;y)-V(\lambda;y))>0$.
\item For $\lambda\in\LInftyRedOne$, $g_+(\lambda;y)=g_-(\lambda;y)$ and $\mathrm{Re}(2g(\lambda;y)-V(\lambda;y))<0$.
\item For $\lambda\in\LInftyRedTwo$, $g_+(\lambda;y)+g_-(\lambda;y)-V(\lambda;y)=0$ (by choice of integration constant) and $\mathrm{Re}(2g(\lambda;y)-V(\lambda;y))>0$ on both left and right sides of $\LInftyRedTwo$.
\item For $\lambda\in\LInftyRedThree$, we can use \eqref{eq:gprime-E} to deduce that
\begin{equation}
g_+(\lambda;y)-g_-(\lambda;y)=-\oint\frac{R(\ell;y)}{\ell^2}\,\dd\ell,\quad\lambda\in \LInftyRedThree,
\end{equation}
where the integration is over a counterclockwise-oriented loop surrounding $\LInftyRedTwo$.  As this loop can be interpreted as one of the homology cycles $(\mathfrak{a},\mathfrak{b})$ on the Riemann surface of the equation $\mu^2=\lambda^{-4}P(\lambda;y,C(y))$, by the Boutroux conditions \eqref{eq:Boutroux} we therefore have $g_+(\lambda;y)-g_-(\lambda;y)=\ii K_1$ where $K_1\in\mathbb{R}$ is a real constant (independent of $\lambda\in\LInftyRedThree$, but depending on $y\in E_\mathrm{R}$).  Also for $\lambda\in\LInftyRedThree$ we have $\mathrm{Re}(g_+(\lambda;y)+g_-(\lambda;y)-V(\lambda;y))<0$.
\item For $\lambda\in\LInftyBlueOne$, $g_+(\lambda;y)=g_-(\lambda;y)$ and $\mathrm{Re}(2g(\lambda;y)-V(\lambda;y))>0$.
\item For $\lambda\in\LInftyBlueTwo$, since $g_+(\lambda;y)+g_-(\lambda;y)-V(\lambda;y)=0$ holds on $\LInftyRedTwo$, integration of \eqref{eq:gprime-E} along $\LInftyRedThree$ gives
\begin{equation}
g_+(\lambda;y)+g_-(\lambda;y)-V(\lambda;y)=2\int_\LInftyRedThree\frac{R(\ell;y)}{\ell^2}\,\dd\ell.
\label{eq:LooksDifferentFor-m-NearNegativeHalfInteger}
\end{equation}
The right-hand side can also be identified with a cycle integral on the Riemann surface, so by the Boutroux conditions \eqref{eq:Boutroux} we deduce that $g_+(\lambda;y)+g_-(\lambda;y)-V(\lambda;y)=\ii K_2$ holds on $\LInftyBlueTwo$, where $K_2=K_2(y)\in\mathbb{R}$ is a real constant.
Also $\mathrm{Re}(2g(\lambda;y)-V(\lambda;y))<0$ holds on either side of the arc $\LInftyBlueTwo$.
\end{itemize}

\subsection{Szeg\H{o} function}
The Szeg\H{o} function is a kind of lower-order correction to the $g$-function.  Its dual purpose is  to remove the weak $\lambda$-dependence from the jump matrices on $\Sigma^\mathrm{out}(y)$ for $\mathbf{M}_n(\lambda;y,w,m)$ defined in \eqref{eq:Y-M-g-function} while simultaneously repairing the singularity at the origin captured by the condition that $\mathbf{M}_n(\lambda;y,w,m)\OurPower{\lambda}{-(m+1/2)\sigma_3}$ must be well-defined at $\lambda=0$.  We write the scalar Szeg\H{o} function $S(\lambda;y,m)$ in the form of an exponential:  $S(\lambda;y,m)=\ee^{L(\lambda;y,m)}$ where $L(\lambda;y,m)$ is bounded except near the origin and is analytic for $\lambda\in\mathbb{C}\setminus (\Sigma^\mathrm{out}(y)\cup\LZeroBlue)$.  The Szeg\H{o} function is then used to define a new unknown $\mathbf{N}_n(\lambda;y,w,m)$, by the formula
\begin{equation}
\mathbf{N}_n(\lambda;y,w,m):=S(\infty;y,m)^{-\sigma_3}\mathbf{M}_n(\lambda;y,w,m)S(\lambda;y,m)^{\sigma_3}=\ee^{-L(\infty;y,m)\sigma_3}\mathbf{M}_n(\lambda;y,w,m)\ee^{L(\lambda;y,m)\sigma_3}.
\end{equation}
To define the Szeg\H{o} function, we insist that the boundary values taken by $L(\lambda;y,m)$ on the arcs of its jump contour $\Sigma^\mathrm{out}(y)\cup \LZeroBlue$ are related as follows:
\begin{itemize}
\item 
For $\lambda\in \LZeroBlue$,
$L_+(\lambda;y,m)-L_-(\lambda;y,m)=-2\pi\ii (m+\tfrac{1}{2})$.
\item 
For $\lambda\in\LInftyRedTwo$, $L_+(\lambda;y,m)+L_-(\lambda;y,m)=\tfrac{1}{2}\ln(2\pi)-\log(\Gamma(\tfrac{1}{2}-m))-(m+1)\OurLog{\lambda}-\tfrac{1}{2}\ii\pi$.  
\item 
For $\lambda\in \LInftyBlueTwo$, $L_+(\lambda;y,m)+L_-(\lambda;y,m)=-(m+1)\OurLogAvg{\lambda}+\tfrac{1}{2}\ii\pi +\gamma(y,m)$.  
\end{itemize}
Here $\log(\Gamma(\tfrac{1}{2}-m))$ is an arbitrary value of the (generally complex) logarithm, we recall that $\OurLog{\lambda}:=\ln|\lambda|+\ii\OurArg{\lambda}$, and $\OurLogAvg{\lambda}$ refers to the average of the two boundary values of $\OurLog{\lambda}$ 
taken on $\LInftyBlueTwo$.  Also, $\gamma(y,m)$ is a constant to be determined so that $L(\lambda;y,m)$ tends to a well-defined limit $L(\infty;y,m)$ as $\lambda\to\infty$. Writing $L(\lambda;y,m)=R(\lambda;y)k(\lambda;y,m)$ and solving for $k$ using the Plemelj formula we obtain
\begin{multline}
k(\lambda;y,m)=-(m+\tfrac{1}{2})\int_\LZeroBlue\frac{\dd\ell}{R(\ell;y)(\ell-\lambda)}\\
{}+\frac{1}{2\pi\ii}\int_\LInftyRedTwo\frac{\tfrac{1}{2}\ln(2\pi)-\log(\Gamma(\tfrac{1}{2}-m))-(m+1)\OurLog{\ell}-\tfrac{1}{2}\ii\pi}{R_+(\ell;y)(\ell-\lambda)}\,\dd\ell\\{}+\frac{1}{2\pi\ii}
\int_\LInftyBlueTwo\frac{-(m+1)\OurLogAvg{\ell}+\tfrac{1}{2}\ii\pi+\gamma(y,m)}{R_+(\ell;y)(\ell-\lambda)}\,\dd\ell.
\end{multline}
Since $R(\lambda;y)=\mathcal{O}(\lambda^2)$ as $\lambda\to\infty$,  we need $k(\lambda;y,m)=\mathcal{O}(\lambda^{-2})$ in the same limit, which gives the condition determining $\gamma(y,m)$:
\begin{multline}
0=-(m+\tfrac{1}{2})\int_\LZeroBlue\frac{\dd\ell}{R(\ell;y)}\\
{}+\frac{1}{2\pi\ii}\int_\LInftyRedTwo\frac{\tfrac{1}{2}\ln(2\pi)-\log(\Gamma(\tfrac{1}{2}-m))-(m+1)\OurLog{\ell}-\tfrac{1}{2}\ii\pi}{R_+(\ell;y)}\,\dd\ell\\
{}+\frac{1}{2\pi\ii}\int_\LInftyBlueTwo\frac{-(m+1)\OurLogAvg{\ell}+\tfrac{1}{2}\ii\pi+\gamma(y,m)}{R_+(\ell;y)}\,\dd\ell.
\end{multline}
Note that the coefficient of $\gamma(y,m)$ is necessarily nonzero as a complete elliptic integral of the first kind.  We note also the identity
\begin{equation}
\int_\LInftyRedTwo\frac{\dd\ell}{R_+(\ell;y)} = -\int_\LInftyBlueTwo\frac{\dd\ell}{R_+(\ell;y)},
\label{eq:loop-swap}
\end{equation}
from which it follows that 
\begin{equation}
\gamma(y,m)=\tfrac{1}{2}\ln(2\pi)-\log(\Gamma(\tfrac{1}{2}-m))-\ii\pi +\tilde{\gamma}(y,m)
\label{eq:gamma-gammatilde}
\end{equation}
where
\begin{multline}
\tilde{\gamma}(y,m):=\left(\int_{\LInftyBlueTwo}\frac{\dd\lambda}{R_+(\lambda;y)}\right)^{-1}\left[2\pi\ii(m+\tfrac{1}{2})\int_{\LZeroBlue}\frac{\dd\lambda}{R(\lambda;y)} \right.\\
\left. {}+ (m+1)\left(\int_{\LInftyRedTwo}\frac{\OurLog{\lambda}\,\dd\lambda}{R_+(\lambda;y)} +
\int_{\LInftyBlueTwo}\frac{\OurLogAvg{\lambda}\,\dd\lambda}{R_+(\lambda;y)}\right)\right].
\end{multline}
Since $k(\lambda;y,m)$ exhibits negative one-half power singularities at each of the four roots of $P(\lambda;y,C(y))$, $L(\lambda;y,m)$ is bounded near these points.  Near the origin, we have $L(\lambda;y,m)=-(m+\tfrac{1}{2})\OurLog{\lambda} + \mathcal{O}(1)$, and therefore $\mathbf{N}_n(\lambda;y,w,m)$ is bounded near $\lambda=0$.\bigskip  

The jump conditions satisfied by $\mathbf{N}_n(\lambda;y,w,m)$ on the arcs of $L$ when $y\in E_\mathrm{R}\setminus\mathbb{R}$ are then as follows: 
\begin{equation}
\mathbf{N}_{n+}(\lambda;y,w,m)=\mathbf{N}_{n-}(\lambda;y,w,m)\begin{bmatrix}
1 & \displaystyle\frac{\sqrt{2\pi}\OurPower{\lambda}{-(m+1)}\ee^{-2L(\lambda;y,m)}\ee^{\ii w\varphi(\lambda)}}{\Gamma(\tfrac{1}{2}-m)}
\ee^{n(2g(\lambda;y)-V(\lambda;y))}\\
0 & 1\end{bmatrix},\quad \lambda\in\LInftyRedOne.
\label{eq:NLInftyRedOne}
\end{equation}
\begin{multline}
\mathbf{N}_{n+}(\lambda;y,w,m)=\\
\mathbf{N}_{n-}(\lambda;y,w,m)\begin{bmatrix}
\ee^{-\ii n K_1(y)} & \displaystyle\frac{\sqrt{2\pi}\OurPower{\lambda}{-(m+1)}\ee^{-2L(\lambda;y,m)}\ee^{\ii w\varphi(\lambda)}}{\Gamma(\tfrac{1}{2}-m)}
\ee^{n(g_+(\lambda;y)+g_-(\lambda;y)-V(\lambda;y))}\\
0 & \ee^{\ii n K_1(y)}\end{bmatrix},\quad \lambda\in\LInftyRedThree.
\end{multline}
\begin{multline}
\mathbf{N}_{n+}(\lambda;y,w,m)=\\
\mathbf{N}_{n-}(\lambda;y,w,m)\begin{bmatrix}
1 & 0\\
\displaystyle\frac{\sqrt{2\pi}(\OurPower{\lambda}{(m+1)/2})_+(\OurPower{\lambda}{(m+1)/2})_-\ee^{2L(\lambda;y,m)}\ee^{-\ii w\varphi(\lambda)}}{\Gamma(\tfrac{1}{2}+m)}\ee^{-n(2g(\lambda;y)-V(\lambda;y))} & 1\end{bmatrix},\quad \lambda\in\LInftyBlueOne.
\end{multline}
\begin{equation}
\mathbf{N}_{n+}(\lambda;y,w,m)=\mathbf{N}_{n-}(\lambda;y,w,m)\begin{bmatrix}
1 & \displaystyle -\frac{\sqrt{2\pi}\OurPower{\lambda}{-(m+1)}\ee^{-2L(\lambda;y,m)}\ee^{\ii w\varphi(\lambda)}}{\Gamma(\tfrac{1}{2}-m)}
\ee^{n(2g(\lambda;y)-V(\lambda;y))}\\
0 & 1\end{bmatrix},\quad\lambda\in \LZeroRed.
\end{equation}
\begin{multline}
\mathbf{N}_{n+}(\lambda;y,w,m)=\\
{}\mathbf{N}_{n-}(\lambda;y,w,m)\begin{bmatrix}
1 & 0\\
\displaystyle\frac{\sqrt{2\pi}(\OurPower{\lambda}{(m+1)/2})_+(\OurPower{\lambda}{(m+1)/2})_-\ee^{L_+(\lambda;y,m)+L_-(\lambda;y,m)}\ee^{-\ii w\varphi(\lambda)}}{\Gamma(\tfrac{1}{2}+m)}\ee^{-n(2g(\lambda;y)-V(\lambda;y))} & 1\end{bmatrix},\\
\lambda\in\LZeroBlue.
\label{eq:NLZeroBlue}
\end{multline}
\begin{multline}
\mathbf{N}_{n+}(\lambda;y,w,m)\begin{bmatrix}1 & 0\\\displaystyle
-\frac{\Gamma(\tfrac{1}{2}-m)\ee^{2L_+(\lambda;y,m)}\ee^{-\ii w\varphi(\lambda)}}{\sqrt{2\pi}\OurPower{\lambda}{-(m+1)}}\ee^{-n(2g_+(\lambda;y)-V(\lambda;y))} & 1\end{bmatrix}=\\
\mathbf{N}_{n-}(\lambda;y,w,m)\begin{bmatrix}1 & 0\\\displaystyle
\frac{\Gamma(\tfrac{1}{2}-m)\ee^{2L_-(\lambda;y,m)}\ee^{-\ii w\varphi(\lambda)}}{\sqrt{2\pi}\OurPower{\lambda}{-(m+1)}}\ee^{-n(2g_-(\lambda;y)-V(\lambda;y))} & 1\end{bmatrix}\ii\sigma_1\ee^{-\ii w\varphi(\lambda)\sigma_3}
,\quad\lambda\in\LInftyRedTwo.
\label{eq:NLInftyRedTwo}
\end{multline}
\begin{multline}
\mathbf{N}_{n+}(\lambda;y,w,m)\begin{bmatrix}1 & \displaystyle-\ee^{\ii\pi(m+1)}\frac{\Gamma(\tfrac{1}{2}+m)\ee^{-2L_+(\lambda;y,m)}\ee^{\ii w\varphi(\lambda)}}{\sqrt{2\pi}(\OurPower{\lambda}{m+1})_+}\ee^{n(2g_+(\lambda;y)-V(\lambda;y))}\\0 & 1\end{bmatrix}=\\
\mathbf{N}_{n-}(\lambda;y,w,m)
\begin{bmatrix}
1 & \displaystyle\ee^{-\ii\pi (m+1)}\frac{\Gamma(\tfrac{1}{2}+m)\ee^{-2L_-(\lambda;y,m)}\ee^{\ii w\varphi(\lambda)}}{\sqrt{2\pi}(\OurPower{\lambda}{m+1})_-}\ee^{n(2g_-(\lambda;y)-V(\lambda;y))}\\
0 & 1
\end{bmatrix}\ii\sigma_1\ee^{\delta(y,m)\sigma_3}\ee^{-\ii n K_2(y)\sigma_3}\ee^{-\ii w\varphi(\lambda)\sigma_3},\\\lambda\in\LInftyBlueTwo,
\label{eq:NLInftyBlueTwo}
\end{multline}
where 
\begin{equation}
\ee^{\delta(y,m)}:=-2\cos(\pi m)\ee^{\tilde{\gamma}(y,m)},
\label{eq:e-to-the-delta}
\end{equation}
in which the product $\Gamma(\tfrac{1}{2}-m)\Gamma(\tfrac{1}{2}+m)$ has been eliminated using \cite[Eq.\@ 5.5.3]{DLMF}.

\begin{rem}
Referring to \eqref{eq:e-to-the-delta}, it is the fact that $\ee^{\delta(y,m)}=0$ and hence $\delta(y,m)$ is undefined when $m\in\mathbb{Z}+\tfrac{1}{2}$ that excludes the latter values from consideration in this section and hence in the statements of Theorem~\ref{theorem:eye}, Corollary~\ref{corollary:eye-zeros-and-poles:better}, and Theorem~\ref{theorem:density}.
\end{rem}

\subsection{Steepest descent.  Outer model problem and its solution}
\subsubsection{Steepest descent and the derivation of the outer model Riemann-Hilbert problem}
For the steepest descent step, we take advantage of the factorization of the jump matrix evidenced in the formul\ae\ \eqref{eq:NLInftyRedTwo} and \eqref{eq:NLInftyBlueTwo}.  Let $\LensRedPM$ denote lens-shaped domains immediately to the left ($+$) and right ($-$) of $\LInftyRedTwo$.  Define
\begin{equation}
\mathbf{O}_n(\lambda;y,w,m):=\mathbf{N}_n(\lambda;y,w,m)\begin{bmatrix}1 & 0\\
\displaystyle\mp\frac{\Gamma(\tfrac{1}{2}-m)\ee^{2L(\lambda;y,m)}\ee^{-\ii w\varphi(\lambda)}}{\sqrt{2\pi}\OurPower{\lambda}{-(m+1)}}\ee^{-n(2g(\lambda;y)-V(\lambda;y))} & 1\end{bmatrix},\quad \lambda\in\LensRedPM.
\end{equation}
Similarly, let $\LensBluePM$ denote lens-shaped domains immediately to the left ($+$) and right ($-$) of $\LInftyBlueTwo$, and define
\begin{equation}
\mathbf{O}_n(\lambda;y,w,m):=\mathbf{N}_n(\lambda;y,w,m)\begin{bmatrix}1 &
\displaystyle
\mp\ee^{\pm\ii\pi(m+1)}\frac{\Gamma(\tfrac{1}{2}+m)\ee^{-2L(\lambda;y,m)}\ee^{\ii w\varphi(\lambda)}}{\sqrt{2\pi}\OurPower{\lambda}{m+1}}\ee^{n(2g(\lambda;y)-V(\lambda;y))}\\0 & 1\end{bmatrix},\quad\lambda\in\LensBluePM.
\end{equation}
For all other values of $\lambda$ for which $\mathbf{N}_n(\lambda;y,w,m)$ is well-defined, we simply set $\mathbf{O}_n(\lambda;y,w,m):=\mathbf{N}_n(\lambda;y,w,m)$.  If we denote by $\partial\LensRedPM$ (resp., $\partial\LensBluePM$) the arc of the boundary of $\LensRedPM$ (resp., $\LensBluePM$) distinct from $\LInftyRedTwo$ (resp., $\LInftyBlueTwo$), but with the same initial and terminal endpoints, then the boundary values taken by $\mathbf{O}_n(\lambda;y,w,m)$ on these arcs satisfy the jump conditions
\begin{equation}
\mathbf{O}_{n+}(\lambda;y,w,m)=\mathbf{O}_{n-}(\lambda;y,w,m)\begin{bmatrix}
1 & 0\\
\displaystyle\frac{\Gamma(\tfrac{1}{2}-m)\ee^{2L(\lambda;y,m)}\ee^{-\ii w\varphi(\lambda)}}{\sqrt{2\pi}\OurPower{\lambda}{-(m+1)}}\ee^{-n(2g(\lambda;y)-V(\lambda;y))} & 1\end{bmatrix},\quad\lambda\in\partial\LensRedPM,
\end{equation}
and
\begin{equation}
\mathbf{O}_{n+}(\lambda;y,w,m)=\mathbf{O}_{n-}(\lambda;y,w,m)\begin{bmatrix}
1 & \displaystyle\ee^{\pm\ii\pi (m+1)}\frac{\Gamma(\tfrac{1}{2}+m)\ee^{-2L(\lambda;y,m)}\ee^{\ii w\varphi(\lambda)}}{\sqrt{2\pi}\OurPower{\lambda}{m+1}}\ee^{n(2g(\lambda;y)-V(\lambda;y))}\\
0 & 1\end{bmatrix},\quad\lambda\in\partial\LensBluePM.
\end{equation}
The effect of the transformation from $\mathbf{N}_n(\lambda;y,w,m)$ to $\mathbf{O}_n(\lambda;y,w,m)$ is that the jump matrices for $\mathbf{O}_n(\lambda;y,w,m)$ on $\LInftyRedTwo$ and $\LInftyBlueTwo$ are now simply off-diagonal matrices:
\begin{equation}
\mathbf{O}_{n+}(\lambda;y,w,m)=\mathbf{O}_{n-}(\lambda;y,w,m)\ii\sigma_1\ee^{-\ii w\varphi(\lambda)\sigma_3},\quad\lambda\in\LInftyRedTwo,
\end{equation}
and
\begin{equation}
\mathbf{O}_{n+}(\lambda;y,w,m)=\mathbf{O}_{n-}(\lambda;y,w,m)\ii\sigma_1\ee^{\delta(y,m)\sigma_3}\ee^{-\ii nK_2(y)\sigma_3}\ee^{-\ii w\varphi(\lambda)\sigma_3},\quad\lambda\in\LInftyBlueTwo.
\end{equation}
On all remaining arcs of $L$, the boundary values of $\mathbf{O}_n(\lambda;y,w,m)$ agree with those of $\mathbf{N}_n(\lambda;y,w,m)$, which are related by the jump conditions \eqref{eq:NLInftyRedOne}--\eqref{eq:NLZeroBlue}.  Finally, we note that $\lambda\mapsto\mathbf{O}_n(\lambda;y,w,m)$ is analytic for $\lambda\in \mathbb{C}\setminus\Sigma_\mathbf{O}$, where $\Sigma_\mathbf{O}:=L\cup\partial\LensRedPM\cup\partial\LensBluePM$, taking continuous boundary values from each component of its domain of analyticity, and satisfies $\mathbf{O}_n(\lambda;y,w,m)\to\mathbb{I}$ as $\lambda\to\infty$.  

The placement of the arcs of $L$ relative to the Stokes graph of $y$ now ensures that all jump matrices converge exponentially fast to the identity as $n\to+\infty$ with the exception of those on the arcs $\LInftyRedTwo\cup\LInftyRedThree\cup\LInftyBlueTwo$.  The convergence holds uniformly on compact subsets of each open contour arc, as well as uniformly in neighborhoods of $\lambda=0$ and $\lambda=\infty$.  Building in suitable assumptions about the behavior near the four roots of $P(\lambda;y,C(y))$, we postulate the following model Riemann-Hilbert problem as an asymptotic description of $\mathbf{O}_n(\lambda;y,w,m)$ away from these four points.
\begin{rhp}[Outer model problem]
Given $n\in\mathbb{Z}$, $y\in E_\mathrm{R}\setminus\mathbb{R}$, $w\in\mathbb{C}$, and $m\in\mathbb{C}\setminus(\mathbb{Z}+\tfrac{1}{2})$, seek a $2\times 2$ matrix function $\lambda\mapsto\dot{\mathbf{O}}^\mathrm{out}(\lambda)=\dot{\mathbf{O}}_n^{\mathrm{out}}(\lambda;y,w,m)$ with the following properties:
\begin{itemize}
\item[1.]\textbf{Analyticity:}  $\lambda\mapsto\dot{\mathbf{O}}^{\mathrm{out}}(\lambda)$ is analytic in the domain $\lambda\in\mathbb{C}\setminus(\LInftyRedTwo\cup\LInftyRedThree\cup\LInftyBlueTwo)$.  It takes continuous boundary values on the three indicated arcs of $L$ except at the four endpoints $\lambda_j(y)$ at which we require that all four matrix elements are $\mathcal{O}((\lambda-\lambda_j(y))^{-1/4})$.
\item[2.]\textbf{Jump conditions:}  The boundary values $\dot{\mathbf{O}}^{\mathrm{out}}_\pm(\lambda)$ are related on each arc of the jump contour by the following formul\ae:
\begin{equation}
\dot{\mathbf{O}}^{\mathrm{out}}_+(\lambda)=\dot{\mathbf{O}}^{\mathrm{out}}_-(\lambda)\ii\sigma_1\ee^{-\ii w\varphi(\lambda)\sigma_3},\quad\lambda\in\LInftyRedTwo,
\end{equation}
\begin{equation}
\dot{\mathbf{O}}^{\mathrm{out}}_+(\lambda)=\dot{\mathbf{O}}^{\mathrm{out}}_-(\lambda)\ee^{-\ii n K_1(y)\sigma_3},\quad\lambda\in\LInftyRedThree,
\end{equation}
and
\begin{equation}
\dot{\mathbf{O}}^{\mathrm{out}}_+(\lambda)=\dot{\mathbf{O}}^{\mathrm{out}}_-(\lambda)\ii\sigma_1\ee^{\delta(y,m)\sigma_3}\ee^{-\ii n K_2(y)\sigma_3}\ee^{-\ii w\varphi(\lambda)\sigma_3},\quad\lambda\in\LInftyBlueTwo.
\end{equation}
\item[3.]\textbf{Asymptotics:}  $\dot{\mathbf{O}}^{\mathrm{out}}(\lambda)\to\mathbb{I}$ as $\lambda\to\infty$.
\end{itemize}
\label{rhp:outer-elliptic}
\end{rhp}
The jump diagram for Riemann-Hilbert Problem~\ref{rhp:outer-elliptic} is illustrated in Figure~\ref{fig:OuterEllipticR}.  The solution of this problem (see Section~\ref{sec:elliptic-outer-solution} below) is called the \emph{outer parametrix}.
\begin{figure}[h]
\begin{center}
\includegraphics{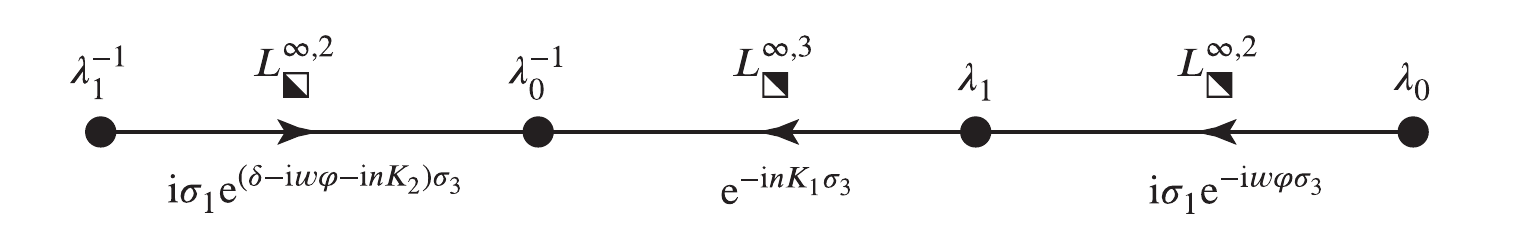}
\end{center}
\caption{The jump contour and jump conditions for Riemann-Hilbert Problem~\ref{rhp:outer-elliptic}.  Note that we denote by $\lambda_0$ the vertex of the Stokes graph adjacent to $\infty$.}
\label{fig:OuterEllipticR}
\end{figure}

\subsubsection{Solution of the outer model Riemann-Hilbert problem}
\label{sec:elliptic-outer-solution}
To solve Riemann-Hilbert Problem~\ref{rhp:outer-elliptic}, first let $H(\lambda;y)$ be defined by the formula
\begin{multline}
H(\lambda;y):=\frac{R(\lambda;y)K_1(y)}{2\pi}\int_{\LInftyRedThree}\frac{\dd\ell}{R(\ell;y)(\ell-\lambda)}+\frac{R(\lambda;y)(\ii K_2(y)+\eta(y))}{2\pi\ii}\int_{\LInftyBlueTwo}\frac{\dd\ell}{R_+(\ell;y)(\ell-\lambda)},\\\lambda\in\mathbb{C}\setminus(\LInftyRedTwo\cup\LInftyRedThree\cup\LInftyBlueTwo).
\label{eq:H-exponent}
\end{multline}
Here, $\eta(y)$ is uniquely determined so that $H(\infty;y)$ is well-defined:
\begin{equation}
\ii K_1(y)\int_\LInftyRedThree\frac{\dd\ell}{R(\ell;y)} + (\ii K_2(y)+\eta(y))\int_\LInftyBlueTwo\frac{\dd\ell}{R_+(\ell;y)}=0.
\label{eq:eta-defining-equation}
\end{equation}
Unlike the real-valued quantities $K_1(y)$ and $K_2(y)$, $\eta(y)$ is complex-valued, and it is well-defined because its coefficient is a complete elliptic integral, necessarily nonzero.  The boundary values taken by $H(\lambda;y)$ on its jump contour are related by the conditions
\begin{equation}
H_+(\lambda;y)+H_-(\lambda;y)=0,\quad\lambda\in\LInftyRedTwo,
\label{eq:Hjump-1}
\end{equation}
\begin{equation}
H_+(\lambda;y)-H_-(\lambda;y)=\ii K_1(y),\quad\lambda\in \LInftyRedThree,
\end{equation}
and
\begin{equation}
H_+(\lambda;y)+H_-(\lambda;y)=\ii K_2(y)+\eta(y),\quad\lambda\in\LInftyBlueTwo.
\label{eq:Hjump-3}
\end{equation}
We also define a related function $h(\lambda;y)$ by
\begin{equation}
h(\lambda;y):=\frac{1}{2}\varphi(\lambda)+\frac{R(\lambda;y)}{\ii y\lambda}+\frac{\nu(y)R(\lambda;y)}{2\pi\ii}\int_{\LInftyBlueTwo}\frac{\dd\ell}{R_+(\ell;y)(\ell-\lambda)},\quad
\lambda\in\mathbb{C}\setminus(\LInftyRedTwo\cup\LInftyBlueTwo)
\label{eq:h-exponent}
\end{equation}
(note that $h(\lambda;y)$ is analytic at $\lambda=0$ because $R(0;y)=\tfrac{1}{2}\ii y$), in which $\nu(y)$ is a constant determined uniquely by setting to zero the coefficient of the dominant term proportional to $\lambda$ in the Laurent series of $h$ at $\lambda=\infty$, making $h(\infty;y)$ well-defined:
\begin{equation}
1-\frac{\nu(y)y}{4\pi}\int_\LInftyBlueTwo\frac{\dd\ell}{R_+(\ell;y)}=0.
\label{eq:nu-defining-equation}
\end{equation}
The analogues of the conditions \eqref{eq:Hjump-1}--\eqref{eq:Hjump-3} for $h$ are
\begin{equation}
h_+(\lambda;y)+h_-(\lambda;y)=\varphi(\lambda),\quad\lambda\in\LInftyRedTwo,
\end{equation}
\begin{equation}
h_+(\lambda;y)-h_-(\lambda;y)=0,\quad\lambda\in\LInftyRedThree,
\end{equation}
and
\begin{equation}
h_+(\lambda;y)+h_-(\lambda;y)=\varphi(\lambda)+\nu(y),\quad\lambda\in\LInftyBlueTwo.
\end{equation}

It follows that the matrix $\mathbf{P}_n(\lambda;y,w,m)$ related to the solution $\dot{\mathbf{O}}_n^{\mathrm{out}}(\lambda;y,w,m)$ of Riemann-Hilbert Problem~\ref{rhp:outer-elliptic} by 
\begin{equation}
\mathbf{P}_n(\lambda;y,w,m):=\ee^{-(nH(\infty;y)+\ii w h(\infty,y))\sigma_3}\dot{\mathbf{O}}_n^{\mathrm{out}}(\lambda;y,w,m)\ee^{(nH(\lambda;y)+\ii w h(\lambda;y))\sigma_3}
\label{eq:POout}
\end{equation}
has the same properties of analyticity, boundedness, and identity normalization at $\lambda=\infty$ as does
$\dot{\mathbf{O}}_n^{\mathrm{out}}(\lambda;y,w,m)$, but the jump conditions for $\mathbf{P}_n(\lambda;y,w,m)$ take the form
\begin{equation}
\mathbf{P}_{n+}(\lambda;y,w,m)=\mathbf{P}_{n-}(\lambda;y,w,m)\ii\sigma_1,\quad\lambda\in\LInftyRedTwo,
\label{eq:PLInftyRedTwo}
\end{equation}
\begin{equation}
\mathbf{P}_{n+}(\lambda;y,w,m)=\mathbf{P}_{n-}(\lambda;y,w,m),\quad\lambda\in\LInftyRedThree,
\label{eq:PLInftyRedThree}
\end{equation}
and
\begin{equation}
\mathbf{P}_{n+}(\lambda;y,w,m)=\mathbf{P}_{n-}(\lambda;y,w,m)\ii\sigma_1\ee^{(\delta(y,m)+\ii w\nu(y))\sigma_3}\ee^{n\eta(y)\sigma_3},\quad\lambda\in\LInftyBlueTwo.
\label{eq:PLInftyBlueTwo}
\end{equation}
The jump condition \eqref{eq:PLInftyRedThree} together with the continuity of the boundary values taken by $\mathbf{P}_n(\lambda;y,m)$ on $\LInftyRedThree$ from both sides indicates that the domain of analyticity of $\mathbf{P}_n(\lambda;y,w,m)$ is precisely the ``two-cut'' contour $\Sigma^\mathrm{out}(y)=\LInftyRedTwo\cup\LInftyBlueTwo$.

Let $\mathfrak{a}$ denote a counterclockwise-oriented loop surrounding the cut $\LInftyRedTwo$, and define the \emph{Abel mapping} $A(\lambda;y)$ by
\begin{equation}
A(\lambda;y):=2\pi\ii\left[\oint_\mathfrak{a}\frac{\dd\ell}{R(\ell;y)}\right]^{-1}\int_{\lambda_0(y)}^\lambda\frac{\dd\ell}{R(\ell;y)},\quad\lambda\in\mathbb{C}\setminus(\LInftyRedTwo\cup\LInftyRedThree\cup\LInftyBlueTwo),
\label{eq:Abel-def}
\end{equation}
where $\lambda_0(y)$ is the vertex adjacent to $\infty$ on the Stokes graph of $y$ (hence the initial endpoint of $\LInftyRedTwo$).  Note that $A(\lambda;y)$ is well-defined because $1/R(\lambda;y)$ is integrable at $\lambda=\infty$.  The integral over the corresponding $\mathfrak{b}$-cycle (in the canonical homology basis determined from $\mathfrak{a}$) of the $\mathfrak{a}$-normalized holomorphic differential that is the integrand of $A(\lambda;y)$ is then given by 
\begin{equation}
B(y):=-4\pi\ii\left[\oint_\mathfrak{a}\frac{\dd\ell}{R(\ell;y)}\right]^{-1}\int_\LInftyRedThree\frac{\dd\ell}{R(\ell;y)}.
\label{eq:B-of-y}
\end{equation}
Since 
\begin{equation}
\oint_\mathfrak{a}\frac{\dd\ell}{R(\ell;y)}=-2\int_\LInftyRedTwo\frac{\dd\ell}{R_+(\ell;y)} = 2\int_\LInftyBlueTwo\frac{\dd\ell}{R_+(\ell;y)},
\end{equation}
with the second equality following from \eqref{eq:loop-swap}, we can use \eqref{eq:eta-defining-equation} to write $\eta(y)$ in the form 
\begin{equation}
\eta(y)=-\ii K_2(y)+\ii K_1(y)\frac{B(y)}{2\pi\ii}.
\label{eq:eta-again}
\end{equation}
It is a general fact \cite{Dubrovin81} that $\mathrm{Re}(B(y))<0$, which implies that therefore $\mathrm{Re}(\eta(y))\neq 0$ unless $K_1(y)=0$.  More concretely, by comparing with the Stokes graphs illustrated in Figure~\ref{fig:StokesGraphReal}, it is easy to see that for $y>0$ in the domain $E$, $B(y)$ is real and strictly negative.  The Abel mapping satisfies the following jump conditions:
\begin{equation}
A_+(\lambda;y)+A_-(\lambda;y)=0,\quad\lambda\in\LInftyRedTwo,
\label{eq:ALInftyRedTwo}
\end{equation}
\begin{equation}
A_+(\lambda;y)-A_-(\lambda;y)=-2\pi\ii,\quad\lambda\in\LInftyRedThree,
\end{equation}
and
\begin{equation}
A_+(\lambda;y)+A_-(\lambda;y)=-B(y),\quad\lambda\in\LInftyBlueTwo.
\label{eq:ALInftyBlueTwo}
\end{equation}
We now recall the \emph{Riemann theta function} $\Theta(z,B)$ defined by the series
\begin{equation}
\Theta(z,B):=\sum_{k\in\mathbb{Z}}\ee^{kz+\tfrac{1}{2}Bk^2},\quad z\in\mathbb{C},\quad \mathrm{Re}(B)<0.
\label{eq:Riemann-Theta}
\end{equation}
In the notation of \cite[\S20]{DLMF}, $\Theta(z,B)=\theta_3(w|\tau)$ where $z=2\ii w$ and $B=2\pi\ii\tau$ (i.e., in the currently relevant genus-$1$ setting the Riemann theta function basically coincides with one of the Jacobi theta functions). For each $B$ in the left half-plane, $\Theta(z,B)$ is an entire function of $z$ with the automorphic properties
\begin{equation}
\Theta(-z,B)=\Theta(z,B),\quad
\Theta(z+2\pi\ii,B)=\Theta(z,B),\quad\text{and}\quad\Theta(z\pm B,B)=\ee^{-\tfrac{1}{2}B}\ee^{\mp z}\Theta(z,B).
\label{eq:automorphic}
\end{equation}
The function $\Theta(z,B)$ has simple zeros only, at each of the lattice points $z=(j+\tfrac{1}{2})2\pi\ii + (k+\tfrac{1}{2})B$, $(j,k)\in\mathbb{Z}^2$.  Given a point $\kappa\in\mathbb{C}\setminus(\LInftyRedTwo\cup\LInftyRedThree\cup\LInftyBlueTwo)$ and a complex number $s$, consider the meromorphic functions defined by
\begin{equation}
q^\pm(\lambda;\kappa,s,y):=\frac{\Theta(A(\lambda;y)\pm A(\kappa;y)\pm\ii\pi\pm\tfrac{1}{2}B(y)-s,B(y))}{\Theta(A(\lambda;y)\pm A(\kappa;y)\pm\ii\pi\pm\tfrac{1}{2}B(y),B(y))},\quad\lambda\in\mathbb{C}\setminus(\LInftyRedTwo\cup\LInftyRedThree\cup\LInftyBlueTwo).
\label{eq:gpm-def}
\end{equation}
In fact, $q^+(\lambda;\kappa,s,y)$ is analytic for $\lambda$ in its domain of definition, but $q^-(\lambda;\kappa,s,y)$ has a simple pole at $\lambda=\kappa$ as its only singularity (unless $s$ is an integer linear combination of $2\pi\ii$ and $B(y)$ in which case the singularity is cancelled and $q^-(\lambda;\kappa,s,y)$ is analytic as well).  Consider the matrix function 
\begin{equation}
\mathbf{Q}(\lambda;\kappa,s,y):=\begin{bmatrix}q^+(\lambda;\kappa,s,y) & -\ii q^-(\lambda;\kappa,-s,y)\\
\ii q^-(\lambda;\kappa,s,y) & q^+(\lambda;\kappa,-s,y)\end{bmatrix}.
\label{eq:Q-def}
\end{equation}
Then from the jump conditions \eqref{eq:ALInftyRedTwo}--\eqref{eq:ALInftyBlueTwo} and the automorphic properties \eqref{eq:automorphic}, it is easy to check that $\mathbf{Q}(\lambda;\kappa,s,y)$ satisfies the jump conditions:
\begin{equation}
\mathbf{Q}_+(\lambda;\kappa,s,y)=\mathbf{Q}_-(\lambda;\kappa,s,y)\begin{bmatrix}0 & -\ii\\\ii & 0\end{bmatrix},\quad\lambda\in\LInftyRedTwo,
\label{eq:QLInftyRedTwo}
\end{equation}
\begin{equation}
\mathbf{Q}_+(\lambda;\kappa,s,y)=\mathbf{Q}_-(\lambda;\kappa,s,y),\quad\lambda\in\LInftyRedThree,
\end{equation}
and
\begin{equation}
\mathbf{Q}_+(\lambda;\kappa,s,y)=\mathbf{Q}_-(\lambda;\kappa,s,y)\begin{bmatrix}0 & -\ii\ee^{s}\\\ii\ee^{-s} & 0\end{bmatrix},\quad \lambda\in\LInftyBlueTwo.
\label{eq:QLInftyBlueTwo}
\end{equation}

To construct $\mathbf{P}_n(\lambda;y,w,m)$ from $\mathbf{Q}$, we need to remove the pole from the off-diagonal elements of $\mathbf{Q}$ while slightly modifying the jump conditions on $\Sigma^\mathrm{out}(y)$.  To this end, we observe that we have the freedom to introduce mild singularities into $\mathbf{P}_n(\lambda;y,w,m)$ at the four roots of $P(\cdot;y,C(y))$, here denoted $\lambda_0(y)$ (adjacent to $\infty$ in the Stokes graph of $y$), $\lambda_1(y)$ (adjacent to $\lambda_0(y)$ in the Stokes graph), $\lambda_0(y)^{-1}$, and $\lambda_1(y)^{-1}$.  Let $\phi(\lambda;y)$ denote the unique function analytic for $\lambda\in\Sigma^\mathrm{out}(y)$ with $\phi(\lambda;y)\to 1$ as $\lambda\to\infty$ that satisfies
\begin{equation}
\phi(\lambda;y)^4=\frac{(\lambda-\lambda_0(y))(\lambda-\lambda_1(y)^{-1})}{(\lambda-\lambda_1(y))(\lambda-\lambda_0(y)^{-1})}.
\label{eq:phi-to-the-fourth}
\end{equation}
Then set 
\begin{equation}
f^\mathrm{D}(\lambda;y):=\frac{1}{2}(\phi(\lambda;y)+\phi(\lambda;y)^{-1}),\quad
f^\mathrm{OD}(\lambda;y):=\frac{1}{2\ii}(\phi(\lambda;y)-\phi(\lambda;y)^{-1}),\quad\lambda\in\mathbb{C}\setminus\Sigma^\mathrm{out}(y).
\label{eq:qDqOD}
\end{equation}
It is easy to see that on both arcs of $\Sigma^\mathrm{out}(y)$, the jump condition $\phi_+(\lambda;y)=-\ii\phi_-(\lambda;y)$ holds.  This implies the corresponding jump conditions
\begin{equation}
f^\mathrm{D}_+(\lambda;y)=f^\mathrm{OD}_-(\lambda;y),\quad
f^\mathrm{OD}_+(\lambda;y)=-f^\mathrm{D}_-(\lambda;y),\quad\lambda\in\Sigma^\mathrm{out}(y).
\label{eq:qDOD}
\end{equation}
The functions $\lambda\mapsto f^\mathrm{D}(\lambda;y)$ and $\lambda\mapsto f^\mathrm{OD}(\lambda;y)$ are analytic in their domain of definition, and they are bounded except near the four roots of $P(\lambda;y,C(y))$, where they exhibit negative one-fourth root singularities.  Also,
\begin{equation}
f^\mathrm{D}(\lambda;y)=1+\mathcal{O}(\lambda^{-2})\quad\text{and}\quad
f^\mathrm{OD}(\lambda;y)=\frac{1}{4}\ii(\lambda_0(y)+\lambda_1(y)^{-1}-\lambda_1(y)-\lambda_0(y)^{-1})\lambda^{-1}+\mathcal{O}(\lambda^{-2}),\quad\lambda\to\infty.
\end{equation}
Observe that
\begin{equation}
\begin{split}
f^\mathrm{D}(\lambda;y)f^\mathrm{OD}(\lambda;y)&=\frac{1}{4\ii\phi(\lambda;y)^2}(\phi(\lambda;y)^4-1)\\
&=\frac{(\lambda-\lambda_0(y))(\lambda-\lambda_1(y)^{-1})-(\lambda-\lambda_1(y))(\lambda-\lambda_0(y)^{-1})}{4\ii\phi(\lambda;y)^2(\lambda-\lambda_1(y))(\lambda-\lambda_0(y)^{-1})}\\
&=\frac{(\lambda_1(y)+\lambda_0(y)^{-1}-\lambda_0(y)-\lambda_1(y)^{-1})(\lambda-\kappa(y))}{4\ii\phi(\lambda;y)^2(\lambda-\lambda_1(y))(\lambda-\lambda_0(y)^{-1}),}
\end{split}
\end{equation}
where
\begin{equation}
\kappa(y):=\frac{\lambda_1(y)\lambda_0(y)^{-1}-\lambda_0(y)\lambda_1(y)^{-1}}{\lambda_1(y)+\lambda_0(y)^{-1}-\lambda_0(y)-\lambda_1(y)^{-1}}=\frac{\lambda_0(y)+\lambda_1(y)}{1+\lambda_0(y)\lambda_1(y)}.
\label{eq:mu-def}
\end{equation}
Therefore the product $f^\mathrm{D}(\lambda;y)f^\mathrm{OD}(\lambda;y)$ has precisely one simple zero in its domain of definition, namely $\lambda=\kappa(y)$, and this value is either a zero of $f^\mathrm{D}(\lambda;y)$ or $f^\mathrm{OD}(\lambda;y)$ but not both.  In the case that $y>0$, the roots of $P(\lambda;y,C(y))$ lie on the imaginary axis with $1<|\lambda_1(y)|<|\lambda_0(y)|$.  It is easy to check that $\phi(\lambda;y)$ is positive on the imaginary axis excluding the jump contour $\Sigma^\mathrm{out}(y)$, which also implies that $f^\mathrm{D}(\lambda;y)>0$ for such $\lambda$.    The inequality $1<|\lambda_1(y)|<|\lambda_0(y)|$ implies that $\kappa(y)$ is negative imaginary, and that $|\kappa(y)|>|\lambda_1(y)^{-1}|$.  Thus $\kappa(y)$ lies below both intervals of the jump contour $\Sigma^\mathrm{out}(y)$ on the imaginary axis, and hence $f^\mathrm{D}(\kappa(y);y)>0$.  It therefore follows that for $y>0$, $\kappa(y)$ is a zero of $f^\mathrm{OD}(\lambda;y)$.  This will remain so as $y$ varies in $E_\mathrm{R}$ so long as $\kappa(y)$ does not pass through either arc of $\Sigma^\mathrm{out}(y)$.  We proceed under the assumption that $\lambda=\kappa(y)$ is a simple zero of $f^\mathrm{OD}(\lambda;y)$, and indicate below how the procedure should be modified if $\kappa(y)$ should ever intersect $\Sigma^\mathrm{out}(y)$, a possibility which is difficult to rule out analytically, although we have never observed it numerically.\smallskip  

We may obtain $\mathbf{P}_n(\lambda;y,w,m)$ from $\mathbf{Q}(\lambda;\mu(y),s,y)$ by multiplying the diagonal elements by $f^\mathrm{D}(\lambda;y)$ and the off-diagonal elements by $\pm f^\mathrm{OD}(\lambda;y)$, and by normalizing the result via left-multiplication by a constant matrix:
\begin{multline}
\mathbf{P}_n(\lambda;y,w,m):=\begin{bmatrix} Q_{11}(\infty;\kappa(y),s,y)^{-1} & 0\\
0 & Q_{22}(\infty;\kappa(y),s,y)^{-1}\end{bmatrix}\\
{}\cdot
\begin{bmatrix} f^\mathrm{D}(\lambda;y)Q_{11}(\lambda;\kappa(y),s,y) & f^\mathrm{OD}(\lambda;y)Q_{12}(\lambda;\kappa(y),s,y)\\
-f^\mathrm{OD}(\lambda;y)Q_{21}(\lambda;\kappa(y),s,y) & f^\mathrm{D}(\lambda;y)Q_{22}(\lambda;\kappa(y),s,y)
\end{bmatrix}.
\label{eq:P-formula}
\end{multline}
Combining \eqref{eq:QLInftyRedTwo}--\eqref{eq:QLInftyBlueTwo} with \eqref{eq:qDOD} shows that $\mathbf{P}_n(\lambda;y,w,m)$ satisfies the prescribed jump conditions \eqref{eq:PLInftyRedTwo}--\eqref{eq:PLInftyBlueTwo} provided that the free parameter $s$ is given the value
\begin{equation}
s=s_n(y,w,m):=-\delta(y,m)-\ii w\nu(y)-n\eta(y).
\label{eq:sn}
\end{equation}
Since the zero $\lambda=\kappa(y)$ of $f^\mathrm{OD}(\lambda;y)$ cancels the simple pole of $Q_{12}(\lambda;\kappa(y),s,y)$ and $Q_{21}(\lambda;\kappa(y),s,y)$, the singularity is removable and hence $\mathbf{P}_n(\lambda;y,w,m)$ is indeed analytic for $\lambda\in\mathbb{C}\setminus\Sigma^\mathrm{out}(y)$ with negative one-fourth root singularities at the roots of $P(\lambda;y,C(y))$.  
Finally, the constant matrix pre-factor ensures the asymptotic normalization condition that $\mathbf{P}_n(\infty;y,w,m)=\mathbb{I}$.  Now that $\mathbf{P}_n(\lambda;y,w,m)$ has been determined, we recover $\dot{\mathbf{O}}_n^{\mathrm{out}}(\lambda;y,w,m)$ using \eqref{eq:H-exponent}, \eqref{eq:POout}, \eqref{eq:B-of-y}, and \eqref{eq:eta-again}.\smallskip

Finally we indicate what changes if $\kappa(y)$ passes through an arc of $\Sigma^\mathrm{out}(y)$ as $y$ varies in $E_\mathrm{R}$.  It is easy to see that each time $\kappa(y)$ crosses an arc of $\Sigma^\mathrm{out}(y)$ transversely, the simple zero at $\lambda=\kappa(y)$ is exchanged between the functions $f^\mathrm{D}(\lambda;y)$ and $f^\mathrm{OD}(\lambda;y)$.  To account for this correctly, one should define the value of $A(\kappa(y);y)$ appearing in \eqref{eq:gpm-def} by analytic continuation of the Abel mapping $A(\lambda;y)$ through the cuts, which has the effect of transferring the simple pole at $\lambda=\kappa(y)$ between the function $q^+(\lambda;\kappa(y),s,y)$ and $q^-(\lambda;\kappa(y),s,y)$ and hence between the off-diagonal and diagonal elements of $\mathbf{Q}(\lambda;\kappa(y),s,y)$.  With this interpretation of $A(\kappa(y);y)$ the formula \eqref{eq:P-formula} remains analytic in its domain of definition and yields the solution of Riemann-Hilbert Problem~\ref{rhp:outer-elliptic} through \eqref{eq:POout}.

\subsubsection{Properties of the solution of the outer model Riemann-Hilbert problem}
The constant pre-factor in \eqref{eq:P-formula} also introduces singularities in the parameter space.  In other words, $\mathbf{P}_n(\lambda;y,w,m)$ and hence also $\dot{\mathbf{O}}_n^{\mathrm{out}}(\lambda;y,w,m)$ will exist if and only if $Q_{11}(\infty;\kappa(y),s_n(y,w,m),y)Q_{22}(\infty;\kappa(y),s_n(y,w,m),y)\neq 0$.  This is equivalent to the condition $\Theta(A(\infty;y)+A(\kappa(y);y)+\ii\pi +\tfrac{1}{2}B(y)-s_n(y,w,m),B(y))\Theta(A(\infty;y)+A(\kappa(y);y)+\ii\pi+\tfrac{1}{2}B(y)+s_n(y,w,m),B(y))\neq 0$.  In other words, we see that Riemann-Hilbert Problem~\ref{rhp:outer-elliptic} has a unique solution provided that the parameters do not satisfy either (distinguished by a sign $\pm$) of the following conditions
\begin{multline}
\text{No solution of Riemann-Hilbert Problem~\ref{rhp:outer-elliptic}:}\\
A(\infty;y)+A(\kappa(y);y)\pm (\delta(y,m)+\ii w\nu(y)+n\eta(y))\in 2\pi\ii \mathbb{Z} + B(y)\mathbb{Z}.
\label{eq:outer-pole-divisor}
\end{multline}
\begin{lem}
For each $y\in E_\mathrm{R}$, the condition \eqref{eq:outer-pole-divisor} is independent of the choice of sign ($\pm$).
\label{lemma:one-divisor}
\end{lem}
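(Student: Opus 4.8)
The plan is to reduce Lemma~\ref{lemma:one-divisor} to a single lattice-membership identity and then prove that identity by an application of Abel's theorem on the spectral curve.  Write $\Lambda := 2\pi\ii\mathbb{Z}+B(y)\mathbb{Z}$, put $z := A(\infty;y)+A(\kappa(y);y)$, and abbreviate $X := \delta(y,m)+\ii w\nu(y)+n\eta(y)$, so that the two instances ($+$ and $-$) of the condition \eqref{eq:outer-pole-divisor} assert $z+X\in\Lambda$ and $z-X\in\Lambda$ respectively.  Since $\Lambda$ is a lattice and $z-X=2z-(z+X)$ while $z+X=2z-(z-X)$, the two assertions are equivalent provided $2z\in\Lambda$.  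Hence it suffices to prove
\[
2A(\infty;y)+2A(\kappa(y);y)\in 2\pi\ii\mathbb{Z}+B(y)\mathbb{Z},\qquad y\in E_\mathrm{R}.
\]

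To this end I would pass to the genus-one Riemann surface $\Gamma=\Gamma(y)$ of $\mu^2=\lambda^{-4}P(\lambda;y,C(y))$ (equivalently $R^2=P$), on which $R(\lambda;y)$ is single-valued; its four branch points lie over $\lambda=\lambda_0(y),\lambda_1(y),\lambda_0(y)^{-1},\lambda_1(y)^{-1}$, and I write $\widetilde{p}$ for the point of $\Gamma$ over $\lambda=p$ on the distinguished sheet $\mathbb{C}\setminus\Sigma^\mathrm{out}(y)$ (the one on which $A$, $f^\mathrm{D}$, $f^\mathrm{OD}$, and $\kappa(y)$ were constructed in Section~\ref{sec:elliptic-outer-solution}).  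Because the jump of $A(\lambda;y)$ across $\LInftyRedThree$ is the period $-2\pi\ii$ (see \eqref{eq:ALInftyRedTwo}--\eqref{eq:ALInftyBlueTwo}), $A$ reduces modulo $\Lambda$ to the Abel--Jacobi isomorphism $\Gamma\xrightarrow{\ \sim\ }\mathbb{C}/\Lambda$ with base point $\lambda_0(y)$, under which the four branch points map bijectively onto the four two-torsion points $0$, $\pi\ii$, $\tfrac12 B(y)$, $\pi\ii+\tfrac12 B(y)$ of $\mathbb{C}/\Lambda$; in particular $2A(\lambda_j(y)^{\pm1};y)\in\Lambda$ for every branch point.

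The heart of the argument is then to exhibit a meromorphic function on $\Gamma$ with divisor $\widetilde{\infty}+\widetilde{\kappa(y)}-\widetilde{\lambda_1(y)}-\widetilde{\lambda_0(y)^{-1}}$; Abel's theorem will then give $A(\infty;y)+A(\kappa(y);y)\equiv A(\lambda_1(y);y)+A(\lambda_0(y)^{-1};y)\pmod\Lambda$, and doubling annihilates the right-hand side.  I claim the function $\phi(\lambda;y)^2-1$ works.  First, $\phi(\lambda;y)^4$ is the rational function in \eqref{eq:phi-to-the-fourth}, with simple zeros at $\lambda_0(y),\lambda_1(y)^{-1}$ and simple poles at $\lambda_1(y),\lambda_0(y)^{-1}$; consequently $\phi(\lambda;y)^2$ is branched exactly over these four points and hence is meromorphic on $\Gamma$, of degree two, with simple zeros at $\widetilde{\lambda_0(y)},\widetilde{\lambda_1(y)^{-1}}$ and simple poles at $\widetilde{\lambda_1(y)},\widetilde{\lambda_0(y)^{-1}}$.  (Equivalently, matching the sign change $\phi_+^2=-\phi_-^2$ across $\Sigma^\mathrm{out}(y)$ with that of $1/R(\lambda;y)$ yields the closed form $\phi(\lambda;y)^2=\ii y(\lambda-\lambda_0(y))(\lambda-\lambda_1(y)^{-1})/(2R(\lambda;y))$ on $\Gamma$.)  Therefore $\phi^2-1$ is meromorphic on $\Gamma$, again of degree two, with poles precisely at the branch points $\widetilde{\lambda_1(y)},\widetilde{\lambda_0(y)^{-1}}$.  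On the distinguished sheet $\phi(\lambda;y)$ is finite and nonvanishing (its zeros and poles are the branch points, which lie on $\Sigma^\mathrm{out}(y)$), so the zeros of $\phi^2-1$ there coincide with those of $f^\mathrm{OD}(\lambda;y)=\tfrac{1}{2\ii}(\phi-\phi^{-1})$: by construction the only finite one is the simple zero at $\lambda=\kappa(y)$, and since $\phi\to1$ as $\lambda\to\infty$ on this sheet there is also a zero at $\lambda=\infty$.  As $\kappa(y)\neq\infty$ (equality would force $\lambda_0(y)\lambda_1(y)=-1$, hence $P(\cdot;y,C(y))$ even, impossible for $y\neq0$ by \eqref{eq:dotV-ODE}) and $\phi^2-1$ has degree two, these two zeros are simple and exhaust the zero divisor.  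Thus $\operatorname{div}(\phi^2-1)=\widetilde{\infty}+\widetilde{\kappa(y)}-\widetilde{\lambda_1(y)}-\widetilde{\lambda_0(y)^{-1}}$, and Abel's theorem followed by doubling yields the displayed identity.  In the (never numerically observed) event that $\kappa(y)$ crosses an arc of $\Sigma^\mathrm{out}(y)$, one interprets $A(\kappa(y);y)$ by analytic continuation of $A$ through the cut as in the remark preceding the lemma; the divisor computation, and hence the conclusion, is unchanged.

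The only real difficulty is bookkeeping: turning the cut-plane objects $R$, $A$, $\phi$, $f^\mathrm{OD}$, and $\kappa(y)$ into honest meromorphic functions and divisors on $\Gamma$ consistently with the orientation and labeling conventions fixed in Sections~\ref{sec:Boutroux-in-E} and~\ref{sec:elliptic-outer-solution}, and in particular making watertight the claims about which branch points are the poles of $\phi^2$ and where (on which sheet, with what multiplicity) $f^\mathrm{OD}$ vanishes.  Once these identifications are pinned down, the lemma is a one-line consequence of Abel's theorem and the two-torsion property of the branch-point images.
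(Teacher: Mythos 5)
Your proof is correct, and although it reduces the lemma to the same key identity $2A(\infty;y)+2A(\kappa(y);y)\in 2\pi\ii\mathbb{Z}+B(y)\mathbb{Z}$ and closes with Abel's theorem exactly as the paper does, the meromorphic function you use to certify that identity is genuinely different. The paper constructs a bespoke function $f(Q)=(\lambda(Q)^2+a\lambda(Q)+b+c\tau(Q))/(\lambda(Q)-\zeta)$ with divisor $Q^+(\infty)+Q^+(\kappa(y))-Q^-(\infty)-Q^-(\kappa(y))$; fixing $a,b,c$ and then showing that $\zeta=\kappa(y)$ makes this divisor special requires imposing the tangency condition \eqref{eq:zeta-condition}, solving a cubic in $\zeta$, and verifying a sign at $y>0$. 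You instead recycle $\phi(\lambda;y)^2$ from \eqref{eq:phi-to-the-fourth}: since $\phi^2R$ has no jump across $\Sigma^\mathrm{out}(y)$, one gets the closed form $\phi^2=\ii y(\lambda-\lambda_0(y))(\lambda-\lambda_1(y)^{-1})/(2R(\lambda;y))$, a degree-two function on $\Gamma$ with simple poles at the branch points over $\lambda_1(y)$ and $\lambda_0(y)^{-1}$; and since $\phi^2-1=2\ii\phi\,f^\mathrm{OD}$ on the distinguished sheet, its zero divisor is read off from the fact, already established in Section~\ref{sec:elliptic-outer-solution}, that $f^\mathrm{OD}$ has its unique finite simple zero at $\kappa(y)$ together with the zero at $\infty$ coming from $\phi\to 1$ (your check that $\kappa(y)\neq\infty$ via the sum of roots $2\ii y^{-1}\neq 0$ is also right). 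Because your divisor has branch points in its polar part, you need the additional, standard observation that branch points map to two-torsion under the Abel map based at a branch point, after which doubling annihilates the right-hand side. What each approach buys: the paper's construction is self-contained at the level of the curve and, as the Remark following the lemma indicates, can be refined to identify the exact lattice point $-B(y)$; yours is shorter and avoids the cubic entirely by exploiting work already done for the outer parametrix, at the cost of importing the $f^\mathrm{OD}$-versus-$f^\mathrm{D}$ dichotomy and its continuation caveat when $\kappa(y)$ meets $\Sigma^\mathrm{out}(y)$ --- but that is the same standing assumption the paper's own construction of $\mathbf{P}_n$ relies on, so nothing new is being assumed.
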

\begin{proof}
Fix $y\in E_\mathrm{R}$.  It is sufficient to show that $2A(\infty;y)+2A(\kappa(y);y)\in 2\pi\ii\mathbb{Z}+B(y)\mathbb{Z}$.  Let $\Gamma$ denote the hyperelliptic Riemann surface associated with the equation $\tau^2=P(\lambda;y,C(y))$, which we model as two copies (``sheets'') of the complex $\lambda$-plane identified along the two cuts making up $\Sigma^\mathrm{out}(y)$.  Selecting one sheet on which $A(\lambda;y)$ is defined as in \eqref{eq:Abel-def}, we may extend the definition to the universal covering of $\Gamma$ by analytic continuation through the cuts or through $\LInftyRedThree$.  Then by taking the quotient of the continuation by the lattice of integer periods $\Lambda:=2\pi\ii\mathbb{Z}+ B(y)\mathbb{Z}$, we arrive at a well-defined function on $\Gamma$ taking values in the corresponding Jacobian variety $\mathrm{Jac}(\Gamma):=\mathbb{C}/\Lambda$, the Abel map of $\Gamma$ denoted $\mathcal{A}(Q)$, $Q\in\Gamma$.  Labeling the points $\lambda$ on the original sheet of definition of $A(\lambda;y)$ as $Q^+(\lambda)$, and their corresponding hyperelliptic involutes on the second sheet as $Q^-(\lambda)$, we observe that for any $\lambda$ not one of the four branch points of $\Gamma$, the equalities $A(\lambda;y)=\mathcal{A}(Q^+(\lambda))=-\mathcal{A}(Q^-(\lambda)))$ hold on $\mathrm{Jac}(\Gamma)$ because the base point of the integral in \eqref{eq:Abel-def} was chosen as a branch point.  We may also take $\lambda=\infty$ in the above relations, and hence we have $2A(\infty;y)+2A(\kappa(y);y)=\mathcal{A}(Q^+(\infty))+\mathcal{A}(Q^+(\kappa(y)))-\mathcal{A}(Q^-(\infty))-\mathcal{A}(Q^-(\kappa(y)))$, which is usually written as $\mathcal{A}(\mathscr{D})$ for $\mathscr{D}:=Q^+(\infty)+Q^+(\kappa(y))-Q^-(\infty)-Q^-(\kappa(y))$ when the action of $\mathcal{A}$ is extended to divisors $\mathscr{D}$ as formal sums of points with integer coefficients.  Therefore, $2A(\infty;y)+2A(\kappa(y);y)\in 2\pi\ii\mathbb{Z}+B(y)\mathbb{Z}$ is equivalent to the condition that $\mathcal{A}(\mathscr{D})=0$ in $\mathrm{Jac}(\Gamma)$ for the indicated divisor $\mathscr{D}$.  According to the Abel-Jacobi theorem, to establish this condition it suffices to construct a nonzero meromorphic function on $\Gamma$ with simple poles at $Q^-(\infty)$ and $Q^-(\kappa(y))$ and simple zeros at the hyperelliptic involutes $Q^+(\infty)$ and $Q^+(\kappa(y))$, with no other zeros or poles.  The existence of such a function must take advantage of the formula \eqref{eq:mu-def}, because the Riemann-Roch theorem asserts that the dimension of the linear space of meromorphic functions on the genus $g=1$ Riemann surface $\Gamma$ with divisor of the form $\mathscr{D}=Q^+(\infty)+Q^+(\zeta)-Q^-(\infty)-Q^-(\zeta)$ is $\mathrm{deg}(\mathscr{D})-g+1=0-1+1=0$ unless the divisor is \emph{special}, implying that $\zeta$ is non-generic.\smallskip  

In order to construct the required function, let $\tau(Q^\pm(\lambda)):=\pm R(\lambda;y)$ define $\tau$ properly as a function on $\Gamma$, and consider the function $f:\Gamma\to\mathbb{C}$ given by
\begin{equation}
f(Q):=\frac{\lambda(Q)^2+a\lambda(Q)+b + c\tau(Q)}{\lambda(Q)-\zeta}
\label{eq:f-of-P}
\end{equation}
for constants $a$, $b$, $c$, and $\zeta$.  The only possible singularities of this function are the two points on $\Gamma$ over $\lambda=\infty$ and the two points over $\lambda=\zeta$. Recall the roots of $P(\lambda;y,C(y))$:  $\lambda_j=\lambda_j(y)$, $j=0,1$, and their reciprocals. As $Q\to Q^\pm(\infty)$, we have $\tau(Q)=\pm\tfrac{1}{2}\ii y(\lambda(Q)^2  -\tfrac{1}{2}(\lambda_0+\lambda_1 +\lambda_0^{-1}+\lambda_1^{-1})\lambda(Q) + O(1))$, so to ensure that $f(Q^+(\infty))=0$ we must choose
\begin{equation}
c:=\frac{2\ii}{y} \quad\text{and}\quad a:=-\frac{1}{2}\left(\lambda_0+\lambda_1+\lambda_0^{-1}+\lambda_1^{-1}\right).
\label{eq:a-c}
\end{equation}
With the above choice of $c$ it is also clear that $f(Q)=2\lambda(Q)+O(1)$ as $Q\to Q^-(\infty)$, so $f$ has a simple pole at $Q=Q^-(\infty)$.  Given these choices and the divisor parameter $\zeta\in\mathbb{C}$, upon taking a generic value of $b$, $f(Q)$ will have simple poles at both $Q=Q^+(\zeta)$ and $Q=Q^-(\zeta)$.  We may obviously choose $b$ uniquely such that $f(Q)$ is holomorphic at $Q=Q^+(\zeta)$:
\begin{equation}
b:=-\zeta^2-a\zeta-c\tau(Q^+(\zeta))=-\zeta^2+\frac{1}{2}\left(\lambda_0+\lambda_1+\lambda_0^{-1}+\lambda_1^{-1}\right)\zeta-\frac{2\ii}{y} R(\zeta;y).
\label{eq:b}
\end{equation}
With $a,b,c$ determined for arbitrary fixed $\zeta$, there is no additional parameter available in the form \eqref{eq:f-of-P} to ensure that $f(Q^+(\zeta))=0$, a fact that is consistent with the Riemann-Roch argument given above.

So we take the point of view that $\zeta$ should be viewed as the additional parameter needed to guarantee that $f(Q^+(\zeta))=0$.  Indeed, for this to be the case, the derivative with respect to $\lambda$ of the numerator in \eqref{eq:f-of-P} should vanish at $Q=Q^+(\zeta)$; we therefore require:
\begin{equation}
2\zeta-\frac{1}{2}\left(\lambda_0+\lambda_1+\lambda_0^{-1}+\lambda_1^{-1}\right)=-\frac{2\ii}{y}\frac{\dd\tau}{\dd\lambda}(Q^+(\zeta)) = -\frac{2\ii}{y}\frac{\dd R}{\dd\lambda}(\zeta;y).
\label{eq:zeta-condition}
\end{equation}
By implicit differentiation,
\begin{multline}
\frac{\dd R}{\dd\lambda}(\lambda;y)=-\frac{y^2}{8R(\lambda;y)}\big((\lambda-\lambda_0)(\lambda-\lambda_1)(\lambda-\lambda_0^{-1}) + (\lambda-\lambda_0)(\lambda-\lambda_1)(\lambda-\lambda_1^{-1})\\
{}+(\lambda-\lambda_0)(\lambda-\lambda_0^{-1})(\lambda-\lambda_1^{-1}) + (\lambda-\lambda_1)(\lambda-\lambda_0^{-1})(\lambda-\lambda_1^{-1})\big).
\label{eq:rho-prime}
\end{multline}
Substituting \eqref{eq:rho-prime} into \eqref{eq:zeta-condition} and squaring both sides yields a cubic equation for $\zeta$ with solutions:
\begin{equation}
\zeta=\kappa(y),\quad\zeta=\kappa(y)^{-1},\quad\text{and}\quad\zeta=0,
\end{equation}
where $\kappa(y)$ is given by \eqref{eq:mu-def}.  These are precisely the three values of $\zeta\in\mathbb{C}$ for which the divisor $\mathscr{D}=Q^+(\infty)+Q^+(\zeta)-Q^-(\infty)-Q^-(\zeta)$ is special in the setting of the Riemann-Roch theorem.  Selecting the desired solution $\zeta=\kappa(y)$, it remains only to confirm that \eqref{eq:zeta-condition} holds \emph{without squaring both sides}.
But, since $-2a$ is the sum of roots of $P(\lambda;y,C(y))$, from \eqref{eq:dotV-ODE} we can also write $a=-\ii y^{-1}$, and then since when $y>0$ we know that the branch cuts of $R(\lambda;y)$ lie on opposite halves of the imaginary axis and $\kappa(y)$ lies on the imaginary axis below both cuts, it follows that both sides of \eqref{eq:zeta-condition} are negative imaginary for $\zeta=\kappa(y)$ and $y>0$.  The persistence of \eqref{eq:zeta-condition} for $\zeta=\kappa(y)$ as $y$ varies within $E_\mathrm{R}$ follows by analytic continuation, with the re-definition of $A(\kappa(y);y)$ as described in the last paragraph of Section~\ref{sec:elliptic-outer-solution}, should $\kappa(y)$ pass through $\Sigma^\mathrm{out}(y)$ as both move in the complex $\lambda$-plane.
\end{proof}
\begin{rem}
Numerical calculations allow us to find the exact lattice point corresponding to the sum of Abel maps appearing in the proof:  $2A(\infty;y)+2A(\kappa(y);y)=-B(y)$ holds as an identity on $y\in E_\mathrm{R}$.  In a similar way, one can also prove the identity $2A(0;y)-2A(\kappa(y);y)=2\pi\ii$.  
\end{rem}
The parameter values excluded by the (equivalent) conditions \eqref{eq:outer-pole-divisor} are said to form the \emph{Malgrange divisor} for Riemann-Hilbert 
Problem~\ref{rhp:outer-elliptic}.
We have the following result.
\begin{lem}
Riemann-Hilbert Problem~\ref{rhp:outer-elliptic} has a unique solution with unit determinant provided that $n=0,1,2,3,\dots$, $m\in\mathbb{C}\setminus(\mathbb{Z}+\tfrac{1}{2})$, $y\in E_\mathrm{R}$, and $w\in\mathbb{C}$ do not lie in the Malgrange divisor \eqref{eq:outer-pole-divisor}.  Moreover, for fixed $m\in\mathbb{C}\setminus(\mathbb{Z}+\tfrac{1}{2})$ and $\epsilon>0$ arbitrarily small, $\dot{\mathbf{O}}_n^{\mathrm{out}}(\lambda;y,w,m)$ is uniformly bounded on the set of $(\lambda,n,y,w)$ satisfying $|w|\le K$ for some $K>0$ and the conditions
\begin{equation}
\mathrm{dist}(\lambda,\{\lambda_0(y),\lambda_1(y),\lambda_0(y)^{-1},\lambda_1(y)^{-1}\})\ge\epsilon,
\label{eq:lambda-cheese}
\end{equation}
\begin{equation}
\quad\mathrm{dist}(A(\infty;y)+A(\kappa(y);y)\pm (\delta(y,m)+\ii w\nu(y)+n\eta(y)),2\pi\ii\mathbb{Z}+B(y)\mathbb{Z})\ge\epsilon,\quad\text{and}\quad \mathrm{dist}(y,\partial(E_\mathrm{R}))\ge\epsilon.
\label{eq:Swiss-Cheese}
\end{equation}
\label{lem:Outer-Bounded}
\end{lem}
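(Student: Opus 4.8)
The plan is to read existence, unimodularity and uniqueness straight off the explicit construction of Section~\ref{sec:elliptic-outer-solution}, and then to obtain the uniform bound by tracing the closed-form solution factor by factor. For existence: formula \eqref{eq:P-formula} together with the substitution \eqref{eq:POout} produces a matrix $\dot{\mathbf{O}}_n^{\mathrm{out}}(\lambda;y,w,m)$ satisfying all three conditions of Riemann-Hilbert Problem~\ref{rhp:outer-elliptic} as soon as the constant pre-factor $\mathrm{diag}(Q_{11}(\infty;\kappa(y),s_n,y)^{-1},Q_{22}(\infty;\kappa(y),s_n,y)^{-1})$ makes sense, i.e.\@ as soon as $Q_{11}(\infty;\kappa(y),s_n,y)Q_{22}(\infty;\kappa(y),s_n,y)\neq 0$; by the computation preceding Lemma~\ref{lemma:one-divisor} this is precisely the complement of the Malgrange divisor \eqref{eq:outer-pole-divisor}, and Lemma~\ref{lemma:one-divisor} shows the two sign choices there define the same set. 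I would recall that the cancellation of the lone pole of $q^-$ at $\lambda=\kappa(y)$ against the simple zero of $f^\mathrm{OD}(\lambda;y)$, verified in Section~\ref{sec:elliptic-outer-solution}, makes this matrix genuinely analytic on $\mathbb{C}\setminus\Sigma^\mathrm{out}(y)$ with the required $\mathcal{O}((\lambda-\lambda_j(y))^{-1/4})$ endpoint behaviour. For unit determinant: every jump matrix in Riemann-Hilbert Problem~\ref{rhp:outer-elliptic} has determinant $1$, so $\det\dot{\mathbf{O}}_n^{\mathrm{out}}$ continues analytically across all jump arcs; near each root $\lambda_j(y)$ of $P(\lambda;y,C(y))$ it is $\mathcal{O}((\lambda-\lambda_j(y))^{-1/2})=o(|\lambda-\lambda_j(y)|^{-1})$, so the singularity is removable; being entire and tending to $1$ at $\infty$, it is identically $1$ by Liouville. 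Uniqueness is then standard: for two solutions $\dot{\mathbf{O}}^{(1)},\dot{\mathbf{O}}^{(2)}$, the product $\dot{\mathbf{O}}^{(1)}(\dot{\mathbf{O}}^{(2)})^{-1}$ is analytic across the jump arcs (using $\det\dot{\mathbf{O}}^{(2)}\equiv 1$), is $\mathcal{O}((\lambda-\lambda_j(y))^{-1/2})$ at the branch points hence entire, and tends to $\mathbb{I}$ at $\infty$, so it equals $\mathbb{I}$.

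For the uniform bound, the set of admissible $y$ is $K_y:=\{y\in E_\mathrm{R}:\mathrm{dist}(y,\partial(E_\mathrm{R}))\ge\epsilon\}$, which is a compact subset of $E_\mathrm{R}$. First I would record that, by the Boutroux analysis of Sections~\ref{sec:Boutroux-in-E}--\ref{sec:Boutroux-degenerate}, the spectral curve is non-degenerate for $y\in K_y$: the four roots $\lambda_j(y)$ stay distinct and away from $0$, $\infty$ and one another, $\kappa(y)$ of \eqref{eq:mu-def} is a well-defined simple zero of $f^\mathrm{OD}(\cdot;y)$, $B(y)$ of \eqref{eq:B-of-y} lies in a compact subset of $\{\mathrm{Re}<0\}$, and the complete elliptic integrals entering \eqref{eq:eta-defining-equation}, \eqref{eq:nu-defining-equation} and the definitions of $\delta,\eta,\nu,K_1,K_2$ vary continuously and are nonzero, so all of $\delta(y,m),\eta(y),\nu(y),K_1(y),K_2(y)$ and the $\mathfrak a$-period in \eqref{eq:Abel-def} are bounded on $K_y$. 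From this, the algebraic factors $f^\mathrm{D}(\lambda;y),f^\mathrm{OD}(\lambda;y)$ of \eqref{eq:qDqOD} are bounded on the set \eqref{eq:lambda-cheese} together with $\lambda=\infty$, uniformly for $y\in K_y$; and $H(\lambda;y),h(\lambda;y)$ of \eqref{eq:H-exponent}, \eqref{eq:h-exponent} are bounded on all of $\mathbb{C}$ — the linear growth at $\lambda=\infty$ being removed exactly by \eqref{eq:eta-defining-equation} and \eqref{eq:nu-defining-equation}, and the pole of $h$ at $\lambda=0$ by the cancellation noted after \eqref{eq:h-exponent} — uniformly for $y\in K_y$. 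Hence, for $|w|\le K$, the only part of $\dot{\mathbf{O}}_n^{\mathrm{out}}$ whose boundedness is not immediate is the interplay between the $n$-dependence of $\ee^{\pm nH(\lambda;y)\sigma_3}$ in \eqref{eq:POout} and that of the theta arguments $s_n=-\delta(y,m)-\ii w\nu(y)-n\eta(y)$ in \eqref{eq:sn}.

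That interplay I would resolve by the standard finite-gap quasi-periodicity argument. Using the relations \eqref{eq:automorphic} for $\Theta(\cdot,B(y))$ I reduce the argument of every theta appearing in $\mathbf{Q}$ (see \eqref{eq:gpm-def}, \eqref{eq:Q-def}) modulo the period lattice $2\pi\ii\mathbb{Z}+B(y)\mathbb{Z}$; the exponential factors so produced cancel exactly against $\ee^{\pm n(H(\lambda;y)-H(\infty;y))\sigma_3}$ and $\ee^{\pm\ii w(h(\lambda;y)-h(\infty;y))\sigma_3}$, by the compatibility of the jump relations \eqref{eq:Hjump-1}--\eqref{eq:Hjump-3} with the Abel-map jumps \eqref{eq:ALInftyRedTwo}--\eqref{eq:ALInftyBlueTwo} and the normalizations \eqref{eq:eta-defining-equation}, \eqref{eq:eta-again}, \eqref{eq:nu-defining-equation}; equivalently, $\dot{\mathbf{O}}_n^{\mathrm{out}}(\lambda;y,w,m)$ is, for fixed $\lambda,y,m$, a quasi-periodic function of the linearized flow variable $\Phi:=\delta(y,m)+\ii w\nu(y)+n\eta(y)$, all of whose $(n,w)$-dependence enters through $\Phi$ taken modulo that lattice. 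It then remains only to bound the resulting theta quotients uniformly. The numerators are values of the entire $\Theta(\cdot,B(y))$ at arguments in a bounded set (the Abel map \eqref{eq:Abel-def} being bounded on the cut $\lambda$-plane together with $\lambda=\infty$, and the shifts being bounded on $K_y$ with $|w|\le K$ once $\Phi$ is reduced), hence bounded over $K_y$. For the denominators: the theta in the normalizing constant $Q_{jj}(\infty;\kappa(y),s_n,y)$ is, via the identity $2A(\infty;y)+2A(\kappa(y);y)=-B(y)$, bounded below in modulus by a constant depending only on $\epsilon$ and $K_y$ thanks to the second condition in \eqref{eq:Swiss-Cheese} (which is the same for both signs, consistently with Lemma~\ref{lemma:one-divisor}); the denominator of $q^+(\lambda;\kappa(y),\cdot,y)$ has no zero on the cut plane, so is bounded below by compactness; and the single zero at $\lambda=\kappa(y)$ of the denominator of $q^-(\lambda;\kappa(y),\cdot,y)$ is cancelled by the simple zero of $f^\mathrm{OD}(\lambda;y)$, the cancelled product being holomorphic at $\kappa(y)$ with value controlled by $(f^\mathrm{OD})'(\kappa(y);y)$ — bounded and nonzero — and the bounded residue of $q^-$. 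Assembling these estimates bounds every factor of $\dot{\mathbf{O}}_n^{\mathrm{out}}$ uniformly on the stated set.

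I expect the main obstacle to be the precise bookkeeping of the quasi-periodicity cancellation in the previous paragraph: verifying that the exponential factors generated by \eqref{eq:automorphic} really do match those carried by $\ee^{\pm n(H(\lambda;y)-H(\infty;y))\sigma_3}$ hinges on the exact compatibility of the third-kind normalization \eqref{eq:eta-defining-equation}/\eqref{eq:eta-again} (and \eqref{eq:nu-defining-equation}) with the jump data, which requires some care. A secondary difficulty is the uniform lower bound on the theta denominators near $\lambda=\kappa(y)$ and near the cut endpoints $\lambda_j(y)$, where the $(\lambda-\lambda_j(y))^{-1/4}$ behaviour of $f^\mathrm{D},f^\mathrm{OD}$ must be reconciled with the continuity of all elliptic-integral data; both, in the end, rest on the non-degeneracy of the spectral curve for $y\in K_y$ established in Section~\ref{sec:Boutroux-in-E}.
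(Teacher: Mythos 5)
Your argument for existence, unimodularity, and uniqueness matches the paper's (which simply calls these ``standard results''), but your treatment of the uniform boundedness takes a genuinely different route. The paper does \emph{not} estimate the explicit formula at all: it observes that the jump data of Riemann--Hilbert Problem~\ref{rhp:outer-elliptic} involve $n$ only through the unimodular factors $\ee^{\pm\ii nK_j(y)}$, $j=1,2$ (real $K_j$ being exactly what the Boutroux conditions \eqref{eq:Boutroux} guarantee), so that as $n\to\infty$ the data range over a precompact set; restricting by \eqref{eq:Swiss-Cheese} keeps the parameters a distance $\epsilon$ from the Malgrange divisor, and boundedness follows from continuity of the solution on the resulting compact parameter set (the paper defers the details to \cite[Proposition 8]{BuckinghamM14}). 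You instead attack the closed-form solution directly, reducing the theta arguments modulo $2\pi\ii\mathbb{Z}+B(y)\mathbb{Z}$ via \eqref{eq:automorphic} and verifying that the exponential prefactors so generated cancel against $\ee^{\pm n(H(\lambda;y)-H(\infty;y))\sigma_3}$ by the compatibility of \eqref{eq:Hjump-1}--\eqref{eq:Hjump-3} with \eqref{eq:ALInftyRedTwo}--\eqref{eq:ALInftyBlueTwo} and \eqref{eq:eta-again}. Your route is correct in principle and yields more explicit information (effectively showing the solution is a quasi-periodic function of the linearized flow variable $\Phi$), but the cancellation bookkeeping you flag as the main obstacle is real work --- note in particular that the integer shift $k\approx nK_1(y)/2\pi$ produces $\ee^{-k^2B(y)/2}$ factors of size $\ee^{cn^2}$ that only disappear because they are common to the numerator at $\lambda$ and the normalizing constant at $\lambda=\infty$, so the cancellation must be organized as a ratio before any modulus estimates are taken. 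The paper's compactness argument buys exactly the avoidance of this bookkeeping, at the price of being less constructive. One small point in your favor: your parenthetical that the two sign choices in \eqref{eq:Swiss-Cheese} give the same distance is justified, since $2A(\infty;y)+2A(\kappa(y);y)$ lies in the (negation-symmetric) lattice by Lemma~\ref{lemma:one-divisor}.
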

Note that for fixed $n$ and $w=0$, the two conditions in \eqref{eq:Swiss-Cheese} 
bound $y$ within $E_\mathrm{R}$ 
by a distance $\epsilon$ from the boundary and also bound $y$ away from the points of the 
Malgrange divisor by a distance proportional to $\frac{\epsilon}{n}$, that is, an arbitrarily small fixed fraction of the spacing between the points of the divisor.  
\begin{proof}
The uniqueness and unimodularity of the solution given existence are standard results.  It remains to show the boundedness under the conditions $|w|\le K$, \eqref{eq:lambda-cheese}, and \eqref{eq:Swiss-Cheese},
which is not obvious because the solution formula for $\dot{\mathbf{O}}^{(n),\mathrm{out}}(\lambda;y,w,m)$ contains exponential factors and theta-function factors that grow exponentially with $n$, which is allowed to grow without bound.  However, the conditions of Riemann-Hilbert 
Problem~\ref{rhp:outer-elliptic}
only involve $n$ in the form of exponential factors $\ee^{\pm\ii n K_j(y)}$, $j=1,2$, which have unit modulus for all $n$ because $K_j(y)\in\mathbb{R}$ by the Boutroux conditions \eqref{eq:Boutroux}.  The parameter space for Riemann-Hilbert 
Problem~\ref{rhp:outer-elliptic} is therefore a subset of a compact set even though $n$ can become unbounded.  This fact leads to the claimed uniform boundedness.  See \cite[Proposition 8]{BuckinghamM14} for a similar argument with full details.
\end{proof}

\subsubsection{Defining the approximation $\dot{u}_n(y,w;m)$}
Reversing the substitutions $\mathbf{Y}_n\mapsto\mathbf{M}_n\mapsto\mathbf{N}_n\mapsto\mathbf{O}_n$ and using \eqref{eq:u-n-from-Y-formula} shows that the rational solution $u=u_n(x;m)$ of the Painlev\'e-III equation \eqref{eq:PIII} can be expressed for 
$y\in E_\mathrm{R}$
and $x=ny+w$ in terms of $\mathbf{O}_n(\lambda;y,w,m)$ in the form
\begin{equation}
u_n(ny+w;m)=\frac{-\ii Y^\infty_{1,12}(ny+w,m)}{Y^0_{0,11}(ny+w,m)Y^0_{0,12}(ny+w,m)}=\frac{-\ii O^\infty_{n,1,12}(y,w,m)}{O^0_{n,0,11}(y,w,m)O^0_{n,0,12}(y,w,m)}.
\label{eq:u_n-O}
\end{equation}
where $\mathbf{O}^0_{n,0}(y,w,m)=\mathbf{O}_n(0;y,w,m)$ and $\mathbf{O}^\infty_{n,1}(y,w,m)=\lim_{\lambda\to\infty}\lambda (\mathbf{O}_n(\lambda;y,w,m)-\mathbb{I})$.  Note that here we do not have to exclude real values of $y$, because $u_n(ny+w;m)$ is rational in $y$ meaning that \eqref{eq:u_n-O} must be consistent for positive $y$ in $E_\mathrm{R}$.
In Section~\ref{sec:Error-elliptic} we will show that under the conditions $|w|\le K$, \eqref{eq:lambda-cheese} and \eqref{eq:Swiss-Cheese}, 
the outer parametrix $\dot{\mathbf{O}}_n^{\mathrm{out}}(\lambda;y,w,m)$ is an accurate approximation of $\mathbf{O}_n(\lambda;y,w,m)$, from which $u_n(ny+w;m)$ can be extracted according to \eqref{eq:u_n-O}.  This motivates the introduction of an explicit approximation for $u_n(ny+w;m)$ obtained by replacing $\mathbf{O}_n(\lambda;y,w,m)$ by its outer parametrix in \eqref{eq:u_n-O}:
\begin{equation}
\dot{u}_n(y,w;m):=\frac{-\ii \dot{O}^\infty_{n,1,12}(y,w,m)}{\dot{O}^0_{n,0,11}(y,w,m)\dot{O}^0_{n,0,12}(y,w,m)},
\label{eq:elliptic-dot-u-def}
\end{equation}
where $\dot{\mathbf{O}}^0_{n,0}(y,w,m)=\dot{\mathbf{O}}_n^{\mathrm{out}}(0;y,w,m)$ and $\dot{\mathbf{O}}^\infty_{n,1}(y,w,m)=\lim_{\lambda\to\infty}\lambda (\dot{\mathbf{O}}_n^{\mathrm{out}}(\lambda;y,w,m)-\mathbb{I})$.
Using the formul\ae\ developed in Section~\ref{sec:elliptic-outer-solution} for the outer parametrix then yields the formula \eqref{eq:udot-elliptic} for $\dot{u}_n(y,w;m)$, in which
\begin{equation}
\begin{split}
\mathcal{Z}_n^\circ(y,w;m)&:=\Theta(A(\infty;y)+A(\kappa(y);y)+\ii\pi+\tfrac{1}{2}B(y)-s_n(y,w,m),B(y))\\
\mathcal{Z}_n^\bullet(y,w;m)&:=\Theta(A(\infty;y)-A(\kappa(y);y)-\ii\pi-\tfrac{1}{2}B(y)+s_n(y,w,m),B(y))\\
\mathcal{P}_n^\bullet(y,w;m)&:=\Theta(A(0;y)+A(\kappa(y);y)+\ii\pi+\tfrac{1}{2}B(y)-s_n(y,w,m),B(y))\\
\mathcal{P}_n^\circ(y,w;m)&:=\Theta(A(0;y)-A(\kappa(y);y)-\ii\pi-\tfrac{1}{2}B(y)+s_n(y,w,m),B(y))
\end{split}
\label{eq:four-factors}
\end{equation}
and, using the fact that $\phi(0;y)^2=\lambda_0(y)\lambda_1(y)^{-1}$, 
\begin{equation}
N(y):=\frac{\ii}{\kappa(y)}
\cdot
\frac{\Theta(A(0;y)+A(\kappa(y);y)+\ii\pi+\tfrac{1}{2}B(y),B(y))\Theta(A(0;y)-A(\kappa(y);y)-\ii\pi-\tfrac{1}{2}B(y),B(y))}{\Theta(A(\infty;y)+A(\kappa(y);y)+\ii\pi+\tfrac{1}{2}B(y),B(y))\Theta(A(\infty;y)-A(\kappa(y);y)-\ii\pi-\tfrac{1}{2}B(y),B(y))}.
\label{eq:N-of-y}
\end{equation}
We recall that for $y\in E_\mathrm{R}$,  $s_n(y,w,m)=-\delta(y,m)-\ii w\nu(y)-n\eta(y)$.
Observe that $N(y)$ is well-defined and nonzero for all 
$y\in E_\mathrm{R}$
and is independent of $n$ and $m$.  

\subsubsection{The differential equation satisfied by $\dot{\lNaught}(w)=-\ii\dot{u}_n(y,w;m)$}
\label{sec:DE-w}
Although $u_n(x;m)$ is a rational function of $x=ny+w$, the approximation $\dot{u}_n(y,w;m)$
is not a meromorphic function of $y$ because $C=C(y)$ is determined from the Boutroux equations \eqref{eq:Boutroux}, from which a direct computation shows that $\overline{\partial}C\neq 0$ in general, i.e., the real and imaginary parts of $C$ do not satisfy the Cauchy-Riemann equations with respect to the real and imaginary parts of $y$.
On the other hand, since $s_n(y,w,m)$ is linear in $w$, it is obvious from \eqref{eq:udot-elliptic} with \eqref{eq:four-factors}--\eqref{eq:N-of-y} that $\dot{u}_n(y,w;m)$ is a meromorphic function of $w$ for each fixed $y\in E_\mathrm{R}$.  In order to establish the first statement of Theorem~\ref{theorem:eye}, we will prove in this section that the related function $\dot{p}(w):=-\ii\dot{u}_n(y,w;m)$ is in fact an elliptic function of $w$ satisfying the differential equation \eqref{eq:dotV-ODE} in which the constant $C=C(y)$ is determined from the Boutroux equations \eqref{eq:Boutroux}.

Rather than try to deal directly with the explicit formula \eqref{eq:udot-elliptic}, we argue indirectly from the conditions of Riemann-Hilbert Problem~\ref{rhp:outer-elliptic}.  We first observe that the outer parametrix $\dot{\mathbf{O}}_n^{\mathrm{out}}(\lambda;y,w,m)$ satisfies a simple algebraic equation.  Indeed, it is straightforward to check that the matrix
\begin{equation}
\mathbf{G}(\lambda):=R(\lambda;y)\dot{\mathbf{O}}_n^{\mathrm{out}}(\lambda;y,w,m)\sigma_3
\dot{\mathbf{O}}_n^{\mathrm{out}}(\lambda;y,w,m)^{-1}
\label{eq:G-matrix-def}
\end{equation}
is an entire function; its continuous boundary values match along the three arcs of the jump contour of $R$ and $\dot{\mathbf{O}}_n^{\mathrm{out}}$, and it is clearly bounded near the four roots of $R^2$, hence analyticity in the whole complex $\lambda$-plane follows by Morera's theorem.  Moreover, since 
\begin{equation}
R(\lambda;y)=\frac{1}{2}\ii y\lambda^2+\frac{1}{2}\lambda +\ii\frac{1-4C(y)}{4y}+ \mathcal{O}(\lambda^{-1}),\quad\lambda\to\infty,
\end{equation}
Liouville's theorem shows that $\mathbf{G}(\lambda)$ is a quadratic matrix polynomial in $\lambda$.  Using the expansion $\dot{\mathbf{O}}_n^{\mathrm{out}}(\lambda;y,w,m)=\mathbb{I}+\lambda^{-1}\dot{\mathbf{O}}^\infty_{n,1}(y,w,m) + \lambda^{-2}\dot{\mathbf{O}}^\infty_{n,2}(y,w,m)+\mathcal{O}(\lambda^{-3})$ as $\lambda\to\infty$ shows that
\begin{equation}
\mathbf{G}(\lambda)=\frac{1}{2}\ii y\sigma_3\lambda^2 + \frac{1}{2}\left(\sigma_3+\ii y\left[\dot{\mathbf{O}}^\infty_{n,1}(y,w,m),\sigma_3\right]\right)\lambda + 
\mathbf{G}^\infty + 
\mathcal{O}(\lambda^{-1}),
\quad\lambda\to\infty,
\label{eq:G-infty}
\end{equation}
where
\begin{equation}
\mathbf{G}^\infty:=\ii\frac{1-4C(y)}{4y}\sigma_3 +\frac{1}{2}\left[\dot{\mathbf{O}}^\infty_{n,1}+\ii y\dot{\mathbf{O}}^\infty_{n,2},\sigma_3\right]-\frac{1}{2}\ii y\left[\dot{\mathbf{O}}^\infty_{n,1},\sigma_3\right]\dot{\mathbf{O}}^\infty_{n,1}.
\label{eq:G0-def}
\end{equation}
Also, using 
\begin{equation}
R(\lambda;y)=\frac{1}{2}\ii y +\frac{1}{2}\lambda+\mathcal{O}(\lambda^2),\quad\lambda\to 0
\end{equation}
and the expansion $\dot{\mathbf{O}}_n^{\mathrm{out}}(\lambda;y,w,m)=\dot{\mathbf{O}}^0_{n,0}(y,w,m)+\dot{\mathbf{O}}^0_{n,1}(y,w,m)\lambda+\mathcal{O}(\lambda^2)$ as $\lambda\to 0$ gives
\begin{equation}
\mathbf{G}(\lambda)
=\frac{1}{2}\ii y\dot{\mathbf{O}}^0_{n,0}(y,w,m)\sigma_3\dot{\mathbf{O}}^0_{n,0}(y,w,m)^{-1} + \mathbf{G}_1^0\lambda+\mathcal{O}(\lambda^2),\quad\lambda\to 0,
\label{eq:G-zero}
\end{equation}
where
\begin{equation}
\mathbf{G}_1^0:=\frac{1}{2}\dot{\mathbf{O}}_{n,0}^0\sigma_3(\dot{\mathbf{O}}_{n,0}^0)^{-1} +\frac{1}{2}\ii y\dot{\mathbf{O}}_{n,0}^0
\left[(\dot{\mathbf{O}}_{n,0}^0)^{-1}\dot{\mathbf{O}}_{n,1}^0,\sigma_3\right](\dot{\mathbf{O}}_{n,0}^0)^{-1}.
\label{eq:G1-def}
\end{equation}
Therefore $\mathbf{G}(\lambda)$ is the quadratic matrix polynomial
\begin{equation}
\mathbf{G}(\lambda)=\frac{1}{2}\ii y\sigma_3\lambda^2 + \frac{1}{2}\left(\sigma_3+\ii y\mathbf{A}(w)\right)\lambda+\frac{1}{2}\ii y\mathbf{B}(w), 
\label{eq:G-of-lambda}
\end{equation}
where, suppressing explicit dependence on the parameters $y\in E$ and $m\in\mathbb{C}$,
\begin{equation}
\mathbf{A}(w):=\left[\dot{\mathbf{O}}^\infty_{n,1}(y,w,m),\sigma_3\right]\quad \text{and}\quad
\mathbf{B}(w):=\dot{\mathbf{O}}^0_{n,0}(y,w,m)\sigma_3\dot{\mathbf{O}}^0_{n,0}(y,w,m)^{-1}.
\end{equation}
These matrices have the forms
\begin{equation}
\mathbf{A}(w)=\begin{bmatrix}0 & A_{12}(w)\\A_{21}(w) & 0\end{bmatrix}\quad\text{and}\quad
\mathbf{B}(w)=\begin{bmatrix}\beta(w) & B_{12}(w)\\B_{21}(w) & -\beta(w)\end{bmatrix}
\end{equation}
where
\begin{equation}
\det(\mathbf{B}(w))=-1\quad\implies\quad \beta(w)^2=1-B_{12}(w)B_{21}(w).
\label{eq:beta-identity}
\end{equation}
Comparing the constant terms between the expansions \eqref{eq:G-infty} and \eqref{eq:G-zero} yields the identity
\begin{equation}
\mathbf{G}^\infty=\frac{1}{2}\ii y\mathbf{B}(w),
\label{eq:G0-eqn}
\end{equation}
where $\mathbf{G}^\infty$ is given by \eqref{eq:G0-def},
and comparing the terms proportional to $\lambda$ in the same expansions yields
\begin{equation}
\mathbf{G}^0_{1}=\frac{1}{2}\left(\sigma_3+\ii y \mathbf{A}(w)\right),
\label{eq:G1-eqn}
\end{equation}
where $\mathbf{G}^0_{1}$ is given by \eqref{eq:G1-def}.
Since $\sigma_3^2=\mathbb{I}$, it is also clear from \eqref{eq:G-matrix-def} that the square of the matrix polynomial $\mathbf{G}(\lambda)$ is a multiple of the identity, i.e., a specific scalar polynomial:
\begin{equation}
\mathbf{G}(\lambda)^2 = R(\lambda;y)^2\mathbb{I}=P(\lambda;y,C(y))\mathbb{I},
\label{eq:G-squared-P}
\end{equation}
where $P$ is the quartic in \eqref{eq:dotV-ODE}.  On the other hand, calculating the square directly from \eqref{eq:G-of-lambda} gives
\begin{multline}
\mathbf{G}(\lambda)^2 = \frac{1}{4}\left(-y^2\lambda^4 + 2\ii y\lambda^3 + \left(1-y^2A_{12}(w)A_{21}(w)-2y^2\beta(w)\right)\lambda^2\right. \\{}\left.+ \ii y\left(2\beta(w)+\ii y(A_{12}(w)B_{21}(w)+A_{21}(w)B_{12}(w))\right)\lambda-y^2\right)\mathbb{I}.
\label{eq:G-squared-direct}
\end{multline}
Comparing the coefficient of $\lambda$ between \eqref{eq:G-squared-direct} and $P(\lambda;y,C(y))\mathbb{I}$ using \eqref{eq:dotV-ODE} yields the identity
\begin{equation}
\beta(w)=1-\frac{1}{2}\ii y(A_{12}(w)B_{21}(w)+A_{21}(w)B_{12}(w)).
\label{eq:lambda-1-identity}
\end{equation}
Using \eqref{eq:lambda-1-identity} to eliminate $\beta(w)$ from \eqref{eq:G-squared-direct} and comparing again with \eqref{eq:G-squared-P} gives the identity
\begin{multline}
P(\lambda;y,C(y))=-\frac{1}{4}y^2\lambda^4 + \frac{1}{2}\ii y\lambda^3 \\
+\frac{1}{4}\left(1-y^2A_{12}(w)A_{21}(w)-2y^2+\ii y^3(A_{12}(w)B_{21}(w)+A_{21}(w)B_{12}(w))\right)\lambda^2 
+\frac{1}{2}\ii y\lambda -\frac{1}{4}y^2.
\label{eq:Poly-rewrite}
\end{multline}
We note that the coefficient of $\lambda^2$ here is actually independent of $w$, since according to \eqref{eq:dotV-ODE} it is given by $C=C(y)$, but the above expression is more useful in the context of the present discussion.\smallskip

On the other hand, one may observe that the matrix $\mathbf{F}(\lambda;w):=\dot{\mathbf{O}}_n^{\mathrm{out}}(\lambda;y,w,m)\ee^{\ii w\varphi(\lambda)\sigma_3/2}$ satisfies jump conditions that are independent of $w\in\mathbb{C}$, and therefore $\mathbf{F}_w\mathbf{F}^{-1}$ is a function of $\lambda$ analytic except possibly at $\lambda=0$ where $\mathbf{F}$ has essential singularities.  By expansion for large and small $\lambda$ and Liouville's theorem, it follows that $\mathbf{F}_w\mathbf{F}^{-1}$ is a Laurent polynomial:
\begin{equation}
\frac{\partial\mathbf{F}}{\partial w}(\lambda;w)\mathbf{F}(\lambda;w)^{-1}=
\frac{1}{2}\ii\sigma_3\lambda +\frac{1}{2}\ii\left[\dot{\mathbf{O}}^\infty_{n,1}(y,w,m),\sigma_3\right]-\frac{1}{2}\ii\dot{\mathbf{O}}^0_{n,0}(y,w,m)\sigma_3\dot{\mathbf{O}}^0_{n,0}(y,w,m)^{-1}\lambda^{-1}.
\end{equation}  
Therefore, the outer parametrix $\mathbf{O}_n^{\mathrm{out}}(\lambda;y,w,m)$ itself satisfies the differential equation
\begin{multline}
\frac{\partial\dot{\mathbf{O}}_n^{\mathrm{out}}}{\partial w}(\lambda;y,w,m)=\frac{1}{2}\ii\left[\sigma_3,\dot{\mathbf{O}}_n^{\mathrm{out}}(\lambda;y,w,m)\right]\lambda + \frac{1}{2}\ii
\left[\dot{\mathbf{O}}^\infty_{n,1}(y,w,m),\sigma_3\right]\dot{\mathbf{O}}_n^{\mathrm{out}}(\lambda;y,w,m) \\{}+\frac{1}{2}\ii\left(\dot{\mathbf{O}}_n^{\mathrm{out}}(\lambda;y,w,m)\sigma_3-
\dot{\mathbf{O}}^0_{n,0}(y,w,m)\sigma_3\dot{\mathbf{O}}^0_{n,0}(y,w,m)^{-1}\dot{\mathbf{O}}_n^{\mathrm{out}}(\lambda;y,w,m)\right)\lambda^{-1}.
\label{eq:outer-parametrix-ODE}
\end{multline}
Substituting the large-$\lambda$ expansion of $\dot{\mathbf{O}}_n^{\mathrm{out}}(\lambda;y,w,m)$ yields an infinite hierarchy of differential equations on the expansion coefficient matrices, the first 
member of which is
\begin{equation}
\frac{\dd\dot{\mathbf{O}}^\infty_{n,1}}{\dd w}=\frac{1}{2}\ii\left[\sigma_3,\dot{\mathbf{O}}^\infty_{n,2}\right] +
\frac{1}{2}\ii\left[\dot{\mathbf{O}}^\infty_{n,1},\sigma_3\right]\dot{\mathbf{O}}^\infty_{n,1} +\frac{1}{2}\ii\sigma_3 -\frac{1}{2}\ii\dot{\mathbf{O}}^0_{n,0}\sigma_3(\dot{\mathbf{O}}^0_{n,0})^{-1}.
\label{eq:DE-infty-1}
\end{equation}
Using the off-diagonal part of the identity \eqref{eq:G0-eqn} we can eliminate the commutator $[\sigma_3,\dot{\mathbf{O}}^\infty_{n,2}]$, and therefore \eqref{eq:DE-infty-1} implies that
\begin{equation}
\frac{\dd\dot{\mathbf{O}}^\infty_{n,1}}{\dd w}=\frac{1}{2y}\mathbf{A}(w)+\frac{1}{2}\ii\left(\mathbf{A}(w)\dot{\mathbf{O}}_{n,1}^\infty\right)^\mathrm{D}+\frac{1}{2}\ii\sigma_3+\frac{1}{2}\ii(\mathbf{B}(w))^\mathrm{D}-\ii\mathbf{B}(w),
\end{equation}
where $(\cdot)^\mathrm{D}$ denotes the diagonal part of a matrix.  Taking the commutator of this equation with $\sigma_3$ then yields
\begin{equation}
\frac{\dd\mathbf{A}}{\dd w} = \frac{1}{2y}[\mathbf{A},\sigma_3] -\ii[\mathbf{B},\sigma_3].
\label{eq:ODE-1}
\end{equation}
Similarly, substituting into \eqref{eq:outer-parametrix-ODE} the small-$\lambda$ expansion of $\dot{\mathbf{O}}_n^{\mathrm{out}}(\lambda;y,w,m)$ and taking just the leading (constant) term gives the differential equation
\begin{equation}
\frac{\dd\dot{\mathbf{O}}^0_{n,0}}{\dd w}=\frac{1}{2}\ii[\dot{\mathbf{O}}^\infty_{n,1},\sigma_3]\dot{\mathbf{O}}^0_{n,0} +\frac{1}{2}\ii
\dot{\mathbf{O}}_{n,0}^{0}[(\dot{\mathbf{O}}_{n,0}^{0})^{-1}\dot{\mathbf{O}}^0_{n,1},\sigma_3].
\end{equation}
Multiplying the identity \eqref{eq:G1-eqn} on the right by $\dot{\mathbf{O}}_{n,0}^{0}$ allows $\dot{\mathbf{O}}_{n,1}^{0}$ to be eliminated from the right-hand side of the above differential equation, leading to
\begin{equation}
\frac{\dd\dot{\mathbf{O}}_{n,0}^0}{\dd w}=\ii\mathbf{A}(w)\dot{\mathbf{O}}_{n,0}^0 +\frac{1}{2y}[\sigma_3,\dot{\mathbf{O}}_{n,0}^0].
\end{equation}
This identity allows us to compute the derivative of $\mathbf{B}(w)$.  Using also $\mathbf{B}(w)^2=\mathbb{I}$ yields the differential equation
\begin{equation}
\frac{\dd\mathbf{B}}{\dd w}=\ii[\mathbf{A},\mathbf{B}]-\frac{1}{2y}[\mathbf{B},\sigma_3].
\label{eq:ODE-2}
\end{equation}
The differential equations \eqref{eq:ODE-1} and \eqref{eq:ODE-2} obviously form a closed system on the matrices $\mathbf{A}(w)$ and $\mathbf{B}(w)$.  

From \eqref{eq:elliptic-dot-u-def}, we can express $\dot{p}(w):=-\ii\dot{u}_n(y,w;m)$ in terms of the elements of $\mathbf{A}(w)$ and $\mathbf{B}(w)$ simply as
\begin{equation}
\dot{p}(w)=-\ii\dot{u}_n(y,w;m)=-\frac{A_{12}(w)}{B_{12}(w)}.
\label{eq:dot-p-AB}
\end{equation}
Now we use \eqref{eq:ODE-1} and \eqref{eq:ODE-2} to differentiate $\dot{p}(w)$:
\begin{equation}
\frac{\dd\dot{p}}{\dd w}=-2\ii\beta(w)\frac{A_{12}(w)^2}{B_{12}(w)^2}+\frac{2}{y}\frac{A_{12}(w)}{B_{12}(w)}-2\ii.
\end{equation}
Therefore, using \eqref{eq:beta-identity} to eliminate $\beta(w)^2$, we find that
\begin{multline}
\frac{y^2}{16}\left(\frac{\dd \dot{p}}{\dd w}\right)^2=-\frac{1}{4}y^2 -\frac{1}{2}\ii y\frac{A_{12}(w)}{B_{12}(w)} +
\left(\frac{1}{4}-\frac{1}{2}y^2\beta(w)\right)\frac{A_{12}(w)^2}{B_{12}(w)^2} \\
{}-\frac{1}{2}\ii y\beta(w)\frac{A_{12}(w)^3}{B_{12}(w)^3}+\frac{1}{4}y^2\left(B_{12}(w)B_{21}(w)-1\right)\frac{A_{12}(w)^4}{B_{12}(w)^4}.
\end{multline}
Substituting $\lambda=\dot{p}$ with \eqref{eq:dot-p-AB} into \eqref{eq:Poly-rewrite} gives
\begin{multline}
P(\dot{p};y,C(y))=-\frac{1}{4}y^2-\frac{1}{2}\ii y\frac{A_{12}(w)}{B_{12}(w)}\\
{}+\frac{1}{4}\left(1-y^2A_{12}(w)A_{21}(w)-2y^2+\ii y^3(A_{12}(w)B_{21}(w)+A_{21}(w)B_{12}(w))\right)\frac{A_{12}(w)^2}{B_{12}(w)^2}\\
{}-\frac{1}{2}\ii y\frac{A_{12}(w)^3}{B_{12}(w)^3} -\frac{1}{4}y^2\frac{A_{12}(w)^4}{B_{12}(w)^4}.
\end{multline}
Subtracting these two identities yields
\begin{multline}
\frac{y^2}{16}\left(\frac{\dd\dot{p}}{\dd w}\right)^2-P(\dot{p};y,C(y))=\\
\left(-\frac{1}{2}y^2(\beta(w)-1)+\frac{1}{4}y^2A_{12}(w)A_{21}(w)-\frac{1}{4}\ii y^3(A_{12}(w)B_{21}(w)+A_{21}(w)B_{12}(w))\right)
\frac{A_{12}(w)^2}{B_{12}(w)^2}\\
-\frac{1}{2}\ii y(\beta(w)-1)\frac{A_{12}(w)^3}{B_{12}(w)^3}+\frac{1}{4}y^2B_{12}(w)B_{21}(w)\frac{A_{12}(w)^4}{B_{12}(w)^4}.
\end{multline}
Finally, eliminating $\beta(w)$ using \eqref{eq:lambda-1-identity} yields the differential equation \eqref{eq:dotV-ODE}.  Together with the fact that the four roots of $P(\lambda;y,C(y))$ are distinct by choice of $C(y)$ satisfying the Boutroux conditions \eqref{eq:Boutroux} on $E_\mathrm{R}$, this proves the first statement of Theorem~\ref{theorem:eye}.

\subsection{Airy-type parametrices}
\label{sec:Airy}
Local parametrices for the matrix $\mathbf{O}_n(\lambda;y,w,m)$ are needed in neighborhoods of each of the four roots of $P(\lambda;y,C(y))$, $\lambda=\lambda_0,\lambda_1,\lambda_1^{-1},\lambda_0^{-1}$, where we recall that by definition $\lambda_0$ is adjacent to $\infty$ and $\lambda_1$ is adjacent to $\lambda_0$ on the Stokes graph of $y\in E_\mathrm{R}\setminus\mathbb{R}$.  Centering a disk of sufficiently small radius independent of $n$ at each of these points, a conformal map $W=W(\lambda)$ can be defined in each disk as indicated in 
Table~\ref{tab:ConfMapR}.
\begin{table}[h]
\centering
\caption{Conformal map data for $y\in E_\mathrm{R}\setminus\mathbb{R}$.}
\label{tab:ConfMapR}
\begin{tabular}{@{}|l|l|l|l|l|l|@{}}
\hline
\multirow{2}{*}{\makecell[l]{Center\\$W=0$}}&\multirow{2}{*}{Conformal map $W$}&\multicolumn{4}{c|}{Ray Preimages}\\
\cline{3-6}
                        &                                                                                                                                                                    & $W>0$                       & $\vphantom{\Big[}\arg(W)=\tfrac{2}{3}\pi$                                    & $\arg(W)=-\tfrac{2}{3}\pi$                                   & $W<0$          \\ \hline\hline
$\lambda_0$                          & \makecell[l]{\rule{0cm}{0.01cm}\\$(V-2g)^{2/3}$,\\ continued from $L^{\infty,1}_\squareurblack$\\\rule{0cm}{0.01cm}}                                                                                                                  & $L^{\infty,1}_\squareurblack$             & $\partial\Lambda^-_\squareurblack$                                   & $\partial\Lambda^+_\squareurblack$                                   & $L^{\infty,2}_\squareurblack$ \\ \hline
$\lambda_1$                          & \makecell[l]{\rule{0cm}{0.01cm}\\$(V-g_+-g_-)^{2/3}$,\\ continued from $L^{\infty,3}_\squareurblack$\\\rule{0cm}{0.01cm}}                                                                                                             & $L^{\infty,3}_\squareurblack$             & $\partial\Lambda^+_\squareurblack$                                   & $\partial\Lambda^-_\squareurblack$                                   & $L^{\infty,2}_\squareurblack$ \\ \hline
$\lambda_1^{-1}$                     & \makecell[l]{\rule{0cm}{0.01cm}\\$(2g-V-2g(\lambda_1^{-1})+V(\lambda_1^{-1}))^{2/3}$,\\ continued from $L^{\infty,1}_\squarellblack$\\\rule{0cm}{0.01cm}}                                                                             & $L^{\infty,1}_\squarellblack$             & $\partial\Lambda^-_\squarellblack$                                   & $\partial\Lambda^+_\squarellblack$                                   & $L^{\infty,2}_\squarellblack$ \\ \hline
\multirow{3}{*}{$\lambda_0^{-1}$} & \multirow{2}{*}{\makecell[l]{$(2g-V-2g(\lambda_0^{-1})+V(\lambda_0^{-1}))^{2/3}$,\\ continued from $L^0_\squarellblack$;\\ $g(\lambda_0^{-1})$ defined by limit along $L^0_\squarellblack$}}                      & \multirow{3}{*}{$L^0_\squarellblack$}                      & \makecell[l]{$\partial\Lambda^+_\squarellblack$ and $L^0_\squareurblack$,\\if $\mathrm{Im}(y)>0$}          & \makecell[l]{$\partial\Lambda^-_\squarellblack$ and $L^{\infty,3}_\squareurblack$,\\ if $\mathrm{Im}(y)>0$} & \multirow{3}{*}{$L^{\infty,2}_\squarellblack$} \\ \cline{4-5}
& 
&  & \makecell[l]{$\partial\Lambda^+_\squarellblack$ and $L^{\infty,3}_\squareurblack$,\\ if $\mathrm{Im}(y)<0$} & \makecell[l]{$\partial\Lambda^-_\squarellblack$ and $L^0_\squareurblack$,\\ if $\mathrm{Im}(y)<0$}          &  \\ \hline
\end{tabular}
\end{table}
As indicated in this table, we assume that certain contours near $\lambda_0^{-1}$
are fused together within the corresponding disk, and that all contours are locally arranged to lie along straight rays in the $W$-plane emanating from the origin.  Locally, the jump contours divide the $W$-plane into four sectors:  
\begin{equation}
S_\mathrm{I}:\; 0<\arg(W)<\frac{2}{3}\pi; \;\;
S_\mathrm{II}:\; \frac{2}{3}\pi<\arg(W)<\pi;\;\; 
S_\mathrm{III}:\; -\pi<\arg(W)<-\frac{2}{3}\pi;\;\;
S_\mathrm{IV}:\; -\frac{2}{3}\pi<\arg(W)<0.
\label{eq:Airy-sectors}
\end{equation}
In each case, the jump conditions satisfied by $\mathbf{O}_n(\lambda;y,w,m)$ can then be cast into a universal form by means of a substitution 
\begin{equation}
\mathbf{P}(\lambda):=\mathbf{O}_n(\lambda;y,w,m)\ee^{\ii w\varphi(\lambda)\sigma_3/2}\ee^{-L(\lambda;y,m)\sigma_3}\OurPower{\lambda}{-(m+1)\sigma_3/2}\mathbf{T}(\lambda),
\end{equation}
where $\mathbf{T}(\lambda)$ is a piecewise-constant matrix defined in the four sectors of each disk as indicated in 
Table~\ref{tab:T-in-ER}.  Note that the Boutroux conditions $K_j\in\mathbb{R}$, $j=1,2$, imply that $\mathbf{T}(\lambda)$ is uniformly bounded on compact sets with respect to $m\in\mathbb{C}$ and for arbitrary $n\in\mathbb{Z}_{\ge 0}$.  
\begin{table}[h]
\centering
\caption{The transformation $\mathbf{T}(\lambda)$ defined in the four sectors of the $W$-plane in each of the four disks for $y\in E_\mathrm{R}\setminus\mathbb{R}$.}
\label{tab:T-in-ER}
\begin{tabular}{@{}|l|l|l|l|l|@{}}
\hline
\multirow{2}{*}{\makecell{Center\\$W=0$}} & \multicolumn{4}{c|}{Transformation $\mathbf{T}(\lambda)$} \\
\cline{2-5}
&In Sector $S_\mathrm{I}$ & In Sector $S_\mathrm{II}$ & In Sector $S_\mathrm{III}$ & In Sector $S_\mathrm{IV}$  \\
\hline\hline
\multirow{2}{*}{$\lambda_0$} & $\vphantom{\Big[}c^{\sigma_3}$ &  $c^{\sigma_3}$ & $c^{\sigma_3}$ & $c^{\sigma_3}$ \\
\cline{2-5}
&\multicolumn{4}{c|}{$\vphantom{\Big[}c:=\ii (2\pi)^{1/4}\Gamma(\tfrac{1}{2}-m)^{-1/2}$}\\
\hline
\multirow{2}{*}{$\lambda_1$} & $\vphantom{\Big[}(c\ee^{\ii n K_1/2})^{\sigma_3}$ & $(c\ee^{\ii n K_1/2})^{\sigma_3}$ & 
$(c\ee^{-\ii n K_1/2})^{\sigma_3}$ & $(c\ee^{-\ii n K_1/2})^{\sigma_3}$ \\
\cline{2-5}
&\multicolumn{4}{c|}{$\vphantom{\Big[}c:=(2\pi)^{1/4}\Gamma(\tfrac{1}{2}-m)^{-1/2}$}\\
\hline
\multirow{2}{*}{$\lambda_1^{-1}$} & $\vphantom{\Big[}(-c\ee^{\ii\pi m/2})^{\sigma_3}\ii\sigma_1$ & $(-c\ee^{\ii\pi m/2})^{\sigma_3}\ii\sigma_1$ & $(c\ee^{-\ii\pi m/2})^{\sigma_3}\ii\sigma_1$ & $(c\ee^{-\ii\pi m/2})^{\sigma_3}\ii\sigma_1$ \\
\cline{2-5}
&\multicolumn{4}{c|}{$\vphantom{\Big[}c:=(2\pi)^{-1/4}\Gamma(\tfrac{1}{2}+m)^{1/2}\ee^{\ii nK_2/2}$}\\
\hline
\multirow{2}{*}{\makecell{$\lambda_0^{-1}$,\\$\mathrm{Im}(y)>0$}} & 
$\vphantom{\Big[}(c\ee^{-\ii\pi m/2})^{\sigma_3}\ii\sigma_1$ & 
$(c\ee^{-\ii\pi m/2})^{\sigma_3}\ii\sigma_1$ & 
$(c\ee^{\ii\pi m/2}\ee^{\ii n K_1})^{\sigma_3}\ii\sigma_1$ & 
$(c\ee^{\ii\pi m/2})^{\sigma_3}\ii\sigma_1$ \\
\cline{2-5}
&\multicolumn{4}{c|}{$\vphantom{\Big[}c:=(2\pi)^{-1/4}\Gamma(\tfrac{1}{2}+m)^{1/2}\ee^{\ii n(K_2-K_1)/2}$}\\
\hline
\multirow{2}{*}{\makecell{$\lambda_0^{-1}$,\\$\mathrm{Im}(y)<0$}} & 
$\vphantom{\Big[}(c\ee^{-\ii\pi m/2})^{\sigma_3}\ii\sigma_1$ & 
$(c\ee^{-\ii\pi m/2}\ee^{-\ii n K_1})^{\sigma_3}\ii\sigma_1$ & 
$(c\ee^{\ii\pi m/2})^{\sigma_3}\ii\sigma_1$ & 
$(c\ee^{\ii\pi m/2})^{\sigma_3}\ii\sigma_1$ \\
\cline{2-5}
&\multicolumn{4}{c|}{$\vphantom{\Big[}c:=(2\pi)^{-1/4}\Gamma(\tfrac{1}{2}+m)^{1/2}\ee^{\ii n(K_2+K_1)/2}$}\\
\hline
\end{tabular}
\end{table}
The jump conditions satisfied by $\mathbf{P}(\lambda)$ in each case are most conveniently written in terms of the rescaled variable $\zeta = n^{2/3}W(\lambda)$:
\begin{equation}
\begin{split}
\mathbf{P}_+(\lambda)&=\mathbf{P}_-(\lambda)\begin{bmatrix}1 & \ee^{-\zeta^{3/2}}\\0 & 1\end{bmatrix},\quad \arg(\zeta)=0,\\
\mathbf{P}_+(\lambda)&=\mathbf{P}_-(\lambda)\begin{bmatrix}1 & 0\\\ee^{\zeta^{3/2}} & 1\end{bmatrix},\quad\arg(\zeta)=\pm\frac{2}{3}\pi,\\
\mathbf{P}_+(\lambda)&=\mathbf{P}_-(\lambda)\begin{bmatrix}0 & 1\\-1 & 0\end{bmatrix},\quad
\arg(-\zeta)=0,
\end{split}
\label{eq:Airy-jumps}
\end{equation}
where in each case the boundary values of $\mathbf{P}$ are defined with respect to orientation in the direction of increasing real part of $\zeta$, and where all powers of $\zeta$ are principal branches. We may make a similar transformation of the outer parametrix, noting that in each disk the matrix
\begin{equation}
\dot{\mathbf{P}}^\mathrm{out}(\lambda):=\dot{\mathbf{O}}_n^{\mathrm{out}}(\lambda;y,m)\ee^{\ii w\varphi(\lambda)\sigma_3/2}\ee^{-L(\lambda;y,m)\sigma_3}\OurPower{\lambda}{-(m+1)\sigma_3/2}\mathbf{T}(\lambda),
\label{eq:O-out-local-transform}
\end{equation}
is analytic except for $\arg(-\zeta)=0$ where it satisfies exactly the same jump condition as does $\mathbf{P}(\lambda)$.  This fact, along with the fact that the matrix elements of $\dot{\mathbf{P}}^\mathrm{out}(\lambda)$ blow up at $W(\lambda)=0$ as negative one-fourth powers, implies that $\dot{\mathbf{P}}^\mathrm{out}(\lambda)$ can be written in the form
\begin{equation}
\dot{\mathbf{P}}^\mathrm{out}(\lambda)=
\mathbf{H}_n(\lambda;y,w,m)W(\lambda)^{\sigma_3/4}\mathbf{V}=\mathbf{H}_n(\lambda;y,w,m)n^{-\sigma_3/6}\zeta^{\sigma_3/4}\mathbf{V},\quad \mathbf{V}:=\frac{1}{\sqrt{2}}\begin{bmatrix}1 & -\ii\\-\ii & 1\end{bmatrix},
\label{eq:local-H-define}
\end{equation}
where $\mathbf{H}_n(\lambda;y,w,m)$ is a function of $\lambda$ that is analytic in the disk in question and uniformly bounded with respect to $m$ in compact subsets of $\mathbb{C}\setminus(\mathbb{Z}+\tfrac{1}{2})$ and $n\in\mathbb{Z}_{\ge 0}$, provided $|w|\le K$ for some $K>0$ and $y$ satisfy conditions such as enumerated in Lemma~\ref{lem:Outer-Bounded}.  Noting that the boundary of each disk corresponds to $\zeta$ proportional to $n^{2/3}$, we wish to model the matrix function $\mathbf{P}(\lambda)$ by something that satisfies the jump conditions \eqref{eq:Airy-jumps} exactly and that matches with the terms $\zeta^{\sigma_3/4}\mathbf{V}$ coming from the outer parametrix when $\zeta$ is large.  We are thus led to the the following model Riemann-Hilbert problem.
\begin{rhp}
\label{rhp:Airy}
Find a $2\times 2$ matrix function $\zeta\mapsto\mathbf{A}(\zeta)$ with the following properties:
\begin{itemize}
\item[1.] \textbf{Analyticity:}  $\zeta\mapsto\mathbf{A}(\zeta)$ is analytic in the sectors $S_\mathrm{I}$, $S_\mathrm{II}$, $S_\mathrm{III}$, and $S_\mathrm{IV}$ of the complex $\zeta$-plane (see \eqref{eq:Airy-sectors}), and takes continuous boundary values from each sector.
\item[2.] \textbf{Jump conditions:} The boundary values $\mathbf{A}_\pm(\zeta)$ are related on each ray of the jump contour by the following formul\ae\ (cf., \eqref{eq:Airy-jumps}),
\begin{equation}
\begin{split}
\mathbf{A}_+(\zeta)&=\mathbf{A}_-(\zeta)\begin{bmatrix}1 & \ee^{-\zeta^{3/2}}\\0 & 1\end{bmatrix},\quad\arg(\zeta)=0,\\
\mathbf{A}_+(\zeta)&=\mathbf{A}_-(\zeta)\begin{bmatrix}1 & 0\\\ee^{\zeta^{3/2}} & 1\end{bmatrix},\quad\arg(\zeta)=\pm\frac{2}{3}\pi,\\
\mathbf{A}_+(\zeta)&=\mathbf{A}_-(\zeta)\begin{bmatrix}0 & 1\\-1 & 0\end{bmatrix},\quad\arg(-\zeta)=0.
\end{split}
\label{eq:Airy-jumps-A}
\end{equation}
\item[3.]\textbf{Asymptotics:} $\mathbf{A}(\zeta)\mathbf{V}^{-1}\zeta^{-\sigma_3/4}\to\mathbb{I}$ as $\zeta\to\infty$.
\end{itemize}
\end{rhp}
This problem will be solved in all details in Appendix~\ref{app:Airy}, where it will be shown that $\mathbf{A}(\zeta)\mathbf{V}^{-1}\zeta^{-\sigma_3/4}$ has a complete asymptotic expansion in descending integer powers of $\zeta$ as $\zeta\to\infty$, with the dominant terms being given by 
\begin{equation}
\mathbf{A}(\zeta)\mathbf{V}^{-1}\zeta^{-\sigma_3/4}=\mathbb{I}+\begin{bmatrix}\mathcal{O}(\zeta^{-3}) & \mathcal{O}(\zeta^{-1})\\ \mathcal{O}(\zeta^{-2}) & \mathcal{O}(\zeta^{-3})\end{bmatrix},\quad\zeta\to\infty.
\label{eq:Airy-norm-better}
\end{equation}
In each disk we then build a local approximation of $\mathbf{O}_n(\lambda;y,w,m)$ by multiplying on the left by the holomorphic prefactor $\mathbf{H}_n(\lambda;y,w,m)n^{-\sigma_3/6}$ and on the right by the piecewise-analytic substitution relating $\mathbf{O}_n(\lambda;y,w,m)$ and $\mathbf{P}(\lambda)$:
\begin{equation}
\dot{\mathbf{O}}_n^{\mathrm{in}}(\lambda;y,w,m):=\mathbf{H}_n(\lambda;y,w,m)n^{-\sigma_3/6}\mathbf{A}(n^{2/3}W(\lambda))\mathbf{T}(\lambda)^{-1}\OurPower{\lambda}{(m+1)\sigma_3/2}\ee^{L(\lambda;y,m)\sigma_3}\ee^{-\ii w\varphi(\lambda)\sigma_3/2},
\end{equation}
where $W(\lambda)$ is the conformal map associated with the disk via 
Table~\ref{tab:ConfMapR}, $\mathbf{T}(\lambda)$ is the unimodular transformation matrix given in 
Table~\ref{tab:T-in-ER}, and $\mathbf{H}_n(\lambda;y,w,m)$ is associated with the outer parametrix and the disk in question via \eqref{eq:O-out-local-transform}--\eqref{eq:local-H-define}.

\subsection{Error analysis and proof of Theorem~\ref{theorem:eye}}
\label{sec:Error-elliptic}
Let $\Sigma_\mathbf{O}$ denote the jump contour for the matrix function $\mathbf{O}_n(\lambda;y,w,m)$, which consists of the contour $L$ augmented with the lens boundaries $\partial\Lambda^\pm_\squareurblack$ and $\partial\Lambda^\pm_\squarellblack$.
The \emph{global parametrix} denoted $\dot{\mathbf{O}}_n(\lambda;y,w,m)$ is defined as $\dot{\mathbf{O}}_n^{\mathrm{out}}(\lambda;y,w,m)$ when $\lambda$ lies outside of all four disks, but instead as $\dot{\mathbf{O}}_n^{\mathrm{in}}(\lambda;y,w,m)$ within each disk (the precise definition is different in each disk as explained in Section~\ref{sec:Airy}).  We wish to compare the global parametrix with the (unknown) matrix function $\mathbf{O}_n(\lambda;y,w,m)$, so we introduce the error matrix $\mathbf{E}_n(\lambda;y,w,m)$ defined by $\mathbf{E}_n(\lambda;y,w,m):=\mathbf{O}_n(\lambda;y,w,m)\dot{\mathbf{O}}_n(\lambda;y,w,m)^{-1}$.  The maximal domain of analyticity of $\mathbf{E}_n(\lambda;y,w,m)$ is determined from those of the two factors; therefore $\mathbf{E}_n(\lambda;y,w,m)$ is analytic in $\lambda$ except along a jump contour consisting of (i) the part of $\Sigma_\mathbf{O}$ lying outside of all four disks and (ii) the boundaries of all four disks.  That $\mathbf{E}_n(\lambda;y,w,m)$ can be taken to be an analytic function in the interior of each disk follows from the fact that the inner parametrices $\dot{\mathbf{O}}_n^{\mathrm{in}}(\lambda;y,w,m)$ satisfy exactly the same jump conditions locally as does $\mathbf{O}_n(\lambda;y,w,m)$ and an argument based on Morera's theorem.  The jump contour for $\mathbf{E}_n(\lambda;y,w,m)$ corresponding to the Stokes graph shown in Figure~\ref{fig:StokesGraph} is shown in Figure~\ref{fig:InsideError}.
\begin{figure}[h]
\begin{center}
\includegraphics{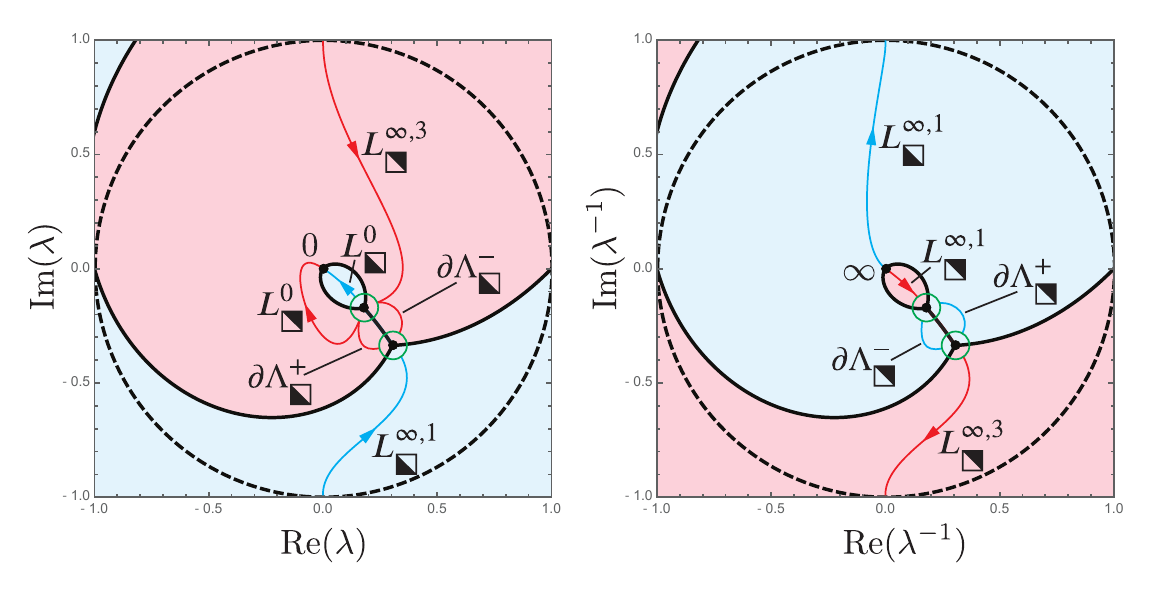}
\end{center}
\caption{The jump contour for $\mathbf{E}_n(\lambda;y,w,m)$ for $y=0.2+0.25\ii\in E_\mathrm{R}$ (cf., Figure~\ref{fig:StokesGraph}).
The jump contour consists of the red and cyan arcs (arcs of $L\setminus(\LInftyBlueTwo\cup\LInftyRedTwo)$ and of the lens boundaries $\partial\Lambda_\squarellblack^\pm$ and $\partial\Lambda_\squareurblack^\pm$ restricted to the exterior of the four disks) and the green circles (the boundaries of the four disks, each of which is taken to have clockwise orientation for the purposes of defining the boundary values of $\mathbf{E}_n(\lambda;y,w,m)$).  Note that the arcs of $\partial\Lambda_\squarellblack^\pm$ in the left-hand panel are oriented toward the upper left, while the arcs of $\partial\Lambda_\squareurblack^\pm$ in the right-hand panel are oriented toward the lower right.}
\label{fig:InsideError}
\end{figure}

Let us assume that, given $m\in\mathbb{C}\setminus (\mathbb{Z}+\tfrac{1}{2})$ and any fixed constants $K>0$ and $\epsilon>0$, $w\in\mathbb{C}$ and $y\in E_\mathrm{R}$ are restricted according to $|w|\le K$ and the inequalities \eqref{eq:Swiss-Cheese}. 
Then by Lemma~\ref{lem:Outer-Bounded}, $\dot{\mathbf{O}}_n(\lambda;y,w,m)$ is uniformly bounded whenever $\lambda$ lies outside all four disks (which both gives $\dot{\mathbf{O}}_n(\lambda;y,w,m)=\dot{\mathbf{O}}_n^{\mathrm{out}}(\lambda;y,w,m)$ and guarantees the condition \eqref{eq:lambda-cheese}).  Since $\mathbf{O}_{n+}(\lambda;y,w,m)=\mathbf{O}_{n-}(\lambda;y,w,m)(\mathbb{I}+\text{exponentially small})$ holds on all arcs of $\Sigma_\mathbf{O}$ lying outside the disks and on which $\dot{\mathbf{O}}_n^{\mathrm{out}}(\lambda;y,w,m)$ is analytic, and since $\mathbf{O}_n(\lambda;y,w,m)$ and $\mathbf{O}_n^{\mathrm{out}}(\lambda;y,w,m)$ satisfy exactly the same jump conditions across all remaining arcs of $\Sigma_\mathbf{O}$ outside of all disks, it follows from Lemma~\ref{lem:Outer-Bounded} that $\mathbf{E}_{n+}(\lambda;y,w,m)=\mathbf{E}_{n-}(\lambda;y,w,m)(\mathbb{I}+\text{exponentially small})$ holds on all jump arcs for $\mathbf{E}_n(\lambda;y,w,m)$ with the exception of the four disk boundaries.  Let the boundary of each disk be oriented in the clockwise direction.  Then a computation shows that on each disk boundary, the matrix $\mathbf{E}_n(\lambda;y,w,m)$ satisfies the jump condition $\mathbf{E}_{n+}(\lambda;y,w,m)=\mathbf{E}_{n-}(\lambda;y,w,m)\mathbf{H}_n(\lambda;y,w,m)n^{-\sigma_3/6}\mathbf{A}(\zeta)\mathbf{V}^{-1}\zeta^{-\sigma_3/4}n^{\sigma_3/6}\mathbf{H}_n(\lambda;y,w,m)^{-1}$ where $\zeta=n^{2/3}W(\lambda)$, $W(\lambda)$ is the relevant conformal mapping from Table~\ref{tab:ConfMapR},
$\mathbf{A}(\zeta)$ is the solution of Riemann-Hilbert Problem~\ref{rhp:Airy}, and $\mathbf{H}_n(\lambda;y,w,m)$ is a bounded function with unit determinant.  Applying the condition \eqref{eq:Airy-norm-better} and using the fact that $W(\lambda)$ is bounded away from zero on the disk boundary then yields the estimate $\mathbf{E}_{n+}(\lambda;y,w,m)=\mathbf{E}_{n-}(\lambda;y,w,m)(\mathbb{I}+\mathcal{O}(n^{-1}))$ as $n\to+\infty$. Since $\mathbf{E}_n(\lambda;y,w,m)\to\mathbb{I}$ as $\lambda\to\infty$, it then follows that this matrix satisfies the conditions of a small-norm Riemann-Hilbert problem.  This implies that (under the conditions of Lemma~\ref{lem:Outer-Bounded}) $\mathbf{E}_n(\lambda;y,w,m)$ exists for sufficiently large $n\in\mathbb{Z}_{>0}$ and satisfies:
\begin{equation}
\mathbf{E}_n(0;y,w,m)=\mathbb{I}+\mathcal{O}(n^{-1})\quad\text{and}\quad
\lim_{\lambda\to\infty}\lambda(\mathbf{E}_n(\lambda;y,w,m)-\mathbb{I})=\mathcal{O}(n^{-1}),\quad n\to +\infty.
\end{equation}
From this result, it follows that $u_n(ny+w;m)=\dot{u}_n(y,w;m)+\mathcal{O}(n^{-1})$ under the conditions \eqref{eq:cheese-carve-outs} which serve to bound the four factors in the fraction in \eqref{eq:udot-elliptic} away from zero.  Combining this result valid for $y\in E_\mathrm{R}$ with the symmetry \eqref{eq:u-n-exact-symmetry} then concludes the proof of Theorem~\ref{theorem:eye}.

\subsection{Detailed properties of the approximation $\dot{u}_n(y,w;m)$.  Proofs of Corollary~\ref{corollary:eye-zeros-and-poles:better} and Theorem~\ref{theorem:density}}
\label{sec:properties-of-udot-elliptic}
Given $n$ and $m$, the zeros of $\dot{u}_n(y,w;m)$ are the roots of the theta function factors in the numerator, namely the pairs $(y,w)$ for which
\begin{equation}
\text{Zeros of $\dot{u}_n(y,w;m)$:\quad}A(\infty;y)\pm A(\kappa(y);y)\mp s_n(y,w,m)\in 2\pi\ii\mathbb{Z}+B(y)\mathbb{Z}.
\label{eq:zeros-of-udot}
\end{equation}
Note that the zeros of $\dot{u}_n(y,w;m)$ corresponding to taking the top sign in \eqref{eq:zeros-of-udot} are also points of the Malgrange divisor \eqref{eq:outer-pole-divisor}, i.e., points at which the solution of Riemann-Hilbert Problem~\ref{rhp:outer-elliptic} fails to exist.  On the other hand, the singularities of $\dot{u}_n(y,w;m)$ are the pairs $(y,w)$ that produce zeros of the denominator in \eqref{eq:udot-elliptic},
\begin{equation}
\text{Singularities of $\dot{u}_n(y,w;m)$:\quad}A(0;y)\pm A(\kappa(y);y)\mp s_n(y,w,m)\in 2\pi\ii\mathbb{Z}+B(y)\mathbb{Z}.
\label{eq:poles-of-udot}
\end{equation}
We hesitate to call these singularities ``poles'' for reasons to be explained in Section~\ref{sec:DE-w} below.

\begin{lem}
Given $n\in\mathbb{Z}_{\ge 0}$, $m\in\mathbb{C}\setminus (\mathbb{Z}+\tfrac{1}{2})$, $y\in E_\mathrm{R}$, and $w\in\mathbb{C}$, at most one of the four conditions in \eqref{eq:zeros-of-udot}--\eqref{eq:poles-of-udot} holds, or equivalently at most one of the four factors $\mathcal{Z}_n^\bullet(y,w;m)$, $\mathcal{Z}_n^\circ(y,w;m)$, $\mathcal{P}_n^\bullet(y,w;m)$, or $\mathcal{P}_n^\circ(y,w;m)$ appearing in the formula \eqref{eq:udot-elliptic} vanishes.
\label{lem:disjoint-lattices}
\end{lem}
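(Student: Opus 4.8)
The plan is to reduce the claim to elementary facts about the Abel map of the genus-$1$ spectral curve, after which all dependence on $n$, $w$, and $m$ disappears. Since $\Theta(z,B(y))$ vanishes exactly when $z\equiv\ii\pi+\tfrac12 B(y)$ modulo $\Lambda:=2\pi\ii\mathbb{Z}+B(y)\mathbb{Z}$, the formulas \eqref{eq:four-factors} show that, abbreviating $a:=A(\kappa(y);y)$ and $p:=s_n(y,w,m)$, the vanishing of $\mathcal{Z}_n^\circ$, $\mathcal{Z}_n^\bullet$, $\mathcal{P}_n^\bullet$, $\mathcal{P}_n^\circ$ is equivalent respectively to the four conditions
\[
(C_1):\ A(\infty;y)+a-p\in\Lambda, \qquad (C_2):\ A(\infty;y)-a+p\in\Lambda,
\]
\[
(C_3):\ A(0;y)+a-p\in\Lambda, \qquad (C_4):\ A(0;y)-a+p\in\Lambda,
\]
which are precisely the four membership statements in \eqref{eq:zeros-of-udot}--\eqref{eq:poles-of-udot} (the two signs in \eqref{eq:zeros-of-udot} giving $C_1,C_2$ and the two signs in \eqref{eq:poles-of-udot} giving $C_3,C_4$).

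Next I would observe that for each of the six unordered pairs among $\{C_1,C_2,C_3,C_4\}$ there is a sum or difference of the two membership relations in which both $a$ and $p$ cancel identically: $C_1+C_2$ yields $2A(\infty;y)\in\Lambda$; $C_3+C_4$ yields $2A(0;y)\in\Lambda$; $C_1-C_3$ and $C_2-C_4$ yield $A(\infty;y)-A(0;y)\in\Lambda$; and $C_1+C_4$ and $C_2+C_3$ yield $A(\infty;y)+A(0;y)\in\Lambda$. Thus it suffices to prove that none of the four quantities $2A(\infty;y)$, $2A(0;y)$, $A(\infty;y)-A(0;y)$, $A(\infty;y)+A(0;y)$ lies in $\Lambda$.

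For this I would argue exactly as in the proof of Lemma~\ref{lemma:one-divisor}. Reducing modulo $\Lambda$, $A(\lambda;y)$ equals $\mathcal{A}(Q^+(\lambda))$, the Abel map of the genus-$1$ curve $\Gamma:\tau^2=P(\lambda;y,C(y))$ evaluated at the lift $Q^+(\lambda)$ of $\lambda$ to the sheet on which \eqref{eq:Abel-def} is defined, and $\mathcal{A}(Q^-(\lambda))=-\mathcal{A}(Q^+(\lambda))$ because the base point $\lambda_0(y)$ of the integral is a branch point. Hence $2A(\infty;y)=\mathcal{A}(Q^+(\infty)-Q^-(\infty))$, $2A(0;y)=\mathcal{A}(Q^+(0)-Q^-(0))$, $A(\infty;y)-A(0;y)=\mathcal{A}(Q^+(\infty)-Q^+(0))$, and $A(\infty;y)+A(0;y)=\mathcal{A}(Q^+(\infty)-Q^-(0))$. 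By the Abel--Jacobi theorem each of these lies in $\Lambda$ if and only if the corresponding divisor, a difference $P-P'$ of two single points of $\Gamma$, is principal; but a nonconstant meromorphic function on a curve of genus $1$ has at least two poles, so $P-P'$ is principal only if $P=P'$. Since $\lambda=0$ and $\lambda=\infty$ are neither branch points of $\Gamma$ (the four branch points being the distinct roots $\lambda_0(y),\lambda_1(y),\lambda_0(y)^{-1},\lambda_1(y)^{-1}$, all finite and nonzero) nor equal to one another on $\mathbb{P}^1$, the four points $Q^\pm(0)$, $Q^\pm(\infty)$ are pairwise distinct, so none of the four divisors above is principal. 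Therefore no two of $C_1,\dots,C_4$ can hold simultaneously, which is the assertion of the lemma.

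I do not foresee a genuine obstacle here; the only points requiring care are the bookkeeping of the half-period shifts $\pm\ii\pi\pm\tfrac12 B(y)$ in passing from the theta arguments in \eqref{eq:four-factors} to the conditions $C_1,\dots,C_4$, and the verification that in each of the six pairs the chosen sum or difference really annihilates both $a$ and $p$ — which is immediate from the sign pattern displayed above.
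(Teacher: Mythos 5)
Your proposal is correct and follows essentially the same route as the paper: reduce the four pairwise coincidences to the statement that none of $2A(\infty;y)$, $2A(0;y)$, $A(\infty;y)\pm A(0;y)$ lies in $\Lambda$, then invoke Abel--Jacobi to rule out a principal divisor of the form $P-P'$ with $P\neq P'$ on the genus-one curve (the paper phrases this via the vanishing residue of a single simple pole, you via the fact that a nonconstant function has at least two poles — the same elementary fact). Your added observation that $0$ and $\infty$ are not branch points, so the four lifted points are pairwise distinct, is a worthwhile explicit check that the paper leaves implicit.
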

\begin{proof}
It suffices to show that none of $2A(\infty;y)$, $2A(0;y)$, $A(\infty;y)-A(0;y)$, nor $A(\infty;y)+A(0;y)$ lies in the lattice $\Lambda:=2\pi\ii\mathbb{Z}+B(y)\mathbb{Z}$.  As in the proof of Lemma~\ref{lemma:one-divisor}, we introduce the Riemann surface $\Gamma=\Gamma(y)$ and its Abel mapping $\mathcal{A}:\Gamma\to\mathrm{Jac}(\Gamma)=\mathbb{C}/\Lambda$, extended to divisors in the usual way.  Given that the base point is a branch point, it is equivalent to show that none of $\mathcal{A}(Q^+(\infty)-Q^-(\infty))$, $\mathcal{A}(Q^+(0)-Q^-(0))$, $\mathcal{A}(Q^+(\infty)-Q^+(0))$, nor $\mathcal{A}(Q^+(\infty)-Q^-(0))$ is mapped to $0\in\mathrm{Jac}(\Gamma)$.  But by the Abel-Jacobi theorem, we just need to rule out the existence of a nonzero meromorphic function on $\Gamma$ having any of the divisors $\mathscr{D}=Q^+(\infty)-Q^-(\infty)$, $\mathscr{D}=Q^+(0)-Q^-(0)$, $\mathscr{D}=Q^+(\infty)-Q^+(0)$, or $\mathscr{D}=Q^+(\infty)-Q^-(0)$.  However, in each case there is only one simple pole whose residue necessarily vanishes, so the desired meromorphic function would in fact be holomorphic with a zero somewhere on $\Gamma$, hence identically zero.
\end{proof}

\begin{proof}[Proof of Corollary~\ref{corollary:eye-zeros-and-poles:better}]
Each of the zero/singularity conditions \eqref{eq:zeros-of-udot}--\eqref{eq:poles-of-udot} defines a regular lattice of points in the $w$-plane, and the minimum distance between points of the four lattices is exactly
\begin{multline}
\delta(y):=\frac{1}{|\nu(y)|}\min\big\{\mathrm{dist}(2A(\infty,y),\Lambda(y)),\mathrm{dist}(2A(0,y),\Lambda(y)),\\
\mathrm{dist}(A(\infty,y)-A(0,y),\Lambda(y)),\mathrm{dist}(A(\infty,y)+A(0,y),\Lambda(y))\big\}>0,\quad y\in E_\mathrm{R},
\end{multline}
where $\Lambda(y):=2\pi\ii\mathbb{Z}+B(y)\mathbb{Z}$ where $\nu(y)$ is the function on $E_\mathrm{R}$ defined by \eqref{eq:nu-defining-equation}.  Note that $|\nu(y)|$ is continuous on $E_\mathrm{R}$ and hence the inequality above follows from Lemma~\ref{lem:disjoint-lattices}.  Moreover, the minimum factor in $\delta(y)$ is continuous on $E_\mathrm{R}$, so it follows from compactness of $K_y\subset E_\mathrm{R}$ in the statement of Corollary~\ref{corollary:eye-zeros-and-poles:better} that
\begin{equation}
\delta:=\inf_{n\ge N}\delta(y_n)\ge\inf_{y\in K_y}\delta(y) = \min_{y\in K_y}\delta(y)>0.
\end{equation}
By definition of the sequence $\{y_n\}_{n=N}^\infty$, taking $y=y_n$ makes one of the four factors in the fraction on the right-hand side of \eqref{eq:udot-elliptic} vanish at $w=0$ for all $n=N,N+1,\dots$, while the roots of the other three factors are bounded away from $w=0$ in the $w$-plane by the distance $\delta>0$.  Now choose $\epsilon=\tfrac{1}{3}\delta$ as the parameter in Theorem~\ref{theorem:eye}; the circle $|w|=\epsilon$ then lies on the boundary of the closed domain characterized by the inequalities \eqref{eq:cheese-carve-outs}.  Letting $n\to+\infty$, Theorem~\ref{theorem:eye} implies that the winding numbers (indices) about the circle $|w|=\epsilon$ of the rational function $f(w):=u_n(ny_n+w;m)$ and the meromorphic function $g(w):=\dot{u}_n(y_n,w;m)$ necessarily agree for sufficiently large $n$.  But since $\epsilon<\delta$ the index of $g(w)=\dot{u}_n(y_n,w;m)$ is $1$ ($-1$) if the sequence $\{y_n\}_{n=N}^\infty$ corresponds to roots of a factor of the numerator (denominator) in \eqref{eq:udot-elliptic}, and this common value of the index is precisely the net number of zeros less poles of the rational function $f(w)=u_n(ny_n+w;m)$ within the circle $|w|=\epsilon$.
This establishes the third statement of Theorem~\ref{theorem:eye} and completes the proof. 
\end{proof}

Since $\mathrm{Re}(B(y))<0$ holds for $y\in E_\mathrm{R}$ and therefore $B(y)$ and $2\pi\ii$ are necessarily linearly independent over the real numbers, we can resolve the left-hand sides of \eqref{eq:zeros-of-udot}--\eqref{eq:poles-of-udot} into
real multiples of the lattice periods:
\begin{equation}
\begin{split}
A(\infty;y)\pm A(\kappa(y);y)\mp s_n(y,w,m)&= 2\pi\ii\alpha_n^{0,\pm}(y,w,m) + B(y)\beta_n^{0,\pm}(y,w,m),\quad\text{where}\\
\alpha_n^{0,\pm}(y,w,m)&:=\frac{\mathrm{Im}(B(y)^*(A(\infty;y)\pm A(\kappa(y);y)\mp s_n(y,w,m)))}{2\pi\mathrm{Re}(B(y))}\\
\beta_n^{0,\pm}(y,w,m)&:=\frac{\mathrm{Re}(A(\infty;y)\pm A(\kappa(y);y)\mp s_n(y,w,m))}{\mathrm{Re}(B(y))},
\end{split}
\label{eq:Zero-Quantum}
\end{equation}
and
\begin{equation}
\begin{split}
A(0;y)\pm A(\kappa(y);y)\mp s_n(y,w,m)&= 2\pi\ii\alpha_n^{\infty,\pm}(y,w,m) + B(y)\beta_n^{\infty,\pm}(y,w,m),\quad\text{where}\\
\alpha_n^{\infty,\pm}(y,w,m)&:=\frac{\mathrm{Im}(B(y)^*(A(0;y)\pm A(\kappa(y);y)\mp s_n(y,w,m)))}{2\pi\mathrm{Re}(B(y))}\\
\beta_n^{\infty,\pm}(y,w,m)&:=\frac{\mathrm{Re}(A(0;y)\pm A(\kappa(y);y)\mp s_n(y,w,m))}{\mathrm{Re}(B(y))}.
\end{split}
\label{eq:Pole-Quantum}
\end{equation}
Thus, the zeros of $\dot{u}_n(y,w;m)$ satisfy the \emph{quantization conditions}
\begin{equation}
\text{Zeros of $\dot{u}_n(y,w;m)$:\quad}
\alpha_n^{0,\pm}(y,w,m)\in\mathbb{Z}\quad\text{and}\quad\beta_n^{0,\pm}(y,w,m)\in\mathbb{Z},
\label{eq:zeros-quantization}
\end{equation}
and similarly 
\begin{equation}
\text{Singularities of $\dot{u}_n(y,w;m)$:\quad}
\alpha_n^{\infty,\pm}(y,w,m)\in\mathbb{Z}\quad\text{and}\quad\beta_n^{\infty,\pm}(y,w,m)\in\mathbb{Z}.
\label{eq:poles-quantization}
\end{equation}
One way to parametrize points within the domain $E_\mathrm{R}$ is by choosing to set $w=0$ and thus $x=ny$ where $y$ ranges over $E_\mathrm{R}$.
Using this parametrization, we can give the following.
\begin{proof}[Proof of Theorem~\ref{theorem:density}]
Given $n$ and $m$, for each choice of sign $\pm$, the conditions \eqref{eq:zeros-quantization} (resp., \eqref{eq:poles-quantization}) for $w=0$ define a network of two families of curves whose common intersections locate the zeros (resp., singularities) of $\dot{u}_n(y,0;m)$ on $E_\mathrm{L}\cup E_\mathrm{R}$.  Given $m\in\mathbb{C}\setminus (\mathbb{Z}+\tfrac{1}{2})$ it is particularly interesting to consider how the curves depend on $n\in\mathbb{Z}$ large and positive.  For this, we observe that the only dependence on $n$ enters through $s_n(y,0,m)$; substituting from \eqref{eq:eta-again} and \eqref{eq:sn} gives, for $y\in E_\mathrm{R}$,
\begin{equation}
\alpha_n^{0,\pm}(y,0,m)=\alpha_0^{0,\pm}(y,0,m)\mp \frac{nK_2(y)}{2\pi},\quad\alpha_0^{0,\pm}(y,0,m)=
\frac{\mathrm{Im}(B(y)^*(A(\infty;y)\pm A(\kappa(y);y)\pm\delta(y,m)))}{2\pi\mathrm{Re}(B(y))},
\label{eq:alpha-zero-n}
\end{equation}
\begin{equation}
\beta_n^{0,\pm}(y,0,m)=\beta_0^{0,\pm}(y,0,m)\mp\frac{nK_1(y)}{2\pi},\quad\beta_0^{0,\pm}(y,0,m)=
\frac{\mathrm{Re}(A(\infty;y)\pm A(\kappa(y);y)\pm\delta(y,m))}{\mathrm{Re}(B(y))},
\end{equation}
\begin{equation}
\alpha_n^{\infty,\pm}(y,0,m)=\alpha_0^{\infty,\pm}(y,0,m)\mp \frac{nK_2(y)}{2\pi},\quad\alpha_0^{\infty,\pm}(y,0,m)=
\frac{\mathrm{Im}(B(y)^*(A(0;y)\pm A(\kappa(y);y)\pm\delta(y,m)))}{2\pi\mathrm{Re}(B(y))},
\end{equation}
\begin{equation}
\beta_n^{\infty,\pm}(y,0,m)=\beta_0^{\infty,\pm}(y,0,m)\mp\frac{nK_1(y)}{2\pi},\quad\beta_0^{\infty,\pm}(y,0,m)=
\frac{\mathrm{Re}(A(0;y)\pm A(\kappa(y);y)\pm\delta(y,m))}{\mathrm{Re}(B(y))}.
\label{eq:beta-infinity-n}
\end{equation}

The simplified formul\ae\ \eqref{eq:alpha-zero-n}--\eqref{eq:beta-infinity-n} show that when $n$ is large, the quantization conditions \eqref{eq:zeros-quantization}--\eqref{eq:poles-quantization} determine a locally (with respect to $y$) uniform tiling of the $y$-plane by parallelograms each of which has area (measured in the $y$-coordinate) $A_\lozenge(y)(1+o(1))$ as $n\to+\infty$, where
\begin{equation}
A_\lozenge(y)=\frac{4\pi^2}{n^2|\nabla K_1(y)\times\nabla K_2(y)|},\quad y\in E_\mathrm{R},
\end{equation}
see Figure~\ref{fig:parallelogram}. By working in the $w$-plane rather than the $y$-plane, one can see that the area $A_\lozenge(y)$ is also proportional by a factor of $n^2$ to the Jacobian determinant \eqref{eq:Jacobian}.
\begin{figure}[h]
\begin{center}
\hspace{1.5cm}\includegraphics{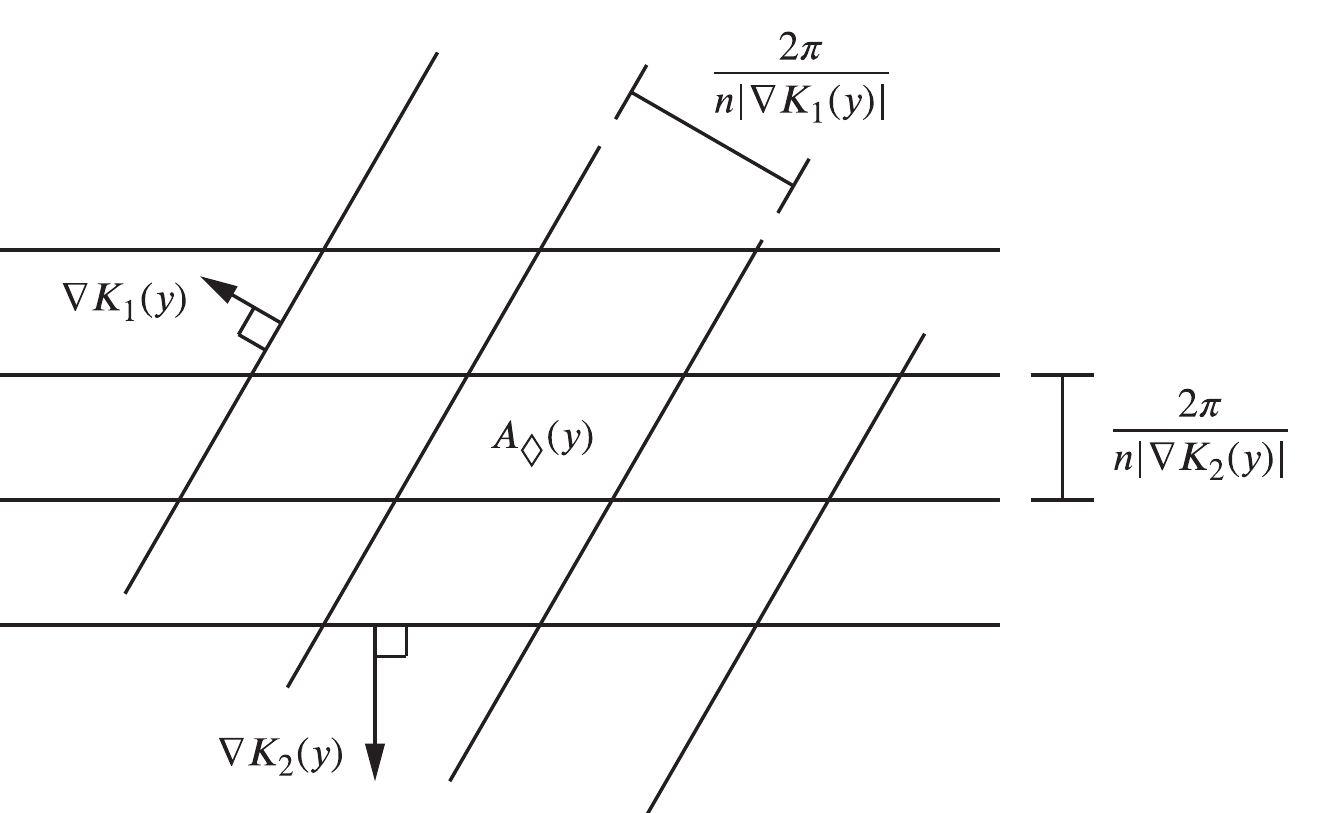}
\end{center}
\caption{The local tiling of the $y$-plane by parallelograms of area $A_\lozenge(y)$.}
\label{fig:parallelogram}
\end{figure}
For each choice of sign $\pm$, one associates via \eqref{eq:zeros-quantization} (resp., \eqref{eq:poles-quantization}) exactly one zero (resp., pole) of $\dot{u}_n(y,0;m)$ with each parallelogram.  Hence the densities (per unit $y$-area) of zeros and poles are exactly the same and are given by $n^2\rho(y)(1+o(1))$ as $n\to+\infty$, where
\begin{equation}
\rho(y):=\frac{2}{n^2A_\lozenge(y)}=\frac{1}{2\pi^2}|\nabla K_1(y)\times\nabla K_2(y)|,\quad y\in E_\mathrm{R}.
\label{eq:zero-pole-density}
\end{equation}
Note that since $K_1(y)$ and $K_2(y)$ are functions independent of $m\in\mathbb{C}\setminus(\mathbb{Z}+\tfrac{1}{2})$, the same is true of $\rho(y)$.  
The density $\rho(y)$ is a smooth nonnegative function on $E_\mathrm{R}$, but it vanishes on $\partial E_\mathrm{R}\setminus\{0\}$ and blows up as $y\to 0$. 
To prove the former, we may use the fact that $\rho(y)$ is inversely-proportional to the Jacobian determinant \eqref{eq:Jacobian}, which blows up as $y\to\partial E_\mathrm{R}\setminus\{0\}$ as mentioned in Section~\ref{sec:Boutroux}.  To prove the blowup of $\rho(y)$ at the origin, we first express the gradients in polar coordinates $y=r\ee^{\ii\theta}$, and thus
\begin{equation}
|\nabla K_1(y)\times\nabla K_2(y)| = \frac{1}{r}\left|\frac{\partial K_1}{\partial r}\frac{\partial K_2}{\partial\theta} -\frac{\partial K_1}{\partial \theta}\frac{\partial K_2}{\partial r}\right|,\quad y=r\ee^{\ii\theta}.
\label{eq:grad-cross-product-polar}
\end{equation}
Next, we compute the partial derivatives near $r=0$.  For this purpose, we recall the scalings introduced in Section~\ref{sec:Boutroux-y-small} to construct the solution of the Boutroux equations for small $r=|y|$.  Thus, $C=y\tilde{C}(r,\theta)$, where for $|\theta|<\pi/2$, 
\begin{equation}
\tilde{C}^0(\theta):=\lim_{r\downarrow 0}\tilde{C}(r,\theta),\quad
\tilde{C}^0_{r}(\theta):=\lim_{r\downarrow 0}\tilde{C}_r(r,\theta),\quad\text{and}\quad
\tilde{C}^0_{\theta}(\theta):=\lim_{r\downarrow 0}\tilde{C}_\theta(r,\theta)=\tilde{C}^{0\prime}(\theta)
\end{equation}
all exist (the subscripts $r$ and $\theta$ denote partial derivatives).  For each such $\theta$, in the limit $r\downarrow 0$, $\lambda_0\to\infty$ while $\lambda_1$ converges to a nonzero limit.  Since the integrands in the definitions of $K_j(y)$, $j=1,2$, are singular at $\lambda=0,\infty$, we first use the Cauchy theorem to rewrite $K_j(y)$ as contour integrals over contours that we may take to be independent of $r$ as $r\downarrow 0$.  Since $\lambda_0\to\infty$ and $\lambda_0^{-1}\to 0$ as $r\downarrow 0$, it is necessary to account for some residues at $\lambda=0,\infty$, but from \eqref{eq:gprimeminushalfVprime-asymp} and \eqref{eq:gprime-E} it follows that these contributions are independent of $y$, so they will not play any role upon taking the required derivatives.  Thus,
\begin{equation}
K_1(y)
= -\pi - \ii\int_{C_1}\frac{R(\lambda;y)}{\lambda^2}\,\dd\lambda,
\end{equation}
where the original contour of integration (a counterclockwise-oriented path just enclosing the arc $\LInftyRedTwo$ with endpoints $\lambda_0\to\infty$ and $\lambda_1$) has been replaced with  $C_1$, a counterclockwise-oriented closed path enclosing the arc $\LInftyBlueTwo$ with endpoints $\lambda_0^{-1}\to 0$ and $\lambda_1^{-1}$ as well as the limit point $\lambda=0$.  Likewise,
\begin{equation}
K_2(y)
= \pi -\ii\int_{C_2}\frac{R(\lambda;y)}{\lambda^2}\,\dd\lambda,
\end{equation}
where $C_2$ is a contour consisting of two arcs joining $\lambda_1$ with $\lambda_1^{-1}$ such that $\lambda_0^{-1}$ and $\lambda=0$ are contained in the region between the two arcs, while $\lambda_0$ is excluded.  With the help of a small additional contour deformation near $\lambda_1$ and $\lambda_1^{-1}$ in the case of $K_2$, both new contours may be taken to be locally independent of $y$ and hence derivatives may be computed by differentiation under the integral sign.  Thus
\begin{equation}
\frac{\partial K_j}{\partial r,\theta}(r\ee^{\ii\theta})=-\frac{\ii}{2}\int_{C_j}\frac{\partial P}{\partial r,\theta}(\lambda;r\ee^{\ii\theta},r\ee^{\ii\theta}\tilde{C}(r,\theta))\frac{\dd\lambda}{\lambda^2R(\lambda;r\ee^{\ii\theta})},\quad j=1,2.
\end{equation}
With the scaling of Section~\ref{sec:Boutroux-y-small}, $P(\lambda;r\ee^{\ii\theta},r\ee^{\ii\theta}\tilde{C}(r,\theta))=-\tfrac{1}{4}r^2\ee^{2\ii\theta}\lambda^4 +\tfrac{1}{2}\ii r\ee^{\ii\theta}\lambda^3 + r\ee^{\ii\theta}\tilde{C}(r,\theta)\lambda^2 +\tfrac{1}{2}\ii r\ee^{\ii\theta}\lambda-\tfrac{1}{4}r^2\ee^{2\ii\theta}$, and therefore
\begin{equation}
\frac{\partial K_j}{\partial r}(r\ee^{\ii\theta})=-\frac{\ii}{4}\int_{C_j}\frac{-r\ee^{2\ii\theta}\lambda^2 +\ii\ee^{\ii\theta}\lambda + 2\ee^{\ii\theta}\tilde{C}(r,\theta) + 2r\ee^{\ii\theta}\tilde{C}_r(r,\theta) +\ii\ee^{\ii\theta}\lambda^{-1}-r\ee^{2\ii\theta}\lambda^{-2}}{R(\lambda;r\ee^{\ii\theta})}\,\dd\lambda,\quad j=1,2,
\label{eq:Kj-r}
\end{equation}
and
\begin{equation}
\frac{\partial K_j}{\partial \theta}(r\ee^{\ii\theta})=-\frac{\ii}{4}\int_{C_j}\frac{-\ii r^2\ee^{2\ii\theta}\lambda^2 -r\ee^{\ii\theta}\lambda + 2\ii r\ee^{\ii\theta}\tilde{C}(r,\theta)+2r\ee^{\ii\theta}\tilde{C}_\theta(r,\theta) - r\ee^{\ii\theta}\lambda^{-1}-\ii r^2\ee^{2\ii\theta}\lambda^{-2}}{R(\lambda;r\ee^{\ii\theta})}\,\dd\lambda,\quad j=1,2.
\label{eq:Kj-theta}
\end{equation}
Now, given $\theta$, the following limit exists uniformly on $C_j$ (again after suitable small deformation near $\lambda_1$ and $\lambda_1^{-1}$ in the case of $C_2$):
\begin{equation}
\lim_{r\downarrow 0} r^{-1/2}R(\lambda;r\ee^{\ii\theta})=\tilde{R}(\lambda;\theta),\quad\tilde{R}(\lambda;\theta)^2 = \frac{1}{2}\ii\ee^{\ii\theta}\lambda^3+\ee^{\ii\theta}\tilde{C}^0(\theta)\lambda^2 + \frac{1}{2}\ii\ee^{\ii\theta}\lambda,
\end{equation}
where $\tilde{R}(\lambda;\theta)$ is analytic except on the limiting Stokes graph arcs $\Sigma^\mathrm{out}(y)$ and is well-defined by choosing the branch globally based on the above limit at any generic point $\lambda$.  Similar uniform limits for the numerators in the integrands of \eqref{eq:Kj-r}--\eqref{eq:Kj-theta} then show that
\begin{equation}
\lim_{r\downarrow 0} r^{1/2}\frac{\partial K_j}{\partial r}(r\ee^{\ii\theta})=\frac{\ee^{\ii\theta}}{4}\int_{C_j}\frac{\lambda+\lambda^{-1}}{\tilde{R}(\lambda;\theta)}\,\dd\lambda-\frac{\ii\ee^{\ii\theta}\tilde{C}^0(\theta)}{2}\int_{C_j}
\frac{\dd\lambda}{\tilde{R}(\lambda;\theta)},\quad j=1,2
\end{equation}
and
\begin{equation}
\lim_{r\downarrow 0}r^{-1/2}\frac{\partial K_j}{\partial\theta}(r\ee^{\ii\theta})=\frac{\ii\ee^{\ii\theta}}{4}\int_{C_j}
\frac{\lambda+\lambda^{-1}}{\tilde{R}(\lambda;\theta)}\,\dd\lambda -\frac{\ii\ee^{\ii\theta}}{2}(\ii\tilde{C}^0(\theta)+\tilde{C}^0_\theta(\theta))\int_{C_j}\frac{\dd\lambda}{\tilde{R}(\lambda;\theta)},\quad j=1,2.
\end{equation}
Therefore, the following limit exists:
\begin{equation}
\frac{1}{2\pi^2}\lim_{r\downarrow 0}\left[\frac{\partial K_1}{\partial r}\frac{\partial K_2}{\partial\theta}-\frac{\partial K_1}{\partial\theta}\frac{\partial K_2}{\partial r}\right]=-\frac{\ii\ee^{2\ii\theta}\tilde{C}^0_\theta(\theta)}{16\pi^2}
\det\begin{bmatrix}\displaystyle\int_{C_1}\frac{\lambda + \lambda^{-1}}{\tilde{R}(\lambda;\theta)}\,\dd\lambda & \displaystyle \int_{C_1}\frac{\dd\lambda}{\tilde{R}(\lambda;\theta)}\\
\displaystyle \int_{C_2}\frac{\lambda + \lambda^{-1}}{\tilde{R}(\lambda;\theta)}\,\dd\lambda & \displaystyle \int_{C_2}\frac{\dd\lambda}{\tilde{R}(\lambda;\theta)}
\end{bmatrix}.
\end{equation}
The absolute value of this limit is the quantity $h(\theta)$ referred to in the statement of Theorem~\ref{theorem:density}.
\end{proof}
%


%

\section{The Special Case of $m\in\mathbb{Z}+\tfrac{1}{2}$}
\label{sec:m-half}
As in Section~\ref{sec:outside}, in this section we study Riemann-Hilbert Problem~\ref{rhp:renormalized} under the substitution $x=ny$, i.e., we set $w=0$.
\subsection{Asymptotic behavior of $u_n(ny;m)$ for $y$ away from the distinguished eyebrow.  Proof of Theorem~\ref{theorem:closed-eye-equilibrium}}
\label{sec:Half-Integer-Away-From-Edge}
If $m\in\mathbb{Z}+\tfrac{1}{2}$, then the jump matrix for $\mathbf{Y}(\lambda)$ simplifies dramatically.  Indeed, if $m=\pm(\tfrac{1}{2}+k)$, $k=0,1,2,3,\dots$, then $\Gamma(\tfrac{1}{2}\mp m)^{-1}=0$.  Therefore, if $m$ is a positive half-integer, in place of \eqref{eq:Yjump-1}--\eqref{eq:Yjump-2} we have simply $\mathbf{Y}_+(\lambda)=\mathbf{Y}_-(\lambda)$ for $\lambda\in \LZeroRed\cup\LInftyRed$, while if $m$ is a negative half-integer, in place of \eqref{eq:Yjump-3}--\eqref{eq:Yjump-4} we have $\mathbf{Y}_+(\lambda)=\mathbf{Y}_-(\lambda)$ for $\lambda\in\LZeroBlue\cup\LInftyBlue$.  Now, we observe that the arc $\pEInftyRed\subset \partial E$ in the right half-plane is the locus of values of $y$ for which the inequality $\mathrm{Re}(V(\lambda;y))>0$ necessarily breaks down at some point of the contour $\LInftyRed$, and that this inequality is \emph{only needed to control the generically nonzero off-diagonal element of the corresponding jump matrix for $\mathbf{Y}$}.
Since this off-diagonal element vanishes for $m=\tfrac{1}{2}+k$, $k=0,1,2,3,\dots$, we see that in this case \emph{the open arc $\pEInftyRed\setminus\{\pm\tfrac{1}{2}\ii\}$ is no obstruction to the continuation of the asymptotic expansions \eqref{eq:u-outside} and \eqref{eq:u-prime-outside} into the domain $E$.}  Likewise, for $m=-(\tfrac{1}{2}+k)$, $k=0,1,2,3,\dots$, the open arc $\pEZeroBlue\setminus\{\pm\tfrac{1}{2}\ii\}$ is no obstruction to the continuation \eqref{eq:u-outside} and \eqref{eq:u-prime-outside} into $E$.

The function $\lNaught(y)$ may be continued through its branch cut $I$ connecting $\pm\tfrac{1}{2}\ii$ from the right.  This continuation can be written in terms of the principal branch of the square root by the formula
\begin{equation}
\lNaught(y)=\frac{\ii}{2y}-\frac{\ii}{y}\left(y-\tfrac{1}{2}\ii\right)^{1/2}\left(y+\tfrac{1}{2}\ii\right)^{1/2},\quad \text{$\mathrm{Re}(y)>0$ or $|\mathrm{Im}(y)|>\tfrac{1}{2}$}.
\label{eq:lambda-0-continue}
\end{equation}
Here the branch cuts of the two square-root factors emanate to the left from the corresponding roots $\pm\tfrac{1}{2}\ii$, so the right-hand side is analytic in the interior domain $E$ with the possible exception of a simple pole at $y=0$.  This particular continuation into $E$ through the open arc $\pEInftyRed\setminus\{\pm\tfrac{1}{2}\ii\}$ is precisely the function $\lNaughtZeroBlue(y)$, a function that has the arc $\pEZeroBlue\subset \partial E$ as its branch cut.  Since $(\pm\tfrac{1}{2}\ii)^{1/2}=2^{-1/2}\ee^{\pm\ii\pi/4}$, it is easy to check that $\mathop{\mathrm{Res}}_{y=0}\lNaughtZeroBlue(y)=0$, so $\lNaughtZeroBlue(y)$ is analytic throughout the interior of $E$.  We conclude that if $m=\tfrac{1}{2}+k$, $k=0,1,2,3,\dots$, the asymptotic formula \eqref{eq:u-outside} in which $\lNaught(y)$ is simply replaced by $\lNaughtZeroBlue(y)$,  is valid both for $y\in\mathbb{C}\setminus E$ as well as throughout the maximal domain of analyticity for $\lNaughtZeroBlue(y)$, namely $y\in\mathbb{C}\setminus\pEZeroBlue$.  Likewise, the continuation of $\lNaught(y)$ through its branch cut from the left can be written as
\begin{equation}
p(y)=\frac{\ii}{2y}+\frac{\ii}{y}\left(-\left(y-\tfrac{1}{2}\ii\right)\right)^{1/2}\left(-\left(y+\tfrac{1}{2}\ii\right)\right)^{1/2},\quad\text{$\mathrm{Re}(y)<0$ or $|\mathrm{Im}(y)|>\tfrac{1}{2}$,}
\end{equation}
which is precisely the branch $\lNaughtInftyRed(y)$ defined as a meromorphic function on the maximal domain $y\in\mathbb{C}\setminus\pEInftyRed$, the only singularity of which is a simple pole at the origin $y=0$.  For $y$ in the interior of the eye $E$, both $\lNaughtZeroBlue(y)$ and $\lNaughtInftyRed(y)$ are well-defined, and we have the identity $\lNaughtInftyRed(y)=\lNaughtZeroBlue(y)^{-1}$ (and of course for $\lambda\in\mathbb{C}\setminus E$ the identity $\lNaughtInftyRed(y)=\lNaughtZeroBlue(y)=\lNaught(y)$ holds).  Due to the pole at the origin, if $m=-(\tfrac{1}{2}+k)$, $k=0,1,2,3,\dots$, the formula \eqref{eq:u-outside} should be replaced with $u_n(ny;m)^{-1}=(\ii\lNaughtInftyRed(y))^{-1}+\mathcal{O}(n^{-1})$ as $n\to+\infty$ which is valid uniformly for $y$ in compact subsets of $\mathbb{C}\setminus\pEInftyRed$.  
This completes the proof of Theorem~\ref{theorem:closed-eye-equilibrium}. \qed

\subsection{Asymptotic behavior of $u_n(ny;m)$ for $y$ near the distinguished eyebrow.  Proof of Theorem~\ref{thm:edge-formulae}}
While to describe the asymptotic behavior of $u_n(ny;m)$ for $m=-(\tfrac{1}{2}+k)$ with $k\in\mathbb{Z}_{\ge 0}$ and $y$ bounded away from the eyebrow $\pEInftyRed$ it was useful to introduce the analytic continuation 
$\lNaughtInftyRed(y)$ of $\lNaught(y)$ from a neighborhood of $y=\infty$ to the maximal domain $\mathbb{C}\setminus\pEInftyRed$, for $y$ near $\pEInftyRed$ it is better to denote the two critical points of $V(\lambda;y)$ as $\lNaught(y)$ and $\lNaught(y)^{-1}$, both of which are analytic functions on all proper sub-arcs of the eyebrow $\pEInftyRed$.  We consider the matrix $\mathbf{M}_n(\lambda;y,m)$ with the simplest choice of $g$-function, namely $g(\lambda)\equiv 0$, which will treat the two critical points more symmetrically, as turns out to be appropriate for $y$ near the eyebrow $\pEInftyRed$.  It is then convenient to reformulate the Riemann-Hilbert conditions on $\mathbf{M}_n(\lambda;y,m)$ in the special case that $m=-(\tfrac{1}{2}+k)$ for $k\in\mathbb{Z}_{\ge 0}$.  Since the jump on $\LInftyBlue\cup\LZeroBlue$ reduces to the identity in this case (see \eqref{eq:Yjump-3}--\eqref{eq:Yjump-4}), the jump contour for $\mathbf{M}_n^{(k)}(\lambda;y):=\mathbf{M}_n(\lambda;y,-(\tfrac{1}{2}+k))=\mathbf{Y}_n(\lambda;ny,-(\tfrac{1}{2}+k))$ is simply $L=\LInftyRed\cup\LZeroRed$, and along the latter contour the factor $\OurPower{\lambda}{-(m+1)}=\OurPower{\lambda}{k-1/2}$ appearing in the jump conditions \eqref{eq:Yjump-1}--\eqref{eq:Yjump-2} changes sign at the junction point between $\LInftyRed$ and $\LZeroRed$.  Therefore, if we define a branch $\lambda_\infty^{k-1/2}$ analytic along $L$ and such that $\lambda_\infty^{k-1/2}=\OurPower{\lambda}{k-1/2}$ holds when $\lambda\in \LInftyRed$, the Riemann-Hilbert problem for $\mathbf{M}_n^{(k)}(\lambda;y)$ can be written as follows.
\begin{rhp}[Eyebrow problem for $m=-(\tfrac{1}{2}+k)$]
Given parameters $n,k\in\mathbb{Z}_{\ge 0}$ as well as $y$ in a tubular neighborhood $T$ of $\pEInftyRed$ as defined in \eqref{eq:T-define}, seek a $2\times 2$ matrix function $\lambda\mapsto\mathbf{M}_n^{(k)}(\lambda;y)$ with the following properties:
\begin{itemize}
\item[1.]\textbf{Analyticity:} $\lambda\mapsto\mathbf{M}_n^{(k)}(\lambda;y)$ is analytic in the domain $\lambda\in\mathbb{C}\setminus L$, $L:=\LInftyRed\cup\LZeroRed$.  It takes continuous boundary values on $L\setminus\{0\}$ from each maximal domain of analyticity.  
\item[2.]\textbf{Jump conditions:} The boundary values $\mathbf{M}^{(k)}_{n\pm}(\lambda;y)$ are related by
\begin{equation}
\mathbf{M}^{(k)}_{n+}(\lambda;y)=\mathbf{M}^{(k)}_{n-}(\lambda;y)\begin{bmatrix}1 & \displaystyle\frac{\sqrt{2\pi}}{k!}\lambda_\infty^{k-1/2}\ee^{-nV(\lambda;y)}\\0 & 1\end{bmatrix},\quad\lambda\in L.
\label{eq:M-edge-jump}
\end{equation}
\item[3.]\textbf{Asymptotics:}  $\mathbf{M}_n^{(k)}(\lambda;y)\to\mathbb{I}$ as $\lambda\to\infty$ and $\mathbf{M}_n^{(k)}(\lambda;y)\lambda^{k\sigma_3}$ has a well-defined limit as $\lambda\to 0$.
\end{itemize}
\label{rhp:edge}
\end{rhp}

\subsubsection{Motivation:  the special case of $k=0$}
When $k=0$, Riemann-Hilbert Problem~\ref{rhp:edge} reduces from a multiplicative matrix problem to an additive scalar problem for the $12$-entry, and the explicit solution is obtained from the Plemelj formula:
\begin{equation}
\mathbf{M}_n^{(0)}(\lambda;y)=\begin{bmatrix}1 & \displaystyle\frac{1}{\ii \sqrt{2\pi}}\int_L\frac{\mu_\infty^{-1/2}\ee^{-nV(\mu;y)}}{\mu-\lambda}\,\dd\mu\\
0 & 1\end{bmatrix}.
\end{equation}
Since $\mathbf{Y}_n(\lambda;ny,-\tfrac{1}{2})=\mathbf{M}_n^{(0)}(\lambda;y)$, applying \eqref{eq:u-n-from-Y-formula} gives the exact result
\begin{equation}
u_n(ny;-\tfrac{1}{2})=\ii\frac{\displaystyle\int_L\lambda_\infty^{-1/2}\ee^{-nV(\lambda;y)}\,\dd\lambda}{\displaystyle\int_L\lambda_\infty^{-3/2}\ee^{-nV(\lambda;y)}\,\dd\lambda}.
\label{eq:u-edge-k-zero}
\end{equation}
The large-$n$ asymptotic behavior of the rational solution $u_n(ny;-\tfrac{1}{2})$ is therefore reduced to the classical saddle-point expansion of two related contour integrals.  When $y$ is close to the eyebrow $\pEInftyRed$, $\mathrm{Re}(V(\lNaught(y);y))\approx 0$, so the landscape of $\mathrm{Re}(-V(\lambda;y))$ in the $\lambda$-plane is similar to that shown in the central panels of Figure~\ref{fig:y-real}, except in small neighborhoods of the two critical points $\lambda=\lNaught(y),\lNaught(y)^{-1}$.  In particular, for $\lambda$ bounded away from these two points, the contour $L=\LInftyRed\cup\LZeroRed$ lies entirely in the red-shaded domain and hence $\mathrm{Re}(-V(\lambda;y))<0$ holds.  This makes the corresponding contributions to the integrands in the numerator and denominator of \eqref{eq:u-edge-k-zero}  exponentially small by comparison with the contributions from neighborhoods of the two saddle points.  In a sense, this classical saddle point analysis can be embedded in a more general scheme that applies to Riemann-Hilbert Problem~\ref{rhp:edge} also for $k=1,2,3,\dots$.

\begin{rem} In our previous paper on the subject of rational solutions of the Painlev\'e-III equation \cite{BothnerMS18} we observed that when $m\in\mathbb{Z}+\tfrac{1}{2}$ it is possible to reduce Riemann-Hilbert Problem~\ref{rhp:renormalized} to a linear algebraic Hankel system of dimension independent of $n$ in which the coefficients are contour integrals amenable to the classical method of steepest descent when $n$ is large such as those just considered above.  We originally thought that these Hankel systems would provide the most efficient approach to the detailed analysis of $u_n(ny;m)$ for half-integral $m$, but it turns out that an approach based on more modern techniques of steepest descent for Riemann-Hilbert problems is more effective.  We develop this approach in the following paragraphs.  
\end{rem}

\subsubsection{Modified outer parametrix}
The same argument that focuses the contour integrals in \eqref{eq:u-edge-k-zero} on the critical points serves more generally to make the jump matrix in \eqref{eq:M-edge-jump} an exponentially small perturbation of the identity matrix except in neighborhoods of the critical points, which in turn suggests approximating $\mathbf{M}_n^{(k)}(\lambda;y)$ with a single-valued analytic function built to satisfy the required asymptotic conditions as $\lambda\to\infty$ and $\lambda\to 0$.  Thus,
given $k\in\mathbb{Z}_{\ge 0}$ and nonnegative integers $\alpha_1$ and $\alpha_2$ such that
\begin{equation}
\alpha_1+\alpha_2 = -\left(m+\frac{1}{2}\right)=k,
\label{eq:alpha-partition-general-red}
\end{equation}
we define an outer parametrix for $\mathbf{M}_n^{(k)}(\lambda;y)$ by the formula
\begin{equation}
\dot{\mathbf{M}}^{\mathrm{out},(\alpha_1,\alpha_2)}(\lambda;y)=\lambda^{-k\sigma_3}(\lambda-\lNaught(y)^{-1})^{\alpha_1\sigma_3}(\lambda-\lNaught(y))^{\alpha_2\sigma_3}.
\label{eq:edge-outer}
\end{equation}
This function
is analytic for $\lambda\in\mathbb{C}\setminus\{0,\lNaught(y),\lNaught(y)^{-1}\}$, and satisfies the required asymptotic conditions in the sense that $\dot{\mathbf{M}}^{\mathrm{out},(\alpha_1,\alpha_2)}(\lambda;y)\to\mathbb{I}$ as $\lambda\to\infty$ and that $\dot{\mathbf{M}}^{\mathrm{out},(\alpha_1,\alpha_2)}(\lambda;y)\lambda^{k\sigma_3}$ is analytic at $\lambda=0$.  The singularities in the outer parametrix at the critical points $\lambda=\lNaught(y),\lNaught(y)^{-1}$ are needed to balance the local behavior of $\mathbf{M}_n^{(k)}(\lambda;y)$ which we turn to approximating next.

\subsubsection{Inner parametrices based on Hermite polynomials}
As the tubular neighborhood $T$ containing $y$ excludes the branch points $\lambda=\pm\tfrac{1}{2}\ii$, the two critical points remain distinct and hence simple, and both are analytic and nonvanishing functions of $y\in T$.  To set up a uniform treatment of the two critical points, we may also refer to the critical points as $\lambda_1(y):=\lNaught(y)^{-1}$ and $\lambda_2(y):=\lNaught(y)$, which indicates the order in which neighborhoods of these points are visited as $\lambda$ traverses $L=\LInftyRed\cup\LZeroRed$.  
Since $\lambda_j(y)$ are analytic and nonvanishing functions on $T$, to define $\lambda_\infty^{k-1/2}=\lambda^k\lambda_\infty^{-1/2}$ for $\lambda=\lambda_j(y)$ it suffices by analytic continuation to determine the value when $y=0.331372$ corresponding to the real midpoint of the eyebrow $\pEInftyRed$.  Thus, from the central panels in Figure~\ref{fig:y-real} we get that when $\lambda=\lambda_j(0.331372)$, $\lambda_\infty^{-1/2}$ lies in the right half-plane for both $j=1,2$.\smallskip  

Let $D_j$ be simply-connected neighborhoods of $\lambda_j(y)$, $j=1,2$, respectively, and assume that these neighborhoods are sufficiently small but independent of $n$.  Exploiting the fact that both critical points of $V$ are simple, we conformally map $D_j$ to a neighborhood of the origin via a conformal mapping $\lambda\mapsto W_j(\lambda;y)$, where
\begin{equation}
V(\lambda;y)-V(\lambda_j(y);y)=W_j(\lambda;y)^2,\quad\lambda\in D_j,\quad j=1,2.
\end{equation}
In this equation we make sure to choose branches of $\log(\lambda)$ in $V$ so that the left-hand side is a well-defined analytic function of $\lambda$ that vanishes to second order at the critical point $\lambda=\lambda_j(y)$.  For small enough $D_j$, this relation defines $W_j(\lambda;y)$ as a conformal mapping up to a sign, which we select such that (possibly after some local adjustment of $L$ near the critical points) the image of the oriented arc $L\cap D_j$ is a real interval containing $W_j=0$ traversed in the direction of increasing $W_j$.  We will need the precise value of $W_j'(\lambda_j(y);y)$, and by implicit differentiation one finds that $\tfrac{1}{2}V''(\lambda_j(y);y)=W_j'(\lambda_j(y);y)^2$.  Now for $j=1,2$, $\tfrac{1}{2}V''(\lambda_j(y);y)$ is an analytic and non-vanishing function of $y$ on the tubular neighborhood $T$ in question, so to determine the $\lambda$-derivative $W_j'(\lambda_j(y);y)$ as an analytic function, it suffices to determine its value at any one point, say $y=0.331372$ where the eyebrow $\pEInftyRed$ intersects the positive real $y$-axis.  Here, from the central panels in Figure~\ref{fig:y-real} one can use the geometric interpretation of $W'_j(\lambda_j(y);y)^{-1}$ as the phase factor of the directed tangent to $L$ to deduce that $W_1'(\lambda_1(y);y)$ is positive imaginary while $W_2'(\lambda_2(y);y)$ is negative real when $y=0.331372$.\smallskip

Given the conformal maps $W_j:D_j\to \mathbb{C}$, $j=1,2$, we define corresponding analytic and non-vanishing functions of $\lambda\in D_j$ denoted $f_j^{(\alpha_1,\alpha_2)}(\lambda;y)$ such that
\begin{equation}
\dot{\mathbf{M}}^{\mathrm{out},(\alpha_1,\alpha_2)}(\lambda;y)=f_j^{(\alpha_1,\alpha_2)}(\lambda;y)^{\sigma_3}W_j(\lambda;y)^{\alpha_j\sigma_3},\quad\lambda\in D_j, \quad j=1,2.
\label{eq:F-j-define-red}
\end{equation}
Likewise, the function $\lambda\mapsto \sqrt{2\pi}\lambda_\infty^{k-1/2}/k!$ admits analytic continuation from $L\cap D_j$ to all of $D_j$, and this function is non-vanishing on $D_j$ (taken sufficiently small but independent of $n$).  Therefore, it has an analytic and non-vanishing square root, which we denote by $d_j(\lambda)$, $j=1,2$.  Then, using \eqref{eq:M-edge-jump} the jump condition for the modified matrix $\mathbf{N}_n^{(k,j)}(\lambda;y):=\mathbf{M}_n^{(k)}(\lambda;y)\ee^{-nV(\lambda_j(y);y)\sigma_3/2}d_j(\lambda)^{\sigma_3}$ satisfies the local jump conditions
\begin{equation}
\mathbf{N}^{(k,j)}_{n+}(\lambda;y)=\mathbf{N}^{(k,j)}_{n-}(\lambda;y)\begin{bmatrix}1 & \ee^{-nW_j(\lambda;y)^2}\\0 & 1\end{bmatrix},\quad\lambda\in L\cap D_j,\quad j=1,2.
\end{equation}
To define appropriate solutions of these jump conditions within the neighborhoods $D_j$ yielding inner parametrices matching well onto the outer parametrix when $\lambda\in\partial D_j$, we need to take into account the final factor on the right-hand side of \eqref{eq:F-j-define-red}.  Thus writing $\zeta=n^{1/2}W_j(\lambda;y)$, we arrive at the following model Riemann-Hilbert problem.
\begin{rhp}
Given $\alpha\in\mathbb{Z}_{\ge 0}$, seek a $2\times 2$ matrix function $\zeta\mapsto\mathbf{H}^{(\alpha)}(\zeta)$ with the following properties:
\begin{itemize}
\item[1.]\textbf{Analyticity:}  $\zeta\mapsto\mathbf{H}^{(\alpha)}(\zeta)$ is analytic for $\mathrm{Im}(\zeta)\neq 0$, taking continuous boundary values on the real axis oriented left-to-right.
\item[2.]\textbf{Jump conditions:}  The boundary values $\mathbf{H}^{(\alpha)}_\pm(\zeta)$ taken on the real axis satisfy the following jump condition:
\begin{equation}
\mathbf{H}^{(\alpha)}_+(\zeta)=\mathbf{H}^{(\alpha)}_-(\zeta)\begin{bmatrix}1 & \ee^{-\zeta^2}\\0 & 1\end{bmatrix},\quad \zeta\in\mathbb{R}.
\end{equation}
\item[3.]\textbf{Asymptotics:}
$\mathbf{H}^{(\alpha)}(\zeta)$ is required to satisfy the normalization condition 
\begin{equation}
\lim_{\zeta\to\infty}\mathbf{H}^{(\alpha)}(\zeta)\zeta^{-\alpha\sigma_3}=\mathbb{I}.
\label{eq:FIK-Hermite-normalize-red}
\end{equation}
\end{itemize}
\label{rhp:FIK-Hermite-red}
\end{rhp}
This problem is well-known \cite{FokasIK91} to be solvable explicitly in terms of Hermite polynomials $\{H_j(\zeta)\}_{j=0}^\infty$ defined by the positivity of the leading coefficient, 
$H_j(\zeta)=h_j\zeta^j+\cdots$ for $h_j>0$, and the orthogonality conditions
\begin{equation}
\int_\mathbb{R}H_j(\zeta)H_{j'}(\zeta)\ee^{-\zeta^2}\,\dd\zeta=\delta_{jj'}.
\end{equation}
Indeed the solution for $\alpha=0$ is explicitly 
\begin{equation}
\mathbf{H}^{(0)}(\zeta):=\begin{bmatrix}1&\displaystyle\frac{1}{2\pi\ii}\int_\mathbb{R}\frac{\ee^{-s^2}\,\dd s}{s-\zeta}\\0 & 1\end{bmatrix},
\end{equation}
and for positive degree,
\begin{equation}
\mathbf{H}^{(\alpha)}(\zeta):=\begin{bmatrix} \displaystyle\frac{1}{h_\alpha}H_\alpha(\zeta) & \displaystyle
\frac{1}{2\pi\ii h_\alpha}\int_\mathbb{R}\frac{H_\alpha(s)\ee^{-s^2}\,\dd s}{s-\zeta}\\
\displaystyle -2\pi\ii h_{\alpha-1}H_{\alpha-1}(\zeta) & \displaystyle -h_{\alpha-1}\int_\mathbb{R}\frac{H_{\alpha-1}(s)\ee^{-s^2}\,\dd s}{s-\zeta}\end{bmatrix},\quad \alpha\ge 1.
\end{equation}
From these formul\ae\ we see that the normalization condition \eqref{eq:FIK-Hermite-normalize-red} takes the more concrete form
\begin{equation}
\mathbf{H}^{(\alpha)}(\zeta)\zeta^{-\alpha\sigma_3}=
\begin{cases}
\begin{bmatrix}1 & (2\pi\ii h_0^2)^{-1}\zeta^{-1}+\mathcal{O}(\zeta^{-3})\\0 & 1\end{bmatrix},&\quad\alpha=0,\smallskip\\
\begin{bmatrix}1+O(\zeta^{-2}) & (2\pi\ii h_\alpha^2)^{-1}\zeta^{-1}+\mathcal{O}(\zeta^{-3})\\
-2\pi\ii h_{\alpha-1}^2\zeta^{-1}+\mathcal{O}(\zeta^{-3}) & 1+\mathcal{O}(\zeta^{-2})\end{bmatrix},&\quad\alpha\ge 1,
\end{cases}
\end{equation}
in the limit $\zeta\to\infty$, where the error terms on the diagonal (resp., off-diagonal) are full asymptotic series in descending even (resp., odd) powers of $\zeta$ (terminating after finitely-many terms in the first column), and where the leading coefficients are explicitly given by \cite[Chapter 18]{DLMF}
\begin{equation}
h_\alpha:=\frac{2^{\alpha/2}}{\pi^{1/4}\sqrt{\alpha!}},\quad\alpha=0,1,2,\dots,
\label{eq:Hermite-constants}
\end{equation}
and by convention we define $h_{-1}:=0$.
Now we define the inner parametrices by
\begin{multline}
\dot{\mathbf{M}}_n^{\mathrm{in},(\alpha_1,\alpha_2,j)}(\lambda;y):=\ee^{-nV(\lambda_j(y);y)\sigma_3/2}n^{-\alpha_j\sigma_3/2}d_j(\lambda)^{\sigma_3}f_j^{(\alpha_1,\alpha_2)}(\lambda;y)^{\sigma_3}\mathbf{H}^{(\alpha_j)}(n^{1/2}W_j(\lambda;y))d_j(\lambda)^{-\sigma_3}\ee^{nV(\lambda_j(y);y)\sigma_3/2},\\
\lambda\in D_j,\quad j=1,2.
\end{multline}
Since the factors to the left of $\mathbf{H}^{(\alpha_j)}(\cdot)$ are analytic within $D_j$, it is easy to see that $\dot{\mathbf{M}}_n^{\mathrm{in},(\alpha_1,\alpha_2,j)}(\lambda;y)$ is analytic within $D_j$ except along $L\cap D_j$, where it exactly satisfies the jump condition \eqref{eq:M-edge-jump}.  We also see easily that
\begin{multline}
\dot{\mathbf{M}}_n^{\mathrm{in},(\alpha_1,\alpha_2,j)}(\lambda;y)\dot{\mathbf{M}}^{\mathrm{out},(\alpha_1,\alpha_2)}(\lambda;y)^{-1}=
\ee^{-nV(\lambda_j(y);y)\sigma_3/2}n^{-\alpha_j\sigma_3/2}d_j(\lambda)^{\sigma_3}f_j^{(\alpha_1,\alpha_2)}(\lambda;y)^{\sigma_3}\\
{}\cdot\mathbf{H}^{(\alpha_j)}(\zeta)\zeta^{-\alpha_j\sigma_3}\cdot 
f_j^{(\alpha_1,\alpha_2)}(\lambda;y)^{-\sigma_3}d_j(\lambda)^{-\sigma_3}n^{\alpha_j\sigma_3/2}\ee^{nV(\lambda_j(y);y)\sigma_3/2},
\label{eq:MinMoutEdge}
\end{multline}
where $\zeta:=n^{1/2}W_j(\lambda;y)$.  Now, set
\begin{equation}
A_j^{(\alpha_1,\alpha_2)}(\lambda;y):=\frac{d_j(\lambda)^2f_j^{(\alpha_1,\alpha_2)}(\lambda;y)^2}{2\pi\ii h_{\alpha_j}^2W_j(\lambda;y)}
\quad\text{and}\quad
B_j^{(\alpha_1,\alpha_2)}(\lambda;y):= -\frac{2\pi\ii h_{\alpha_j-1}^2}{d_j(\lambda)^2f_j^{(\alpha_1,\alpha_2)}(\lambda;y)^2W_j(\lambda;y)},\quad j=1,2.
\end{equation}
These are meromorphic functions of $\lambda\in D_j$ with simple poles at $\lambda_j(y)$, and they are independent of $n$.
Since $W_j(\lambda;y)$ is bounded away from zero when $\lambda\in\partial D_j$, restriction \eqref{eq:MinMoutEdge} to the boundaries $\partial D_j$, $j=1,2$, gives
\begin{multline}
\dot{\mathbf{M}}_n^{\mathrm{in},(\alpha_1,\alpha_2,j)}(\lambda;y)\dot{\mathbf{M}}^{\mathrm{out},(\alpha_1,\alpha_2)}(\lambda;y)^{-1}=\ee^{-nV(\lambda_j(y);y)\sigma_3/2}n^{-\alpha_j\sigma_3/2}\\
\cdot\begin{bmatrix}1 + \mathcal{O}(n^{-1}) & A_j^{(\alpha_1,\alpha_2)}(\lambda;y)n^{-1/2}+\mathcal{O}(n^{-3/2})\\
B_j^{(\alpha_1,\alpha_2)}(\lambda;y)n^{-1/2}+\mathcal{O}(n^{-3/2}) & 1+\mathcal{O}(n^{-1})\end{bmatrix}\\
\cdot n^{\alpha_j\sigma_3/2}\ee^{nV(\lambda_j(y);y)\sigma_3/2},\quad\lambda\in\partial D_j,\quad j=1,2
\label{eq:red-eyebrow-disk-boundaries}
\end{multline}
if $\alpha_j\ge 0$, while in the special case $\alpha_j=0$ we may also write
\begin{multline}
\dot{\mathbf{M}}_n^{\mathrm{in},(\alpha_1,\alpha_2,j)}(\lambda;y)\dot{\mathbf{M}}^{\mathrm{out},(\alpha_1,\alpha_2)}(\lambda;y)^{-1} = \ee^{-nV(\lambda_j(y);y)\sigma_3/2}
\begin{bmatrix}1 & A_j^{(\alpha_1,\alpha_2)}(\lambda;y)n^{-1/2}+\mathcal{O}(n^{-3/2})\\0 & 1\end{bmatrix}\ee^{nV(\lambda_j(y);y)\sigma_3/2},\\
\lambda\in \partial D_j,\quad\alpha_j=0.
\label{eq:red-eyebrow-disk-boundaries-alpha-0}
\end{multline}

\subsubsection{Initial global parametrix construction and comparison matrices}
\label{sec:edge-comparison-matrices}
Given non-negative integers $\alpha_1,\alpha_2$ satisfying \eqref{eq:alpha-partition-general-red}, the global parametrix for $\mathbf{M}_n^{(k)}(\lambda;y)$ is then defined by:
\begin{equation}
\dot{\mathbf{M}}_n^{(\alpha_1,\alpha_2)}(\lambda;y):=\begin{cases}
\dot{\mathbf{M}}_n^{\mathrm{in},(\alpha_1,\alpha_2,1)}(\lambda;y),&\quad\lambda\in D_1\\
\dot{\mathbf{M}}_n^{\mathrm{in},(\alpha_1,\alpha_2,2)}(\lambda;y),&\quad\lambda\in D_2\\
\dot{\mathbf{M}}^{\mathrm{out},(\alpha_1,\alpha_2)}(\lambda;y),&\quad\lambda\in\mathbb{C}\setminus\overline{D_1\cup D_2}.
\end{cases}
\end{equation}

For later purposes, we will need to record the residues of $A_j^{(\alpha_1,\alpha_2)}(\lambda;y)$ and $B_j^{(\alpha_1,\alpha_2)}(\lambda;y)$ at $\lambda=\lambda_j$, where $W_j(\lambda;y)$ vanishes to first order.  Thus,
\begin{equation}
\mathop{\mathrm{Res}}_{\lambda=\lambda_j}A_j^{(\alpha_1,\alpha_2)}(\lambda;y)=\frac{d_j(\lambda_j)^2f_j^{(\alpha_1,\alpha_2)}(\lambda_j;y)^2}{2\pi\ii h_{\alpha_j}^2W_j'(\lambda_j;y)}\quad\text{and}\quad
\mathop{\mathrm{Res}}_{\lambda=\lambda_j}B_j^{(\alpha_1,\alpha_2)}(\lambda;y)=
-\frac{2\pi\ii h_{\alpha_j-1}^2}{d_j(\lambda_j)^2f_j^{(\alpha_1,\alpha_2)}(\lambda_j;y)^2W_j'(\lambda_j;y)},
\end{equation}
and combining \eqref{eq:edge-outer} with \eqref{eq:F-j-define-red} and l'H\^opital's rule gives
\begin{equation}
f_j^{(\alpha_1,\alpha_2)}(\lambda_j;y)=\lambda_j^{-k}(\lambda_j-\lambda_{3-j})^{\alpha_{3-j}}W_j'(\lambda_j;y)^{-\alpha_j},\quad j=1,2.
\end{equation}
Recalling the definition of $d_j(\lambda)$ as a square root of $\sqrt{2\pi}\lambda_\infty^{k-1/2}/k!$, 
we have
\begin{equation}
\begin{split}
d_j(\lambda_j)^2f_j^{(\alpha_1,\alpha_2)}(\lambda_j;y)^2 &= \frac{\sqrt{2\pi}}{k!}\lambda_j^{-k}\lambda_{j,\infty}^{-1/2}(\lambda_2-\lambda_1)^{2\alpha_{3-j}}W_j'(\lambda_j;y)^{-2\alpha_j}\\ &=
\frac{2^{\alpha_j}\sqrt{2\pi}}{k!}\lambda_j^{-k}\lambda_{j,\infty}^{-1/2}(\lambda_2-\lambda_1)^{2\alpha_{3-j}}V''(\lambda_j;y)^{-\alpha_j}
\end{split}
\end{equation}
where $\lambda_{j,\infty}^{-1/2}$ refers to the branch of the square root that lies in the right half-plane when $y=0.331372$ (the real point of $\pEInftyRed$), continued analytically to all $y\in T$, and we used the identity $W_j'(\lambda_j;y)^2=\tfrac{1}{2}V''(\lambda_j;y)$ and the fact that $2\alpha_{3-j}$ is even.  Combining this with \eqref{eq:Hermite-constants} finally gives
\begin{equation}
\begin{split}
\mathop{\mathrm{Res}}_{\lambda=\lambda_j}A^{(\alpha_1,\alpha_2)}_j(\lambda;y)&=
\frac{\alpha_j!\lambda_j^{-k}\lambda_{j,\infty}^{-1/2}(\lambda_2-\lambda_1)^{2\alpha_{3-j}}V''(\lambda_j;y)^{-\alpha_j}}{\ii k! V''(\lambda_j;y)^{1/2}}\\
\mathop{\mathrm{Res}}_{\lambda=\lambda_j}B^{(\alpha_1,\alpha_2)}_j(\lambda;y)&=
\frac{k!\lambda_j^k\lambda_{j,\infty}^{1/2}(\lambda_2-\lambda_1)^{-2\alpha_{3-j}}V''(\lambda_j;y)^{\alpha_j}}{\ii (\alpha_j-1)!V''(\lambda_j;y)^{1/2}},
\end{split}
\label{eq:edge-A-B-residues}
\end{equation}
where the analytic functions $V''(\lambda_1(y);y)^{1/2}$ and $V''(\lambda_2(y);y)^{1/2}$ are respectively positive imaginary and negative real when $y=0.331372$.\bigskip

To compare $\mathbf{M}_n^{(k)}(\lambda;y)$ with its parametrix, we define two types of comparison matrices by
\begin{equation}
\mathbf{F}_n^{(\alpha_1,\alpha_2,j)}(\lambda;y):=\ee^{nV(\lambda_j(y);y)\sigma_3/2}n^{(\alpha_j+1/2)\sigma_3/2}\mathbf{M}_n^{(k)}(\lambda;y)\dot{\mathbf{M}}_n^{(\alpha_1,\alpha_2)}(\lambda;y)^{-1}n^{-(\alpha_j+1/2)\sigma_3/2}\ee^{-nV(\lambda_j(y);y)\sigma_3/2},\quad j=1,2.
\end{equation}
Both types (i.e., for $j=1,2$) of comparison matrix have the property that they are analytic functions of $\lambda$ in both domains $D_1$ and $D_2$ (because the continuous boundary values of $\mathbf{M}_n^{(k)}(\lambda;y)$ and $\dot{\mathbf{M}}_n^{(\alpha_1,\alpha_2)}(\lambda;y)$ satisfy the same jump conditions there) and in the exterior domain except on the original jump contour $L=\LInftyRed\cup\LZeroRed$.  Moreover, it is easy to check that $\mathbf{F}_n^{(\alpha_1,\alpha_2,j)}(\lambda;y)\to\mathbb{I}$ as $\lambda\to\infty$.  The comparison matrices therefore satisfy the conditions of a Riemann-Hilbert problem specified by jump conditions across the part of $L$ exterior to the domains $D_1$ and $D_2$ and across the boundaries $\partial D_1$ and $\partial D_2$ of these domains.  
Jump contours for the comparison matrices are illustrated in Figures~\ref{fig:EdgeError-within} and \ref{fig:EdgeError-without} (cf., Figure~\ref{fig:y-real}) for two different values of $y$ on either side of the eyebrow $\pEInftyRed$.  
\begin{figure}[h]
\begin{center}
\includegraphics{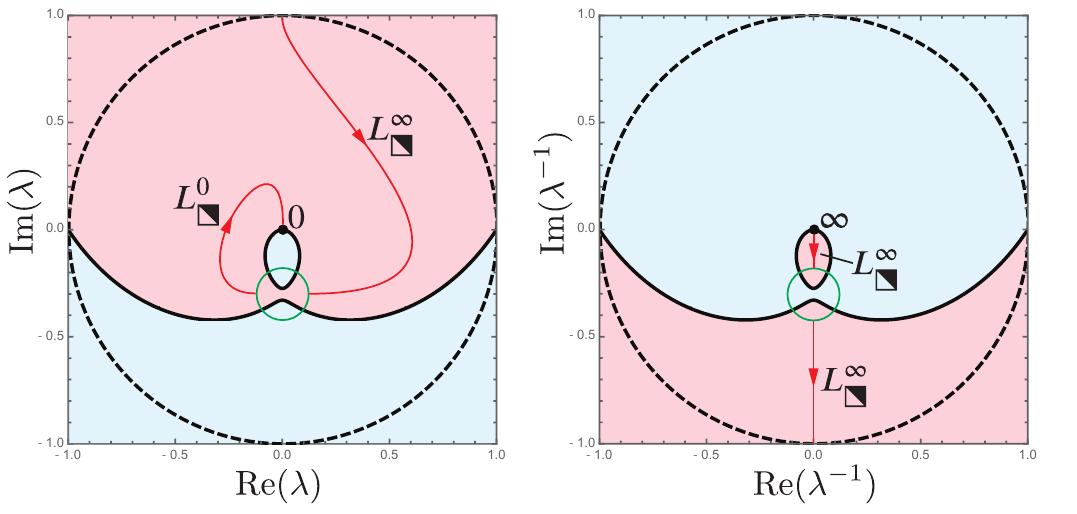}
\end{center}
\caption{The jump contour for the comparison matrices $\mathbf{F}_n^{(\alpha_1,\alpha_2,j)}(\lambda;y)$, and also for the final error matrix $\mathbf{E}_n^{(\alpha_1,\alpha_2,j)}(\lambda;y)$, for $y=0.33$, a point just to the left of the eyebrow $\pEInftyRed$. The jump contour consists of the arcs of $L=\LInftyRed\cup\LZeroRed$ outside the disks $D_1$ and $D_2$ (red), as well as the boundaries of the latter disks (green) that are oriented in the clockwise direction for the purposes of defining the boundary values taken thereon.  The background is a contour plot of $\mathrm{Re}(V(\lambda;y))$, with pink shading for $\mathrm{Re}(V(\lambda;y))>0$ and blue shading for $\mathrm{Re}(V(\lambda;y))<0$.}
\label{fig:EdgeError-within}
\end{figure}
\begin{figure}[h]
\begin{center}
\includegraphics{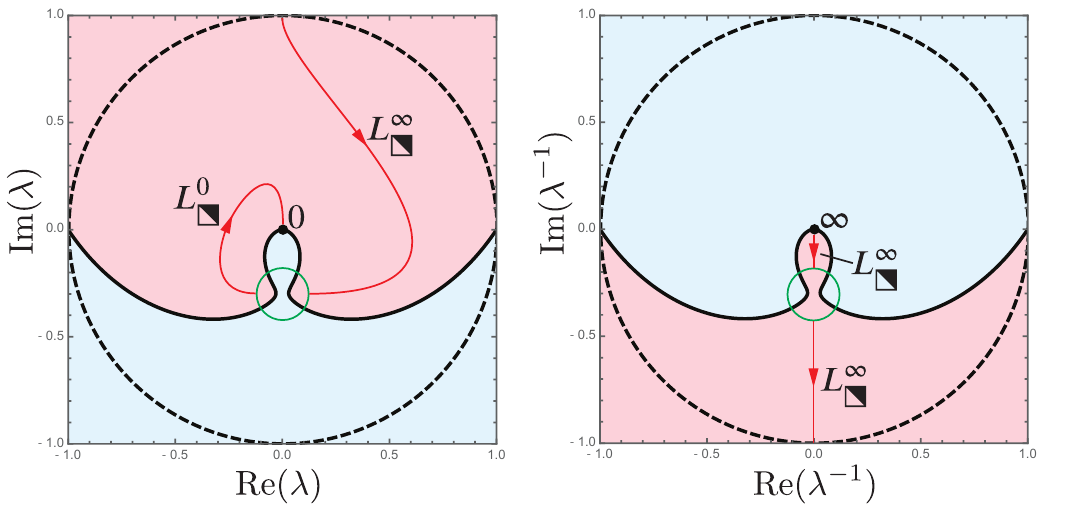}
\end{center}
\caption{As in Figure~\ref{fig:EdgeError-within}, but for $y=0.333$, a point just to the right of the eyebrow $\pEInftyRed$.}  
\label{fig:EdgeError-without}
\end{figure}
Observe that the landscape of $\mathrm{Re}(V(\lambda;y))$ shown in these plots resembles, at least for $\lambda$ not too close to $\lambda_1$ or $\lambda_2$, that illustrated in the central panels of Figure~\ref{fig:y-real}.  Therefore, when $y$ is close to the eyebrow $\pEInftyRed$, if the tubular neighborhood $T$ is taken to be sufficiently thin (by choosing $\delta_2$ sufficiently small in \eqref{eq:T-define}) given the domains $D_1$ and $D_2$, the jump condition satisfied by $\mathbf{F}_n^{(\alpha_1,\alpha_2,j)}(\lambda;y)$ on the arcs of $L$ exterior to the latter domains has the form (because all red contours lie strictly within the pink-shaded region)
\begin{equation}
\mathbf{F}^{(\alpha_1,\alpha_2,j)}_{n+}(\lambda;y)= \mathbf{F}^{(\alpha_1,\alpha_2,j)}_{n-}(\lambda;y)(\mathbb{I}+\text{exponentially small}),\quad n\to+\infty,\quad \lambda\in L\setminus(D_1\cup D_2),
\end{equation}
with the convergence being in the $L^p$ sense for every $p$ and holding uniformly for $y\in T$.  Therefore, the essential jump conditions for $\mathbf{F}_n^{(\alpha_1,\alpha_2,j)}(\lambda;y)$ are those across the domain boundaries $\partial D_1$ and $\partial D_2$.  Taking these to be oriented in the clockwise sense, using \eqref{eq:red-eyebrow-disk-boundaries}--\eqref{eq:red-eyebrow-disk-boundaries-alpha-0} gives
\begin{equation}
\mathbf{F}^{(\alpha_1,\alpha_2,j)}_{n+}(\lambda;y)=\mathbf{F}^{(\alpha_1,\alpha_2,j)}_{n-}(\lambda;y)
\begin{bmatrix}1+\mathcal{O}(n^{-1}) & A_j^{(\alpha_1,\alpha_2)}(\lambda;y)+\mathcal{O}(n^{-1})\\ \mathcal{O}(n^{-1}) & 1+\mathcal{O}(n^{-1})
\end{bmatrix},\quad\lambda\in\partial D_j
\end{equation}
(in the special case that $\alpha_j=0$ the $\mathcal{O}(n^{-1})$ error terms in all but the $12$ entry of the jump matrix vanish identically), while
\begin{multline}
\mathbf{F}^{(\alpha_1,\alpha_2,j)}_{n+}(\lambda;y)=\\
\mathbf{F}^{(\alpha_1,\alpha_2,j)}_{n-}(\lambda;y)
\begin{bmatrix}1+\mathcal{O}(n^{-1}) & a_n^{(\alpha_1,\alpha_2,j)}(y)[A_{3-j}^{(\alpha_1,\alpha_2)}(\lambda;y)+\mathcal{O}(n^{-1})]\\
b_n^{(\alpha_1,\alpha_2,j)}(y)[B_{3-j}^{(\alpha_1,\alpha_2)}(\lambda;y)+\mathcal{O}(n^{-1})] & 1+\mathcal{O}(n^{-1})\end{bmatrix},\\
\lambda\in\partial D_{3-j}
\label{eq:Edge-F-jump-other-boundary}
\end{multline}
where
\begin{equation}
a_n^{(\alpha_1,\alpha_2,j)}(y):=\ee^{n(V(\lambda_j(y);y)-V(\lambda_{3-j}(y);y))}n^{\alpha_j-\alpha_{3-j}}
\quad\text{and}\quad
b_n^{(\alpha_1,\alpha_2,j)}(y):=\ee^{n(V(\lambda_{3-j}(y);y)-V(\lambda_j(y);y))}n^{\alpha_{3-j}-\alpha_j-1}
\end{equation}
(in the special case that $\alpha_{3-j}=0$ the $O(n^{-1})$ error terms in all but the $12$ entry of the jump matrix vanish identically, and in addition $B_{3-j}^{(\alpha_1,\alpha_2)}(\lambda;y)\equiv 0$ because by convention $h_{-1}=0$).  Recalling that $\mathrm{Re}(V(\lambda_2(y);y))+\mathrm{Re}(V(\lambda_1(y);y))=0$ and $\alpha_1+\alpha_2=k$, upon suitable conditions on $y\in T$ the jump condition \eqref{eq:Edge-F-jump-other-boundary} reduces to one of the following forms:  
\begin{itemize}
\item Case I:  If $\alpha_{3-j}=0$ (so also $\alpha_j=k$) and the inequality
\begin{equation}
\mathrm{Re}(V(\lambda_j(y);y))\le-\frac{1}{2}\alpha_j\frac{\ln(n)}{n}=-\frac{1}{2}k\frac{\ln(n)}{n}
\label{eq:edge-inequality-3}
\end{equation}
holds, then \eqref{eq:Edge-F-jump-other-boundary} becomes
\begin{equation}
\mathbf{F}^{(\alpha_1,\alpha_2,j)}_{n+}(\lambda;y)=\mathbf{F}^{(\alpha_1,\alpha_2,j)}_{n-}(\lambda;y)
\begin{bmatrix}1 & a_n^{(\alpha_1,\alpha_2,j)}(y)[A_{3-j}^{(\alpha_1,\alpha_2)}(\lambda;y)+\mathcal{O}(n^{-1})]\\0 & 1\end{bmatrix},\quad\lambda\in\partial D_{3-j}
\label{eq:edge-jump-reduce-3}
\end{equation}
in which $a_n^{(\alpha_1,\alpha_2,j)}(y)=\mathcal{O}(1)$ as $n\to+\infty$.
\item Case II$_\mathrm{a}$:  If $\alpha_{3-j}>0$ (so also $\alpha_j<k$) and the inequalities
\begin{equation}
\frac{1}{2}(k-2\alpha_j-\tfrac{1}{2})\frac{\ln(n)}{n}\le\mathrm{Re}(V(\lambda_j(y);y))\le\frac{1}{2}(k-2\alpha_j)\frac{\ln(n)}{n}
\label{eq:edge-inequality-1}
\end{equation}
hold, then \eqref{eq:Edge-F-jump-other-boundary} becomes
\begin{equation}
\mathbf{F}_{n+}^{(\alpha_1,\alpha_2,j)}(\lambda;y)=\mathbf{F}_{n-}^{(\alpha_1,\alpha_2,j)}(\lambda;y)
\begin{bmatrix}1+\mathcal{O}(n^{-1}) & a_n^{(\alpha_1,\alpha_2,j)}(y)[A_{3-j}^{(\alpha_1,\alpha_2)}(\lambda;y)+\mathcal{O}(n^{-1})]\\ \mathcal{O}(n^{-1/2}) & 1+\mathcal{O}(n^{-1})\end{bmatrix},\quad\lambda\in\partial D_{3-j}
\label{eq:edge-jump-reduce-1}
\end{equation}
in which $a_n^{(\alpha_1,\alpha_2,j)}(y)=\mathcal{O}(1)$ as $n\to+\infty$.
\item Case II$_\mathrm{b}$:  If $\alpha_{3-j}>0$ (so also $\alpha_j<k$) and the inequalities 
\begin{equation}
\frac{1}{2}(k-2\alpha_j-1)\frac{\ln(n)}{n}\le\mathrm{Re}(V(\lambda_j(y);y))\le \frac{1}{2}(k-2\alpha_j-\tfrac{1}{2})\frac{\ln(n)}{n}
\label{eq:edge-inequality-2}
\end{equation}
hold, then \eqref{eq:Edge-F-jump-other-boundary} becomes
\begin{equation}
\mathbf{F}^{(\alpha_1,\alpha_2,j)}_{n+}(\lambda;y)=\mathbf{F}^{(\alpha_1,\alpha_2,j)}_{n-}(\lambda;y)
\begin{bmatrix}1+\mathcal{O}(n^{-1}) & \mathcal{O}(n^{-1/2})\\ b_n^{(\alpha_1,\alpha_2,j)}(y)[B_{3-j}^{(\alpha_1,\alpha_2)}(\lambda;y)+\mathcal{O}(n^{-1})] & 1+\mathcal{O}(n^{-1})\end{bmatrix},\quad\lambda\in\partial D_{3-j}
\label{eq:edge-jump-reduce-2}
\end{equation}
in which $b_n^{(\alpha_1,\alpha_2,j)}(y)=\mathcal{O}(1)$ as $n\to+\infty$.
\end{itemize}
By varying the index $j=1,2$ as well as the choice of non-negative integers $\alpha_1+\alpha_2=k$, the above inequalities \eqref{eq:edge-inequality-1}, \eqref{eq:edge-inequality-2}, and \eqref{eq:edge-inequality-3} on $y\in T$ actually cover the whole tubular neighborhood $T$.  Indeed, given $k\ge 0$, begin by taking $\alpha_1=k$ and $\alpha_2=0$, and consider the comparison matrix $\mathbf{F}_n^{(k,0,1)}(\lambda;y)$.  Assuming that $\mathrm{Re}(V(\lambda_1(y);y))\le -\tfrac{1}{2}kn^{-1}\ln(n)$, the inequality \eqref{eq:edge-inequality-3} of Case I guarantees that \eqref{eq:edge-jump-reduce-3} governs the jump condition on $\partial D_2$.  Then, for $\ell=1,\dots,k$,
\begin{itemize}
\item Take $\alpha_1=k-\ell+1>0$ and $\alpha_2=\ell-1<k$, and consider the comparison matrix $\mathbf{F}_n^{(k-\ell+1,\ell-1,2)}(\lambda;y)$.  Assuming that $-\tfrac{1}{2}(k-2\ell+2)n^{-1}\ln(n)\le\mathrm{Re}(V(\lambda_1(y);y))\le -\tfrac{1}{2}(k-2\ell+\tfrac{3}{2})n^{-1}\ln(n)$, the inequalities \eqref{eq:edge-inequality-1} of Case II$_\mathrm{a}$ imply that \eqref{eq:edge-jump-reduce-1} governs the jump condition on $\partial D_1$.
Assuming that $-\tfrac{1}{2}(k-2\ell+\tfrac{3}{2})n^{-1}\ln(n)\le\mathrm{Re}(V(\lambda_1(y);y))\le -\tfrac{1}{2}(k-2\ell+1)$, the inequalities \eqref{eq:edge-inequality-2} of Case II$_\mathrm{b}$ imply that \eqref{eq:edge-jump-reduce-2} governs the jump condition on $\partial D_1$.
\item Now take $\alpha_1=k-\ell<k$ and $\alpha_2=\ell>0$, and consider the comparison matrix $\mathbf{F}_n^{(k-\ell,\ell,1)}(\lambda;y)$.  Assuming that $-\tfrac{1}{2}(k-2\ell+1)n^{-1}\ln(n)\le\mathrm{Re}(V(\lambda_1(y);y))\le -\tfrac{1}{2}(k-2\ell+\tfrac{1}{2})n^{-1}\ln(n)$, the inequalities \eqref{eq:edge-inequality-2} of Case II$_\mathrm{b}$ imply that \eqref{eq:edge-jump-reduce-2} governs the jump condition on $\partial D_2$.
Assuming that $-\tfrac{1}{2}(k-2\ell+\tfrac{1}{2})n^{-1}\ln(n)\le\mathrm{Re}(V(\lambda_1(y);y))\le-\tfrac{1}{2}(k-2\ell)n^{-1}\ln(n)$, the inequalities \eqref{eq:edge-inequality-1} of Case II$_\mathrm{a}$ imply that \eqref{eq:edge-jump-reduce-1} governs the jump condition on $\partial D_2$.
\end{itemize}
Finally, take $\alpha_1=0$ and $\alpha_2=k$, and consider the comparison matrix $\mathbf{F}_n^{(0,k,2)}(\lambda;y)$.  Assuming that $\mathrm{Re}(V(\lambda_1(y);y))\ge \tfrac{1}{2}kn^{-1}\ln(n)$, the inequality \eqref{eq:edge-inequality-3} of Case I then guarantees that \eqref{eq:edge-jump-reduce-3} governs the jump condition on $\partial D_1$.

\subsubsection{Modeling of comparison matrices}
To determine the asymptotic behavior as $n\to+\infty$ of the various types of comparison matrices, it now becomes necessary to model the leading terms of the jump matrices, which generally do not decay to the identity on the domain boundaries $\partial D_1$ and $\partial D_2$, but that are guaranteed to be bounded by association of $y\in T$ with the appropriate indices $\alpha_1$, $\alpha_2$, and $j$ as described above.  In Cases I and II$_\mathrm{a}$, the dominant terms in the jump matrices on $\partial D_1$ and $\partial D_2$ are both upper triangular matrices, while in Case II$_\mathrm{b}$ one jump matrix is upper triangular and the other is lower triangular.  This situation requires two different types of parametrices, which we formulate as Riemann-Hilbert problems here.

\begin{rhp}[Upper-upper; Cases I and II$_\mathrm{a}$]
Let distinct points $\lambda_1\neq\lambda_2$, $\lambda_j\neq 0$, $j=1,2$, be given with corresponding simply-connected neighborhoods $D_1$ and $D_2$ with $D_1\cap D_2=\emptyset$.  For $j=1,2$, let $\phi_j$ be meromorphic on $D_j$ and continuous up to the boundary $\partial D_j$ with a simple pole at $\lambda_j$ as the only singularity in $D_j$.  Seek a $2\times 2$ matrix function $\lambda\mapsto\dot{\mathbf{F}}(\lambda)$ with the following properties:
\begin{itemize}
\item[1.] \textbf{Analyticity:}  $\lambda\mapsto\dot{\mathbf{F}}(\lambda)$ is analytic for $\lambda\in\mathbb{C}\setminus (\partial D_1\cup\partial D_2)$ and takes continuous boundary values from each side on $\partial D_1$ and $\partial D_2$.  
\item[2.] \textbf{Jump conditions:}  The boundary values are related by the following jump conditions.  Assuming clockwise orientation of $\partial D_j$, $j=1,2$, 
\begin{equation}
\dot{\mathbf{F}}_+(\lambda)=\dot{\mathbf{F}}_-(\lambda)\begin{bmatrix}1 & \phi_j(\lambda)\\
0 & 1\end{bmatrix},\quad\lambda\in \partial D_j,\quad j=1,2.
\end{equation}
\item[3.] \textbf{Asymptotics:}  $\dot{\mathbf{F}}(\lambda)\to\mathbb{I}$ as $\lambda\to\infty$.
\end{itemize}
\label{rhp:upper-upper}
\end{rhp}
This problem always has a unique solution, which may be sought in the form 
\begin{equation}
\dot{\mathbf{F}}(\lambda)=\begin{bmatrix}1 & \dot{f}(\lambda)\\0 & 1\end{bmatrix}.
\end{equation}
The conditions of Riemann-Hilbert Problem~\ref{rhp:upper-upper} descend to the conditions that the scalar function $\dot{f}(\lambda)$ be analytic for $\lambda\in\mathbb{C}\setminus(\partial D_1\cup\partial D_2)$ with $\dot{f}(\lambda)\to 0$ as $\lambda\to\infty$, and the jump conditions now take the additive form:  $\dot{f}_+(\lambda)=\dot{f}_-(\lambda)+\phi_j(\lambda)$ holds for $\lambda\in\partial D_j$, $j=1,2$.  It follows that $\dot{f}(\lambda)$ is given by the Plemelj formula
\begin{equation}
\dot{f}(\lambda)=\frac{1}{2\pi\ii}\oint_{\partial D_1}\frac{\phi_1(\mu)\,\dd\mu}{\mu-\lambda} +
\frac{1}{2\pi\ii}\oint_{\partial D_2}\frac{\phi_2(\mu)\,\dd\mu}{\mu-\lambda},\quad\lambda\in\mathbb{C}\setminus(\partial D_1\cup\partial D_2).
\end{equation}
The integrals may be evaluated explicitly, by residues.  In the special case that $\lambda$ is exterior to both domains $D_j$, $j=1,2$, we therefore find
\begin{equation}
\dot{\mathbf{F}}(\lambda)=\begin{bmatrix}1 & \displaystyle\frac{\Phi_1}{\lambda-\lambda_1} +\frac{\Phi_2}{\lambda-\lambda_2}\\0 & 1\end{bmatrix},\quad\lambda\in\mathbb{C}\setminus\overline{D_1\cup D_2},
\end{equation}
where $\Phi_j$ denotes the residue of $\phi_j$ at $\lambda_j$, $j=1,2$.  In particular, we see that
\begin{equation}
\dot{\mathbf{F}}(\lambda)=\mathbb{I} + \lambda^{-1}\dot{\mathbf{F}}^\infty_1 + \mathcal{O}(\lambda^{-2}),\quad\lambda\to\infty\quad\text{and}\quad
\dot{\mathbf{F}}(\lambda)=\dot{\mathbf{F}}^0_0 + \mathcal{O}(\lambda),\quad\lambda\to 0,
\label{eq:dotF-expand}
\end{equation}
in which we have
\begin{equation}
\dot{F}^\infty_{1,12}=\Phi_1+\Phi_2,\quad\dot{F}^0_{0,11}=1,\quad\dot{F}^0_{0,12}=-\frac{\Phi_1}{\lambda_1}-\frac{\Phi_2}{\lambda_2}.
\label{eq:dot-F-elements-upper-upper}
\end{equation}

\begin{rhp}[Upper-lower; Case II$_\mathrm{b}$]
Let distinct nonzero points $\lambda_\mathrm{U}\neq\lambda_\mathrm{L}$ be given with corresponding simply-connected neighborhoods $D_\mathrm{U}$ and $D_\mathrm{L}$ with $D_\mathrm{U}\cap D_\mathrm{L}=\emptyset$.
Let $\phi_\mathrm{U}$ and $\phi_\mathrm{L}$ be meromorphic on $D_\mathrm{U}$ and $D_\mathrm{L}$ respectively and continuous up to the corresponding boundary curves, with simple poles only at $\lambda_\mathrm{U}$ and $\lambda_\mathrm{L}$ respectively.  
Seek a $2\times 2$ matrix function $\lambda\mapsto\dot{\mathbf{F}}(\lambda)$ with the following properties:
\begin{itemize}
\item[1.]\textbf{Analyticity:} $\lambda\mapsto\dot{\mathbf{F}}(\lambda)$ is analytic for $\lambda\in\mathbb{C}\setminus(\partial D_\mathrm{U}\cup\partial D_\mathrm{L})$ and takes continuous boundary values from each side on $\partial D_\mathrm{U}$ and $\partial D_\mathrm{L}$.
\item[2.]\textbf{Jump conditions:}  The boundary values are related by the following jump conditions.  Assuming clockwise orientation of $\partial D_\mathrm{U}$,
\begin{equation}
\dot{\mathbf{F}}_+(\lambda)=\dot{\mathbf{F}}_-(\lambda)\begin{bmatrix}1 & \phi_\mathrm{U}(\lambda)\\0 & 1\end{bmatrix},\quad\lambda\in\partial D_\mathrm{U},
\label{eq:dot-F-jump-U}
\end{equation}
and assuming clockwise orientation of $\partial D_\mathrm{L}$,
\begin{equation}
\dot{\mathbf{F}}_+(\lambda)=\dot{\mathbf{F}}_-(\lambda)\begin{bmatrix}1 & 0\\\phi_\mathrm{L}(\lambda) & 1\end{bmatrix},\quad\lambda\in\partial D_\mathrm{L}.
\end{equation}
\item[3.]\textbf{Asymptotics:}  $\dot{\mathbf{F}}(\lambda)\to\mathbb{I}$ as $\lambda\to\infty$.
\end{itemize}
\label{rhp:F-dot-eyebrow}
\end{rhp}
This problem is a generalization of the one that characterizes the soliton solutions of AKNS systems \cite{BealsC84}. Unlike Riemann-Hilbert Problem~\ref{rhp:upper-upper} this problem is only conditionally solvable.
Letting $\Phi_\mathrm{U}$ denote the residue of $\phi_\mathrm{U}$ at $\lambda_\mathrm{U}$, and $\Phi_\mathrm{L}$ that of $\phi_\mathrm{L}$ at $\lambda_\mathrm{L}$, this problem has a unique solution if and only if 
\begin{equation}
\Delta:=\Phi_\mathrm{U}\Phi_\mathrm{L}+(\lambda_\mathrm{U}-\lambda_\mathrm{L})^2\neq 0.
\label{eq:edge-denominator}
\end{equation}
The solution is a rational function in the domain exterior to $D_\mathrm{U}\cup D_\mathrm{L}$:
\begin{equation}
\dot{\mathbf{F}}(\lambda)=\mathbb{I}+\frac{1}{\lambda-\lambda_\mathrm{U}}\frac{(\lambda_\mathrm{U}-\lambda_\mathrm{L})\Phi_\mathrm{U}}{\Delta}\begin{bmatrix}
0 & \lambda_\mathrm{U}-\lambda_\mathrm{L}\\0 & \Phi_\mathrm{L}\end{bmatrix}
+\frac{1}{\lambda-\lambda_\mathrm{L}}\frac{(\lambda_\mathrm{L}-\lambda_\mathrm{U})\Phi_\mathrm{L}}{\Delta}
\begin{bmatrix}\Phi_\mathrm{U} & 0\\\lambda_\mathrm{L}-\lambda_\mathrm{U} & 0\end{bmatrix},\quad
\lambda\in\mathbb{C}\setminus\overline{D_\mathrm{U}\cup D_\mathrm{L}}.
\label{eq:dotF-edge-outside}
\end{equation}
This formula determines $\dot{\mathbf{F}}(\lambda)$ in the domains $D_\mathrm{U}$ and $D_\mathrm{L}$ by the jump conditions; Laurent expansion of the interior boundary value $\dot{\mathbf{F}}_-(\lambda)$ shows in each case that its only singularity is removable.  Moreover, \eqref{eq:dotF-edge-outside} shows that expansions of the form \eqref{eq:dotF-expand} again hold whenever the solution exists, in which
\begin{equation}
\dot{F}^\infty_{1,12}= \frac{(\lambda_\mathrm{U}-\lambda_\mathrm{L})^2\Phi_\mathrm{U}}{\Delta},\quad
\dot{F}^0_{0,11}=1-\frac{(\lambda_\mathrm{L}-\lambda_\mathrm{U})\Phi_\mathrm{L}\Phi_\mathrm{U}}{\lambda_\mathrm{L}\Delta},\quad
\text{and}\quad
\dot{F}^0_{0,12}=-\frac{(\lambda_\mathrm{U}-\lambda_\mathrm{L})^2\Phi_\mathrm{U}}{\lambda_\mathrm{U}\Delta}.
\label{eq:dot-F-elements-upper-lower}
\end{equation}

\subsubsection{Final error analysis and asymptotic formul\ae\ for $u_n(ny;-(\tfrac{1}{2}+k))$}
\label{sec:Edge-error-analysis}
Suppose that $y$ is such that, after proper association of the data of Riemann-Hilbert Problem~\ref{rhp:upper-upper} or \ref{rhp:F-dot-eyebrow} with the leading terms of the jump matrices for the comparison matrix $\mathbf{F}_n^{(\alpha_1,\alpha_2,j)}(\lambda;y)$, $\dot{\mathbf{F}}(\lambda)$ exists and is bounded as $n\to+\infty$ (no condition in the case of Riemann-Hilbert Problem~\ref{rhp:upper-upper}).  Then it is easy to check that the error matrix $\mathbf{E}_n^{(\alpha_1,\alpha_2,j)}(\lambda;y):=\mathbf{F}_n^{(\alpha_1,\alpha_2,j)}(\lambda;y)\dot{\mathbf{F}}(\lambda)^{-1}$ satisfies the conditions of a small-norm Riemann-Hilbert problem formulated relative to a jump contour such as shown in Figures~\ref{fig:EdgeError-within} and \ref{fig:EdgeError-without}, with the result that 
\begin{equation}
\mathbf{E}_n^{(\alpha_1,\alpha_2,j)}(\lambda;y)=\mathbb{I}+\lambda^{-1}\mathbf{E}^\infty_{n,1}(y) + \mathcal{O}(\lambda^{-2}),\quad\lambda\to\infty\quad\text{and}\quad \mathbf{E}_n(\lambda;y)=\mathbf{E}^0_{n,0}(y) + \mathcal{O}(\lambda),\quad\lambda\to 0,
\end{equation}
where $\mathbf{E}^\infty_{n,1}(y)=\mathcal{O}(n^{-1/2})$ and $\mathbf{E}^0_{n,0}(y)=\mathbb{I}+\mathcal{O}(n^{-1/2})$ as $n\to+\infty$ uniformly for $y\in T$ (if Case I holds, we may replace $\mathcal{O}(n^{-1/2})$ in both estimates with $\mathcal{O}(n^{-1})$).  Note that for $\lambda$ in the exterior of the domain $D_1\cup D_2$, we have the exact identity
\begin{equation}
\begin{split}
\mathbf{Y}_n(\lambda;ny,-(\tfrac{1}{2}+k))&=\mathbf{M}_n^{(k)}(\lambda;y)\\
&=\delta_n(y)^{\sigma_3}\mathbf{F}_n^{(\alpha_1,\alpha_2,j)}(\lambda;y)\delta_n(y)^{-\sigma_3}\dot{\mathbf{M}}^{\mathrm{out},(\alpha_1,\alpha_2)}(\lambda;y)\\&=\delta_n(y)^{\sigma_3}\mathbf{F}_n^{(\alpha_1,\alpha_2,j)}(\lambda;y)\dot{\mathbf{M}}^{\mathrm{out},(\alpha_1,\alpha_2)}(\lambda;y)\delta_n(y)^{-\sigma_3}\\
&=\delta_n(y)^{\sigma_3}\mathbf{E}_n^{(\alpha_1,\alpha_2,j)}(\lambda;y)\dot{\mathbf{F}}(\lambda)\dot{\mathbf{M}}^{\mathrm{out},(\alpha_1,\alpha_2)}(\lambda;y)\delta_n(y)^{-\sigma_3},\quad\lambda\in\mathbb{C}\setminus\overline{D_1\cup D_2},
\end{split}
\end{equation}
where $\delta_n(y)\neq 0$ is independent of $\lambda$ and takes a different form in different parts of the tubular neighborhood $T$ containing $y$, the hypothesis on $\lambda$ ensures that $\dot{\mathbf{M}}_n^{(\alpha_1,\alpha_2)}(\lambda;y)=\dot{\mathbf{M}}^{\mathrm{out},(\alpha_1,\alpha_2)}(\lambda;y)$, and we used the fact that the outer parametrix is diagonal.
Consequently, from \eqref{eq:u-n-from-Y-formula}, we arrive at the following approximate formula for $u_n(ny;-(\tfrac{1}{2}+k))$ valid for large $n$:
\begin{equation}
u_n(ny;-(\tfrac{1}{2}+k))=\frac{-\ii\dot{F}^\infty_{1,12}+\mathcal{O}(n^{-1/2})}{\dot{F}^0_{0,11}\dot{F}^0_{0,12}+\mathcal{O}(n^{-1/2})},\quad n\to+\infty,
\end{equation}
which holds uniformly for $y\in T$ for which $\dot{\mathbf{F}}(\lambda)$ exists and is bounded (in Case II$_\mathrm{b}$ the denominator $\Delta$ should be bounded away from zero).  In Case I, the error terms can be replaced with $\mathcal{O}(n^{-1})$.  Therefore, if $y\in T$ is such that Case I or II$_\mathrm{a}$ holds, from \eqref{eq:dot-F-elements-upper-upper} we have
\begin{equation}
u_n(ny;-(\tfrac{1}{2}+k))=\ii\frac{\lambda_1\lambda_2(\Phi_1+\Phi_2)+\mathcal{O}(n^{-1})}{\lambda_2\Phi_1+\lambda_1\Phi_2 + \mathcal{O}(n^{-1})},\quad n\to+\infty,\quad \text{if Case I holds,}
\label{eq:u-upper-upper-CaseI}
\end{equation}
\begin{equation}
u_n(ny;-(\tfrac{1}{2}+k))=\ii\frac{\lambda_1\lambda_2(\Phi_1+\Phi_2)+\mathcal{O}(n^{-1/2})}{\lambda_2\Phi_1+\lambda_1\Phi_2 + \mathcal{O}(n^{-1/2})},\quad n\to+\infty,\quad \text{if Case II$_\mathrm{a}$ holds,}
\label{eq:u-upper-upper}
\end{equation}
while if instead Case II$_\mathrm{b}$ holds and $\Delta$ is bounded away from zero, from \eqref{eq:dot-F-elements-upper-lower} and \eqref{eq:edge-denominator} we have, for any given $\delta>0$ independent of $n$,
\begin{equation}
u_n(ny;-(\tfrac{1}{2}+k))=\ii\frac{\lambda_\mathrm{U}\lambda_\mathrm{L}((\lambda_\mathrm{L}-\lambda_\mathrm{U})^2+\Phi_\mathrm{U}\Phi_\mathrm{L})+\mathcal{O}(n^{-1/2})}{\lambda_\mathrm{L}(\lambda_\mathrm{L}-\lambda_\mathrm{U})^2+\lambda_\mathrm{U}\Phi_\mathrm{U}\Phi_\mathrm{L}+ \mathcal{O}(n^{-1/2})},\quad n\to+\infty,\quad\text{if Case II$_\mathrm{b}$ holds and $|\Delta|\geq \delta>0$.}
\label{eq:u-upper-lower}
\end{equation}
Due to the inequalities that characterize Cases I, II$_\mathrm{a}$, and II$_\mathrm{b}$, the leading terms in the numerator and denominator are bounded as $n\to+\infty$ in each case.  If in addition the leading terms in the denominator are bounded away from zero, one can extract a leading approximation $\dot{u}_n$ of $u_n(ny;-(\tfrac{1}{2}+k))$ with a small absolute error:
\begin{equation}
u_n(ny;-(\tfrac{1}{2}+k))=\dot{u}_n+\begin{cases}\mathcal{O}(n^{-1}),\quad& \text{(Case I)}\\
\mathcal{O}(n^{-1/2}),\quad&\text{(Case II$_\mathrm{a}$)},\end{cases}\quad
\dot{u}_n:=\ii\frac{\Phi_1+\Phi_2}{\lambda_2\Phi_1+\lambda_1\Phi_2},
\label{eq:dot-u-CaseI-IIa}
\end{equation}
if $|\lambda_2\Phi_1+\lambda_1\Phi_2|\ge\delta>0$
and for Case II$_\mathrm{b}$,
\begin{equation}
u_n(ny;-(\tfrac{1}{2}+k))=\dot{u}_n+\mathcal{O}(n^{-1/2}),\quad \dot{u}_n:=
\ii\frac{(\lambda_\mathrm{L}-\lambda_{\mathrm{U}})^2+\Phi_\mathrm{U}\Phi_\mathrm{L}}{\lambda_\mathrm{L}(\lambda_\mathrm{L}-\lambda_\mathrm{U})^2+\lambda_\mathrm{U}\Phi_\mathrm{U}\Phi_\mathrm{L}},
\label{eq:dot-u-CaseIIb}
\end{equation}
if $|\Delta|\ge\delta>0$ and also $|\lambda_\mathrm{L}(\lambda_\mathrm{L}-\lambda_\mathrm{U})^2+\lambda_\mathrm{U}\Phi_\mathrm{U}\Phi_\mathrm{L}|\ge\delta>0$.  Note that in this case the zeros of $\Delta$ (where Riemann-Hilbert Problem~\ref{rhp:F-dot-eyebrow} fails to be solvable) correspond to zeros of the approximation $\dot{u}_n$.  In deriving the above formul\ae\ for $\dot{u}_n$ we used the fact that $\lambda_1\lambda_2=\lambda_\mathrm{U}\lambda_\mathrm{L}=1$.

\subsubsection{Concrete formul\ae\ for $\dot{u}_n$ in domains covering the tubular neighborhood $T$}
\label{sec:edge-formulae}
Now recall that $\lambda_1(y)=\lNaught(y)^{-1}$ and $\lambda_2(y)=\lNaught(y)$.  We construct $\dot{u}_n$ for each $y\in T$ according to the scheme described at the end of Section~\ref{sec:edge-comparison-matrices}.\smallskip

Suppose first that $\mathrm{Re}(V(\lNaught(y)^{-1};y))\le-\tfrac{1}{2}kn^{-1}\ln(n)$.  Then we are to consider the comparison matrix $\mathbf{F}_n^{(k,0,1)}(\lambda;y)$ which corresponds to Case I and hence the formula \eqref{eq:dot-u-CaseI-IIa} with 
\begin{equation}
\Phi_1:=\mathop{\mathrm{Res}}_{\lambda=\lNaught(y)^{-1}}A_1^{(k,0)}(\lambda;y) = \frac{\lNaught(y)^k\lNaught(y)_\infty^{1/2}V''(\lNaught(y)^{-1};y)^{-k}}{\ii V''(\lNaught(y)^{-1};y)^{1/2}}
\end{equation}
and
\begin{equation}
\Phi_2:=a_n^{(k,0,1)}(y)\mathop{\mathrm{Res}}_{\lambda=\lNaught(y)}A_2^{(k,0)}(\lambda;y)=
\ee^{2nV(\lNaught(y)^{-1};y)}n^k\frac{\lNaught(y)^{-k}\lNaught(y)_\infty^{-1/2}(\lNaught(y)^{-1}-\lNaught(y))^{2k}}{\ii k! V''(\lNaught(y);y)^{1/2}}
\end{equation}
where we used the identity $V(\lNaught(y);y)=-V(\lNaught(y)^{-1};y)\pmod{2\pi\ii}$.  Since only the ratio of the residues $\Phi_1$ and $\Phi_2$ enters into the formula \eqref{eq:dot-u-CaseI-IIa}, it is possible to remove all ambiguity of branches of square roots as follows.  It is straightforward to check that whenever $\lambda$ is such that $V'(\lambda;y)=0$ we have $V''(\lambda;y)=\lambda^{-2}(\lambda^2-1)(\lambda^2+1)^{-1}$ and therefore the identity $V''(\lNaught(y)^{-1};y)=-\lNaught(y)^4V''(\lNaught(y);y)$ holds for $y\in T$.  Taking square roots and carefully determining the sign to choose for consistency when $y=0.331372\in\pEInftyRed$, this identity implies that $V''(\lNaught(y)^{-1};y)^{1/2}=\ii\lNaught(y)^2V''(\lNaught(y);y)^{1/2}$.  Furthermore, $\lNaught(y)_\infty^{1/2}/\lNaught(y)_\infty^{-1/2}=\lNaught(y)$, so all details of the ``$\infty$'' branch of the power functions disappears from the ratio of residues.  Using these facts, we arrive at a simple formula for $\dot{u}_n$ valid for $\mathrm{Re}(V(\lNaught(y)^{-1};y))\le -\tfrac{1}{2}kn^{-1}\ln(n)$:
\begin{equation}
\dot{u}_n=\ii\lNaught(y)\frac{\ee^{2nV(\lNaught(y)^{-1};y)}-\ii n^{-k}k!\lNaught(y)^{-1}(\lNaught(y)^{-1}+\lNaught(y))^k(\lNaught(y)^{-1}-\lNaught(y))^{-3k}}{\ee^{2nV(\lNaught(y)^{-1};y)}-\ii n^{-k}k!\lNaught(y)(\lNaught(y)^{-1}+\lNaught(y))^k(\lNaught(y)^{-1}-\lNaught(y))^{-3k}},\quad
\mathrm{Re}(V(\lNaught(y)^{-1};y))\le-\frac{1}{2}k\frac{\ln(n)}{n}.
\label{eq:dot-u-UU-left}
\end{equation}
We observe that as $y$ moves away from $\pEInftyRed$ into the interior of $E$, the exponentials tend to zero and $\dot{u}_n\approx\ii\lNaught(y)^{-1}$ consistent with Theorem~\ref{theorem:closed-eye-equilibrium}.\smallskip

Now let $\ell$ be an integer varying from $\ell=1$ to $\ell=k$; we must now analyze four corresponding sub-cases depending on $y\in T$.  First we are to consider the comparison matrix $\mathbf{F}_n^{(k-\ell+1,\ell-1,2)}(\lambda;y)$.  Assuming the inequalities $-\tfrac{1}{2}(k-2\ell+2)n^{-1}\ln(n)\le\mathrm{Re}(V(\lNaught(y)^{-1};y))\le-\tfrac{1}{2}(k-2\ell+\tfrac{3}{2})n^{-1}\ln(n)$, we are in Case II$_\mathrm{a}$ with 
\begin{equation}
\begin{split}
\Phi_1&=a_n^{(k-\ell+1,\ell-1,2)}(y)\mathop{\mathrm{Res}}_{\lambda=\lNaught(y)^{-1}}A_1^{(k-\ell+1,\ell-1)}(\lambda;y) \\ &= \ee^{-2nV(\lNaught(y)^{-1};y)}n^{2\ell-k-2}
\frac{(k-\ell+1)!\lNaught(y)^k\lNaught(y)_\infty^{1/2}(\lNaught(y)^{-1}-\lNaught(y))^{2\ell-2}V''(\lNaught(y)^{-1};y)^{\ell-k-1}}{\ii k!V''(\lNaught(y)^{-1};y)^{1/2}}
\end{split}
\end{equation}
and
\begin{equation}
\Phi_2=\mathop{\mathrm{Res}}_{\lambda=\lNaught(y)}A_2^{(k-\ell+1,\ell-1)}(\lambda;y) =
\frac{(\ell-1)!\lNaught(y)^{-k}\lNaught(y)_\infty^{-1/2}(\lNaught(y)^{-1}-\lNaught(y))^{2k-2\ell+2}
V''(\lNaught(y);y)^{-\ell+1}}{\ii k!V''(\lNaught(y);y)^{1/2}}.
\end{equation}
Applying similar arguments to express the ratio of residues appearing in \eqref{eq:dot-u-CaseI-IIa} in terms of integer powers of $\lNaught(y)$ gives
\begin{multline}
\dot{u}_n=\ii\lNaught(y)\frac{\ee^{2nV(\lNaught(y)^{-1};y)}-\ii(-1)^{\ell-1}n^{2\ell-k-2}\frac{(k-\ell+1)!}{(\ell-1)!}
\lNaught(y)^{-1}(\lNaught(y)^{-1}+\lNaught(y))^{k-2\ell+2}(\lNaught(y)^{-1}-\lNaught(y))^{6\ell-3k-6}}{\ee^{2nV(\lNaught(y)^{-1};y)}-\ii(-1)^{\ell-1}n^{2\ell-k-2}\frac{(k-\ell+1)!}{(\ell-1)!}
\lNaught(y)(\lNaught(y)^{-1}+\lNaught(y))^{k-2\ell+2}(\lNaught(y)^{-1}-\lNaught(y))^{6\ell-3k-6}},\\
-\frac{1}{2}(k-2\ell+2)\frac{\ln(n)}{n}\le\mathrm{Re}(V(\lNaught(y)^{-1};y))\le
-\frac{1}{2}(k-2\ell+\tfrac{3}{2})\frac{\ln(n)}{n}.
\label{eq:dot-u-UU-first}
\end{multline}
Comparing \eqref{eq:dot-u-UU-left} and \eqref{eq:dot-u-UU-first} in the case $\ell=1$ shows that the same formula for $\dot{u}_n$ holds over the whole range of values $\mathrm{Re}(V(\lNaught(y)^{-1};y))\le-\tfrac{1}{2}(k-\tfrac{1}{2})n^{-1}\ln(n)$ although different comparison matrices are involved in the derivation.
Continuing with studying the same comparison matrix but now assuming that $-\tfrac{1}{2}(k-2\ell+\tfrac{3}{2})n^{-1}\ln(n)\le\mathrm{Re}(V(\lNaught(y)^{-1};y))\le-\tfrac{1}{2}(k-2\ell+1)n^{-1}\ln(n)$, we are in Case II$_\mathrm{b}$ with $\lambda_\mathrm{U}=\lambda_2=\lNaught(y)$ and $\lambda_\mathrm{L}=\lambda_1=\lNaught(y)^{-1}$, with corresponding residues
\begin{equation}
\Phi_\mathrm{U}=\mathop{\mathrm{Res}}_{\lambda=\lNaught(y)}A_2^{(k-\ell+1,\ell-1)}(\lambda;y) =
\frac{(\ell-1)!\lNaught(y)^{-k}\lNaught(y)_\infty^{-1/2}(\lNaught(y)^{-1}-\lNaught(y))^{2k-2\ell+2}
V''(\lNaught(y);y)^{-\ell+1}}{\ii k!V''(\lNaught(y);y)^{1/2}},
\end{equation}
and
\begin{equation}
\begin{split}
\Phi_\mathrm{L}&=b_n^{(k-\ell+1,\ell-1,2)}\mathop{\mathrm{Res}}_{\lambda=\lNaught(y)^{-1}}B_1^{(k-\ell+1,\ell-1)}(\lambda;y)\\
&=\ee^{2nV(\lNaught(y)^{-1};y)}n^{k-2\ell+1}\frac{k!\lNaught(y)^{-k}\lNaught(y)_\infty^{-1/2}(\lNaught(y)^{-1}-\lNaught(y))^{2-2\ell}V''(\lNaught(y)^{-1};y)^{k-\ell+1}}{\ii (k-\ell)!V''(\lNaught(y)^{-1};y)^{1/2}}.
\end{split}
\end{equation}
After some simplification of the product $\Phi_\mathrm{U}\Phi_\mathrm{L}$ of the residues along the lines indicated above, the applicable formula \eqref{eq:dot-u-CaseIIb} for $\dot{u}_n$ becomes
\begin{multline}
\dot{u}_n=\ii\lNaught(y)^{-1}\frac{\ee^{2nV(\lNaught(y)^{-1};y)}-\ii(-1)^\ell n^{2\ell-k-1}\frac{(k-\ell)!}{(\ell-1)!}
\lNaught(y)(\lNaught(y)^{-1}+\lNaught(y))^{k-2\ell+1}(\lNaught(y)^{-1}-\lNaught(y))^{6\ell-3k-3}}
{\ee^{2nV(\lNaught(y)^{-1};y)}-\ii(-1)^\ell n^{2\ell-k-1}\frac{(k-\ell)!}{(\ell-1)!}
\lNaught(y)^{-1}(\lNaught(y)^{-1}+\lNaught(y))^{k-2\ell+1}(\lNaught(y)^{-1}-\lNaught(y))^{6\ell-3k-3}},\\
-\frac{1}{2}(k-2\ell+\tfrac{3}{2})\frac{\ln(n)}{n}\le\mathrm{Re}(V(\lNaught(y)^{-1};y))\le -\frac{1}{2}(k-2\ell+1)\frac{\ln(n)}{n}.
\label{eq:dot-u-UL-1}
\end{multline}
Switching now to the comparison matrix $\mathbf{F}_n^{(k-\ell,\ell,1)}(\lambda;y)$, we assume that the inequalities $-\tfrac{1}{2}(k-2\ell-1)n^{-1}\ln(n)\le\mathrm{Re}(V(\lNaught(y)^{-1};y))\le -\tfrac{1}{2}(k-2\ell-\tfrac{1}{2})n^{-1}\ln(n)$ hold, which again imply Case II$_\mathrm{b}$ but now with $\lambda_\mathrm{U}=\lambda_1=\lNaught(y)^{-1}$ and $\lambda_\mathrm{L}=\lambda_2=\lNaught(y)$, and corresponding residues
\begin{equation}
\Phi_\mathrm{U}=\mathop{\mathrm{Res}}_{\lambda=\lNaught(y)^{-1}} A_1^{(k-\ell,\ell)}(\lambda;y)=
\frac{(k-\ell)!\lNaught(y)^k\lNaught(y)_\infty^{1/2}(\lNaught(y)^{-1}-\lNaught(y))^{2\ell}V''(\lNaught(y)^{-1};y)^{\ell-k}}{\ii k!V''(\lNaught(y)^{-1};y)^{1/2}}
\end{equation}
and
\begin{equation}
\begin{split}
\Phi_\mathrm{L}&=b_n^{(k-\ell,\ell,1)}\mathop{\mathrm{Res}}_{\lambda=\lNaught(y)}B_2^{(k-\ell,\ell)}(\lambda;y)\\
&=\ee^{-2nV(\lNaught(y)^{-1};y)}n^{2\ell-k-1}\frac{k!\lNaught(y)^k\lNaught(y)_\infty^{1/2}(\lNaught(y)^{-1}-\lNaught(y))^{2\ell-2k}V''(\lNaught(y);y)^{\ell}}{\ii (\ell-1)!V''(\lNaught(y);y)^{1/2}}.
\end{split}
\end{equation}
Thus \eqref{eq:dot-u-CaseIIb} becomes
\begin{multline}
\dot{u}_n=\ii\lNaught(y)^{-1}\frac{\ee^{2nV(\lNaught(y)^{-1};y)}-\ii(-1)^\ell n^{2\ell-k-1}\frac{(k-\ell)!}{(\ell-1)!}
\lNaught(y)(\lNaught(y)^{-1}+\lNaught(y))^{k-2\ell+1}(\lNaught(y)^{-1}-\lNaught(y))^{6\ell-3k-3}}
{\ee^{2nV(\lNaught(y)^{-1};y)}-\ii(-1)^\ell n^{2\ell-k-1}\frac{(k-\ell)!}{(\ell-1)!}
\lNaught(y)^{-1}(\lNaught(y)^{-1}+\lNaught(y))^{k-2\ell+1}(\lNaught(y)^{-1}-\lNaught(y))^{6\ell-3k-3}},\\
-\frac{1}{2}(k-2\ell+1)\frac{\ln(n)}{n}\le\mathrm{Re}(V(\lNaught(y)^{-1};y))\le -\frac{1}{2}(k-2\ell+\tfrac{1}{2})\frac{\ln(n)}{n}.
\label{eq:dot-u-UL-2}
\end{multline}
Comparing \eqref{eq:dot-u-UL-1} and \eqref{eq:dot-u-UL-2}, we observe that the approximate formula $\dot{u}_n$ is the same over the whole range $-\tfrac{1}{2}(k-2\ell+\tfrac{3}{2})n^{-1}\ln(n)\le\mathrm{Re}(V(\lNaught(y)^{-1};y))\le-\tfrac{1}{2}(k-2\ell+\tfrac{1}{2})n^{-1}\ln(n)$ over which Case II$_\mathrm{b}$ applies with different comparison matrices.  Continuing with the same comparison matrix we now assume the inequalities $-\tfrac{1}{2}(k-2\ell+\tfrac{1}{2})n^{-1}\ln(n)\le\mathrm{Re}(V(\lNaught(y)^{-1};y))\le-\tfrac{1}{2}(k-2\ell)n^{-1}\ln(n)$ and find that Case II$_a$ applies once again with residues given by
\begin{equation}
\Phi_1=\mathop{\mathrm{Res}}_{\lambda=\lNaught(y)^{-1}}A_1^{(k-\ell,\ell)}(\lambda;y)=
\frac{(k-\ell)!\lNaught(y)^k\lNaught(y)_\infty^{1/2}(\lNaught(y)^{-1}-\lNaught(y))^{2\ell}V''(\lNaught(y)^{-1};y)^{\ell-k}}{\ii k!V''(\lNaught(y)^{-1};y)^{1/2}}
\end{equation}
and
\begin{equation}
\begin{split}
\Phi_2&=a_n^{(k-\ell,\ell,1)}(y)\mathop{\mathrm{Res}}_{\lambda=\lNaught(y)}A_2^{(k-\ell,\ell)}(\lambda;y)\\
&=\ee^{2nV(\lNaught(y)^{-1};y)}n^{k-2\ell}\frac{\ell!\lNaught(y)^{-k}\lNaught(y)_\infty^{-1/2}(\lNaught(y)^{-1}-\lNaught(y))^{2k-2\ell}V''(\lNaught(y);y)^{-\ell}}{\ii k!V''(\lNaught(y);y)^{1/2}}.
\end{split}
\end{equation}
Hence from \eqref{eq:dot-u-CaseI-IIa} we get
\begin{multline}
\dot{u}_n=\ii\lNaught(y)\frac{\ee^{2nV(\lNaught(y)^{-1};y)}-\ii(-1)^\ell n^{2\ell-k}\frac{(k-\ell)!}{\ell!}\lNaught(y)^{-1}(\lNaught(y)^{-1}+\lNaught(y))^{k-2\ell}(\lNaught(y)^{-1}-\lNaught(y))^{6\ell-3k}}
{\ee^{2nV(\lNaught(y)^{-1};y)}-\ii(-1)^\ell n^{2\ell-k}\frac{(k-\ell)!}{\ell!}\lNaught(y)(\lNaught(y)^{-1}+\lNaught(y))^{k-2\ell}(\lNaught(y)^{-1}-\lNaught(y))^{6\ell-3k}},\\
-\frac{1}{2}(k-2\ell+\tfrac{1}{2})\frac{\ln(n)}{n}\le\mathrm{Re}(V(\lNaught(y)^{-1};y))\le
-\frac{1}{2}(k-2\ell)\frac{\ln(n)}{n}.
\label{eq:dot-u-UU-last}
\end{multline}
We observe that \eqref{eq:dot-u-UU-first} and \eqref{eq:dot-u-UU-last} agree upon replacing $\ell$ with $\ell+1$ in the former.\bigskip

Having completed the above four cases with $\ell=k$, it remains only to turn to the comparison matrix $\mathbf{F}_n^{(0,k,2)}(\lambda;y)$ and assume the inequality $\mathrm{Re}(V(\lNaught(y)^{-1};y))\ge \tfrac{1}{2}kn^{-1}\ln(n)$.  This corresponds to Case I with residues
\begin{equation}
\Phi_1=a_n^{(0,k,2)}(y)\mathop{\mathrm{Res}}_{\lambda=\lNaught(y)^{-1}}A_1^{(0,k)}(\lambda;y)
=\ee^{-2nV(\lNaught(y)^{-1};y)}n^k\frac{\lNaught(y)^k\lNaught(y)_\infty^{1/2}(\lNaught(y)^{-1}-\lNaught(y))^{2k}}{\ii k!V''(\lNaught(y)^{-1};y)^{1/2}}
\end{equation}
and
\begin{equation}
\Phi_2=\mathop{\mathrm{Res}}_{\lambda=\lNaught(y)}A_2^{(0,k)}(\lambda;y)=
\frac{\lNaught(y)^{-k}\lNaught(y)_\infty^{-1/2}V''(\lNaught(y);y)^{-k}}{\ii V''(\lNaught(y);y)^{1/2}}.
\end{equation}
Using these in \eqref{eq:dot-u-CaseI-IIa} gives
\begin{equation}
\dot{u}_n=\ii\lNaught(y)\frac{\ee^{2nV(\lNaught(y)^{-1};y)}-\ii (-1)^kn^k\frac{1}{k!}\lNaught(y)^{-1}
(\lNaught(y)^{-1}+\lNaught(y))^{-k}(\lNaught(y)^{-1}-\lNaught(y))^{3k}}{\ee^{2nV(\lNaught(y)^{-1};y)}-\ii (-1)^kn^k\frac{1}{k!}\lNaught(y)
(\lNaught(y)^{-1}+\lNaught(y))^{-k}(\lNaught(y)^{-1}-\lNaught(y))^{3k}},\quad
\mathrm{Re}(V(\lNaught(y)^{-1};y))\ge\frac{1}{2}k\frac{\ln(n)}{n}.
\label{eq:dot-u-UU-right}
\end{equation}
If $k=0$, the formul\ae\ \eqref{eq:dot-u-UU-left} and \eqref{eq:dot-u-UU-right} agree and define $\dot{u}_n$ by the same formula for all $y\in T$.  However if $k>0$, then \eqref{eq:dot-u-UU-right} agrees with \eqref{eq:dot-u-UU-last} with $\ell=k$, showing that the latter formula defines the approximation $\dot{u}_n$ over the whole range $\mathrm{Re}(V(\lNaught(y)^{-1};y))\ge\tfrac{1}{2}(k-\tfrac{1}{2})n^{-1}\ln(n)$.
Also, as $y$ moves out of $T$ into the exterior of $E$, we have $\mathrm{Re}(V(\lNaught(y)^{-1};y))>0$, so as $n\to+\infty$ we get $\dot{u}_n\approx \ii\lNaught(y)$, which is again consistent with Theorem~\ref{theorem:closed-eye-equilibrium}.  Combining these formul\ae\ with the convergence results described in Section~\ref{sec:Edge-error-analysis} and the exact symmetry \eqref{eq:u-n-exact-symmetry} to extend the results to $m=\tfrac{1}{2}+k$, $k\in\mathbb{Z}_{\ge 0}$ completes the proof of Theorem~\ref{thm:edge-formulae}.

\subsubsection{Detailed asymptotics for poles and zeros.  Proofs of Corollary~\ref{corollary:eyebrow-zeros-and-poles} and Theorem~\ref{theorem:eyebrow-curves}}
\label{sec:eyebrow-zeros-and-poles}
\begin{proof}[Proof of Corollary~\ref{corollary:eyebrow-zeros-and-poles}]
Each of the formul\ae\ for $\dot{u}_n$ described in Section~\ref{sec:edge-formulae} is a different meromorphic function of $y$ whose accuracy as an approximation of $u_n(ny;-(\tfrac{1}{2}+k))$ holds in an absolute sense for $y$ in a certain curvilinear strip roughly parallel to the eyebrow $\pEInftyRed$ and of width proportional to $n^{-1}\ln(n)$.  The absolute accuracy of the approximation depends on the assumption that $y$ is bounded away from each pole and zero of $\dot{u}_n$ by a distance proportional to $n^{-1}$ by an arbitrarily small constant.  It is easy to see that this distance is an arbitrarily small fraction of the spacing between nearest poles or zeros of $\dot{u}_n$.  This allows one to compute the index (winding number) of $u_n(ny;-(\tfrac{1}{2}+k))$ about a small circle containing just one pole or zero of $\dot{u}_n$ and hence deduce that the index is $-1$ or $1$ respectively.  
\end{proof}

\begin{proof}[Proof of Theorem~\ref{theorem:eyebrow-curves}]
According to \eqref{eq:dot-u-UU-left} and the discussion following \eqref{eq:dot-u-UU-first}, the zeros and poles of $\dot{u}_n$ in the left-most sub-domain of $T$ given by the inequality $\mathrm{Re}(V(\lNaught(y)^{-1};y))\le-\tfrac{1}{2}(k-\tfrac{1}{2})n^{-1}\ln(n)$ lie exactly on the respective curves
\begin{equation}
\text{Zero curve of $\dot{u}_n$ for $\displaystyle\mathrm{Re}(V(\lNaught(y)^{-1};y))\le-\frac{1}{2}(k-\tfrac{1}{2})\frac{\ln(n)}{n}$:}\quad
\ee^{2n\mathrm{Re}(V(\lNaught(y)^{-1};y))}=\frac{k!|\lNaught(y)^{-1}+\lNaught(y)|^k}{n^{k}|\lNaught(y)^{-1}-\lNaught(y)|^{3k}}|\lNaught(y)|^{-1},
\label{eq:left-zero-curve}
\end{equation}
\begin{equation}
\text{Pole curve of $\dot{u}_n$ for $\displaystyle\mathrm{Re}(V(\lNaught(y)^{-1};y))\le-\frac{1}{2}(k-\tfrac{1}{2})\frac{\ln(n)}{n}$:}\quad
\ee^{2n\mathrm{Re}(V(\lNaught(y)^{-1};y))}=\frac{k!|\lNaught(y)^{-1}+\lNaught(y)|^k}{n^{k}|\lNaught(y)^{-1}-\lNaught(y)|^{3k}}|\lNaught(y)|.
\label{eq:left-pole-curve}
\end{equation}
Note that because $|\lNaught(y)|<1$ holds for all $y\in\pEInftyRed$, the zero curve lies to the right of the pole curve.
Then, for $\ell=1,\dots,k$, by \eqref{eq:dot-u-UL-1}  and \eqref{eq:dot-u-UL-2}, the zeros and poles of $\dot{u}_n$ in the domain $-\tfrac{1}{2}(k-2\ell+\tfrac{3}{2})n^{-1}\ln(n)\le\mathrm{Re}(V(\lNaught(y)^{-1};y))\le-\tfrac{1}{2}(k-2\ell+\tfrac{1}{2})n^{-1}\ln(n)$ lie exactly on the respective curves
\begin{multline}
\text{Zero curve of $\dot{u}_n$ for $\displaystyle-\frac{1}{2}(k-2\ell+\tfrac{3}{2})\frac{\ln(n)}{n}\le\mathrm{Re}(V(\lNaught(y)^{-1};y))\le-\frac{1}{2}(k-2\ell+\tfrac{1}{2})\frac{\ln(n)}{n}$:}\\
\ee^{2n\mathrm{Re}(V(\lNaught(y)^{-1};y))}=\frac{(k-\ell)!|\lNaught(y)^{-1}+\lNaught(y)|^{k-2\ell+1}}{(\ell-1)!n^{k-2\ell+1}|\lNaught(y)^{-1}-\lNaught(y)|^{3k-6\ell+3}}|\lNaught(y)|,
\label{eq:mid-UL-zero-curve}
\end{multline}
\begin{multline}
\text{Pole curve of $\dot{u}_n$ for $\displaystyle-\frac{1}{2}(k-2\ell+\tfrac{3}{2})\frac{\ln(n)}{n}\le\mathrm{Re}(V(\lNaught(y)^{-1};y))\le-\frac{1}{2}(k-2\ell+\tfrac{1}{2})\frac{\ln(n)}{n}$:}\\
\ee^{2n\mathrm{Re}(V(\lNaught(y)^{-1};y))}=\frac{(k-\ell)!|\lNaught(y)^{-1}+\lNaught(y)|^{k-2\ell+1}}{(\ell-1)!n^{k-2\ell+1}|\lNaught(y)^{-1}-\lNaught(y)|^{3k-6\ell+3}}|\lNaught(y)|^{-1},
\label{eq:mid-UL-pole-curve}
\end{multline}
(the zero curve lies to the left of the pole curve)
and from \eqref{eq:dot-u-UU-first} and \eqref{eq:dot-u-UU-last}, the zeros and poles of $\dot{u}_n$ in the adjacent domain 
$-\tfrac{1}{2}(k-2\ell+\tfrac{1}{2})n^{-1}\ln(n)\le\mathrm{Re}(V(\lNaught(y)^{-1};y))\le-\tfrac{1}{2}(k-2\ell-\tfrac{1}{2})n^{-1}\ln(n)$ lie exactly on the respective curves
\begin{multline}
\text{Zero curve of $\dot{u}_n$ for $\displaystyle-\frac{1}{2}(k-2\ell+\tfrac{1}{2})\frac{\ln(n)}{n}\le\mathrm{Re}(V(\lNaught(y)^{-1};y))\le-\frac{1}{2}(k-2\ell-\tfrac{1}{2})\frac{\ln(n)}{n}$:}\\
\ee^{2n\mathrm{Re}(V(\lNaught(y)^{-1};y))}=\frac{(k-\ell)!|\lNaught(y)^{-1}+\lNaught(y)|^{k-2\ell}}{\ell!n^{k-2\ell}|\lNaught(y)^{-1}-\lNaught(y)|^{3k-6\ell}}|\lNaught(y)|^{-1},
\label{eq:mid-UU-zero-curve}
\end{multline}
\begin{multline}
\text{Pole curve of $\dot{u}_n$ for $\displaystyle-\frac{1}{2}(k-2\ell+\tfrac{1}{2})\frac{\ln(n)}{n}\le\mathrm{Re}(V(\lNaught(y)^{-1};y))\le-\frac{1}{2}(k-2\ell-\tfrac{1}{2})\frac{\ln(n)}{n}$:}\\
\ee^{2n\mathrm{Re}(V(\lNaught(y)^{-1};y))}=\frac{(k-\ell)!|\lNaught(y)^{-1}+\lNaught(y)|^{k-2\ell}}{\ell!n^{k-2\ell}|\lNaught(y)^{-1}-\lNaught(y)|^{3k-6\ell}}|\lNaught(y)|
\label{eq:mid-UU-pole-curve}
\end{multline}
(again the zero curve lies to the right of the pole curve).
Finally, according to \eqref{eq:dot-u-UU-right}, the zeros and poles of $\dot{u}_n$ in the right-most sub-domain of $T$ given by the inequality $\mathrm{Re}(V(\lNaught(y)^{-1};y))\ge \tfrac{1}{2}(k+\tfrac{1}{2})n^{-1}\ln(n)$ lie along the latter curves in the terminal case of $\ell=k$.  
\end{proof}

\appendix
\section{Solution of Riemann-Hilbert Problem~\ref{rhp:ParabolicCylinder}}
\label{app:PC}
\subsection{Derivation of differential equation}
Suppose that $\mathbf{P}(\zeta;m)$ satisfies the conditions of Riemann-Hilbert Problem~\ref{rhp:ParabolicCylinder}.  It is easy to check that $\det(\mathbf{P}(\zeta;m))=1$ for all $\zeta$.  Then the related matrix $\mathbf{R}(\zeta;m):=\mathbf{P}(\zeta;m)\ee^{\zeta^2\sigma_3/2}$ has the same analyticity domain and is equally regular up to the jump contour $\mathrm{Re}(\zeta^2)=0$.  It satisfies jump conditions across the four rays of the jump contour that are direct analogues of \eqref{eq:N-zeta-jump-first}--\eqref{eq:N-zeta-jump-last}, except that the $\zeta$-dependent exponential factors $\ee^{\pm \zeta^2}$ have been removed from the off-diagonal elements of the jump matrices.  Thus, $\mathbf{R}(\zeta;m)$ satisfies jump conditions that are independent of $\zeta$ on each ray.
Differentiating these jump conditions with respect to $\zeta$ then shows that $\mathbf{R}'(\zeta;m)$ satisfies exactly the same jump conditions as does $\mathbf{R}(\zeta;m)$ and hence (using the fact that $\det(\mathbf{R}(\zeta;m))=1$ for $\mathrm{Re}(\zeta^2)\neq 0$) $\mathbf{R}'(\zeta;m)\mathbf{R}(\zeta;m)^{-1}$ can be continued to the jump contour unambiguously, defining an entire function of $\zeta$.  Supposing for the moment that the normalization condition \eqref{eq:N-zeta-normalize} holds in the stronger sense that 
\begin{equation}
\mathbf{P}(\zeta;m)\sim(\mathbb{I} + \zeta^{-1}\mathbf{P}^{\infty}_1(m) + \cdots)\zeta^{-(m+\tfrac{1}{2})\sigma_3},\quad\zeta\to\infty
\end{equation}
with $\mathbf{P}^{\infty}_1(m)$ being the same for all four sectors and with the indicated asymptotic series being differentiable term-by-term, the entire function $\mathbf{R}'(\zeta;m)\mathbf{R}(\zeta;m)^{-1}$ satisfies
\begin{equation}
\mathbf{R}'(\zeta;m)\mathbf{R}(\zeta;m)^{-1}=\sigma_3\zeta + [\mathbf{P}^{\infty}_1(m),\sigma_3] + \mathcal{O}(\zeta^{-1}),\quad\zeta\to\infty,
\end{equation}
and hence by Liouville's theorem, $\mathbf{R}'(\zeta;m)\mathbf{R}(\zeta;m)^{-1}=\sigma_3\zeta + [\mathbf{P}^{\infty}_1(m),\sigma_3]$ exactly.  In other words, $\mathbf{R}(\zeta;m)$ satisfies the following differential equation:
\begin{equation}
\frac{\dd\mathbf{R}}{\dd\zeta}=\begin{bmatrix}\zeta & \alpha\\\beta & -\zeta\end{bmatrix}\mathbf{R},\quad \alpha:=-2P^{\infty}_{1,12}(m),\quad\beta:=2P^{\infty}_{1,21}(m).
\label{eq:R-matrix-PC-ODE}
\end{equation}
Rescaling by $t:=\zeta\sqrt{2}$ and eliminating the second row shows that any element $R_{1k}$, $k=1,2$, of the first row satisfies the differential equation of parabolic cylinder functions (see \cite[Eq.~12.2.2]{DLMF})
\begin{equation}
\frac{\dd^2R_{1k}}{\dd t^2} - \left(\frac{1}{4}t^2 + a\right)R_{1k}=0,\quad a:=\frac{1}{2}(1+\alpha\beta), \quad k=1,2.
\end{equation}
According to \cite[\S12.2(i)]{DLMF}, we will take $R_{1k}$ as a linear combination of an appropriate ``numerically satisfactory'' pair of solutions in each of the four sectors:
\begin{equation}
R_{1k}(\zeta;m)=\alpha A_k^\mathrm{I}U(a,\sqrt{2}\zeta) + \alpha B_k^\mathrm{I}U(-a,-\ii\sqrt{2}\zeta),\quad 0\le\arg(\zeta)\le\frac{\pi}{2},
\label{eq:R1-I}
\end{equation}
\begin{equation}
R_{1k}(\zeta;m)=\alpha A_k^\mathrm{II}U(-a,-\ii\sqrt{2}\zeta) + \alpha B_k^\mathrm{II}U(a,-\sqrt{2}\zeta),\quad \frac{\pi}{2}\le\arg(\zeta)\le\pi,
\end{equation}
\begin{equation}
R_{1k}(\zeta;m)=\alpha A_k^\mathrm{III}U(a,-\sqrt{2}\zeta) + \alpha B_k^\mathrm{III}U(-a,\ii\sqrt{2}\zeta),\quad -\pi\le\arg(\zeta)\le -\frac{\pi}{2},
\end{equation}
and
\begin{equation}
R_{1k}(\zeta;m)=\alpha A_k^\mathrm{IV}U(-a,\ii\sqrt{2}\zeta)+\alpha B_k^\mathrm{IV}U(a,\sqrt{2}\zeta),\quad -\frac{\pi}{2}\le\arg(\zeta)\le 0.
\label{eq:R1-IV}
\end{equation}
From the first row of \eqref{eq:R-matrix-PC-ODE} we then find the corresponding matrix elements $R_{2k}(\zeta;m):=\alpha^{-1}(R_{1k}'(\zeta;m)-\zeta R_{1k}(\zeta;m))$.  Hence using \cite[Eqs.~12.8.2--12.8.3]{DLMF}, from \eqref{eq:R1-I}--\eqref{eq:R1-IV} we get
\begin{equation}
R_{2k}(\zeta;m)=-\sqrt{2}A_k^\mathrm{I}U(a-1,\sqrt{2}\zeta)-\ii\sqrt{2}(a-\tfrac{1}{2})B_k^\mathrm{I}U(1-a,-\ii\sqrt{2}\zeta),\quad 0\le\arg(\zeta)\le\frac{\pi}{2},
\label{eq:R2-I}
\end{equation}
\begin{equation}
R_{2k}(\zeta;m)=-\ii\sqrt{2}(a-\tfrac{1}{2})A_k^\mathrm{II}U(1-a,-\ii\sqrt{2}\zeta) +\sqrt{2}B_k^\mathrm{II}U(a-1,-\sqrt{2}\zeta),\quad \frac{\pi}{2}\le\arg(\zeta)\le\pi,
\end{equation}
\begin{equation}
R_{2k}(\zeta;m)=\sqrt{2}A_k^\mathrm{III}U(a-1,-\sqrt{2}\zeta)+\ii\sqrt{2}(a-\tfrac{1}{2})B_k^\mathrm{III}U(1-a,\ii\sqrt{2}\zeta),\quad -\pi\le\arg(\zeta)\le-\frac{\pi}{2},
\end{equation}
\begin{equation}
R_{2k}(\zeta;m)=\ii\sqrt{2}(a-\tfrac{1}{2})A_k^\mathrm{IV}U(1-a,\ii\sqrt{2}\zeta) -\sqrt{2}B_k^\mathrm{IV}U(a-1,\sqrt{2}\zeta),\quad -\frac{\pi}{2}\le\arg(\zeta)\le 0.
\label{eq:R2-IV}
\end{equation}
Note that in addition to the sixteen coefficients $A_k^S$ and $B_k^S$, $k=1,2$, $S=\mathrm{I},\mathrm{II},\mathrm{III},\mathrm{IV}$, it remains to determine also the parameters $\alpha$ and $\beta$.
\subsection{Selection of solutions and parameter determination}
Now we impose that the matrix $\mathbf{R}(\zeta;m)$ satisfy the leading-order normalization condition implied by \eqref{eq:N-zeta-normalize}.  For this purpose, given the choice of basis made above in each sector $S$, it is sufficient to use the large-$z$ asymptotic expansion for $U(a,z)$ given by \cite[Eq.~12.9.1]{DLMF}, which implies that $U(a,z)=e^{-z^2/4}z^{-a-1/2}(1+\mathcal{O}(z^{-2}))$ as $z\to\infty$ with $|\arg(z)|< 3\pi/4$.
Thus, in order to avoid unwanted exponential growth it is necessary to choose:
\begin{equation}
A_1^\mathrm{I}=B_2^\mathrm{I}=0,\quad A_2^\mathrm{II}=B_1^\mathrm{II}=0,\quad A_1^\mathrm{III}=B_2^\mathrm{III}=0,\quad A_2^\mathrm{IV}=B_1^\mathrm{IV}=0.
\end{equation}
With these choices, all four elements of $\mathbf{R}(\zeta;m)\ee^{-\zeta^2\sigma_3/2}\zeta^{(m+\tfrac{1}{2})\sigma_3}$ are bounded by a power of $\zeta$ as $\zeta\to\infty$.  Determining the parameter $a=\tfrac{1}{2}(1+\alpha\beta)$ in terms of $m$ explicitly by
\begin{equation}
a=-m
\end{equation}
is then necessary to ensure the existence of a finite limit as $\zeta\to\infty$.  By examination of the diagonal elements in the four sectors, we then deduce that $\mathbf{R}(\zeta;m)\ee^{-\zeta^2\sigma_3/2}\zeta^{(m+\tfrac{1}{2})\sigma_3}=\mathbb{I}+\mathcal{O}(\zeta^{-2})$ as $\zeta\to\infty$ in all directions, provided that the remaining eight nonzero coefficients are determined as follows:
\begin{equation}
\begin{gathered}
B_1^\mathrm{I}=A_1^\mathrm{II}=\alpha^{-1}2^{\tfrac{1}{4}-\tfrac{1}{2}a}\ee^{\ii\pi(\tfrac{1}{2}a-\tfrac{1}{4})},\quad
B_1^\mathrm{III}=A_1^\mathrm{IV}=\alpha^{-1}2^{\tfrac{1}{4}-\tfrac{1}{2}a}\ee^{-\ii\pi(\tfrac{1}{2}a-\tfrac{1}{4})},\\
A_2^\mathrm{I}=B_2^\mathrm{IV}=-2^{\tfrac{1}{2}a-\tfrac{3}{4}},\quad B_2^\mathrm{II}=2^{\tfrac{1}{2}a-\tfrac{3}{4}}\ee^{\ii\pi(\tfrac{1}{2}-a)},\quad A_2^\mathrm{III}=2^{\tfrac{1}{2}a-\tfrac{3}{4}}\ee^{-\ii\pi(\tfrac{1}{2}-a)}.
\end{gathered}
\end{equation}
The only remaining ambiguity concerns the precise values of $\alpha$ and $\beta$ such that $\tfrac{1}{2}(1+\alpha\beta)=a=-m$.  This ambiguity is resolved by resorting to the jump conditions in Riemann-Hilbert Problem~\ref{rhp:ParabolicCylinder}.  Here, we make use of the connection formula $U(a,z)=\pm\ii\ee^{\pm\ii\pi a}U(a,-z)+\sqrt{2\pi}\ee^{\pm\ii\pi(a-1/2)/2}U(-a,\pm\ii z)/\Gamma(a+1/2)$, see \cite[Eq.~12.2.19]{DLMF}.  With the help of this formula, it is straightforward to check that the jump conditions are satisfied by $\mathbf{P}(\zeta;m):=\mathbf{R}(\zeta;m)\ee^{-\zeta^2\sigma_3/2}$, provided that $\alpha$ is given by:
\begin{equation}
\alpha=\ii\ee^{\ii \pi m}2^{m+1}.
\end{equation}
Then from $a=\tfrac{1}{2}(1+\alpha\beta)=-m$ we get 
\begin{equation}
\beta=\ii\ee^{-\ii\pi m}2^{-m}(m+\tfrac{1}{2}).
\end{equation}

\subsection{Refined asymptotics of $\mathbf{P}(\zeta)$}
The matrix $\mathbf{P}(\zeta;m)=\mathbf{R}(\zeta;m)\ee^{-\zeta^2\sigma_3/2}$ clearly satisfies all of the conditions of Riemann-Hilbert Problem~\ref{rhp:ParabolicCylinder}, and it is easy to show that there can be only one solution.  Here we give the complete asymptotic expansion of the solution $\mathbf{P}(\zeta;m)$ in the large-$\zeta$ limit, giving information beyond the normalization condition \eqref{eq:N-zeta-normalize}.  Indeed, using \cite[Eq. 12.9.1]{DLMF}, we find that
\begin{equation}
\mathbf{P}(\zeta;m)\zeta^{(m+\tfrac{1}{2})\sigma_3}\sim\begin{bmatrix}
\displaystyle\sum_{j=0}^\infty\frac{(\tfrac{1}{2}+m)_{2j}}{4^jj!\zeta^{2j}} &
\displaystyle - \ii\ee^{\ii\pi m}2^m\zeta^{-1}\sum_{j=0}^\infty \frac{(-1)^j(\tfrac{1}{2}-m)_{2j}}{4^jj!\zeta^{2j}}\\
\displaystyle \ii\ee^{-\ii\pi m}2^{-m-1}(m+\tfrac{1}{2})\zeta^{-1}\sum_{j=0}^\infty \frac{(\tfrac{3}{2}+m)_{2j}}{4^jj!\zeta^{2j}} &
\displaystyle\sum_{j=0}^\infty\frac{(-1)^j(-\tfrac{1}{2}-m)_{2j}}{4^jj!\zeta^{2j}}
\end{bmatrix},\quad\zeta\to\infty
\label{eq:N-expansion}
\end{equation}
uniformly in all directions of the complex plane, including along the sector boundaries.  Notably, the expansion coefficients do not depend on
the sector in which the solution is analyzed, a fact that also follows from the exponential decay of the off-diagonal elements of the jump matrices in Riemann-Hilbert Problem~\ref{rhp:ParabolicCylinder}.

\section{Solution of Riemann-Hilbert Problem~\ref{rhp:Airy}}
\label{app:Airy}
\subsection{Derivation of differential equation}  
Reasoning as in Appendix~\ref{app:PC}, one checks that if $\mathbf{A}(\zeta)$ satisfies Riemann-Hilbert Problem~\ref{rhp:Airy}, then the related matrix $\mathbf{S}(\zeta):=\mathbf{A}(\zeta)\ee^{-\zeta^{3/2}\sigma_3/2}$ has unit determinant and satisfies jump conditions analogous to \eqref{eq:Airy-jumps-A} except that the exponential factors in the jump matrices for $\arg(\zeta)=0$ and $\arg(\zeta)=\pm \tfrac{2}{3}\pi$ have been cancelled.  It follows by differentiation of the resulting constant jump matrices with respect to $\zeta$ that $\mathbf{S}'(\zeta)\mathbf{S}(\zeta)^{-1}$ is an entire function of $\zeta$.  Assuming that $\mathbf{A}(\zeta)\mathbf{V}^{-1}\zeta^{-\sigma_3/4}\sim\mathbb{I} + \zeta^{-1}\mathbf{A}_1^\infty + \cdots$ as $\zeta\to\infty$ with the coefficient $\mathbf{A}_1^\infty$ being the same for each of the four sectors of analyticity and with the asymptotic series being differentiable term-by-term, it follows that 
\begin{equation}
\mathbf{S}'(\zeta)\mathbf{S}(\zeta)^{-1}=\frac{3\ii}{4}\begin{bmatrix}A^\infty_{1,21} & A^\infty_{1,22}-A^\infty_{1,11}-\zeta\\1 & -A^\infty_{1,21}\end{bmatrix}+\mathcal{O}(\zeta^{-1}),\quad\zeta\to\infty.
\end{equation}
By Liouville's theorem, we derive the first-order system of differential equations satisfied by $\mathbf{S}(\zeta)$:
\begin{equation}
\frac{\dd\mathbf{S}}{\dd\zeta}=\frac{3\ii}{4}\begin{bmatrix}A^\infty_{1,21} & A^\infty_{1,22}-A^\infty_{1,11}-\zeta\\1 & -A^\infty_{1,21}\end{bmatrix}\mathbf{S}.
\label{eq:Airy-system}
\end{equation}
Rescaling by $t=(\tfrac{3}{4})^{2/3}(\zeta-c)$, where $c:=(A^\infty_{1,21})^2+A^\infty_{1,22}-A^\infty_{1,11}$ and eliminating the first row of $\mathbf{S}$ it follows that the elements of the second row of $\mathbf{S}$ are solutions of Airy's equation \cite[Eqn.\@ 9.2.1]{DLMF}
\begin{equation}
\frac{\dd^2S_{2k}}{\dd t^2}-\zeta S_{2k}=0,\quad k=1,2.
\label{eq:Airy}
\end{equation}
We represent the second-row elements of $\mathbf{S}$ as linear combinations of ``numerically satisfactory'' solutions (see \cite[Table 9.2.1]{DLMF}) appropriate for each sector:
\begin{equation}
S_{2k}(\zeta)=A_k^\mathrm{I}\mathrm{Ai}(t) + B_k^\mathrm{I}\mathrm{Ai}(\ee^{-2\pi\ii/3}t),\quad
0\le\arg(\zeta)\le\frac{2}{3}\pi,
\end{equation}
\begin{equation}
S_{2k}(\zeta)=A_k^\mathrm{II}\mathrm{Ai}(\ee^{-2\pi\ii/3}t)+B_k^\mathrm{II}\mathrm{Ai}(\ee^{2\pi\ii/3}t),
\quad \frac{2}{3}\pi\le\arg(\zeta)\le\pi,
\end{equation}
\begin{equation}
S_{2k}(\zeta)=A_k^\mathrm{III}\mathrm{Ai}(\ee^{-2\pi\ii/3}t)+B_k^\mathrm{III}\mathrm{Ai}(\ee^{2\pi\ii/3}t),\quad -\pi\le\arg(\zeta)\le-\frac{2}{3}\pi,
\end{equation}
\begin{equation}
S_{2k}(\zeta)=A_k^\mathrm{IV}\mathrm{Ai}(t)+B_k^\mathrm{IV}\mathrm{Ai}(\ee^{2\pi\ii/3}t),\quad
-\frac{2}{3}\pi\le\arg(\zeta)\le 0.
\end{equation}
Once the sixteen coefficients $A_k^S$ and $B_k^S$, $k=1,2$, $S=\mathrm{I},\mathrm{II},\mathrm{III},\mathrm{IV}$ have been determined, the first row elements of $\mathbf{S}(\zeta)$ will then be determined from the first-order system \eqref{eq:Airy-system}.  The parameter $c$ involved in the relation connecting $\zeta$ with $t$ also needs to be determined.

\subsection{Selection of solutions}
We now consider the normalization condition on $\mathbf{A}(\zeta)$ which in terms of $\mathbf{S}(\zeta)$ reads $\mathbf{S}(\zeta)\ee^{\zeta^{3/2}\sigma_3/2}\mathbf{V}^{-1}\zeta^{-\sigma_3/4}=\mathbb{I}+\mathcal{O}(\zeta^{-1})$ as $\zeta\to\infty$.  Imposing this condition in each sector with the help of the well-known asymptotic formula \cite[Eqn.\@ 9.7.5]{DLMF} for $\mathrm{Ai}(t)$ valid for large $t$, we find that in order to avoid unwanted exponential growth it is necessary to choose
\begin{equation}
B_1^\mathrm{I}=A_2^\mathrm{I}=0,\quad A_1^\mathrm{II}=B_2^\mathrm{II}=0,\quad
B_1^\mathrm{III}=A_2^\mathrm{III}=0,\quad B_1^\mathrm{IV}=A_2^\mathrm{IV}=0.
\label{eq:Airy-zero-coeffs}
\end{equation}
While these conditions remove the terms exhibiting the most rapid exponential growth as $\zeta\to\infty$, there are still subdominant exponentially growing terms that, since $\det(\mathbf{S}(\zeta))=1$, can only be removed if the parameter $c$ is made to vanish:  $c=0$.  Assuming \eqref{eq:Airy-zero-coeffs} and $c=0$, the quantity $\mathbf{S}(\zeta)\ee^{\zeta^{3/2}\sigma_3/2}\mathbf{V}^{-1}\zeta^{-\sigma_3/4}$ tends to a finite limit as $\zeta\to\infty$ but with a leading error term proportional to $\zeta^{-1/2}$.  Removing this term and imposing the condition that the finite limit in question is $\mathbb{I}$ gives two additional conditions per sector that determine all eight remaining coefficients.\smallskip  

Thus, the second row of the matrix $\mathbf{S}(\zeta)$ is uniquely determined in each of the four sectors simply from the differential equation \eqref{eq:Airy} and by imposing the desired asymptotic behavior for large $\zeta$.  The first row is then determined from the second using \eqref{eq:Airy-system} and $c=0$, and finally once $\mathbf{S}(\zeta)$ is known in all four sectors, $\mathbf{A}(\zeta)=\mathbf{S}(\zeta)\ee^{-\zeta^{3/2}\sigma_3/2}$.  The resulting formul\ae\ are as follows, in which $t=(\tfrac{3}{4})^{2/3}\zeta$ because $c=0$:
\begin{equation}
\mathbf{A}(\zeta):=\sqrt{2\pi}\left(\frac{4}{3}\right)^{\sigma_3/6}\begin{bmatrix}-\mathrm{Ai}'(t) & \ee^{2\pi\ii/3}\mathrm{Ai}'(t\ee^{-2\pi\ii/3})\\
-\ii\mathrm{Ai}(t) & \ii\ee^{-2\pi\ii/3}\mathrm{Ai}(t\ee^{-2\pi\ii/3})\end{bmatrix}\ee^{2t^{3/2}\sigma_3/3},\quad 0<\arg(\zeta)<\frac{2}{3}\pi,
\end{equation}
\begin{equation}
\mathbf{A}(\zeta):=\sqrt{2\pi}\left(\frac{4}{3}\right)^{\sigma_3/6}\begin{bmatrix}
\ee^{-2\pi\ii/3}\mathrm{Ai}'(t\ee^{2\pi\ii/3}) & \ee^{2\pi\ii/3}\mathrm{Ai}'(t\ee^{-2\pi\ii/3})\\
\ii\ee^{2\pi\ii/3}\mathrm{Ai}(t\ee^{2\pi\ii/3}) & \ii\ee^{-2\pi\ii/3}\mathrm{Ai}(t\ee^{-2\pi\ii/3})
\end{bmatrix}\ee^{2t^{3/2}\sigma_3/3},\quad\frac{2}{3}\pi<\arg(\zeta)<\pi,
\end{equation}
\begin{equation}
\mathbf{A}(\zeta):=\sqrt{2\pi}\left(\frac{4}{3}\right)^{\sigma_3/6}\begin{bmatrix}
\ee^{2\pi\ii/3}\mathrm{Ai}'(t\ee^{-2\pi\ii/3}) & -\ee^{-2\pi\ii/3}\mathrm{Ai}'(t\ee^{2\pi\ii/3})\\
\ii\ee^{-2\pi\ii/3}\mathrm{Ai}(t\ee^{-2\pi\ii/3}) & -\ii\ee^{2\pi\ii/3}\mathrm{Ai}(t\ee^{2\pi\ii/3})
\end{bmatrix}\ee^{2t^{3/2}\sigma_3/3},\quad -\pi<\arg(\zeta)<-\frac{2}{3}\pi,
\end{equation}
\begin{equation}
\mathbf{A}(\zeta):=\sqrt{2\pi}\left(\frac{4}{3}\right)^{\sigma_3/6}\begin{bmatrix}
-\mathrm{Ai}'(t) & -\ee^{-2\pi\ii/3}\mathrm{Ai}'(t\ee^{2\pi\ii/3})\\
-\ii\mathrm{Ai}(t) & -\ii\ee^{2\pi\ii/3}\mathrm{Ai}(t\ee^{2\pi\ii/3})\end{bmatrix}
\ee^{2t^{3/2}\sigma_3/3},\quad -\frac{2}{3}\pi<\arg(\zeta)<0.
\end{equation}
The jump conditions relating the boundary values of $\mathbf{A}(\zeta)$ in Riemann-Hilbert Problem~\ref{rhp:Airy} are now seen to simply be a consequence of the connection formula $\mathrm{Ai}(t)+\ee^{-2\pi\ii/3}\mathrm{Ai}(t\ee^{-2\pi\ii/3})+\ee^{2\pi\ii/3}\mathrm{Ai}(t\ee^{2\pi\ii/3})=0$ (see \cite[Eqn.\@ 9.2.12]{DLMF}).

\subsection{Refined asymptotics of $\mathbf{A}(\zeta)$}
Using the known asymptotic expansions of $\mathrm{Ai}(t)$ and $\mathrm{Ai}'(t)$ (see \cite[Eqns.\@ 9.7.5--9.7.6]{DLMF}), it is easy to show that $\mathbf{A}(\zeta)\mathbf{V}^{-1}\zeta^{-\sigma_3/4}$ has a complete asymptotic expansion in integer powers of $\zeta$ as $\zeta\to\infty$, and that the leading error term is characterized by the formula \eqref{eq:Airy-norm-better}.

\end{document}